\DeclareMathOperator{\sign}{sign}
\numberwithin{equation}{section}
\theoremstyle{plain}
\newtheorem{theorem}{Theorem}[section]
\newtheorem{lemma}[theorem]{Lemma}
\newtheorem{proposition}[theorem]{Proposition}
\theoremstyle{definition}
\newtheorem{definition}[theorem]{Definition}
\newtheorem{remark}[theorem]{Remark}
\newcommand{\E}{\mathbb{E}}
\newcommand{\D}{\mathbb{D}}
\newcommand{\W}{\dot{W}}
\newcommand{\ud}{\ensuremath{\mathrm{d}}}
\newcommand{\sgn}{\text{sgn}}
\newcommand{\Indt}[1]{\one_{\left\{#1 \right\}}}
\newcommand{\Norm}[1]{\left|\left|  #1   \right|\right|}
\newcommand{\spt}[1]{\text{supp}\left(#1\right)}
\newcommand{\InPrd}[1]{\left\langle #1 \right\rangle}
\newcommand{\calB}{\mathcal{B}}
\newcommand{\calF}{\mathcal{F}}
\newcommand{\calG}{\mathcal{G}}
\newcommand{\calK}{\mathcal{K}}
\newcommand{\calH}{\mathcal{H}}
\newcommand{\calI}{\mathcal{I}}
\newcommand{\calL}{\mathcal{L}}
\newcommand{\calN}{\mathcal{N}}
\newcommand{\calP}{\mathcal{P}}
\newcommand{\calS}{\mathcal{S}}
\newcommand{\bbN}{\mathbb{N}}
\newcommand{\bbP}{\mathbb{P}}
\newcommand*{\one}{{{\rm 1\mkern-1.5mu}\!{\rm I}}}
\newcommand{\R}{\mathbb{R}}
\def\myred{black}
\DeclareMathOperator{\Lip}{\mathit{L}}
\DeclareMathOperator{\LIP}{Lip}
\DeclareMathOperator{\lip}{\mathit{l}}
\DeclareMathOperator{\Vip}{\overline{\varsigma}}
\DeclareMathOperator{\vv}{\varsigma}
\newcommand{\Dxa}{{}_x D_\delta^\alpha}
\newcommand{\lMr}[3]{\:{}_{#1} #2_{#3}}
\title{Regularity and strict positivity of densities for \\
the nonlinear stochastic heat equation}
\author{
{\bf Le Chen} and {\bf Yaozhong Hu\footnote{Research partially supported by a grant from the Simons Foundation \#209206.} }
and {\bf David Nualart\footnote{Research partially supported by the NSF grant DMS1512891.} }
\\[1em]
Department of Mathematics\\
University of Kansas
\date{}
\vspace{-1em}
}
\begin{document}
\maketitle
\begin{center}
\begin{minipage}[rct]{5 in}
\footnotesize \textbf{Abstract:}
In this paper, we establish a necessary and sufficient condition 
for the existence and regularity of the density of the solution to a semilinear stochastic (fractional) heat equation with measure-valued initial conditions.
Under a mild cone condition for the diffusion coefficient, 
we establish the smooth joint density at multiple points. The tool we use is Malliavin calculus. The main ingredient is to prove that the solutions to a related stochastic partial differential equation have negative moments of all orders. Because we cannot prove $u(t,x)\in \mathbb{D}^\infty$ for measure-valued initial data, we need a localized version of Malliavin calculus. Furthermore, we prove that the (joint) density is strictly positive in the interior of the support of the law, where we allow both measure-valued initial data and unbounded diffusion coefficient.
The criteria introduced by Bally and Pardoux \cite{BP98} are no longer applicable for the {\it parabolic Anderson model}. We have extended their criteria to a localized version. Our general framework includes the parabolic Anderson model as a special case.

\vspace{2ex}
\textbf{MSC 2010 subject classifications:}
Primary 60H15. Secondary 60G60, 35R60.

\vspace{2ex}
\textbf{Keywords:}
Stochastic heat equation, space-time white noise, Malliavin calculus, negative moments,
regularity of density, strict positivity of density, measure-valued initial data, parabolic Anderson model.
\vspace{4ex}
\end{minipage}
\end{center}

\tableofcontents
\setlength{\parindent}{1.5em}



\section{Introduction}

In this paper,  we are interested in   the density of the law of the solution to
the following stochastic fractional heat equation:
\begin{align}\label{E:FracHt}
 \begin{cases}
  \left(\displaystyle\frac{\partial}{\partial t} - \Dxa \right) u(t,x) =
\rho(t,x,u(t,x))  \dot{W}(t,x),& t>0\;,\: x\in\R,\cr
u(0,\cdot) = \mu(\cdot).
 \end{cases}
\end{align}
We shall concentrate on  the regularity and the strict positivity of the  density of the solution $u(t,x)$ as a random variable.
First, we explain the meaning of the terms in above equation.
The operator $\Dxa$ is the {\it Riesz-Feller fractional differential operator} of
order $\alpha$ and skewness $\delta$.  In terms of Fourier transform this operator
 is defined by
\begin{equation}
{\cal F}  (\Dxa  f)(\xi)=-|\xi|^ \alpha   e^{\iota \sign(\xi) \delta \pi/2}\hat f(\xi) \,.
\label{e.1.F-Dxa}
\end{equation}
To study random field solutions, we need to require, and hence will assume throughout  the paper, that
\begin{align}\label{E:alphaDelta}
\alpha\in (1,2]\quad\text{and}\quad
|\delta|\le 2-\alpha.
\end{align}
Under these two conditions and when $\alpha\ne 2$,  we can express the above Riesz-Feller fractional differential operator as 
\begin{align*}
\Dxa  f(x)
=\frac{\Gamma(1+ \alpha)}{\pi}\bigg\{&\sin\left(( \alpha+\delta)\frac{\pi}{2}\right)\int_0^\infty
\frac{f(x+\xi)-f(x)-\xi f'(x)}{\xi^{1+ \alpha}}d\xi\\
+&\sin\left(( \alpha-\delta)\frac{\pi}{2}\right)\int_0^\infty
\frac{f(x-\xi)-f(x)+\xi f'(x)}{\xi^{1+ \alpha}}d\xi\bigg\}\,.
\end{align*}
The nonlinear coefficient $\rho(t,x,z)$ is a continuous function which is differentiable in the third argument with
a bounded derivative and it also satisfies the linear growth condition:
for some $\Lip_\rho>0$ and $\vv\ge 0$,
\begin{align}\label{E:LinGrw}
 \left|\rho(t,x,z)\right| \le
  {\Lip_\rho (\vv+|z|)}
 \quad\text{for all $(t,x,z)\in \R_+\times\R\times\R$}.
\end{align}
Throughout of the paper, denote $\rho' :=\partial\rho/\partial z$ and
$\rho(u(t,x))$ is understood as a short-hand notation for $\rho(t,x,u(t,x))$.
An important case that fits these conditions is the {\it parabolic Anderson model} (PAM)
\cite{CarmonaMolchanov94PAM}: $\rho(u)=\lambda u$, $\alpha=2$ and $\delta=0$.

$\dot{W}$ is the space-time white noise on $\R_+\times\R$.
The initial data $\mu$ is assumed to be a Borel (regular) measure on $\R$ such that
\begin{align}\label{E:InitD}
\begin{cases}
\displaystyle
 \sup_{y\in\R}\int_\R\frac{|\mu|(\ud x)}{1+|y-x|^{1+\alpha}}<+\infty, & \text{if $\alpha\in (1,2)$,}\\[1em]
\displaystyle
 \int_\R e^{-c x^2} |\mu|(\ud x) <+\infty\quad \text{for all $c>0$}, & \text{if $\alpha=2$\,, }
 \end{cases}
\end{align}
where we recall  the {\it Jordan decomposition} of a signed Borel measure $\mu=\mu_+-\mu_-$,
where $\mu_\pm$ are two nonnegative Borel measures with disjoint support and we denote   $|\mu|=\mu_++\mu_-$.  
By ``$\mu>0$'', it means that $\mu$ is a nonnegative and nonvanishing measure.
It is interesting to point out that our assumption allows  the initial data  $\mu$ to be the Dirac measure $\delta$.

Before we state our main results let us recall some relevant works.
Nualart and Quer-Sardanyons \cite{NualartQuer07} proved the existence of a smooth density for the solutions to a general
class of stochastic partial differential equations (SPDE's), including stochastic heat and wave equations.
 They assume that the initial data is vanishing and $\rho$ is $C^\infty$ with bounded derivatives.   Moreover,
the condition $\rho(u)\ge c>0$ is required in their proof, which excludes the case $\rho(u)=\lambda u$.
Mueller and Nualart \cite{MuellerNualart08} later showed
that for the stochastic heat equation (i.e., $\alpha=2$) on $[0,1]$ with Dirichlet boundary conditions,
the condition $\rho(u)\ge c>0$ can be removed.
They require the initial condition to be a H\"older continuous function such that
$\rho(0,x,u(0,x))\ne 0$ for some $x\in\R$.
The existence of absolutely continuous density under a similar setting as \cite{MuellerNualart08} is obtained earlier by Pardoux and Zhang \cite{PardouxZhang93}.
Note that all these results are for densities at a single  time space
point $(t,x)\in(0,\infty)\times\R$.

For solution  at multiple spatial points $(u(t,x_1),\dots,u(t,x_d))$,  Bally and Pardoux \cite{BP98} proved a local result, i.e.,
smoothness of  density on $\{\rho\ne 0\}^d$
for the space-time white noise case and
more recently Hu {\it et al} \cite{HHNS14} proved this result for the spatially colored noise case (which is white in time).
In both \cite{BP98} and \cite{HHNS14}, the function $\rho(t,x,z)=\rho(z)$ does not depend on $(t,x)$.
The initial data are assumed to be  continuous   in \cite{BP98} and  vanishing   in \cite{HHNS14}.

The aim of this paper is to extend the above results with more general initial conditions.
 In particular,
we are interested in proving that under some regularity and non-degeneracy conditions
the solution at a single point  or  at multiple spatial points  has a smooth (joint) density and the density is strictly positive
in the interior of support of the law.
We shall not concern with the existence and uniqueness of the solution
since it   has been studied in \cite{ChenDalang13Heat,ChenDalang14Holder,ChenDalang15FracHeat}.
Notice  that a comparison principle has been obtained recently in \cite{ChenKim14Comparison}. Let us point  out that  the initial measure satisfying  \eqref{E:InitD} poses some serious  difficulties.
For example, the following statement will no longer hold    true:
\begin{align}\label{E:BddNorm}
\sup_{(t,x)\in [0,T]\times (a,b)}\E[|u(t,x)|^p]<+\infty,\qquad \text{for $T>0$ and $-\infty\le a<b\le +\infty$}.
\end{align}
For this reason, more care needs to be taken when dealing with various approximation procedures. This property \eqref{E:BddNorm} is important in the conventional Malliavin calculus. For example, without property \eqref{E:BddNorm}, even in the case of $\rho$ is smooth and its derivatives of all orders are bounded, we are not able to prove the property $u(t,x)\in \D^\infty$. We need to introduce a bigger space $\D^\infty_S$ and carry out some localized analysis to deal with this case; see Theorem \ref{T:Density2}, Proposition \ref{P:D1} and Remark \ref{R:LocalAnalize} below for more details.


\bigskip
Now we state our main results on the regularity of densities.
These results are summarized in the following three theorems, Theorems \ref{T:Single}, \ref{T:Mult} and \ref{T:LocDen}.
The differences lie in different assumptions on initial data and on the function $\rho(t,x,u)$.
The first two theorems (Theorems \ref{T:Single} and \ref{T:Mult}) are global results, while the
third   (Theorem \ref{T:LocDen}) is a local one.

\textcolor{\myred}{ The first theorem, Theorem \ref{T:Single}, gives the necessary and sufficient condition for the existence and smoothness of the density of the solution $u(t,x)$.
It extends the sufficient condition (see Theorem \ref{T_:Single} below) by Mueller and Nualart \cite{MuellerNualart08} from the case where $\alpha=2$ and the initial data is an ordinary function to the case where $\alpha\in (1,2]$ and  the initial data is a  measure.}

\begin{theorem}\label{T:Single}
Suppose that $\rho:[0,\infty)\times\R^2\mapsto\R$ is continuous.
Let $u(t,x)$ be the solution  to \eqref{E:FracHt} starting from an initial measure $\mu$
that satisfies \eqref{E:InitD}.
Then we have the following two statements:
\begin{enumerate}[parsep=0ex,topsep=1ex]
 \item[(a)] If $\rho$ is differentiable in the third argument with bounded Lipschitz continuous derivative,
 then  for all $t>0$ and $x\in\R$,  $u(t,x)$ has an absolutely continuous law with respect to the Lebesgue measure
 if and only if 
 \begin{align}\label{E:IFF}
  t>t_0:=\inf\left\{s>0,\: \sup_{y\in\R}\left|\rho\left(s,y,(G(s,\cdot)*\mu)(y)\right)\right|\ne 0\right\}.
 \end{align}
 \item[(b)] If $\rho$ is infinitely differentiable in the third argument with bounded derivatives,
 then  for all $t>0$ and $x\in\R$,  $u(t,x)$ has a smooth  density if and only if condition \eqref{E:IFF} holds.
\end{enumerate}
\end{theorem}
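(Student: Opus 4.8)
I would treat the ``only if'' direction directly: assume $t\le t_0$ and set $v(s,y):=(G(s,\cdot)*\mu)(y)$, the solution of the noise-free equation. By the definition of $t_0$ in \eqref{E:IFF}, $\rho(s,y,v(s,y))=0$ for every $s\in(0,t_0)$ and every $y\in\R$, so $v$ solves the mild form of \eqref{E:FracHt} on $[0,t_0)$ with vanishing stochastic integral; pathwise uniqueness then forces $u(s,\cdot)=v(s,\cdot)$ a.s.\ for all $s<t_0$, and letting $s\uparrow t$ and using the continuity of the solution gives $u(t,x)=v(t,x)$ a.s. Hence the law of $u(t,x)$ is the Dirac mass at $v(t,x)$, which is not absolutely continuous; this settles necessity in both (a) and (b).

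\textbf{Sufficiency in (a).} Now assume $t>t_0$, so there are $s_\ast\in(t_0,t)$ and $y_\ast\in\R$ with $\rho(s_\ast,y_\ast,v(s_\ast,y_\ast))\ne0$. First I would check, by Picard iteration together with the closedness of the Malliavin derivative, that $u(t,x)$ is Malliavin differentiable -- in the localized sense forced by the possible failure of \eqref{E:BddNorm}, cf.\ Proposition~\ref{P:D1} -- with
\begin{align*}
D_{r,z}u(t,x) &= G(t-r,x-z)\,\rho(r,z,u(r,z))\\
&\qquad +\int_r^t\!\!\int_\R G(t-s,x-y)\,\rho'(s,y,u(s,y))\,D_{r,z}u(s,y)\,W(\ud s,\ud y) .
\end{align*}
For fixed $(r,z)$ the random field $(s,y)\mapsto D_{r,z}u(s,y)$ on $s\ge r$ therefore solves, on $[r,\infty)$, the linear equation with operator $\Dxa$ and multiplicative noise of bounded coefficient $\rho'(s,y,u(s,y))$, started at time $r$ from the point mass $\rho(r,z,u(r,z))\,\delta_z$. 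By the strong positivity of such linear stochastic heat equations, $\rho(r,z,u(r,z))\ne0$ implies $D_{r,z}u(t,x)\ne0$ almost surely; hence on the event $A:=\{\|Du(t,x)\|_{\mathcal H}^2=0\}$, on which $D_{r,z}u(t,x)=0$ for a.e.\ $(r,z)$, continuity of $(r,z)\mapsto\rho(r,z,u(r,z))$ forces $\rho(r,z,u(r,z))\equiv0$ for all $r<t$ and all $z\in\R$. Letting $\tau$ be the first time at which $\sup_{y\in\R}|\rho(s,y,u(s,y))|$ becomes positive, the standard fact that $u$ does not feel the noise before $\tau$ gives $u(s,\cdot)=v(s,\cdot)$ for $s<\tau$; but on $A$ one has $\tau\ge t>s_\ast$, whence $u(s_\ast,y_\ast)=v(s_\ast,y_\ast)$ and $\rho(s_\ast,y_\ast,u(s_\ast,y_\ast))=\rho(s_\ast,y_\ast,v(s_\ast,y_\ast))\ne0$, contradicting the definition of $\tau$. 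Thus $\bbP(A)=0$, i.e.\ $\|Du(t,x)\|_{\mathcal H}^2>0$ a.s., and the localized Bouleau--Hirsch criterion yields the absolute continuity of the law of $u(t,x)$.

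\textbf{Sufficiency in (b).} Assume moreover that $\rho$ is $C^\infty$ in its third argument with all derivatives bounded, and let $t>t_0$. Two further ingredients are needed. First, $u(t,x)\in\D^\infty_S$, the enlarged ``smooth'' space localized by events $\Omega_n\uparrow\Omega$ on which all $L^p$-norms of $u$ are controlled -- one cannot assert $u(t,x)\in\D^\infty$, precisely because \eqref{E:BddNorm} may fail (cf.\ Theorem~\ref{T:Density2} and Remark~\ref{R:LocalAnalize}). Second, and this is the heart of the matter, negative moments of every order for the Malliavin matrix,
\begin{equation*}
\E\big[\|Du(t,x)\|_{\mathcal H}^{-2p}\big]<+\infty\qquad\text{for every }p\ge1 .
\end{equation*}
Following the strategy announced in the introduction, I would bound $\|Du(t,x)\|_{\mathcal H}^{2}$ from below, on a short final time layer, by a positive constant times $\int_{t-\varepsilon}^{t}\!\int_\R G(t-r,x-z)^2\rho(r,z,u(r,z))^2\,\ud z\,\ud r$ -- the remaining stochastic-integral contributions being of strictly lower order as $\varepsilon\downarrow0$, since $\int_\R G(\tau,\cdot)^2$ is comparable to $\tau^{-1/\alpha}$ with $\alpha>1$ by \eqref{E:alphaDelta} -- and then bound the reciprocal of this quantity by the solution of a related stochastic (fractional) heat equation which is shown to have finite moments of all negative orders; the hypothesis $t>t_0$ is what secures the non-degeneracy that makes those negative moments finite. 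With $u(t,x)\in\D^\infty_S$ and these negative moments at hand, the localized smoothness criterion produces a smooth density for $u(t,x)$ on each $\Omega_n$, hence a smooth density for $u(t,x)$.

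\textbf{Main obstacle.} The substantive difficulty is the quantitative non-degeneracy in (b): the hypothesis $t>t_0$ only provides a space--time point $(s_\ast,y_\ast)$ with $s_\ast<t$ at which $\rho$ is nonzero \emph{along the deterministic profile} $v=G*\mu$, whereas the Malliavin matrix is controlled from below only through the final layer near $t$. Turning the former into a lower bound for the latter that is sharp enough to extract \emph{all} negative moments -- and doing so while replacing, throughout, the unavailable uniform bound \eqref{E:BddNorm} by estimates carried on the localizing events $\Omega_n$ -- is where the real work lies; it is also the reason the usual global $\D^\infty$ machinery must be replaced by its localized counterpart.
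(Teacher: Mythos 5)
Your treatment of necessity is the same as the paper's: if $t\le t_0$ the stochastic integral vanishes, $u(t,x)=J_0(t,x)$ is deterministic, and no density exists.

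For sufficiency in part~(a), you take a genuinely different route from the paper. The paper's proof never argues directly on $\|Du(t,x)\|_{\mathcal H}^2$: it introduces the stopping time $\tau := (t_0+\epsilon_0)\wedge\inf\{t>t_0:\sup_{|y-x_0|\le\epsilon_0}|I(t,y)|\ge\epsilon_0\}$, observes (via the strong Markov property) that the shifted problem started at $u(\tau,\cdot)$ has a proper, H\"older continuous initial profile on which $\rho\ne0$, and hands everything to Theorem~\ref{T_:Single}, whose conclusion it then integrates over $\tau$. Your argument, by contrast, works directly with the pointwise factorization $D_{r,z}u(t,x)=S_{r,z}(t,x)\,\rho(r,z,u(r,z))$ (cf.\ the beginning of Section~\ref{SS:Mult}), calls on strict positivity of the linear equation for $S_{r,z}$ — which does follow from Theorem~\ref{T:NegMom} applied with $\sigma(z)=z$, $H=\rho'(u)$, and deterministic initial measure $\delta_z$ after a Fubini step — and reaches a contradiction on $\{\|Du(t,x)\|_{\mathcal H}^2=0\}$. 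This is an acceptable alternative for the absolute continuity statement: the paper's stopping time is doing the organizational work of converting ``$t>t_0$'' into a proper initial condition, whereas your contradiction does that conversion implicitly. The price is that you still lean on the same deep input (the strict positivity of the linearized SPDE, i.e.\ Theorem~\ref{T:NegMom}), so the argument is not substantially more elementary; but it is cleaner for (a) in that it bypasses the shifted equation and Theorem~\ref{T_:Single} altogether.

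For part~(b), however, your proposal has a real gap which you yourself half-acknowledge in the ``Main obstacle'' paragraph. You assert that the needed negative moments $\E[\|Du(t,x)\|_{\mathcal H}^{-2p}]<\infty$ follow from a lower bound on a final-time layer plus ``a related SPDE with finite negative moments,'' with ``$t>t_0$ securing the non-degeneracy.'' But the hypothesis $t>t_0$ only gives a deterministic point $(s_\ast,y_\ast)$ with $s_\ast<t$ at which $\rho$ is nonzero \emph{along the noise-free profile}, and there is no direct mechanism by which this controls $\rho(r,z,u(r,z))$ quantitatively near the terminal layer $r\in(t-\epsilon,t)$, which is what the lower bound on $\|Du(t,x)\|_{\mathcal H}^2$ actually needs. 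The paper's resolution is precisely the stopping time argument you omit: after $\tau$, the initial data $u(\tau,\cdot)$ for the time-shifted equation is a bounded H\"older continuous function with $\rho(\tau,y,u(\tau,y))\ne 0$ on a fixed neighborhood, and then Theorem~\ref{T_:Single} (whose Steps~1--2 set up the auxiliary SPDE~\eqref{E:Y_0} with strictly positive initial data and invoke Theorem~\ref{T:NegMom} to get negative moments) finishes the job conditionally, after which one integrates over $\tau$. Without this reduction, or an equivalent way of turning ``$\rho\ne0$ somewhere before $t$'' into a positive initial condition for an SPDE to which the negative-moment machinery applies, the negative moments — and hence the smoothness in (b) — are not actually obtained by your proposal.
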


This theorem is proved in Section \ref{S:Main}.
For the PAM (i.e., $\rho(t,x,u)=\lambda u$) with the Dirac delta initial condition, it is not hard to see that $t_0=0$. Theorem \ref{T:Single} implies both existence and smoothness of the density of the law of $u(t,x)$ for any $t>0$ and $x\in\R$.

\bigskip
In the next theorem, Theorem \ref{T:Mult}, by imposing one additional condition \eqref{E:lip}   on the lower bound of $\rho$,
we are able to extend the above result (and hence the result by Mueller and Nualart \cite{MuellerNualart08})
from the density at a single point to a joint density at multiple spatial points
and with slightly different condition on the initial data from that in Theorem \ref{T:Single}. 
\textcolor{\myred}{In particular, the condition ``$\mu>0$'' and the cone condition \eqref{E:lip} below imply immediately that the critical time $t_0$ defined in \eqref{E:IFF} is equal to zero.}

\begin{theorem}\label{T:Mult}
Let $u(t,x)$ be the solution to \eqref{E:FracHt} starting from a nonnegative measure $\mu>0$ that satisfies \eqref{E:InitD}.
Suppose that for some constants $\beta>0$, $\gamma \in (0,2-1/\alpha)$ and $\lip_\rho>0$,
\begin{align}\label{E:lip}
|\rho(t,x,z)|\ge \lip_\rho \exp\left\{-\beta\left[\log\frac{1}{|z|\wedge 1}\right]^\gamma\right\},
\qquad\text{for all $(t,x,z)\in\R_+\times\R^2$}\,.
\end{align}
Then for any  $x_1<x_2<\dots<x_d$ and $t>0$,
we have the following two statements:
\begin{enumerate}[parsep=0ex,topsep=1ex]
 \item[(a)] If $\rho$ is differentiable in the third argument with bounded Lipschitz continuous derivative,
 then the law of the random vector $\left(u(t,x_1),\dots,u(t,x_d)\right)$ is absolutely continuous
 with respect to the Lebesgue measure on $\R^d$.
 \item[(b)] If $\rho$ is infinitely differentiable in the third argument with bounded derivatives,
 then the random vector $\left(u(t,x_1),\dots,u(t,x_d)\right)$ has a smooth  density on $\R^d$.
\end{enumerate}
\end{theorem}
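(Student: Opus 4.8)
The plan is to apply Malliavin calculus: part (a) will follow once we show that the Malliavin matrix $\gamma=(\gamma_{ij})_{1\le i,j\le d}$ of the vector $U=(u(t,x_1),\dots,u(t,x_d))$, with
\[
\gamma_{ij}=\int_0^t\!\!\int_\R D_{r,y}u(t,x_i)\,D_{r,y}u(t,x_j)\,\ud y\,\ud r ,
\]
is almost surely invertible, and part (b) will follow if in addition $(\det\gamma)^{-1}\in L^p(\Omega)$ for every $p\ge1$. Since \eqref{E:BddNorm} fails under \eqref{E:InitD} we cannot assert $u(t,x)\in\D^\infty$, so the entire argument is carried out in the localized space $\D^\infty_S$, using the localized Malliavin-calculus framework for measure-valued initial data (Proposition \ref{P:D1}, Remark \ref{R:LocalAnalize}): fix $2a<\min_{i\ne j}|x_i-x_j|$ and an increasing sequence of events $\Omega_N\uparrow\Omega$ on which $u$ is bounded, and bounded below away from $0$, on the compact set $[t/2,t]\times\bigcup_i[x_i-a,x_i+a]$; prove the conclusions on each $\Omega_N$; then glue. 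That $\bigcup_N\Omega_N=\Omega$ up to a null set uses the comparison principle of \cite{ChenKim14Comparison} together with $\mu>0$, which forces $u(s,y)>0$ for all $s>0$, $y\in\R$ a.s., and the a.s.\ joint continuity of $u$ on $(0,\infty)\times\R$. The same positivity makes the remark following the statement immediate: $(G(s,\cdot)*\mu)(y)>0$ for all $s>0$ and $y$, so \eqref{E:lip} gives $\sup_y|\rho(s,y,(G(s,\cdot)*\mu)(y))|>0$ for every $s>0$, whence $t_0=0$; in particular each coordinate $u(t,x_i)$ already has the required regularity by Theorem \ref{T:Single}, and the genuinely new point is the joint nondegeneracy.

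The first ingredient, supplied by Proposition \ref{P:D1}, is that $u(t,x)\in\D^\infty_S$, that its Malliavin derivative solves
\[
D_{r,y}u(t,x)=G(t-r,x-y)\,\rho(u(r,y))+\int_r^t\!\!\int_\R G(t-\theta,x-z)\,\rho'(u(\theta,z))\,D_{r,y}u(\theta,z)\,W(\ud\theta\,\ud z)\qquad(r<t),
\]
with $D_{r,y}u(t,x)=0$ for $r>t$, and that $\E[|D_{r,y}u(s,z)|^p]\le C_p\,G(s-r,z-y)^p$ uniformly for $r<s$ with $r,s$ in a compact subinterval of $(0,\infty)$. The second ingredient is a lower bound for the quadratic form of $\gamma$. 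For a unit vector $\xi\in\R^d$ and $\varepsilon\in(0,t/2)$, keeping only the times $r\in[t-\varepsilon,t]$ and inserting the derivative identity,
\[
\xi^{\mathsf T}\gamma\,\xi\ \ge\ \int_{t-\varepsilon}^t\!\!\int_\R\Big(\rho(u(r,y))\sum_i\xi_i G(t-r,x_i-y)+\sum_i\xi_i R^{i}_{r,y}\Big)^2\ud y\,\ud r ,
\]
where $R^{i}$ denotes the stochastic-integral remainder. Using $(a+b)^2\ge\tfrac12 a^2-b^2$, restricting the $y$-integral of the leading term to the disjoint intervals $B_i=(x_i-a,x_i+a)$, observing that on $B_i$ the off-diagonal kernels $G(t-r,x_j-\cdot)$ ($j\ne i$) are bounded and have $\int_{t-\varepsilon}^tG(t-r,x_j-y)^2\,\ud r$ negligible as $\varepsilon\downarrow0$ compared with $\int_{t-\varepsilon}^t\!\int_{B_i}G(t-r,x_i-y)^2\ud y\,\ud r\sim c_\alpha\varepsilon^{1-1/\alpha}$, and invoking \eqref{E:lip} in the form $\rho(u(r,y))^2\ge\lip_\rho^2\,\Psi(u(r,y))$ with $\Psi(z):=\exp\{-2\beta[\log^+(1/z)]^\gamma\}$ nondecreasing, one gets on $\Omega_N$
\[
\xi^{\mathsf T}\gamma\,\xi\ \ge\ \kappa\,\Psi_N\,\varepsilon^{1-1/\alpha}\,|\xi|^2\ -\ \int_{t-\varepsilon}^t\!\!\int_\R\Big(\sum_i\xi_i R^{i}_{r,y}\Big)^2\ud y\,\ud r ,
\]
with $\kappa>0$ deterministic and $\Psi_N>0$ deterministic on $\Omega_N$. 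The weighted $L^p$ bound gives $\E\big[\sup_{|\xi|=1}\int_{t-\varepsilon}^t\!\int_\R(\sum_i\xi_i R^{i}_{r,y})^2\ud y\,\ud r\big]=O(\varepsilon^{2(1-1/\alpha)})$, so Markov's inequality and letting $\varepsilon\downarrow0$ yield $\inf_{|\xi|=1}\xi^{\mathsf T}\gamma\xi>0$, i.e.\ $\det\gamma>0$, a.s.\ on each $\Omega_N$, hence a.s.\ on $\Omega$; this proves (a).

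For (b) the above must be upgraded to $\E[(\det\gamma)^{-p}\one_{\Omega_N}]<\infty$ with a bound summable in $N$. Optimizing over $\varepsilon$ rather than freezing it, the deterministic $\Psi_N$ gets replaced by the true random quantity $\Psi(u(t,x_i))$, so one is forced to prove that, for $(t,x)$ in compacts,
\[
\E\Big[\exp\big\{\theta\,[\log^+(1/u(t,x))]^\gamma\big\}\Big]<\infty\qquad\text{for every }\theta>0 ,
\]
equivalently that $\rho(u(t,x))$ has negative moments of all orders. This is exactly where the hypothesis $\gamma<2-1/\alpha$ enters: the scaling $\int_\R G(s,z)^2\,\ud z\sim c_\alpha s^{-1/\alpha}$ identifies $2-1/\alpha$ as the critical exponent for the lower-tail (small-ball) behaviour of $u$, and the displayed exponential moment holds precisely below it. Establishing this estimate is, in my view, the technical heart of the theorem; following the route announced in the abstract, one reduces it to showing that the solution of a related (linear) stochastic heat equation has negative moments of all orders. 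Granting that input, combining the matrix lower bound with the localized integration-by-parts formula on $\D^\infty_S$ and summing the contributions over $N$ produces a smooth density for $U$ on $\R^d$. Thus the main obstacle is not the (delicate, but by now fairly routine) localized Malliavin-calculus bookkeeping, but rather this sharp negative-moment estimate for the solution under merely measure-valued initial data.
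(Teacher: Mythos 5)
Your overall strategy matches the paper's: localize to $S=[t/2,t]\times K$, derive from \eqref{E:SPDE-Du} a lower bound on the quadratic form $\xi^{\mathsf T}\sigma_{F,S}\xi$ whose leading part is governed by the cone condition \eqref{E:lip} and whose remainders are controlled by moment estimates on the derivative kernel, and finally feed in a negative-moment / small-ball estimate for $\inf_{x\in K}u(t,x)$. That much is right, and your identification of $\gamma<2-1/\alpha$ as the exact threshold making the exponential moment finite is correct.

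However, the passage from the quadratic-form lower bound to part (b) does not go through as you have written it, and this is exactly where the paper's argument is sharper. Your plan is to fix an exhaustion $\Omega_N\uparrow\Omega$ on which $u$ is bounded below by a constant, establish $\E\bigl[(\det\sigma)^{-p}\one_{\Omega_N}\bigr]<\infty$ with a bound ``summable in $N$'', and then glue. But the $\Omega_N$ are increasing, so the summability of $\E\bigl[(\det\sigma)^{-p}\one_{\Omega_N}\bigr]$ over $N$ cannot hold unless all those expectations vanish; and replacing $\Omega_N$ by $\Omega_N\setminus\Omega_{N-1}$ would require control of $(\det\sigma)^{-p}$ on the exceptional sets, which is precisely what you are trying to avoid. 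The paper circumvents the $\Omega_N$ exhaustion entirely: it compares $(\det\sigma)^{1/d}$ with $\epsilon^{(1-1/\alpha)\eta}$ for a parameter $\eta$ chosen in the window \eqref{E:eta}, so that $(1-1/\alpha)(\eta-1)>0$ (the cone-condition probability is superpolynomially small by Theorem \ref{T:NegMom}) while $\min(1-\tfrac1{2\alpha},\,2-\tfrac2\alpha)-(1-1/\alpha)\eta>0$ (the remainder probabilities $I_\epsilon^{(1)},I_\epsilon^{(2)},I_\epsilon^{(3)}$ are polynomially small with arbitrary exponent by Markov in $L^p$). That yields the direct bound \eqref{E:4Prob}, which together with Lemma \ref{L:NegMom} gives $\E[(\det\sigma)^{-p}]<\infty$ for all $p$, with no exhaustion. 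Your ``optimize over $\varepsilon$'' intuition is pointing at the same thing, but as written it is not a proof step; you would need the $\eta$-balancing to make it one.

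Two further inaccuracies worth flagging. First, the specific small-ball input you state — ``$\rho(u(t,x))$ has negative moments of all orders'' for each fixed $(t,x)$ — is insufficient: the lower bound on $I_\epsilon^{(0)}$ involves $\inf_{x\in K}f_{\beta,\gamma}(u(t,x))$ over a spatial compact, so you need the uniform-in-space estimate $\bbP\bigl(\inf_{x\in K}u(t,x)<\epsilon\bigr)\le\exp\bigl(-B\{|\log\epsilon|\log|\log\epsilon|\}^{2-1/\alpha}\bigr)$, which is exactly Theorem \ref{T:NegMom} and is proved by a stopping-time argument rather than deferred. Second, your moment bound $\E[|D_{r,y}u(s,z)|^p]\le C_pG(s-r,z-y)^p$ is not the one that is available or needed; the usable estimate is $\Norm{S_{r,z}(t,x)}_p^2\le C(t-r)^{-1/\alpha}G(t-r,x-z)$ (Lemma \ref{L:S}) together with $D_{r,z}u(t,x)=S_{r,z}(t,x)\rho(u(r,z))$, and it is from this scaling that the leading rate $\epsilon^{1-1/\alpha}$ and the remainder rates $\epsilon^{1-\frac1{2\alpha}}$ and $\epsilon^{2(1-1/\alpha)}$ actually arise.
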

This theorem is proved in Section \ref{SS:Mult}.
Note that because $\alpha\in (1,2]$, we see that $2-1/\alpha\in (1,3/2]$. When \eqref{E:lip} holds for $\gamma=1$, condition \eqref{E:lip} reduces 
to the following condition
\[
|\rho(t,x,z)|\ge \lip_\rho \min\left(|z|^\beta,1\right)
\qquad\text{for all $(t,x,z)\in\R_+\times\R^2$}\,.
\]
In particular, the PAM (i.e., $\rho(t,x,u)=\lambda u$) satisfies \eqref{E:lip} with $\lip_\rho= |\lambda|$ and $\beta=\gamma=1$.
We also note that the range of the exponent $\gamma$ in \eqref{E:lip} can be improved if one can obtain a better bound for the probability on the left-hand side of \eqref{E:Rate} below; see the work by Moreno Flores \cite{MorenoFlores14} for the case $\alpha=2$.

\bigskip

The next theorem, Theorem \ref{T:LocDen}, extends the local results of  Bally and Pardoux \cite{BP98} by allowing more general initial data and allowing the parabolic Anderson model.
The cone condition \eqref{E:lip} in Theorem \ref{T:Mult} is not required. 
However, one should require $\rho(t,x,u)=\rho(u)$ and the conclusion is weaker in the sense that
only a local result is obtained: the smooth density is established over the domain $\{\rho\ne0\}^d$ instead of $\R^d$.

\begin{theorem}\label{T:LocDen}
Let $u(t,x)$ be the  solution   starting from \textcolor{\myred}{a singed initial measure} $\mu$
that satisfies \eqref{E:InitD}.
Suppose that $\rho(t,x,z)=\rho(z)$ and it is infinitely differentiable with bounded derivatives.
Then for any  $x_1<x_2<\dots<x_d$ and $t>0$,  the law
of the random vector $\left(u(t,x_1),\dots,u(t,x_d)\right)$ admits a smooth joint density $p$ on $\{\rho\ne 0\}^d$.   Namely,
for some $p\in C^\infty(\{\rho\ne 0\}^d;\R)$ it holds that for every $\phi\in C_b(\R^d;\R)$ with
$\spt{\phi}\subseteq\{\rho\ne 0\}^d$,
\[
\E\left[\phi(u(t,x_1),\dots,u(t,x_d))\right]=\int_{\R^d}\phi(z)p(z)\ud z.
\]
\end{theorem}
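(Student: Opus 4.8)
The plan is to run the localized Malliavin calculus developed in this paper on the random vector $U:=\left(u(t,x_1),\dots,u(t,x_d)\right)$. Since $\rho(t,x,z)=\rho(z)$ is infinitely differentiable with bounded derivatives of all orders, a Picard iteration of the mild formulation of \eqref{E:FracHt} combined with the chain rule shows that every iterate is infinitely Malliavin differentiable; the only obstruction to concluding $u(t,x)\in\D^\infty$ in the classical sense is the failure of the uniform moment bound \eqref{E:BddNorm} for measure-valued data, which is exactly what the larger space $\D^\infty_S$ is designed to circumvent (Proposition \ref{P:D1}). I would thus first record that $u(t,x)\in\D^\infty_S$ for every $(t,x)$, that the first Malliavin derivative satisfies the linear mild equation
\[
 D_{r,z}u(t,x)=G(t-r,x-z)\,\rho(u(r,z))+\int_r^t\!\!\int_\R G(t-s,x-y)\,\rho'(u(s,y))\,D_{r,z}u(s,y)\,W(\ud s,\ud y),
\]
and that all higher Malliavin derivatives solve analogous linear equations with bounded coefficients built from the derivatives of $\rho$.

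Next, by the localized version of the Bally--Pardoux criterion developed in this paper, it suffices to prove the following: for every compact $K\subseteq\{\rho\ne0\}^d$ there is a cutoff $\Psi_K=\psi_K(U)$, where $U:=\left(u(t,x_1),\dots,u(t,x_d)\right)$ and $\psi_K\in C_c^\infty(\{\rho\ne0\}^d)$ with $\psi_K\equiv1$ on $K$, such that on the event $\{\Psi_K\ne0\}$ the Malliavin matrix $\gamma=\big(\langle Du(t,x_i),Du(t,x_j)\rangle_{\calH}\big)_{1\le i,j\le d}$ is invertible with $(\det\gamma)^{-1}\one_{\{\Psi_K\ne0\}}\in L^p(\Omega)$ for every $p\ge1$. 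Granting this, the standard Malliavin integration by parts yields, for $\phi\in C_b(\R^d;\R)$ with $\spt{\phi}\subseteq K^\circ$, an identity $\E[\partial^\beta\phi(U)]=\E[\phi(U)H_\beta(U;\Psi_K)]$ with $H_\beta(U;\Psi_K)\in\bigcap_p L^p$, hence a density $p_K\in C^\infty(K^\circ)$; exhausting $\{\rho\ne0\}^d$ by such compacts $K$, matching the $p_K$'s on overlaps (they are continuous and agree a.e.\ there), and using a partition of unity produces the global smooth density $p$ and the stated integral identity. So the whole content of the theorem sits in the negative-moment bound for $\det\gamma$ on $\{\Psi_K\ne0\}$.

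To establish that bound, fix $\xi\in\R^d$ with $|\xi|=1$ and, for small $\varepsilon>0$, keep only the near-diagonal part of the $\calH$-inner products:
\[
 \xi^\top\gamma\,\xi\ \ge\ \int_{t-\varepsilon}^{t}\!\!\int_\R\Big(\sum_{i=1}^d\xi_i\,D_{r,z}u(t,x_i)\Big)^2\,\ud z\,\ud r.
\]
Insert $D_{r,z}u(t,x_i)=G(t-r,x_i-z)\rho(u(r,z))+R_i(r,z)$; since $\rho'$ is bounded, an It\^o-isometry estimate shows that the contribution of the stochastic-integral remainders $R_i$ to $\xi^\top\gamma\xi$ is, in every $L^p$ restricted to $\{\Psi_K\ne0\}$, of strictly smaller order as $\varepsilon\to0$ than the natural scale $\int_{t-\varepsilon}^t\!\int_\R G(t-r,\cdot)^2\sim\varepsilon^{1-1/\alpha}$ of the leading term. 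Splitting the $z$-integral of the leading term into disjoint windows $\{|z-x_i|<\eta\}$ with $\eta\sim\varepsilon^{1/\alpha}$ — so that $G(t-r,x_i-\cdot)$ carries most of its $L^2$-mass there and the windows for distinct $i$ are almost $L^2$-orthogonal — the problem reduces to the single-point quantities $\int_{t-\varepsilon}^{t}\int_{|z-x_i|<\eta}G(t-r,x_i-z)^2\,|\rho(u(r,z))|^2\,\ud z\,\ud r$. On $\{\Psi_K\ne0\}$ one has $|\rho(u(t,x_i))|\ge c_K>0$; using the a.s.\ space--time continuity of $u$, the fixed-point moment bound for $u(t,x_i)$, the uniform continuity of $\rho$ on compacts, and the local H\"older increment moment bounds from \cite{ChenDalang14Holder}, one gets $|\rho(u(r,z))|\ge c_K/2$ on $[t-\varepsilon,t]\times(x_i-\eta,x_i+\eta)$ outside an event of probability $\le C_p(\varepsilon^{a}+\eta^{b})^p$, and on its complement each single-point quantity is $\ge c\,\varepsilon^{1-1/\alpha}$. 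Choosing $\varepsilon$ comparable to a suitable power of $\delta$ and taking $p$ large gives $\bbP\big(\xi^\top\gamma\,\xi<\delta,\ \Psi_K\ne0\big)\le C_q\,\delta^q$ for every $q$, with constants independent of $\xi$; a chaining argument over the unit sphere upgrades this to $\bbP\big(\inf_{|\xi|=1}\xi^\top\gamma\,\xi<\delta,\ \Psi_K\ne0\big)\le C_q\,\delta^q$, equivalent to $(\det\gamma)^{-1}\one_{\{\Psi_K\ne0\}}\in\bigcap_p L^p$. In contrast to Theorems \ref{T:Single} and \ref{T:Mult}, neither the cone condition \eqref{E:lip} nor negative moments of $u$ itself are needed here, since localizing to $\{\rho\ne0\}^d$ lets continuity alone keep $|\rho(u)|$ away from $0$ near the diagonal.

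I expect the main obstacle to be carrying out this near-diagonal analysis under the weak integrability available for measure-valued initial data. Because \eqref{E:BddNorm} fails, every $L^p$ estimate in the argument — the remainder bound for $R_i$, the modulus-of-continuity bound for $u$, and the $L^p$ control of the higher Malliavin derivatives entering the weights $H_\beta(U;\Psi_K)$ — must be performed on the localizing set $\{\Psi_K\ne0\}$ (equivalently, along the localizing sequence defining $\D^\infty_S$) rather than globally, and the constants must be monitored so that the densities $p_K$ genuinely glue into one smooth $p$ as $K\uparrow\{\rho\ne0\}^d$. A secondary, more bookkeeping, difficulty is verifying that $\Psi_K$ can be chosen simultaneously inside $\D^\infty_S$ and compatible with the localizing sequence, so that the Malliavin integration-by-parts formula underlying the localized Bally--Pardoux criterion is legitimate term by term.
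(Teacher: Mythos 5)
Your proposal is correct and follows essentially the same route as the paper: localize via $\D^\infty_S$ with $S=[t/2,t]\times K$ away from $t=0$ (so Proposition~\ref{P:D1}(3) applies without any condition on $\mu$), decompose the near-diagonal quadratic form of the Malliavin matrix into a leading term carrying $G(t-r,x_i-z)\rho(u(r,z))$ over windows of width $\sim\varepsilon^{1/\alpha}$ plus cross-terms, H\"older-increment terms, and the stochastic-integral remainder, and show that on the set where each $|\rho(u(t,x_i))|$ is bounded below the leading term is $\gtrsim\varepsilon^{1-1/\alpha}$ while all error terms are of strictly smaller order. This is precisely the paper's decomposition into $I^{(0)}_\epsilon,\dots,I^{(3)}_\epsilon$, with your $\{\Psi_K\ne0\}$ playing the role of the paper's $\Omega_c=\{\rho(u(t,x_i))^2>c,\;i=1,\dots,d\}$, and with the H\"older bound $\|\rho(u(r,z))^2-\rho(u(t,x_j))^2\|_p\lesssim(t-r)^{1-1/\alpha}+|z-x_j|^{\alpha-1}$ replacing your continuity-plus-small-probability argument. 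Two small remarks: the chaining over the unit sphere you invoke is not needed in the paper because the upper bounds on $\sup_{|\xi|=1}|I^{(i)}_\epsilon(\xi)|$ and lower bound on $\inf_{|\xi|=1}I^{(0)}_\epsilon(\xi)$ are already uniform in $\xi$; and you spell out more explicitly the Bally--Pardoux cutoff/gluing step that the paper compresses into "the remaining part of the proof is the same as Step~5 of the proof of Theorem~\ref{T:Mult}," which is a reasonable expansion of that reference.
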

The proof is given at Section \ref{SS:LocDen}.
Note that when the initial data are vanishing,
one can also prove this theorem by verifying the four assumptions in
Theorem 3.1 and Remark 3.2 of \cite{HHNS14}; see Remark \ref{R:H1-H4} for the verification.

\bigskip

As for the strict positivity of the density, most known results assume the boundedness of the diffusion coefficient $\rho$;
see, e.g., Theorem 2.2 of Bally and Pardoux \cite{BP98},
Theorem 4.1 of Hu {\it et al} \cite{HHNS14}
and Theorem 5.1 of Nualart \cite{Nualart13Density}.
This condition excludes the important case: the parabolic Anderson model $\rho(u)=\lambda u$.
The following theorem will cover this linear case. Moreover, it allows measure-valued initial data in some cases.
\textcolor{\myred}{Before stating the theorem, we first introduce a notion. We say that a Borel measure $\mu$ is {\it proper} at a point $x\in \R$ if
there exists a neighborhood $E$ of $x$ such that
$\mu$ restricted to $E$ is absolutely continuous with respect to
the Lebesgue measure.}

\begin{theorem}\label{T:Pos}
Suppose $\rho(t,x,z)=\rho(z)$ and $\rho\in C^\infty(\R)$ such that all derivatives of $\rho$ are bounded.
Let  $ x_1<x_2<\cdots<x_d $  be some  proper   points  of   $\mu$
with a bounded density on each neighborhood.  Then for any $t>0$,
the joint law of $(u(t,x_1),\dots,u(t,x_d))$ admits a $C^\infty$-density $p(y)$, and $p(y)>0$
if $y$    belongs both  to $\{\rho\ne 0\}^d$ and
  to the interior of the support of the law of $(u(t,x_1),\dots,u(t,x_d))$.
\end{theorem}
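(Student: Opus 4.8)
The plan is to combine the smoothness of the density (already established in Theorem \ref{T:LocDen}, whose hypotheses are met here since proper points with bounded density on a neighborhood certainly give initial measures satisfying \eqref{E:InitD} restricted to those neighborhoods) with a strict-positivity argument of the Bally--Pardoux type, suitably localized to deal with the unbounded diffusion coefficient and the measure-valued initial data. Since $\rho$ is unbounded (the PAM being the motivating case), the original criterion of Bally and Pardoux \cite{BP98} does not apply directly, so I would invoke the localized version of their criteria that the paper announces it has developed. Concretely, fix $y = (y_1,\dots,y_d)$ in $\{\rho\ne 0\}^d$ and in the interior of the support of the law $\mathcal L$ of $U := (u(t,x_1),\dots,u(t,x_d))$; the goal is $p(y)>0$.

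**First I would** set up the localization: choose a small ball $B(y,r)$ contained in $\{\rho\ne 0\}^d\cap \mathrm{int}(\mathrm{supp}\,\mathcal L)$, and a smooth cutoff $\psi$ supported in $\{\rho\ne 0\}^d$ with $\psi\equiv 1$ near $\bar B(y,r)$. On the event where $U$ is near $y$, the relevant Malliavin matrix is nondegenerate with good (localized) negative moments, because the noise entering $u$ between a time $t-\epsilon$ and $t$ contributes a Malliavin derivative whose size is controlled below by $|\rho(u(s,z))|$ for $(s,z)$ in a space-time region where, on the conditioning event, $u$ stays close to the value dictated by $y$ and hence $\rho$ stays bounded away from zero. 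This is exactly where the cone-free but locally-nonvanishing structure of $\{\rho\ne 0\}^d$ is used, and where the localized Malliavin calculus in $\mathbb D^\infty_S$ replaces the global $\mathbb D^\infty$ machinery. The standard chain then gives: $p$ is continuous on $\{\rho\ne 0\}^d$ (from Theorem \ref{T:LocDen}), so it suffices to show $p>0$ on a dense subset of $B(y,r)$, or equivalently that $\mathbb P(U\in A)>0$ for every open $A\subseteq B(y,r)$.

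**The core step** is then a support/controllability argument: I would show that $y$ lies in the topological support of $\mathcal L$ forces $\mathbb P(U\in B(y,r))>0$, and then \emph{propagate} positivity to an open neighborhood using the noise. The mechanism is the classical one — condition on $\mathcal F_{t-\epsilon}$; on a positive-probability event $u(t-\epsilon,\cdot)$ is close to a deterministic profile steering $U$ near $y$; then the increment of the noise on $[t-\epsilon,t]$ can, with positive conditional probability, push $U$ into any prescribed small neighborhood, using that the ``control map'' from noise increments to $U$ has a surjective/open-mapping property at that profile, which is guaranteed precisely by the nondegeneracy of the Malliavin matrix together with $\rho\ne 0$ there. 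A Girsanov shift (translating $\dot W$ by a suitable deterministic $L^2$ kernel on $[t-\epsilon,t]\times\R$) makes this rigorous: it changes the law by an equivalent measure, so positivity of probability is preserved, and it moves the ``center'' of the conditional law of $U$. Combining: every point of the interior of $\mathrm{supp}\,\mathcal L$ that also lies in $\{\rho\ne 0\}^d$ has a neighborhood on which $\mathcal L$ charges every open set, hence $p>0$ there by continuity of $p$ and the fact that a nonnegative continuous density vanishing at a point where the measure charges all neighborhoods is impossible.

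**The main obstacle** I expect is controlling the negative moments of the localized Malliavin matrix uniformly on the conditioning event while $\rho$ is unbounded and the initial data is only a measure: one cannot use \eqref{E:BddNorm}, so the estimates on $\E[|u(t,x)|^p]$ and on $\|D u(t,x)\|$ must be done with the measure-valued heat kernel weights from \eqref{E:InitD}, and the lower bound on the Malliavin matrix must be extracted purely from the behavior of $u$ and $\rho$ on a short time window near $t$ where, after conditioning, everything is well-behaved. This is also why the argument must be genuinely local — over $\{\rho\ne 0\}^d$ rather than $\R^d$ — and why one leans on Proposition \ref{P:D1} and the localized framework (Remark \ref{R:LocalAnalize}) rather than on a global $\mathbb D^\infty$ statement. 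Once the localized nondegeneracy with negative moments is in hand, the Girsanov/support step and the continuity of $p$ close the proof in the standard way.
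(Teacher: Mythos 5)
Your plan has the right ingredients on the surface (Girsanov shift, localized Malliavin analysis, conditioning near the terminal time, invoking the paper's modified Bally--Pardoux criterion), but the concluding mechanism is wrong, and it is wrong at exactly the point that makes strict positivity a non-trivial theorem.

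You reduce the claim to showing ``$\mathbb P(U\in A)>0$ for every open $A\subseteq B(y,r)$,'' and then assert that together with the continuity of $p$ this forces $p(y)>0$: ``a nonnegative continuous density vanishing at a point where the measure charges all neighborhoods is impossible.'' This is false. Take $d=1$ and $p(x)=c|x|$ in a neighborhood of $0$: $p$ is continuous, the measure charges every open neighborhood of $0$, yet $p(0)=0$. In other words, $y$ lying in the interior of the support of the law is, by itself, compatible with $p(y)=0$, and the ``propagation'' step you describe delivers nothing beyond the support hypothesis that you started with. The whole point of Bally--Pardoux's criterion, and of the paper's extension of it (Theorem~\ref{T:Criteria}), is to produce an explicit strictly positive lower bound on $p(y_*)$, not merely positivity of the measure on open sets. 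Concretely, one shows that on an event $\Omega_{r,m}$ of positive probability the map $\mathbf z\mapsto\phi_m(\mathbf z)=F(\widehat W^{\mathbf z})$ is a diffeomorphism from a neighborhood of $0$ onto a fixed ball $B(F,\alpha)$ (Lemma~\ref{L:diff}), and Girsanov plus a change of variables then yields $\E[g(F)]\ge\int g(\mathbf y)\theta(\mathbf y)\,\ud\mathbf y$ for an explicitly constructed continuous function $\theta$ with $\theta(\mathbf y_*)>0$; this forces $p(\mathbf y_*)\ge\theta(\mathbf y_*)>0$. The support hypothesis only enters to guarantee that the localizing event $\{|F-\mathbf y_*|\le r\}$ has positive probability; it does not itself yield strict positivity of the density.

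A secondary gap: you ``invoke the localized version of their criteria that the paper announces it has developed'' without stating what that criterion is or verifying it. The paper's Theorem~\ref{T:Criteria} differs from Bally--Pardoux's Theorem~3.3 precisely in that the $C^2$ bound on $\phi_n$ in condition (ii) is only required \emph{conditionally} on the event $\{|F-\mathbf y_*|\le r_0\}$; this is what allows unbounded $\rho$ (e.g., $\rho(u)=\lambda u$) to pass. Verifying this conditional boundedness is the bulk of the work: one must choose $\mathbf h_n$ concentrated on a shrinking box near $(T,x_i)$, show $\det[\{\widehat u_0^{n,i}(T,x_j)\}]\to\prod_i\rho(u(T,x_i))$ a.s.\ (condition (i)), and establish that $\sup_{|\mathbf z|\le\kappa}$ of $\widehat u^n_{\mathbf z}(T,x_i)$, $\widehat u^{n,i}_{\mathbf z}(T,x_i)$, $\widehat u^{n,i,k}_{\mathbf z}(T,x_i)$ is conditionally bounded (Proposition~\ref{P:thetaBdd}), via delicate moment bounds for the shifted process and its $\mathbf z$-derivatives. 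Your sketch gestures at the right obstacles (negative moments, measure-valued initial data, the short time window) but does not supply the diffeomorphism/Girsanov mechanism that actually converts those estimates into $p(\mathbf y_*)>0$.
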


This theorem is proved in Section \ref{S:Pos}.

\bigskip
To show our theorems  we need  two auxiliary results, which are interesting by themselves.
So we shall state them explicitly here. The first one is the existence of negative moments
for the solution of the following   SPDE:
\begin{align}\label{E:SPDE-Var}
 \begin{cases}
  \left(\displaystyle\frac{\partial}{\partial t} - \Dxa \right) u(t,x) =
H(t,x) \sigma(t,x,u(t,x))  \dot{W}(t,x),& t>0\;,\: x\in\R,\\[1em]
u(0,\cdot) =\mu(\cdot),
 \end{cases}
\end{align}
which is a key ingredient in the proofs of Theorems \ref{T:Single} and \ref{T:Mult}.

\begin{theorem}
\label{T:NegMom}
Let $u(t,x)$ be the solution to \eqref{E:SPDE-Var} starting from a deterministic and nonnegative measure \textcolor{\myred}{$\mu>0$} that satisfies \eqref{E:InitD}.
Suppose that $H(t,x)$ in \eqref{E:SPDE-Var} is a bounded and adapted process and $\sigma(t,x,z)$ is a measurable and locally bounded function
which is Lipschitz continuous in $z$, uniformly in both $t$ and $x$, satisfying that  $\sigma(t,x,0)=0$.
Let $\Lambda>0$ be a constant such that
\begin{align}\label{E:Lambda}
\left|H(t,x,\omega)\sigma(t,x,z)\right|\le \Lambda |z| \qquad\text{for all $(t,x,z,\omega)\in \R_+\times\R^2\times\Omega$.}
\end{align}
Then for any compact set $K\subseteq\R$ and $t>0$,
there exist finite constant $B>0$ which only depend on $\Lambda$, $\delta$, $K$ and $t>0$ such that for small enough $\epsilon>0$,
\begin{align}\label{E:Rate}
\bbP\left(\inf_{x\in K}u(t,x) <\epsilon \right)&\le \exp\left(-B \left\{|\log(\epsilon)| \log\left(|\log(\epsilon)|\right) \right\}^{2-1/\alpha}\right).
\end{align}
Consequently, for all $p>0$,
\begin{align}\label{E:NonMom1}
\E\left(\left[\inf_{x\in K}u(t,x) \right]^{-p}\right)<+\infty.
\end{align}
\end{theorem}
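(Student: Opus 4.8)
The plan is to obtain the small-ball probability estimate \eqref{E:Rate} first, and then to derive the negative moment bound \eqref{E:NonMom1} as an elementary consequence. Indeed, if \eqref{E:Rate} holds, then writing $Z := \inf_{x\in K} u(t,x)$ we have
\[
\E[Z^{-p}] = \int_0^\infty p \lambda^{p-1} \bbP(Z < 1/\lambda)\, \ud\lambda
\le C_p + \int_{\lambda_0}^\infty p\lambda^{p-1}\exp\left(-B\{\log\lambda \cdot \log\log\lambda\}^{2-1/\alpha}\right)\ud\lambda,
\]
and since $2-1/\alpha>1$ the integrand decays faster than any negative power of $\lambda$, so the integral converges. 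Thus the whole content of the theorem is the tail bound \eqref{E:Rate}.

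For \eqref{E:Rate}, the approach I would take is a \emph{bootstrap / iteration on small balls}, in the spirit of Mueller's positivity argument and the Moreno Flores paper cited in the excerpt. The first step is to use the mild (integral) formulation of \eqref{E:SPDE-Var}: $u(t,x) = (G(t,\cdot)*\mu)(x) + \int_0^t\int_\R G(t-s,x-y)H(s,y)\sigma(s,y,u(s,y))\,W(\ud s,\ud y)$, where $G$ is the heat kernel for $\Dxa$. Because $\mu>0$ and $G>0$, the deterministic term is strictly positive on $K$; the real danger is that the stochastic integral drives $u$ down near zero. Using $\sigma(t,x,0)=0$ together with the Lipschitz bound \eqref{E:Lambda}, one controls the stochastic term: roughly, the noise coefficient is bounded by $\Lambda\, u$ itself, so the multiplicative structure prevents $u$ from reaching $0$ but requires a careful quantitative argument. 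The key step is a \emph{comparison with a simpler equation}: since $|H\sigma(t,x,z)| \le \Lambda|z|$ and one has a comparison principle (the excerpt cites \cite{ChenKim14Comparison}), one can compare $u$ from below with the solution $\underline u$ of the linear equation with coefficient $\Lambda$ (the PAM-type equation), reducing to a lower-tail estimate for the PAM started from $\mu>0$.

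The core of the argument — and the main obstacle — is then the \textbf{dyadic iteration scheme} producing the precise rate $\exp(-B\{|\log\epsilon|\log|\log\epsilon|\}^{2-1/\alpha})$. One partitions $[0,t]$ into geometrically shrinking time intervals $t_n \downarrow 0$ and tracks the event $A_n = \{\inf_{x\in K_n} u(t_n,x) \ge \epsilon_n\}$ for a suitable sequence $\epsilon_n$ and an increasing sequence of slightly enlarged compacts $K_n$. Conditionally on $A_n$, one shows $\bbP(A_{n+1}^c \mid \mathcal F_{t_n})$ is small using: (i) the heat-kernel smoothing $\int_{t_n}^{t_{n+1}}\int G(t_{n+1}-s,x-y)(\text{known lower bound})\,\ud y\,\ud s \gtrsim$ something explicit, so the drift-like contribution of the positive initial data over a short window is quantified via the moderate-deviation/Gaussian-type tail estimates for the stochastic convolution (Burkholder/Kolmogorov plus the $L^p$ norm estimates of the kernel, which is where the fractional exponent $\alpha$ and hence $2-1/\alpha$ enters — through the space-time scaling $\|G(s,\cdot)\|_{L^2}^2 \sim s^{-1/\alpha}$); and (ii) summing $\sum_n \bbP(A_{n+1}^c\mid A_n)$ and choosing $t_n,\epsilon_n$ to optimize. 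Chaining these conditional estimates and optimizing the number of steps ($\sim |\log\epsilon|$ many) against the per-step cost yields the stated rate. The delicate points are: keeping uniform control over the growing compacts $K_n$ (one needs the enlargement to be slow enough that the heat kernel on $K_{n+1}$ still "sees" the mass guaranteed on $K_n$, exploiting the polynomial-type lower bounds on $G$ for $\alpha<2$ versus Gaussian bounds for $\alpha=2$), and handling the fact that, because of measure-valued data and unbounded $\rho$, property \eqref{E:BddNorm} fails — so all moment estimates must be done with the weighted norms adapted to \eqref{E:InitD} rather than uniform ones. I expect this optimization of the iteration, together with the kernel estimates uniform in the parameter $\delta$, to be the technically hardest part; everything else (mild formulation, comparison, the final integral giving \eqref{E:NonMom1}) is comparatively routine.
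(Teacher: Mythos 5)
Your high-level plan is mostly aligned with the paper: derive \eqref{E:NonMom1} from the tail bound \eqref{E:Rate} by a standard integral (correct, this is Lemma \ref{L:NegMom}), work in the mild formulation, and use a comparison principle together with the Markov property to reduce to an initial condition of the form $\one_{[a,b]}$ — the paper does exactly this (Lemma \ref{L:WeakComp} and then the time-shift in ``Case~II'' of its proof). However, the core of your argument — ``one partitions $[0,t]$ into geometrically shrinking time intervals $t_n$'' and chains the events $A_n=\{\inf_{x\in K_n}u(t_n,x)\ge\epsilon_n\}$ — is \emph{not} what the paper does, and I do not believe that scheme delivers the stated exponent $2-1/\alpha$. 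A deterministic time partition plus union bound is essentially the method behind part~(1) of Theorem \ref{T:NegMom-old}, which yields only the weaker exponent $1-1/\alpha$; the paper explicitly says the upgrade to $2-1/\alpha$ ``is achieved through a stopping time argument.''

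The device you are missing is the following. Define the random times
\[
T_0=0,\qquad T_k=\inf\left\{t>T_{k-1}:\inf_{x\in\calI_t}u(t,x)\le\beta^k\right\},
\]
where $\beta\in(0,1/2)$ is a fixed constant coming from Lemma \ref{L:1Init} and $\calI_t=[a-\gamma t,b+\gamma t]$ is the space cone. Then $\{\inf_{x\in K}u(T,x)\le\beta^n\}\subseteq\{T_n\le T\}$, and the key combinatorial observation is that on $\{T_n\le T\}$ at least $m=n/2$ of the increments $T_k-T_{k-1}$ must be $\le\tau_n:=2T/n$. One bounds $\bbP(T_k-T_{k-1}\le\tau_n\mid\calF_{T_{k-1}})$ uniformly by an estimate of the form $\exp\bigl(-Qn^{(\alpha-1)/\alpha}(\log n)^{(2\alpha-1)/\alpha}\bigr)$ — obtained by optimizing the choice of $p$ in the Kolmogorov-continuity / Burkholder estimates for the shifted, rescaled process $w_k=\beta^{1-k}u_k$, which is where your observation $\|G(s,\cdot)\|^2_{L^2}\sim s^{-1/\alpha}$ enters — and then the binomial union bound over $\binom{n}{n/2}\le 2^n$ choices of the ``fast'' crossings gives $\bbP(\inf_{x\in K}u(T,x)\le\beta^n)\le 2^nQ^m\exp\bigl(-mQ\,n^{(\alpha-1)/\alpha}(\log n)^{(2\alpha-1)/\alpha}\bigr)$, i.e.\ the $2-1/\alpha$ rate after setting $\epsilon=\beta^n$. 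This ``count how many level-crossings can be fast'' structure is precisely what a fixed dyadic partition cannot encode: with deterministic times you only multiply one per-slice estimate per slice, and you lose the factor of $n$ that upgrades the power.

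One smaller but genuine inaccuracy: you write that one ``can compare $u$ from below with the solution of the linear equation with coefficient $\Lambda$,'' i.e.\ a PAM. There is no such pathwise comparison between solutions with different diffusion coefficients in this setting. The comparison principle the paper invokes (Lemma \ref{L:WeakComp}) orders solutions of the \emph{same} equation by their initial data; the bound $|H\sigma(t,x,z)|\le\Lambda|z|$ is used only inside the $L^p$-moment/Kolmogorov estimates (Lemma 4.2 of \cite{ChenKim14Comparison}), not to trade the nonlinearity for a linear one.
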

This theorem is an extension of Theorem 1.4 in \cite{ChenKim14Comparison}; see Remark \ref{R:NegMom} below.
This theorem is proved in Section \ref{S:NegMom} by arguments similar to those in \cite{MuellerNualart08}. The improvement is made through a stopping time argument.

\bigskip
The second result is the following lemma, which is used in the proof of Theorem \ref{T:Single}.
It also illustrates  the subtlety of the measure-valued initial data.
Let $\Norm{\cdot}_p$ denote the $L^p(\Omega)$-norm.

\begin{lemma}\label{L:LimPmom}
Let $u$ be the  solution   with the initial data $\mu$ that satisfies \eqref{E:InitD}.
Suppose there are  $a'< b'$ such that
the measure $\mu$ restricted to $[a',b']$
has a bounded density $f(x)$.  Then for any $a'<a<b<b'$, the following properties hold:
\begin{enumerate}[parsep=0ex,topsep=1ex]
\item[(1)] For all $T>0$, $\sup_{(t,x)\in [0,T]\times [a,b]} \Norm{u(t,x)}_p<+\infty$;
\item[(2)] If $f$ is $\beta$-H\"older continuous on $[a', b']$
for some $\beta\in(0,1)$, then for all $x\in (a,b)$,
 \[
 \Norm{u(t,x)-u(s,x)}_p\le C_{T} |t-s|^{\frac{(\alpha-1)\wedge \beta}{2\alpha}}
 \qquad\text{for all $0\le s\le t\le  T$ and $p\ge 2$}.
 \]
\end{enumerate}
\end{lemma}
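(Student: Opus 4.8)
\medskip
\noindent\textbf{Proof proposal.}
The plan is to work from the mild formulation
\[
u(t,x)=\bigl(G(t,\cdot)\ast\mu\bigr)(x)+\int_0^t\!\!\int_\R G(t-r,x-y)\,\rho\bigl(r,y,u(r,y)\bigr)\,W(\ud r,\ud y),
\]
($G$ the fundamental solution of $\partial_t-\Dxa$), and to remove the singular part of $\mu$ by a linearization. First extend $f|_{[a',b']}$ to a bounded --- and, for part~(2), $\beta$-H\"older --- function $\tilde f$ on $\R$, and put $\mu_1:=\tilde f\,\ud x$, $\mu_2:=\mu-\mu_1$. Then $\mu_2$ still satisfies \eqref{E:InitD}, and the essential gain is $\spt{\mu_2}\subseteq[a',b']^c$, so
\[
d_0:=\mathrm{dist}\bigl([a,b],\,\spt{\mu_2}\bigr)\ \ge\ \min(a-a',\,b'-b)\ >\ 0 .
\]
Let $\bar u$ solve \eqref{E:FracHt} with initial datum $\mu_1$; since $\tilde f$ is bounded, $G(t,\cdot)\ast\mu_1$ is bounded on $\R_+\times\R$, so \eqref{E:BddNorm} holds for $\bar u$, whence by the standard theory for bounded initial data (\cite{ChenDalang13Heat,ChenDalang14Holder,ChenDalang15FracHeat}) $\sup_{[0,T]\times\R}\Norm{\bar u(t,x)}_p<\infty$ and, for part~(2), $\Norm{\bar u(t,x)-\bar u(s,x)}_p\le C_T|t-s|^{\frac{(\alpha-1)\wedge\beta}{2\alpha}}$. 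Subtracting the two mild formulations, $v:=u-\bar u$ solves
\[
v(t,x)=\bigl(G(t,\cdot)\ast\mu_2\bigr)(x)+\int_0^t\!\!\int_\R G(t-r,x-y)\bigl[\rho(r,y,u(r,y))-\rho(r,y,\bar u(r,y))\bigr]W(\ud r,\ud y),
\]
and $\rho\in C^1$ in $z$ with $L':=\Norm{\rho'}_\infty<\infty$ gives $|\rho(r,y,u(r,y))-\rho(r,y,\bar u(r,y))|\le L'|v(r,y)|$, which makes this effectively a \emph{linear} equation with a bounded coefficient.

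For part~(1) I would estimate $v$ along the Picard scheme that builds the solutions, writing $v_m:=u_m-\bar u_m$ for the $m$th iterates (so $v_0=G\ast\mu_2$). By Burkholder--Davis--Gundy and Minkowski,
\[
\Norm{v_{m+1}(t,x)}_p^2\ \le\ 2\bigl(G(t,\cdot)\ast|\mu_2|\bigr)(x)^2\ +\ 2c_pL'^2\int_0^t\!\!\int_\R G(t-r,x-y)^2\,\Norm{v_m(r,y)}_p^2\,\ud y\,\ud r .
\]
The crux is that \eqref{E:InitD} and the off-diagonal decay of $G$ yield $\sup_x\bigl(G(r,\cdot)\ast|\mu_2|\bigr)(x)\le C_1r^{-1/\alpha}$ on $(0,T]$ (for $\alpha=2$ this holds with a Gaussian weight $e^{cx^2}$, absorbed into the norms as in \cite{ChenDalang15FracHeat}; harmless since $x$ will be restricted to $[a,b]$), so $2\bigl(G(r,\cdot)\ast|\mu_2|\bigr)(x)^2\le b_0(r)\bigl(G(r,\cdot)\ast|\mu_2|\bigr)(x)$ with $b_0(r):=2C_1r^{-1/\alpha}$; and this \emph{shape} is preserved by the space--time convolution, because $G(\tau,\cdot)^2\le\Norm{G(\tau,\cdot)}_\infty G(\tau,\cdot)$ together with the Chapman--Kolmogorov identity $\int_\R G(t-r,x-y)G(r,y-z)\,\ud y=G(t,x-z)$ gives
\[
\int_\R G(t-r,x-y)^2\,\bigl(G(r,\cdot)\ast|\mu_2|\bigr)(y)\,\ud y\ \le\ c\,(t-r)^{-1/\alpha}\,\bigl(G(t,\cdot)\ast|\mu_2|\bigr)(x).
\]
An induction then gives $\Norm{v_m(t,x)}_p^2\le\sum_{k=0}^m b_k(t)\bigl(G(t,\cdot)\ast|\mu_2|\bigr)(x)$ with $b_{k+1}(t)=2c_pL'^2c\int_0^t b_k(r)(t-r)^{-1/\alpha}\,\ud r$; iterating this fractional-integral operator produces a Mittag--Leffler series in $t^{\,1-1/\alpha}$, so $\bar b(t):=\sum_{k\ge0}b_k(t)\le\tilde C_T\,t^{-1/\alpha}$ for $t\le T$ (finite because $\alpha>1$). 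Passing to the limit (the scheme converges in $L^p(\Omega)$),
\[
\Norm{v(t,x)}_p^2\ \le\ \tilde C_T\,t^{-1/\alpha}\,\bigl(G(t,\cdot)\ast|\mu_2|\bigr)(x),\qquad (t,x)\in(0,T]\times\R .
\]
For $x\in[a,b]$ we have $|x-z|\ge d_0$ on $\spt{\mu_2}$, so by the decay of $G$ the last factor is bounded on $[0,T]\times[a,b]$ and vanishes faster than $t^{1/\alpha}$ as $t\downarrow0$ there ($O(t)$ if $\alpha\in(1,2)$, exponentially small if $\alpha=2$). Hence $\sup_{[0,T]\times[a,b]}\Norm{v(t,x)}_p<\infty$, and with $\sup_{[0,T]\times\R}\Norm{\bar u}_p<\infty$ this proves~(1).

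For part~(2), write $u(t,x)-u(s,x)=[\bar u(t,x)-\bar u(s,x)]+[v(t,x)-v(s,x)]$, the first bracket handled above. The second equals $\bigl(G(t,\cdot)\ast\mu_2\bigr)(x)-\bigl(G(s,\cdot)\ast\mu_2\bigr)(x)$ plus the two stochastic terms $A$ (the kernel increment $G(t-\cdot,x-\cdot)-G(s-\cdot,x-\cdot)$ against $\rho(u)-\rho(\bar u)$ over $[0,s]$) and $B$ (the kernel $G(t-\cdot,x-\cdot)$ over $[s,t]$). The deterministic bracket is $\le\int_{|x-z|\ge d_0}\int_s^t|\partial_rG(r,x-z)|\,\ud r\,|\mu_2|(\ud z)\lesssim(t-s)\int_{|x-z|\ge d_0}|x-z|^{-1-\alpha}\,|\mu_2|(\ud z)\lesssim t-s$ by \eqref{E:InitD}. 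For $A$ and $B$, apply BDG with $|\rho(u)-\rho(\bar u)|\le L'|v|$ and split the spatial integral at $[a'',b'']$, $a'<a''<a<b<b''<b'$; on $[a'',b'']$ use $\Norm{v(r,y)}_p^2\le\sup_{[0,T]\times[a'',b'']}\Norm{v}_p^2<\infty$ from~(1) together with the standard moduli
\begin{align*}
\int_s^t\!\!\int_\R G(t-r,x-y)^2\,\ud y\,\ud r&\ \lesssim\ (t-s)^{1-1/\alpha},\\
\int_0^s\!\!\int_\R\bigl(G(t-r,x-y)-G(s-r,x-y)\bigr)^2\,\ud y\,\ud r&\ \lesssim\ (t-s)^{1-1/\alpha}
\end{align*}
(the second by Plancherel); on $[a'',b'']^c$, where $|x-y|\ge d_1:=\min(a-a'',b''-b)>0$, use $\Norm{v(r,y)}_p^2\le\bar b(r)\bigl(G(r,\cdot)\ast|\mu_2|\bigr)(y)$ from~(1) and the off-diagonal bounds for $G$ and $\partial_\tau G$ (which kill the $r$-singularity since $|x-y|$ is bounded below), after a further splitting of the $r$-integral near $r=s$. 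This far-field estimate for $A$ is the most technical step, but it uses only $d_0,d_1>0$ and \eqref{E:InitD} and yields $\lesssim(t-s)^{1-1/\alpha}$. Altogether $\Norm{v(t,x)-v(s,x)}_p^2\lesssim(t-s)^{\frac{(\alpha-1)\wedge(2\beta)}{\alpha}}$, giving the stated (non-sharp) rate.

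The heart of the difficulty is the failure of \eqref{E:BddNorm}: since $\Norm{u(r,y)}_p$ may blow up as $r\downarrow0$, no moment inequality above can be closed by a naive Gronwall argument. The two devices that rescue the argument are (i) confining the singular part of $\mu$ to the \emph{linear} sub-equation for $v$, whose coefficient is bounded, and (ii) propagating the \emph{shape} $\Norm{v(r,y)}_p^2\le Q(r)\bigl(G(r,\cdot)\ast|\mu_2|\bigr)(y)$ --- preserved by the convolution via Chapman--Kolmogorov --- rather than a pointwise bound, so that one can eventually factor out $\bigl(G(t,\cdot)\ast|\mu_2|\bigr)(x)$; it is precisely the positivity of $\mathrm{dist}([a,b],\spt{\mu_2})$, through the off-diagonal decay of $G$, that makes this factor not merely finite but $O(t)$ on $[a,b]$, which is what absorbs all the $r\downarrow0$ singularities generated by the iteration.
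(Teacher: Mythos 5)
Your proposal is correct and arrives at the conclusion, but it takes a genuinely different route from the paper. The paper does not decompose the \emph{solution}: it applies the moment bound \eqref{E:MomentT} directly to $u$, obtaining $\Norm{u(t,x)}_p^2\lesssim J_0^2(t,x)+R(t,x)$ with $R=\int_0^t\!\int_\R |J_0(s,y)|^2\,(t-s)^{-1/\alpha}G(t-s,x-y)\,\ud y\,\ud s$, then decomposes the \emph{measure} $\mu=f+\hat\mu$ inside $J_0$ and exploits the distance from $[a,b]$ to $\spt{\hat\mu}$ to show $R(t,x)\lesssim t^{2(1-1/\alpha)}+t^{1-1/\alpha}$ on $[0,1]\times[a,b]$ (see \eqref{E:BoundJ0}--\eqref{E:BoundR}). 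Your route adds the auxiliary solution $\bar u$ with initial datum $\tilde f\,\ud x$ and propagates the bound $\Norm{v(r,y)}_p^2\le Q(r)\,(G(r,\cdot)\ast|\mu_2|)(y)$ through the Picard scheme via Chapman--Kolmogorov; this ``shape preservation'' step is precisely the content that the paper encapsulates once and for all in Lemma~\ref{L:Mom} (bounding the kernel $\calK_\lambda$ by $\calG$), so you are in effect re-deriving that lemma. What the paper's route buys is brevity and a cleaner part~(2): it proves the increment estimate only for $s=0$ (where $u(0,x)=f(x)$, so the increment reduces to $\Norm{u(t,x)-f(x)}_p$, handled by Lemma~\ref{L:J0f} plus the bound on $R$) and cites Theorem~1.6 of \cite{ChenKim14Comparison} for $s>0$, whereas you prove the full increment from scratch; your far-field estimate for the $A$-term does go through (one must use $|\partial_\tau G(\tau,w)|\lesssim |w|^{-1-\alpha}$ uniformly for $\tau\le T$, $|w|\ge d_1$, and integrate the shape bound against $|\mu_2|$ using \eqref{E:InitD} --- the measure may have infinite total mass, so a crude bound on the inner $y$-integral by a constant is not enough), but you leave it at a sketch, and invoking \cite{ChenKim14Comparison} for $s>0$ would have saved you this work. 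Two cosmetic remarks: the exponent you write for $\Norm{v(t,x)-v(s,x)}_p^2$ should not involve $\beta$ (the $\beta$-dependence enters only through $\bar u$; what you actually obtain for $v$ is $(t-s)^{(\alpha-1)/\alpha}$, or even $(t-s)^2$ from the far field); and it is worth stating explicitly that $v$ is not itself a solution of an SPDE --- the driving term depends on $u$ and $\bar u$ separately --- although, as you note, the pointwise Lipschitz bound $|\rho(u)-\rho(\bar u)|\le L'|v|$ is all that the moment recursion uses.
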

As pointed out, e.g., in Example 3.3 and Proposition 3.5 of \cite{ChenDalang14Holder},
without the restriction $x\in(a,b)$, both the limit and the supremum in the above lemma can be equal to infinity.
This lemma is proved in Section \ref{S:LimPmom}.

\bigskip
The paper is organized as follows.  We first give some preliminaries
on the fundamental solutions, stochastic integral and Malliavin calculus
in Section \ref{S:Pre}.
In Section \ref{S:MalliavinD} we study the Malliavin derivatives of $u(t,x)$.
Then we prove Theorem \ref{T:NegMom} on the moments of negative power  in Section \ref{S:NegMom}.
We proceed to prove Lemma \ref{L:LimPmom} in Section \ref{S:LimPmom}.
Then we prove our results on the densities, Theorem \ref{T:Single} (resp. Theorems \ref{T:Mult} and \ref{T:LocDen}),
in Section \ref{S:Main} (resp. Section \ref{S:Mult}).
Finally, the strict positivity of the density, Theorem \ref{T:Pos}, is proved in Section \ref{S:Pos}.
Some technical lemmas are given in Appendix.

\bigskip
Throughout this paper, we  use $C$ to denote a generic constant whose value may vary at different occurrences.

\section{Preliminaries and notation}\label{S:Pre}

\subsection{Fundamental solutions}
Following \cite{ChenDalang15FracHeat}, we denote the fundamental solution to
\eqref{E:FracHt} by $\lMr{\delta}{G}{\alpha}(t,x) $,
or simply $G(t,x)$ since we shall fix  $\alpha$ and $\delta$.
The function $\lMr{\delta}{G}{\alpha}(1,x)$ is the density of the (not necessarily symmetric) $\alpha$-stable distribution.
In particular, when $\alpha=2$ (which forces $\delta=0$),
\begin{align}\label{E:Gaussian}
 \lMr{0}{G}{2}(t,x)=\frac{1}{\sqrt{4\pi t}}\exp\left(-\frac{x^2}{4t}\right).
\end{align}
For general  values of $\alpha$ and $\delta$, there is no explicit expression for $G(t,x)$ in general.  From \eqref{e.1.F-Dxa}  it is easy to see that
it can be equivalently defined through the inverse Fourier transform:
\[
\lMr{\delta}{G}{\alpha}(t,x):=\frac{1}{2\pi}\int_\R\ud \xi \:\exp
\left(
i\xi x -t |\xi|^\alpha e^{-i\pi\delta \sgn(\xi)/2}
\right).
\]
The fundamental solution $G(t,x)$ has the following scaling property
\begin{align}\label{E:ScaleG}
G(t,x) = t^{-1/\alpha} G\left(1,t^{-1/\alpha} x\right).
\end{align}
The following bounds are useful
\begin{align}\label{E:BddG}
\begin{cases}
 \displaystyle \left|G^{(n)}(1,x)\right| \le \frac{K_n}{1+|x|^{1+n+\alpha}},&\text{for $n\ge 0$, if $\alpha\in (1,2]$ and $\delta\le 2-|\alpha|$,}\\[1em]
 \displaystyle G(1,x) \ge \frac{K'}{1+|x|^{1+\alpha}},&\text{if $\alpha\in (1,2)$ and $\delta< 2-|\alpha|$\,. }
\end{cases}
\end{align}
[See (4.2), resp. (5.1), of \cite{ChenDalang15FracHeat} for the upper, resp. lower, bound.]
When we apply space-time convolutions, $G(t,x)$ is understood as $G(t,x)\one_{\{t>0\}}$.
We will use   $\widetilde{G}(t,x)$ to denote the symmetric case (when
$\delta=0$), i.e.,
\[
\widetilde{G}(t,x) := \lMr{0}{G}{\alpha}(t,x).
\]
By the properties of $\lMr{\delta}{G}{\alpha}(t,x)$, we know that for some constants $C_{\alpha,\delta}$ and $C_{\alpha,\delta}'>0$,
\begin{align}\label{E:TildeG}
C_{\alpha,\delta}' \widetilde{G}(t,x)\le
\lMr{\delta}{G}{\alpha}(t,x)\le C_{\alpha,\delta} \widetilde{G}(t,x), \quad\text{for all $t>0$ and $x\in\R$.}
\end{align}
In particular, when $\alpha=2$, the two inequalities in \eqref{E:TildeG} become equality with
$C_{\alpha,\delta}=C_{\alpha,\delta}'=1$.
In \cite{ChenDalang15FracHeat} one can find more properties of $G(t,x)$ related to the calculations in this paper.

\subsection{Some moment bounds and related functions}
Now we define the space-time white noise on a certain  complete probability space $(\Omega,\calF,\bbP)$.   Let
\[
W=\left\{W(A),\: \text{$A$ is a Borel subset of $[0,+\infty)\times\R$ such that $|A|<\infty$}\right\}
\]
be a Gaussian family of random variables with zero mean and covariance
\[
\E\left[W(A)W(B)\right] = |A\cap B|,
\]
where $|A|$ denotes the Lebesgue measure of a Borel sets in $\R^2$.
Let  $\{\calF_t,\, t\ge 0\}$ be the filtration generated by $W$ and augmented by the $\sigma$-field $\calN$ generated
by all $\bbP$-null sets in $\calF$:
\begin{align}\label{E:Ft}
\calF_t = \sigma\left(W([0,s)\times A):0\le s\le
t,A\in\calB_b\left(\R\right)\right)\vee
\calN,\quad t\ge 0,
\end{align}
where $\calB_b\left(\R\right)$ is the set of Borel sets with finite Lebesgue measure.
In the following, we fix this filtered
probability space $\left\{\Omega,\calF,\{\calF_t,t\ge 0\}\:,\bbP\right\}$.
In this setup, $W$ becomes a worthy martingale measure in the sense of Walsh
\cite{Walsh86}. As proved in \cite{ChenDalang13Heat},
for any adapted random field $\left\{X(s,y),\; (s,y)\in\R_+\times\R\right\}$
that is jointly measurable and
\[
\int_0^t \int_\R \E[X(s,y)^2]\ud s\ud y<\infty,
\]
the following stochastic integral
\[
\iint_{[0,t]\times\R}X(s,y) W(\ud s,\ud y)
\]
is well-defined.

For $h\in L^2(\R)$, set $W_t(h):=\iint_{[0,t]\times\R} h(y)W(\ud s,\ud y)$.
Then $\{W_t,t\ge 0\}$ becomes a cylindrical Wiener process in the Hilbert space $L^2(\R)$ with
the following covariance
\begin{align}\label{E:Cov}
\E(W_t(h)W_s(g)) = \min(t,s) \InPrd{h,g}_{L^2(\R)}\qquad\text{for all $h,g\in L^2(\R)$};
\end{align}
see \cite[Chapter 4]{DaPrato}.
With this integral, the solution to \eqref{E:FracHt} is understood in the following {\it mild} sense:
\begin{align}\label{E:mild}
u(t,x)=J_0(t,x)+\iint_{[0,t]\times\R} G(t-s,x-y)\rho(s,y,u(s,y))W(\ud s,\ud y),
\end{align}
where  $J_0(t,x)$ is the solution to the homogeneous equation, i.e.,
\begin{align}\label{E:J0}
J_0(t,x):= (\mu*G(t,\cdot))(x) =\int_\R G(t,x-y)\mu(\ud y).
\end{align}
In \cite{ChenDalang13Heat,ChenDalang15FracHeat}, existence and uniqueness of solutions to \eqref{E:FracHt}
starting from initial data that satisfies \eqref{E:InitD} have been established.
Note that as far as for existence and uniqueness   the change from $\rho(z)$ to $\rho(t,x,z)$ does not pose any problem
because both the Lipschitz continuity and the linear growth condition \eqref{E:LinGrw} in $z$ are uniformly in $t$ and $x$.
The following inequality is a version of the Burkholder-Gundy-Davis inequality (see Lemma 3.7 \cite{ChenDalang13Heat}):
If $X$ is an adapted and jointly measurable random field such that
\[
\int_0^t\int_\R \Norm{X(s,y)}_p^2 \ud s\ud y<\infty,
\]
then
\begin{align}\label{E:BGD}
\Norm{\int_0^t\int_\R X(s,y)W(\ud s\ud y)}_p^2 \le 4 p \int_0^t \int_\R \Norm{X(s,y)}_p^2 \ud s\ud y.
\end{align}
Hence,
\begin{align}\label{E:BGD-U}
 \Norm{u(t,x)}_p^2\le 2 J_0^2(t,x) + 8p \int_0^t \int_\R G(t-s,x-y)^2 \Norm{\rho(s,y,u(s,y))}_p^2 \ud s\ud y,
\end{align}
for all $p\ge 2$, $t>0$ and $x\in\R$.

\bigskip
The moment formula/bounds obtained in \cite{ChenDalang13Heat,ChenDalang15FracHeat} are the key tools in this study.
Here is a brief summary. Denote
\begin{align}\label{E:K}
 \calK_\lambda(t,x) := \sum_{n=0}^\infty \lambda^{2(n+1)} \underbrace{\left(G^2\star\cdots \star G^2\right)}_{\text{$(n+1)$'s}}(t,x),
\end{align}
where ``$\star$'' is convolution in both space and time variables, i.e.,
\[
(h\star g)(t,x)= \int_0^t\ud s \int_\R \ud y \: h(s,y) g(t-s,x-y).
\]
For the heat equation ($\alpha=2$), an explicit formula for this kernel function $\calK_\lambda$ is obtained in \cite{ChenDalang13Heat}
\begin{align}
\label{E:K-2}
\calK_\lambda(t,x) =
\frac1{\sqrt{ 2\pi t}}  \left(\frac{\lambda^2}{\sqrt{8\pi t}}+\frac{\lambda^4}{4}e^{\frac{\lambda^4 t}{8}}\Phi\left(\frac{\lambda^2\sqrt{t}}{2}\right)\right)
\exp\left(-\frac{x^2}{2t} \right),
\end{align}
where  $\Phi(x)=(2\pi)^{-1/2}\int_{-\infty}^x e^{-y^2/2}\ud y$ is
the cumulative distribution function for the standard normal distribution.
Denote
\begin{align}\label{E:calG}
\calG(t,x):=t^{-1/\alpha}G(t,x).
\end{align}
When $\alpha\in (1,2]$ and $|\delta|\le 2-\alpha$,
we have the following upper bound for $\calK_\lambda$  (see \cite[Proposition 3.2]{ChenDalang15FracHeat})
\begin{align}\label{E:K_bound}
\calK_\lambda(t,x)\le C \calG(t,x) \left(1+t^{1/\alpha} e^{C t}\right),
\end{align}
for some constant $C=C(\alpha,\delta,\lambda)$.

One can view Lemma \ref{L:Mom} below as a two-parameter Gronwall's lemma.

\begin{lemma}\label{L:Mom}
If   two functions $f, g : [0,\infty)\times\R\mapsto [0,\infty]$ satisfy the following integral inequality
\begin{align}\label{E:Mom-IntIneq}
g(t,x)\le f(t,x) + \lambda^2 \int_0^t\int_\R G(t-s,x-y)^2 \left[\vv+g(s,y)\right]\ud s\ud y,
\end{align}
for all $t\ge 0$ and $x\in\R$, where $\lambda>0$ and $\vv\ge 0$ are some constants, then for all $t\in [0,T]$ and $x\in\R$,
\[
g(t,x) \le f(t,x) + C_{\lambda,T} \left[\vv+ \int_0^t \int_\R \calG(t-s,x-y) f(s,y)\ud s\ud y\right].
\]
\end{lemma}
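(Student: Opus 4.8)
The plan is a Picard-type iteration of the integral inequality $\eqref{E:Mom-IntIneq}$. Rewrite it as $g\le f+A(\vv+g)$, where
\[
(A\phi)(t,x):=\lambda^2\int_0^t\int_\R G(t-s,x-y)^2\,\phi(s,y)\,\ud s\,\ud y
\]
is a linear, monotone, positivity-preserving operator on $[0,\infty]$-valued functions. Substituting the bound for $g$ into the right-hand side of $\eqref{E:Mom-IntIneq}$ $n$ times and using linearity and monotonicity of $A$, one obtains, for every $n\ge0$,
\[
g(t,x)\ \le\ f(t,x)+\sum_{k=1}^{n}A^{k}(\vv+f)(t,x)+A^{n+1}(\vv+g)(t,x).
\]

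Next I would identify the iterated kernels. Writing $(G^2)^{\star k}$ for the $k$-fold space-time convolution $G^2\star\cdots\star G^2$, associativity of $\star$ gives $A^{k}\phi=\lambda^{2k}\,(G^2)^{\star k}\star\phi$ for nonnegative $\phi$, hence by Tonelli $\sum_{k\ge1}A^{k}\phi=\calK_\lambda\star\phi$ with $\calK_\lambda$ as in $\eqref{E:K}$; in particular $\sum_{k=1}^{n}A^{k}(\vv+f)\uparrow\calK_\lambda\star(\vv+f)$ by monotone convergence. Moreover, $\eqref{E:K_bound}$ gives, for $\tau\le T$ and uniformly in $k$, $\lambda^{2k}(G^2)^{\star k}(\tau,z)\le\calK_\lambda(\tau,z)\le C_{\lambda,T}\,\calG(\tau,z)$, and since $\lambda^{2k}(G^2)^{\star k}(\tau,z)$ is the general term of the convergent series $\calK_\lambda(\tau,z)=\sum_{k\ge1}\lambda^{2k}(G^2)^{\star k}(\tau,z)$, it tends to $0$ pointwise as $k\to\infty$.

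The remaining point, which is also the main obstacle, is that the remainder $A^{n+1}(\vv+g)(t,x)$ tends to $0$. Writing it as $\int_0^t\int_\R\lambda^{2(n+1)}(G^2)^{\star(n+1)}(t-s,x-y)\,(\vv+g(s,y))\,\ud s\,\ud y$, its integrand is dominated, uniformly in $n$, by $C_{\lambda,T}\,\calG(t-s,x-y)(\vv+g(s,y))$ and tends to $0$ pointwise, so by dominated convergence $A^{n+1}(\vv+g)(t,x)\to0$ provided $\calG\star(\vv+g)(t,x)<\infty$. Since $\int_\R\calG(s,y)\,\ud y=s^{-1/\alpha}$ (because $\int_\R G(s,y)\,\ud y=1$) and $\int_0^t s^{-1/\alpha}\,\ud s<\infty$ (here $\alpha>1$ is used), this reduces to $\calG\star g(t,x)<\infty$, the natural integrability under which $\eqref{E:Mom-IntIneq}$ is invoked and which holds in every application in this paper, where $g$ is built from the finite $p$-th moments of the solution. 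Letting $n\to\infty$ then yields $g\le f+\calK_\lambda\star(\vv+f)$; a final use of $\eqref{E:K_bound}$, so that $\calK_\lambda(t,x)\le C_{\lambda,T}\,\calG(t,x)$ for $t\le T$, together with $\int_\R\calG(s,y)\,\ud y=s^{-1/\alpha}$, converts this into $g(t,x)\le f(t,x)+C_{\lambda,T}\bigl(\vv+\calG\star f(t,x)\bigr)$ after absorbing $T^{1-1/\alpha}/(1-1/\alpha)$ into $C_{\lambda,T}$. The crux is this tail term: one cannot drop $g$ from $A^{n+1}(\vv+g)$ for free, which is why the integrability $\calG\star g<\infty$ (available in the applications from the already established moment estimates) must be brought in.
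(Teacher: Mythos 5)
Your proof is correct and follows the same route as the paper's short proof: iterate the inequality to obtain $g\le f+(\vv+f)\star\calK_\lambda$ (a step the paper delegates to the cited references \cite{ChenDalang13Heat,ChenDalang15FracHeat}), then bound $\calK_\lambda(t,x)\le C_{\lambda,T}\,\calG(t,x)$ for $t\le T$ using \eqref{E:K_bound}. You are also right to flag that discarding the tail $A^{n+1}(\vv+g)$ requires some a priori finiteness such as $(\calG\star g)(t,x)<\infty$; this hypothesis is implicit in the paper's citation as well, and it is satisfied in every application here since $g$ is built from already-established finite $p$-th moment bounds.
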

\begin{proof}
By the arguments  in \cite{ChenDalang13Heat} and \cite{ChenDalang15FracHeat}, we have
\begin{equation}
g(t,x)\le f(t,x) + ((\vv+f)\star \calK_\lambda)(t,x).
\end{equation}
For $t\in[0,T]$ and $\alpha\in (1,2]$, from \eqref{E:K_bound}, we see that $\calK_\lambda(t,x)\le C t^{-1/\alpha} G(t,x)=C\calG(t,x)$.
Plugging this upper bound in the above inequality proves Lemma \ref{L:Mom}.
\end{proof}
{Under the growth condition \eqref{E:LinGrw}},   we can write \eqref{E:BGD-U}
as the form of \eqref{E:Mom-IntIneq}.  Then  an application of Lemma \ref{L:Mom}
yields  the bounds for
the $p$-th moments ($p\ge 2$)
of the solution:
\begin{align}\label{E:K_p}
\Norm{u(t,x)}_p^2\le 2 J_0^2(t,x)+\left([\varsigma^2+2J_0^2]\star \calK_{2\sqrt{p}\Lip_\rho}\right)(t,x);
\end{align}
see \cite[Theorem 3.1]{ChenDalang15FracHeat}. Then by \eqref{E:K_bound}, for some constant $C$ depending on $\lambda$ and $p$, we have
\begin{align}\label{E:MomentT}
\Norm{u(t,x)}_p^2\le C J_0^2(t,x)+C \int_0^t \int_\R (t-s)^{-1/\alpha}G(t-s,x-y) J_0^2(s,y)\ud s\ud y,
\end{align}
for all $t\in[0,T]$ and $x\in\R$.

We remark that   by setting $C=\Lip_\rho \max(\vv,1)$  the linear growth
condition \eqref{E:LinGrw} may be replaced by a condition of  the following form
\[
|\rho(z)|\le C (1+|z|)\quad\text{for all $z\in\R$}.
\]

\textcolor{\myred}{In the proofs of Theorems \ref{T:Single} and \ref{T:NegMom}, we will apply the strong Markov property for the solution $u(t,x)$. 
For any stopping time $\tau$ with respect to the filtration $\{\calF_t,t\ge 0\}$, the time-shifted white noise,  denoted formally as $\W^*(t,x) = \W(t+\tau,x)$,  is defined as 
\begin{align}\label{E:W*}
W_t^*(h) = W_{t+\tau}(h)-W_t(h),\qquad\text{for all $h\in L^2(\R)$.} 
\end{align}
}
\subsection{Malliavin calculus}\label{SS:Malliavin}
Now we recall some basic facts on  Malliavin calculus associated with $W$.
Denote by $C_p^\infty(\R^n)$ the space of smooth functions with
all their partial derivatives having at most polynomial growth at infinity.
Let $\calS$ be the space of simple functionals of the form
\begin{align}\label{E:Ff}
F=f(W(A_1),\dots,W(A_n)),
\end{align}
where $f\in C_p^\infty(\R^n)$ and
$A_1,\dots, A_n$ are Borel subsets of $[0,\infty)\times\R$ with finite Lebesgue measure.
The derivative of $F$ is a two-parameter stochastic process defined as follows
\[
D_{t,x}F=\sum_{i=1}^n \frac{\partial f}{\partial x_i}\left(W(A_1),\dots,W(A_n)\right)\one_{A_i}(t,x).
\]
In a similar way we define the iterated derivative $D^{k}F$.
The derivative operator $D^{k}$ for positive integers $k\ge 1$ is a closable  operator from $L^p(\Omega)$
into $L^p(\Omega;L^2(([0,\infty)\times\R)^{k})$ for any $p\ge 1$.
Let $k$ be some positive integer.
For any $p>1$, let $\D^{k,p}$ be the completion of $\calS$ with respect to the
norm
\begin{align}
\Norm{F}_{k,p}:=\left(\E\left(|F|^p\right)+\sum_{j=1}^k\E\left[
\left(\int_{([0,\infty)\times\R)^{j}}\left(D_{z_1}\cdots D_{z_j}F\right)^2 \ud
z_1\dots\ud z_j\right)^{p/2}
\right]\right)^{1/p}.
\end{align}
Denote $\D^{\infty}:=\cap_{k,p}\D^{k,p}$.

Suppose that $F=(F^1,\dots,F^d)$ is a $d$-dimensional random vector whose components are in $\D^{1,2}$.
The following random symmetric nonnegative definite matrix
\begin{align}
\sigma_F = \left(
\InPrd{D F^i,DF^j}_{L^2([0,\infty)\times \R)}
\right)_{1\le i,j\le d}
\end{align}
is called the {\it Malliavin matrix} of $F$.
The classical criteria for the existence and regularity of the density are the
following:

\begin{theorem}\label{T:Density}
 Suppose that $F=(F^1,\dots,F^d)$ is a $d$-dimensional random vector whose components are in $\D^{1,2}$. Then
\begin{enumerate}[parsep=0ex,topsep=1ex]
 \item[(1)] If $\det(\sigma_F)>0$ almost surely, the law of $F$ is absolutely continuous with respect to the Lebesgue measure.
 \item[(2)] If $F^i\in\D^\infty$ for each $i=1,\dots,d$ and $\E\left[\left(\det\sigma_F\right)^{-p}\right]<\infty$
 for all $p\ge 1$, then $F$ has a smooth  density.
\end{enumerate}
\end{theorem}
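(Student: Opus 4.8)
Both parts are classical: (1) is the Bouleau--Hirsch absolute-continuity criterion and (2) is Malliavin's smoothness criterion, and either may be quoted from Nualart's monograph; here I sketch how I would argue. The common tool is the chain rule $D(\varphi(F))=\sum_i\partial_i\varphi(F)\,DF^{i}$, valid in $L^{2}(\Omega;\calH)$ with $\calH=L^{2}([0,\infty)\times\R)$ for $\varphi\in C_b^{1}(\R^{d})$; it gives $\InPrd{D(\varphi(F)),DF^{j}}_{\calH}=\sum_i\partial_i\varphi(F)\,\sigma_F^{ij}$ and hence, writing $\mathrm{cof}(\sigma_F)$ for the cofactor matrix of $\sigma_F$ (a polynomial in the entries $\sigma_F^{ij}=\InPrd{DF^{i},DF^{j}}_{\calH}$, with no division involved),
\begin{align}\label{E:cofid}
\det(\sigma_F)\,\partial_i\varphi(F)=\sum_{j=1}^{d}\mathrm{cof}(\sigma_F)^{ij}\,\InPrd{D(\varphi(F)),DF^{j}}_{\calH},\qquad i=1,\dots,d.
\end{align}
Everything hinges on how much regularity is available to divide by $\det\sigma_F$ and to integrate by parts through the divergence $\delta=D^{*}$.

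\smallskip
\noindent\emph{Part (2).} Under $F^{i}\in\D^{\infty}$ and $\E[(\det\sigma_F)^{-p}]<\infty$ for all $p$, I would first show $(\det\sigma_F)^{-1}\in\D^{\infty}$: each $\sigma_F^{ij}\in\D^{\infty}$, so $\det\sigma_F\in\D^{\infty}$, and a strictly positive element of $\D^{\infty}$ whose reciprocal has moments of all orders has its reciprocal in $\D^{\infty}$ too (approximate $x\mapsto x^{-1}$ by functions with bounded derivatives, controlling the errors by the negative moments and the Sobolev norms by Meyer's inequalities). Then $\gamma_F:=\sigma_F^{-1}$ has $\D^{\infty}$ entries, so $\gamma_F^{ij}DF^{j}\in\D^{\infty}(\calH)\subseteq\mathrm{Dom}\,\delta$, and dividing \eqref{E:cofid} by $\det\sigma_F$, taking expectations and using $\E[\InPrd{DG,u}_{\calH}]=\E[G\,\delta(u)]$ gives $\E[\partial_i\varphi(F)]=\E[\varphi(F)H_i]$ with $H_i=\sum_j\delta(\gamma_F^{ij}DF^{j})\in\cap_{p}L^{p}(\Omega)$. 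Iterating, for every multi-index $\beta$ there is $H_\beta\in\cap_{p}L^{p}(\Omega)$ with $\E[\partial_\beta\varphi(F)]=\E[\varphi(F)H_\beta]$ for all $\varphi\in C_b^{\infty}(\R^{d})$; applying this to $\varphi(x)=e^{\mathrm{i}\xi\cdot x}$ shows $|\xi^{\beta}|\,|\E[e^{\mathrm{i}\xi\cdot F}]|\le\Norm{H_\beta}_{1}$, so the characteristic function of $F$ decays faster than any polynomial, and Fourier inversion yields a density in $C^{\infty}(\R^{d})$.

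\smallskip
\noindent\emph{Part (1).} Here only $F^{i}\in\D^{1,2}$ is available, so $\delta$ is out of reach and one localizes. Set $\Omega_{N}=\{\det\sigma_F\ge1/N\}\cap\{\sum_i\Norm{DF^{i}}_{\calH}^{2}\le N\}$; these increase to $\{\det\sigma_F>0\}$, which has full probability, so it suffices to prove $\bbP(\{F\in B\}\cap\Omega_{N})=0$ for every $N$ and every compact $B\subseteq\R^{d}$ with Lebesgue measure zero. I would pick $g_{n}\in C_c^{\infty}(\R^{d})$ with $0\le g_{n}\le1$, $g_{n}\ge\one_{B}$ and supports shrinking to $B$, together with a one-coordinate antiderivative $\varphi_{n}$ of $g_{n}$ arranged so that $\varphi_{n}(F)\to0$ in $L^{2}(\Omega)$ while $\varphi_{n}(F)$ stays bounded in $\D^{1,2}$; then $\varphi_{n}(F)\rightharpoonup0$ in $\D^{1,2}$, hence $D(\varphi_{n}(F))\rightharpoonup0$ in $L^{2}(\Omega;\calH)$. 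Pairing \eqref{E:cofid} (with $\varphi=\varphi_{n}$, so $\partial_i\varphi_n=g_n$) against $\one_{\Omega_{N}}$ and noting that $\one_{\Omega_{N}}\mathrm{cof}(\sigma_F)^{ij}DF^{j}$ is a fixed element of $L^{2}(\Omega;\calH)$, the right-hand side tends to $0$, so $\E[\one_{\Omega_{N}}\det(\sigma_F)\,g_{n}(F)]\to0$; since $g_{n}\ge\one_{B}$ this forces $\E[\one_{\Omega_{N}}\det(\sigma_F)\,\one_{\{F\in B\}}]=0$, and $\det\sigma_F\ge1/N$ on $\Omega_{N}$ then gives $\bbP(\{F\in B\}\cap\Omega_{N})=0$.

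\smallskip
\noindent The main obstacle, I expect, is the construction of the approximants $\varphi_{n}$ in Part (1): a naive one-coordinate antiderivative of a height-one bump supported in a $1/n$-neighborhood of $B$ has cross-derivatives of order $n$, so $\varphi_{n}(F)$ need not remain bounded in $\D^{1,2}$; one must either match the $\varphi_{n}$ to a dyadic covering of $B$ or argue by induction on $d$ (the Malliavin matrix of any $d-1$ of the components is a principal submatrix of $\sigma_F$, hence a.s.\ invertible, so the corresponding marginal is absolutely continuous by the inductive hypothesis, and together with Fubini this forces $\varphi_{n}(F)\to0$ in $L^{2}$). In Part (2) the only non-routine ingredient is promoting $(\det\sigma_F)^{-1}$ from $\cap_p L^p$ to $\D^{\infty}$; the rest is the standard $D$--$\delta$ calculus. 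For the present paper it is of course legitimate simply to cite these classical facts.
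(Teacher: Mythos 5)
The paper does not supply a proof of this theorem: it is presented as ``the classical criteria'' and the section closes by referring to Nualart \cite{Nualart06} for the general theory (part (1) is the Bouleau--Hirsch criterion, Theorem 2.1.2 there, and part (2) is Proposition~2.1.5 there). So there is no in-paper proof to compare against, and citing these facts --- as you say at the end --- is exactly what the paper does.

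Your sketch of part (2) is the standard argument and is in good shape: $\sigma_F^{ij}\in\D^\infty$ hence $\det\sigma_F\in\D^\infty$; the negative-moment hypothesis then upgrades $(\det\sigma_F)^{-1}$ from $\cap_pL^p$ to $\D^\infty$; the integration-by-parts identity $\E[\partial_i\varphi(F)]=\E[\varphi(F)H_i]$ with $H_i=\sum_j\delta(\gamma_F^{ij}DF^j)$ iterates to give $H_\beta\in\cap_pL^p$ for every multi-index; and applying this to $e^{\mathrm{i}\xi\cdot x}$ yields polynomial decay of the characteristic function, hence a $C^\infty$ density.

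Your sketch of part (1) has the gap you honestly flag, and neither proposed patch closes it. The one-coordinate antiderivative $\varphi_n$ of a height-one bump $g_n$ supported in a $1/n$-neighborhood of $B$ has cross-derivatives $\partial_j\varphi_n$ of size $O(n)$, so $\varphi_n(F)$ need not be bounded in $\D^{1,2}$ and the weak-convergence step $D(\varphi_n(F))\rightharpoonup 0$ is not available. The dyadic-covering suggestion is left unspecified. The induction-on-$d$ idea does not resolve it either: it is true that the $(d-1)\times(d-1)$ principal minor of $\sigma_F$ is a.s.\ positive on $\{\det\sigma_F>0\}$, so the marginal $(F^1,\dots,F^{d-1})$ is absolutely continuous by the inductive hypothesis, but absolute continuity of a marginal plus Fubini does not by itself give $\varphi_n(F)\to 0$ in $L^2$ --- you would still need uniform control of the $x_1$-sections, which is exactly where the original difficulty lives. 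The classical Bouleau--Hirsch proof instead approximates $F$ by smooth cylindrical functionals and invokes a co-area / Sard-type argument, and this requires genuinely more machinery than the ``antiderivative plus weak convergence'' skeleton. None of this affects the paper, which correctly treats Theorem~\ref{T:Density} as a black box, but your part (1) sketch as written would not compile into a proof without that missing construction.
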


The next lemma gives us a way to establish $F\in\D^\infty$.
\begin{lemma}\label{L:Dinfty0}(Lemma 1.5.3 in \cite{Nualart06})
Let $\{F_m,m\ge 1\}$ be a sequence of random variables converging to $F$ in $L^p(\Omega)$ for some $p>1$. Suppose that $\sup_{m}\Norm{F_m}_{k,p}<\infty$ for some $k\ge 1$. Then $F\in \D^{k,p}$.
\end{lemma}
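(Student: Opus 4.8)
The plan is to obtain the statement from the completeness of the Banach space $\D^{k,p}$ (equivalently, the closedness of the iterated derivative operators) combined with weak compactness in reflexive spaces. Write $\calH_j:=L^2\bigl(([0,\infty)\times\R)^{j}\bigr)$ for $1\le j\le k$, so that $\Norm{\cdot}_{k,p}$ is comparable to the product norm on $E:=L^p(\Omega)\times\prod_{j=1}^{k}L^p(\Omega;\calH_j)$, and each factor $L^p(\Omega;\calH_j)$ is a reflexive Banach space because $1<p<\infty$ and $\calH_j$ is Hilbert. Note that the hypothesis $M:=\sup_m\Norm{F_m}_{k,p}<\infty$ in particular presupposes $F_m\in\D^{k,p}$ for every $m$.

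First I would record that $\{D^jF_m\}_{m\ge1}$ is bounded in $L^p(\Omega;\calH_j)$ for each $j=1,\dots,k$, and that $F_m\to F$ strongly, hence weakly, in $L^p(\Omega)$. By reflexivity, the Banach--Alaoglu theorem, and a diagonal extraction over the finitely many indices $j$, there is a single subsequence $\{F_{m_i}\}_{i}$ along which, for every $j=1,\dots,k$, $D^jF_{m_i}$ converges weakly in $L^p(\Omega;\calH_j)$ to some $G_j$; thus the $E$-valued sequence $V_i:=\bigl(F_{m_i},D^1F_{m_i},\dots,D^kF_{m_i}\bigr)$ converges weakly in $E$ to $V:=(F,G_1,\dots,G_k)$. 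The key step is then Mazur's lemma: choose convex combinations $\widetilde F_i:=\sum_{l\ge i}\lambda_{i,l}F_{m_l}$ (finitely many $\lambda_{i,l}\ge0$ with $\sum_l\lambda_{i,l}=1$) so that the corresponding convex combinations of $V_i$ converge to $V$ \emph{strongly} in $E$; by linearity of each $D^j$ this says exactly that $\widetilde F_i\to F$ in $L^p(\Omega)$ and $D^j\widetilde F_i\to G_j$ in $L^p(\Omega;\calH_j)$ for every $j\le k$. Consequently $\{\widetilde F_i\}$ is $\Norm{\cdot}_{k,p}$-Cauchy in $\D^{k,p}$; since $\D^{k,p}$ is complete it converges there, and its limit must coincide with its $L^p(\Omega)$-limit $F$. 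Hence $F\in\D^{k,p}$ (with $D^jF=G_j$ for $j=1,\dots,k$), and weak lower semicontinuity of the norms yields moreover $\Norm{F}_{k,p}\le\liminf_i\Norm{F_{m_i}}_{k,p}\le M$.

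The main obstacle is precisely the passage from weak to strong convergence: knowing only that $F_{m_i}\rightharpoonup F$ and $D^jF_{m_i}\rightharpoonup G_j$ does not directly give $G_j=D^jF$, since this is a closedness statement for the strong topology. Reflexivity (to extract weakly convergent subsequences at all) together with Mazur's lemma (to convexify into a strongly convergent sequence still lying in $\D^{k,p}$) is what bridges this gap; the remaining ingredients --- the boundedness bookkeeping, the diagonal extraction, and the identification of the limit via uniqueness in $L^p(\Omega)$ --- are routine. An alternative to Mazur's lemma is to test the weak limits against smooth functionals via integration by parts and use the duality between $D^j$ and its adjoint, but the convexification argument is shorter to state.
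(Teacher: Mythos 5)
Your proof is correct. Note first that the paper does not actually prove Lemma \ref{L:Dinfty0}; it cites Lemma 1.5.3 of Nualart's book. The natural comparison is with the paper's proof of the localized extension, Lemma \ref{L:Dinfty}, which is the version actually used, and there the authors take a genuinely different route than you do. You extract a weakly convergent subsequence $D^jF_{m_i}\rightharpoonup G_j$ (possible since $1<p<\infty$ makes $L^p(\Omega;\calH_j)$ reflexive), then invoke Mazur's lemma to convexify into a $\Norm{\cdot}_{k,p}$-Cauchy sequence, and finish by completeness of $\D^{k,p}$ (the domain of the closed operator $D^k$ is complete in the graph norm). This is the classical ``weakly closed because convex and strongly closed'' argument, and it is clean and self-contained. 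The paper, by contrast, identifies the weak limit $\Theta_k$ by passing through the Ornstein--Uhlenbeck semigroup $T_{t,S}$: using the contraction estimate $\|D^{k,S}T_{t,S}G\|_p\le C_pt^{-k/2}\|G\|_p$ and the commutation relation $D^{k,S}T_{t,S}=e^{-kt}T_{t,S}D^{k,S}$, it shows $D^{k,S}T_{t,S}F=e^{-kt}T_{t,S}\Theta_k$ for every $t>0$ and then lets $t\downarrow0$. The semigroup route avoids convexification and is perhaps easier to carry over verbatim to the restricted space $\D_S^{k,p}$ (the setting the paper cares about), but your Mazur-plus-completeness argument is more elementary and would also adapt to $\D^{k,p}_S$ with only notational changes. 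The remark at the end of your proof, that one can alternatively test the weak limit against smooth cylindrical functionals via the integration-by-parts duality, is in fact the method Nualart uses in the cited Lemma 1.5.3; so you have identified both standard routes.
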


However, in order to deal with measure-valued initial conditions, we need to extend the above criteria and the lemma as follows; See Remark \ref{R:LocalAnalize} below for the reason that our arguments fail when applying Theorem \ref{T:Density}.
Fix some measurable set $S\subseteq [0,\infty)\times\R$.
For $F$ in the form \eqref{E:Ff}, define the following norm
\begin{align}
\Norm{F}_{k,p,S}:=\left(\E\left(|F|^p\right)+\sum_{j=1}^k\E\left[
\left(\int_{S^{j}}\left(D_{z_1}\cdots D_{z_j}F\right)^2 \ud
z_1\dots\ud z_j\right)^{p/2}
\right]\right)^{1/p},
\end{align}
with respect to which one can define the spaces $\D^{k,p}_S$ as above. 
By convention, when $k=0$, $\Norm{F}_{0,p,S}=\Norm{F}_p$.
Define
$\D^{\infty}_S := \cap_{k,p} \D^{k,p}_S$.
\begin{definition}
We say that a random vector $F=(F^1,\dots,F^d)$ is {\it nondegenerate with
respect to $S$} if it satisfies the following conditions:
\begin{itemize}[parsep=0ex,topsep=1ex]
 \item[(1)] $F^i\in \D^{\infty}_S$ for all $i=1,\dots,d$.
 \item[(2)] The {\it localized  Malliavin matrix}
 \begin{align}
\sigma_{F,S} := \left(
\InPrd{D F^i,DF^j}_{L^2(S)}
\right)_{1\le i,j\le d}
\end{align}
satisfies $\E\left[\left(\det\sigma_{F,S}\right)^{-p}\right]<\infty$ for all
$p\ge 2$.
\end{itemize}
\end{definition}

\begin{theorem}\label{T:Density2}
Suppose that $F=(F^1,\dots,F^d)$ is a random
vector whose components are in $\D^{1,2}$. 
Then
\begin{enumerate}[parsep=0ex,topsep=1ex]
 \item[(1)] If $\det(\sigma_{F,S})>0$ almost surely, the law of $F$ is absolutely
continuous with respect to the Lebesgue measure.
 \item[(2)] If $F^i\in\D^\infty_S$ for each $i=1,\dots,d$ and
$\E\left[\left(\det\sigma_{F,S}\right)^{-p}\right]<\infty$
 for all $p\ge 1$, then $F$ has a smooth  density.
\end{enumerate}
\end{theorem}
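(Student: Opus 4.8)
The plan is to deduce Theorem~\ref{T:Density2} from the classical criteria in Theorem~\ref{T:Density} by disintegrating the noise over $S$ and its complement. Since two disjoint Borel subsets of $[0,\infty)\times\R$ carry uncorrelated, hence independent, increments of $W$, the orthogonal splitting $L^2([0,\infty)\times\R)=L^2(S)\oplus L^2(S^c)$ realizes the underlying Gaussian space as a product $(\Omega,\calF,\bbP)=(\Omega_1\times\Omega_2,\calF_1\otimes\calF_2,\bbP_1\otimes\bbP_2)$, with $\Omega_1$ carrying the white noise $A\mapsto W(A\cap S)$ and $\Omega_2$ carrying $A\mapsto W(A\cap S^c)$. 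Writing $\calG:=\calF_2$ and, for $\omega_2\in\Omega_2$, $F_{\omega_2}:=F(\cdot,\omega_2)$ (a random vector on $\Omega_1$), the first thing I would record is the standard commutation property of Malliavin calculus on product Gaussian spaces: for $\bbP_2$-a.e.\ $\omega_2$, if $F\in\D^{1,2}$ then $F_{\omega_2}\in\D^{1,2}(\Omega_1)$ and its Malliavin derivative is $z\mapsto(D_zF)(\cdot,\omega_2)$, $z\in S$; more generally $\Norm{F}_{k,p,S}^p=\int_{\Omega_2}\Norm{F_{\omega_2}}_{k,p}^p\,\bbP_2(\ud\omega_2)$ with the right-hand norm taken on $\Omega_1$, so that the Malliavin matrix of $F_{\omega_2}$ on $\Omega_1$ is exactly $\sigma_{F,S}(\cdot,\omega_2)$.

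From here, Fubini and the tower property transfer each hypothesis to the slices: $F^i\in\D^\infty_S$ gives $F^i_{\omega_2}\in\D^\infty(\Omega_1)$ for $\bbP_2$-a.e.\ $\omega_2$; $\det\sigma_{F,S}>0$ a.s.\ gives $\det\sigma_{F_{\omega_2}}>0$ $\bbP_1$-a.s.\ for $\bbP_2$-a.e.\ $\omega_2$; and $\E[(\det\sigma_{F,S})^{-p}]<\infty$ for all $p$ gives $\E_{\bbP_1}[(\det\sigma_{F_{\omega_2}})^{-p}]<\infty$ for all $p$ and $\bbP_2$-a.e.\ $\omega_2$. Statement~(1) is then immediate: Theorem~\ref{T:Density}(1) applied on $\Omega_1$ shows that the law of $F_{\omega_2}$ is absolutely continuous for $\bbP_2$-a.e.\ $\omega_2$, and since the law of $F$ is the mixture $\int_{\Omega_2}\calL(F_{\omega_2})\,\bbP_2(\ud\omega_2)$ (the regular conditional distribution of $F$ given $\calG$), it is absolutely continuous as an average of absolutely continuous measures.

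For statement~(2) I would argue through integration by parts rather than quoting Theorem~\ref{T:Density}(2) directly. For $\bbP_2$-a.e.\ $\omega_2$, the machinery behind Theorem~\ref{T:Density}(2) (see \cite{Nualart06}) produces, for every multi-index $\beta$, a weight $H^S_\beta(\cdot,\omega_2)\in\bigcap_{p\ge1}L^p(\Omega_1)$ built algebraically from $F_{\omega_2}$, its $S$-derivatives and the entries of $\sigma_{F_{\omega_2}}^{-1}$, such that $\E_{\bbP_1}[\partial^\beta\phi(F_{\omega_2})]=\E_{\bbP_1}[\phi(F_{\omega_2})\,H^S_\beta(\cdot,\omega_2)]$ for all $\phi\in C_b^\infty(\R^d)$. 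Since the $L^p(\Omega_1)$-norm of $H^S_\beta(\cdot,\omega_2)$ is bounded by products of $\D^{k,q}(\Omega_1)$-norms of the $F^i_{\omega_2}$ and $L^r(\Omega_1)$-norms of $(\det\sigma_{F_{\omega_2}})^{-1}$, all of which lie in $\bigcap_{p}L^p(\Omega_2)$ by the hypotheses, H\"older's inequality in $\omega_2$ yields $H^S_\beta\in\bigcap_{p\ge1}L^p(\Omega)$. Integrating the slicewise identity over $\Omega_2$ gives the global formula $\E[\partial^\beta\phi(F)]=\E[\phi(F)\,H^S_\beta]$ for every $\beta$, and the standard lemma that integration by parts of all orders forces a $C^\infty$ density (a Fourier/Sobolev argument, as in \cite{Nualart06}) completes the proof.

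The main obstacle is the careful bookkeeping in the two foundational points: (i) that the Malliavin derivative genuinely commutes with the product disintegration, so that the slicewise derivative on $\Omega_1$ is $D$ restricted to $S$ --- one must be attentive here precisely because $F$ need not be $\calF_1$-measurable --- and (ii) that the hypotheses are uniform enough in their exponents (``$\in\D^{k,p}_S$ for all $k,p$'', ``$(\det\sigma_{F,S})^{-1}\in L^p$ for all $p$'') to survive Fubini and H\"older in the $\omega_2$ variable. An equivalent route, bypassing conditioning, is to repeat the proof of Theorem~\ref{T:Density} verbatim with $L^2([0,\infty)\times\R)$ replaced throughout by the Cameron--Martin space $L^2(S)$: every ingredient (Meyer's inequalities, continuity of the divergence, the chain rule) is intrinsic to the chosen subspace, so the argument localizes automatically, but the disintegration makes the reduction most transparent.
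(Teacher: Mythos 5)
Your proof follows the same overall strategy as the paper: split the Gaussian space as a product $\Omega_S\times\Omega_{S^c}$ (your $\Omega_1\times\Omega_2$), record that the slicewise Malliavin derivative on $\Omega_S$ coincides with $\one_S\cdot D$ (the paper's identity \eqref{E:DoneD}) together with the norm identity $\Norm{F}_{k,p,S}^p=\E\bigl[\Norm{F_{\omega_2}}_{k,p}^p\bigr]$, transfer the hypotheses to $\bbP_2$-a.e.\ slice, and apply the classical criterion slicewise. The small deviations are worth noting. For part~(1) the paper simply observes it is ``a restatement of part~(1) of Theorem~\ref{T:Density}'' --- the point being $\sigma_F=\sigma_{F,S}+\sigma_{F,S^c}$ with both summands positive semidefinite, so $\det\sigma_F\ge\det\sigma_{F,S}>0$ and Theorem~\ref{T:Density}(1) applies to $F$ directly on the full space; your route via the mixture of slicewise laws reaches the same conclusion but does more work. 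For part~(2) the paper stops at ``apply Theorem~\ref{T:Density}(2) on each slice and integrate over $\omega'$,'' leaving implicit the justification that the $\omega'$-average of the slice densities is itself $C^\infty$. Your elaboration --- constructing the integration-by-parts weights $H_\beta^S(\cdot,\omega_2)$, checking via H\"older in $\omega_2$ that they lie in $\bigcap_p L^p(\Omega)$, and then invoking the standard ``global IBP of all orders implies a $C^\infty$ density'' lemma --- is exactly the mechanism that makes that last step rigorous. So you have not taken a genuinely different route, but you have filled in the one place where the paper's proof is terse.
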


\begin{proof}
We will only prove part (2). 
We use $\calB(A)$ to denote all Borel subsets of $A\subseteq(\R^d)$.
Part (1) is a restatement of part (1) of Theorem \ref{T:Density}.
Because $\{W(A), A\in \calB(S)\}$ and $\{W(B), B\in \calB(S^c)\}$ are independent, we may assume that we work on a product space $\Omega_S\times \Omega_{S^c}$. 
Let $D^{S}$ be the Malliavin derivative with respect to $\{W(A), A\in \calB(S)\}$.
For $k\ge 1$ and $p\ge 2$, let $\D^{k,p}_{S,*}$ be the space completed by the {\em smooth cylindrical random variables restricted on $S$} with respect to the norm
\begin{align}
\Norm{F}_{k,p,S,*}:=\left(\E\left(|F|^p\right)+\sum_{j=1}^k\E\left[
\left(\int_{S^{j}}\left(D_{z_1}^S\cdots D_{z_j}^S F\right)^2 \ud
z_1\dots\ud z_j\right)^{p/2}
\right]\right)^{1/p}.
\end{align}
Here, by smooth cylindrical random variables restricted on $S$, we mean any random variable of the following form: 
\[
F=f(W(A_1),\dots,W(A_n)),\quad \text{$A_1,\dots A_n\in \calB(S)$}, 
\]
where $f$ belongs to $C_p^\infty(\R^n)$ ($f$ and all its partial derivatives have polynomial growth).
Similarly, define the space $\D^{\infty}_{S,*}=\cap_{k\ge 1,p\ge 2} \D^{k,p}_{S,*}$.

We first claim that  
\begin{align}\label{E:DoneD}
D^S_z F = \one_S(z) \: D_z F \quad\text{for any $F\in \D^{1,2}$.}
\end{align}
In fact, let $F$ be a smooth and cylindrical random variables without restrictions (i.e., restricted on $[0,\infty)\times\R$) of the following form 
\[
F=f(W(B_1),\dots,W(B_n)), \quad B_1,\dots,B_n\in\calB([0,\infty)\times\R),\: f \in C_p^\infty(\R^n).
\]
Noticing that 
\[
F=f(W(B_1\cap S)+W(B_1\cap S^c),\dots,W(B_n\cap S)+W(B_n\cap S^c)),
\]
we have that
\[
D^S_z F = \sum_{i=1}^n \frac{\partial f}{\partial x_i} 
\left(W(B_1),\dots, W(B_n)\right) \one_{B_i\cap S}(z)
= \one_S(z) \: D_z F.
\]
As a consequence of \eqref{E:DoneD}, we have that for all $F\in \D^{k,p}_S$,
\begin{align}
\Norm{F}_{k,p,S,*}:=\left(\E^S\left(|F|^p\right)+\sum_{j=1}^k\E^S\left[
\left(\int_{S^{j}}\left(D_{z_1}\cdots D_{z_j} F\right)^2 \ud
z_1\dots\ud z_j\right)^{p/2}
\right]\right)^{1/p},
\end{align}
and 
\begin{align}\label{E:TwoNorms}
 \Norm{F}_{k,p,S}^p = \E \left[\Norm{F}_{k,p,S,*}^p\right],
\end{align}
where $\E^{S}$ is the expectation with respect to $\Omega_{S}$.

Now we claim that $F^j(\cdot,\omega')\in \D_{S,*}^\infty$ for almost all $\omega'\in \Omega_{S^c}$.
Actually, since $F\in \D^{k,p}_S$, one can find a sequence of smooth and cylindrical random variables $\{F^j_{n},n\ge 1\}$ such that $F^j_{n}$ converges to $F^j$ in the norm $\Norm{\cdot}_{k,p,S}$. By \eqref{E:TwoNorms}, one can find a subsequence $\{F^j_{n_k},k\ge 1\}$ such that $F^j_{n_k}$ converges to $F^j$ in the norm $\Norm{\cdot}_{k,p,S,*}$ almost surely on $\Omega_{S^c}$ as $k\rightarrow\infty$. Since the operator $D^S$ is closable from $L^p(\Omega_S)$ to $L^p(\Omega_S;L^2(S))$, one can conclude that $F^j\in \D_{S,*}^{k,p}$ almost surely on $\Omega_{S^c}$.
Finally, because $k\ge 1$ and $p\ge 2$ are arbitrary, this claim follows.

The condition $\E\left[(\det \sigma_{F,S})^{-p}\right]<\infty$ implies that $\E^{S}\left[(\det \sigma_{F,S})^{-p}\right]<\infty$ for almost all $\omega'\in \Omega_{S^c}$. Now we can apply the usual criterion, i.e., part (2) of Theorem \ref{T:Density}, to obtain a smooth density for $F(\cdot,\omega')$ for almost all $\omega'$. Finally, integrating with respect to $\omega'$ produces the desired smooth density for $F$.
This completes the proof of Theorem \ref{T:Density2}.
\end{proof}

The following lemma provides us some sufficient conditions for proving $F\in\D^{n,p}_S$. It is an extension of Lemma \ref{L:Dinfty0}. 
We need to introduce some operators.
Notice that one can obtain the orthogonal decomposition $L^2(\Omega_S)=\oplus_{n=0}^\infty \calH_{n,S}$ where $\calH_{n,S}$ is the $n$-th Wiener Chaos associated to the Gaussian family $\{W(A),A\in\calB(S)\}$. Let $J_{n,S}$ be the orthogonal projection on the $n$-th Wiener chaos. Define the {\em Ornstein-Uhlenbeck semigroup restricted on $S$} on $L^2(\Omega_S)$ by 
\[
T_{t,S}(F) =\sum_{n=0}^\infty  e^{-nt} J_{n,S} F.
\]
Or one can equivalently define this operator through {\it Mehler's formula} (see, e.g., \cite[Proposition 3.1]{Nualart09}):
\[
T_{t,S}(F) =\E'\left(F\left(e^{-t}W+\sqrt{1-e^{-2t}}W'\right)\right),\quad\text{for all $F\in L^2(\Omega_S)$},
\]
where $W = \{W(A),A\in\calB(S)\}$, $W'$ is an independent copy of $W$, and $\E'$ is the expectation with respect to $W'$.

\begin{lemma}\label{L:Dinfty}
Let $\{F_m,m\ge 1\}$ be a sequence of random variables converging to $F$ in
$L^p(\Omega)$ for some $p>1$. Suppose that $\sup_{m}\Norm{F_m}_{n,p,S}<\infty$
for some integer $n\ge 1$. Then $F\in\D_S^{n,p}$.
\end{lemma}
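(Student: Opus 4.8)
The plan is to mimic the classical proof of Lemma \ref{L:Dinfty0} (Lemma 1.5.3 in \cite{Nualart06}), replacing the global objects by their $S$-localized counterparts. The key technical device is the Ornstein–Uhlenbeck semigroup restricted on $S$, $T_{t,S}$, introduced just above. First I would recall that for each integer $n\ge 1$ and each $p>1$ the operator $D^{n}_S := (D^S)^{\otimes n}$ (equivalently, integration over $S^j$ of the iterated derivatives) is closable from $L^p(\Omega)$ into $L^p(\Omega;L^2(S^n))$; this is immediate from closability of $D^S$ on $L^p(\Omega_S)$ together with the product structure $\Omega=\Omega_S\times\Omega_{S^c}$ and Fubini, exactly as in \eqref{E:TwoNorms}. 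The statement to prove is then: a sequence $F_m\to F$ in $L^p(\Omega)$ with $\sup_m\Norm{F_m}_{n,p,S}<\infty$ forces $F\in\D^{n,p}_S$.

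The main step is the uniform-bound-plus-convergence argument. Since $\sup_m \E\bigl[\bigl(\int_{S^j}(D_{z_1}\cdots D_{z_j}F_m)^2\,\ud z\bigr)^{p/2}\bigr]<\infty$ for each $j=1,\dots,n$, the sequences $\{D^j_S F_m\}_m$ are bounded in $L^p(\Omega;L^2(S^j))$, a reflexive Banach space for $p>1$. Hence, passing to a subsequence, each $D^j_S F_m$ converges weakly in $L^p(\Omega;L^2(S^j))$ to some $G_j$; and since $F_m\to F$ in $L^p(\Omega)$, the closability of $D^j_S$ (upgraded to weak–weak closability, which holds because the graph of a closed densely defined operator between reflexive spaces is weakly closed) identifies $G_j=D^j_S F$. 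This shows $F$ lies in the domain of $D^j_S$ for every $j\le n$, with $\Norm{F}_{n,p,S}\le\liminf_m\Norm{F_m}_{n,p,S}<\infty$ by weak lower semicontinuity of the norm, i.e. $F\in\D^{n,p}_S$.

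An alternative route, which perhaps fits the paper's setup more cleanly because $T_{t,S}$ is already introduced, is to argue fiberwise: by \eqref{E:TwoNorms} and extraction of a subsequence one gets, for $\bbP$-a.e. $\omega'\in\Omega_{S^c}$, that $F_m(\cdot,\omega')\to F(\cdot,\omega')$ in $L^p(\Omega_S)$ with $\sup_m\Norm{F_m(\cdot,\omega')}_{n,p,S,*}<\infty$; then one applies the genuinely classical Lemma \ref{L:Dinfty0} on the probability space $\Omega_S$ to conclude $F(\cdot,\omega')\in\D^{n,p}_{S,*}$ for a.e. $\omega'$, and finally integrates in $\omega'$ using \eqref{E:TwoNorms} to recover $F\in\D^{n,p}_S$. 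I expect the main obstacle to be bookkeeping rather than substance: one must be careful that the uniform bound $\sup_m\Norm{F_m}_{n,p,S}<\infty$ combined with $L^p(\Omega)$-convergence really does pass (along a common subsequence, for all $j=1,\dots,n$ simultaneously) to a.e.\ convergence of the $\Omega_{S^c}$-fibers in the $\Norm{\cdot}_{n,p,S,*}$ norm, and that the restricted derivative $D^S$ inherits closability fiberwise; both are straightforward given \eqref{E:DoneD} and \eqref{E:TwoNorms}, but need to be stated. No new idea beyond \eqref{E:DoneD}–\eqref{E:TwoNorms} and Lemma \ref{L:Dinfty0} is required.
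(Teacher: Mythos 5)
Your Route~1 is correct and takes a genuinely different path from the paper's proof. The paper proceeds via the Ornstein--Uhlenbeck semigroup $T_{t,S}$ introduced just above the lemma: after extracting the weak limits $\Theta_k$ of $D^{k,S}F_m$ in $L^p(\Omega;\calH^{\otimes k})$ (same as your first step), it shows $T_{t,S}F\in\D^{n,p}_S$ for each $t>0$ using the smoothing estimate $\Norm{\,\Norm{D^{k,S}(T_{t,S}F_n-T_{t,S}F_m)}_{\calH^{\otimes k}}}_p\le C_p t^{-k/2}\Norm{F_n-F_m}_p$, identifies $D^{k,S}T_{t,S}F=e^{-kt}T_{t,S}\Theta_k$ by duality, and then lets $t\downarrow 0$, invoking closedness of $D^{k,S}$ along a \emph{strongly} convergent family $T_{1/m,S}F\to F$. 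You instead pass directly to the limit along the weakly convergent subsequence: the joint graph of $(D^S,\dots,(D^S)^n)$ on $\D^{n,p}_S$ is a norm-closed linear subspace of a product of reflexive $L^p$ spaces, hence weakly closed by Mazur, so the weak limit $(F,\Theta_1,\dots,\Theta_n)$ lies in the graph. Both proofs rely on closability of $D^{k,S}$ and both need the extracted weak limits $\Theta_k$; the tradeoff is that the paper buys a strongly convergent approximating family with the OU-semigroup estimates of \cite{Nualart09}, while you trade that machinery for one standard fact of functional analysis (a norm-closed convex set is weakly closed). Your route is shorter and arguably cleaner; the paper's route is more self-contained relative to the classical Malliavin-calculus toolbox it has just set up.

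Your Route~2, however, has a real gap and is not just a matter of bookkeeping. To apply Lemma~\ref{L:Dinfty0} fiberwise on $\Omega_S$ one needs, for a.e.\ $\omega'\in\Omega_{S^c}$, both $F_m(\cdot,\omega')\to F(\cdot,\omega')$ in $L^p(\Omega_S)$ (available a.e.\ along a subsequence) \emph{and} $\sup_m\Norm{F_m(\cdot,\omega')}_{n,p,S,*}<\infty$. The hypothesis only gives $\sup_m\E\bigl[\Norm{F_m(\cdot,\omega')}_{n,p,S,*}^p\bigr]<\infty$, and a uniform $L^p$-bound does not yield an a.e.\ uniform pointwise bound, even along a subsequence: think of $g_m=m^{1/p}\one_{I_m}$ on $[0,1]$ with $|I_m|=1/m$ and the $I_m$ arranged (using $\sum 1/m=\infty$) so that every point is covered infinitely often; then $\sup_m\E[g_m^p]=1$ but $\sup_m g_m(\omega')=\infty$ for every $\omega'$. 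One can still try to salvage the fiberwise approach via Fatou and $\omega'$-dependent subsequences, but then the concluding step — passing from $F(\cdot,\omega')\in\D^{n,p}_{S,*}$ a.e.\ with integrable norm to $F\in\D^{n,p}_S$ — requires a separate argument identifying $\D^{n,p}_S$ with $L^p(\Omega_{S^c};\D^{n,p}_{S,*})$, which is not established in the text. Since Route~1 is your main argument and it is sound, this does not undermine the proposal, but the characterization of Route~2 as ``straightforward given \eqref{E:DoneD}--\eqref{E:TwoNorms}'' is too optimistic.
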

\begin{proof}
Fix some integer $n\ge 1$. Set $\calH=L^2(S)$.
The condition $\sup_{m}\Norm{F_m}_{n,p,S}<\infty$ implies that
$\{D^k F_m, m\ge 1\}$ is bounded in $L^p(\Omega;\calH^{\otimes k})$ for all $k=1,\dots, n$.
Hence, one can find a subsequence $\{F_{m_i},i\ge 1\}$ such that 
$D^kF_{m_i}$ converges weakly to $\Theta_k\in L^p(\Omega;\calH^{\otimes k})$ for all $k=1,\dots,n$, i.e., for all $\beta_k\in L^q(\Omega;\calH^{\otimes k})$, $1/p+1/q=1$, 
\begin{align}\label{E:betaDF}
\lim_{i\rightarrow\infty} 
\E\left[\InPrd{\beta_k, D^kF_{m_i}}_{\calH^{\otimes k}}\right]
= \E\left[\InPrd{\beta_k,\Theta_k}_{\calH^{\otimes k}}\right],
\quad\text{for $k=1,\dots, n$.}
\end{align}
In the following, we will use the sequence $\{F_m,m\ge 1\}$ itself to denote the subsequence $\{F_{m_i},i\ge 1\}$ for simplicity.

Let $t>0$. It is clear that $T_{t,S}F_m$ converges to $T_{t,S}F$ in $L^p(\Omega)$ as $m\rightarrow\infty$. 
On the other hand, by Proposition 3.8 of \cite{Nualart09}, for all $k\in\{1,\dots,n\}$, 
\[
\Norm{\: \Norm{D^{k,S}\left(T_{t,S}F_n-T_{t,S}F_m\right)}_{\calH^{\otimes k}}}_p
\le C_p t^{-k/2} \Norm{F_n-F_m}_p\rightarrow 0\quad\text{as $n,m\rightarrow\infty$}.
\]
Hence, one can conclude that $T_{t,S}F\in \D^{n,p}_S$ and 
\begin{align}\label{E:DkSTtS}
D^{k,S} T_{t,S}F = \lim_{m\rightarrow\infty}D^{k,S} T_{t,S} F_m \qquad\text{in $L^p(\Omega)$ for all $k=1,\dots,n.$}
\end{align}

By Proposition 3.7 of \cite{Nualart09}, we have that $D^{k,S}  T_{t,S}  F_m= e^{-k t}T_{t,S}  D^{k,S}  F_m$ for all $k=1,\dots,n$. 
Then, for all $\beta_k\in L^q(\Omega;\calH^{\otimes k})$, $1/p+1/q=1$, $k=1,\cdots, n$, we have that
\begin{eqnarray*}
\E\left[\InPrd{D^{k,S}T_{t,S} F, \beta_k}_{\calH^{\otimes k}}\right] 
&\stackrel{\eqref{E:DkSTtS}}{=}&\lim_{m\rightarrow\infty}
\E\left[\InPrd{D^{k,S}T_{t,S} F_m, \beta_k}_{\calH^{\otimes k}}\right] \\
&=&
\lim_{m\rightarrow\infty}
\E\left[\InPrd{e^{-kt}T_{t,S} D^{k,S} F_m, \beta_k}_{\calH^{\otimes k}}\right] \\
&=&
\lim_{m\rightarrow\infty}
\E\left[\InPrd{D^{k,S} F_m, e^{-kt}T_{t,S} \beta_k}_{\calH^{\otimes k}}\right] \\
&\stackrel{\eqref{E:betaDF}}{=}&
\E\left[\InPrd{\Theta_k, e^{-kt}T_{t,S} \beta_k}_{\calH^{\otimes k}}\right] \\
&=&
\E\left[\InPrd{e^{-kt}T_{t,S}\Theta_k,  \beta_k}_{\calH^{\otimes k}}\right] .
\end{eqnarray*}
Therefore, 
\[
D^{k,S} T_{t,S}F = e^{-kt}T_{t,S} \Theta_k,\quad\text{for all $k=1,\dots,n$.}
\]
Because $T_{1/m,S}F$ converges to $F$ as $m\rightarrow \infty$ in $L^p(\Omega)$ and 
\[
D^{k,S} T_{1/m,S} F =e^{-k/m} T_{1/m,S}\Theta_k \rightarrow \Theta_k \quad\text{as $m\rightarrow\infty$ in $L^p(\Omega)$ for all $k=1,\dots,n$,}
\]
we see that $D^{k,S}F=\Theta_k\in L^p(\Omega;\calH^{\otimes k})$ for all $k=1,\dots,n$, which proves that $F\in\D_S^{n,p}$.
\end{proof}

Finally, we refer to Nualart \cite{Nualart06} for a complete presentation of the Malliavin calculus and its applications and to Hu \cite{Hu16Book} for the general analysis on Gaussian space.


\section{Nonnegative moments: Proof of Theorem \ref{T:NegMom}}\label{S:NegMom}

In this section, we will prove Theorem \ref{T:NegMom}, which is an improvement of part (2) of the following theorem. Part (1) of this theorem will also be used in our arguments.
\begin{theorem}[Theorem 1.4 of \cite{ChenKim14Comparison}]
\label{T:NegMom-old}
Let $u(t,x)$ be the solution to \eqref{E:SPDE-Var} starting from a deterministic and nonnegative measure $\mu$ that satisfies \eqref{E:InitD}. Then we have the following two statements:\\
(1) If $\mu>0$, then for any compact set $K\subseteq \R_+^*\times\R$,
there exists some finite constant $B>0$ which only depends on $K$ such that for small enough $\epsilon>0$,
\begin{align}\label{E:Rates1}
P\left(\inf_{(t,x)\in K}u(t,x) <\epsilon \right)&\le \exp\left(-B |\log(\epsilon)|^{1-1/a} \log\left(|\log(\epsilon)|\right)^{2-1/a}\right).
\end{align}
(2) If  $\mu(\ud x)=f(x)\ud x$ with $f\in C(\R)$, $f(x)\ge 0$ for all $x\in\R$ and $\spt{f}\neq \emptyset$,
then for any compact set $K\subseteq\spt{f}$ and any $T>0$,
there exists finite constant $B>0$ which only depends on $K$ and $T$ such that for all small enough $\epsilon>0$,
\begin{align}
P\left(\inf_{(t,x)\in \: ]0,T]\times K}u(t,x) <\epsilon\right)\le \exp\left(-B \left\{ |\log(\epsilon)|\cdot \log\left(|\log(\epsilon)|\right)\right\}^{2-1/a}\right).
\end{align}
\end{theorem}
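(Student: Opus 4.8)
\emph{Proof strategy.}
The plan is to prove both parts by the iterative restart argument of Mueller and Nualart \cite{MuellerNualart08}, adapted to the Riesz--Feller operator $\Dxa$ and to measure-valued initial data. Two qualitative facts set it up. First, by the comparison principle of \cite{ChenKim14Comparison} together with $\sigma(t,x,0)=0$ and $\mu\ge 0$, the solution of \eqref{E:SPDE-Var} is nonnegative, so after each restart the deterministic part of the mild formulation \eqref{E:mild} is bounded below by $J_0$ from \eqref{E:J0}, and when $\mu>0$ the strict positivity of the stable kernel (the lower bound in \eqref{E:BddG}, or \eqref{E:Gaussian} when $\alpha=2$) gives $J_0(t,x)>0$ for all $t>0$, $x\in\R$. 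Second, all the $\alpha$-dependence comes from the scaling \eqref{E:ScaleG}: \eqref{E:BddG} gives $\int_0^h\!\int_\R G(s,y)^2\,\ud s\,\ud y\le C\,h^{1-1/\alpha}$ and $\int_{|z|\ge r}G(h,z)\,\ud z\le C\,h\,r^{-\alpha}$, so that over a time window of length $h$ the stochastic convolution has size of order $h^{(1-1/\alpha)/2}$ while the heat drift ``leaks'' at most $C\,h\,r^{-\alpha}$ past a buffer of width $r$; it is these two estimates that will produce the exponents $1-1/\alpha$ and $2-1/\alpha$.

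The core is a one-step (conditional) small-ball estimate. Fix a window $[a,a+h]$, nested intervals $I'\subset I$ with $\operatorname{dist}(I',\R\setminus I)=r$, and levels $\ell>\ell'>0$, and restart \eqref{E:mild} at time $a$ using the time-shifted white noise $W^{\ast}$ of \eqref{E:W*}. On the event $\{\inf_{I}u(a,\cdot)\ge\ell\}$ one has
\[
u(s,x)\ \ge\ \ell\,\Big(1-\!\int_{\R\setminus I}\!G(s-a,x-y)\,\ud y\Big)\;+\;N_{a,s}(x),\qquad x\in I',\ a\le s\le a+h,
\]
where $N_{a,s}(x)=\iint_{[a,s]\times\R}G(s-v,x-y)\,H(v,y)\sigma(v,y,u(v,y))\,W^{\ast}(\ud v\,\ud y)$, and by the leakage bound the first term is $\ge \ell(1-C h r^{-\alpha})$. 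To control $N$ one stops $u$ at the first time it exceeds a level $R$ on an enlargement of $I$; on that event $|H\sigma(u)|\le\Lambda R$ by \eqref{E:Lambda}, so \eqref{E:BGD} and the $L^2$-bound for $G^2$ give $\Norm{N_{a,s}(x)}_p^2\le C p\,\Lambda^2 R^2 h^{1-1/\alpha}$; a Kolmogorov/Garsia chaining in $(s,x)$ using the smoothness of $G$ then yields $\bbP\big(\sup_{a\le s\le a+h,\,x\in I'}|N_{a,s}(x)|>\eta\big)\le\exp\!\big(-c\,\eta^2/(\Lambda^2 R^2 h^{1-1/\alpha})\big)$, while the probability that the stopping time has fired is bounded via the positive-moment bounds \eqref{E:K_p}--\eqref{E:MomentT}. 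Combining, if $\ell(1-C h r^{-\alpha})-\eta\ge\ell'$ then $\bbP\big(\inf_{[a,a+h]\times I'}u<\ell'\big)$ is at most the sum of these two bounds.

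One then telescopes. In case (1) enclose $K$ in a box $[t_1,t_2]\times K_x$ (with $K_x\subset\R$ compact); in case (2) one may take $t_1$ arbitrarily small and $t_2=T$. Partition $[0,t_2]$ into finitely many windows, choose nested intervals $I_0\supset\cdots\supset I_M\supseteq K_x$ whose buffer widths sum to less than the fixed room around $K_x$, and a decreasing sequence of levels $\ell_0>\cdots>\ell_M=\epsilon$; the number of windows, the window lengths, the levels, the buffer widths and the stopping level $R$ are all chosen as functions of $\epsilon$. The initialization produces $\inf_{I_0}u(t_1,\cdot)\ge\ell_0$: in case (2), $G(t,\cdot)*f\to f$ uniformly on a fixed neighbourhood of $K_x$ and the noise accumulated on $[0,t_1]$ is of order $t_1^{(1-1/\alpha)/2}\to 0$, so this holds off an event of probability $\exp(-c/t_1^{1-1/\alpha})$ that can be made negligible; in case (1), $\mu>0$ only yields a positive constant $\ell_0$ with $J_0$ bounded below on $[t_1,t_2]\times K_x$ while the noise on $[0,t_1]$ is of order one, so the initialization must itself be bootstrapped by a preliminary run of the one-step estimate, at the cost of one power of $|\log\epsilon|$. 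Summing the initialization failure and the $M$ one-step failures and optimizing the free parameters (in particular the number of windows $\asymp|\log\epsilon|/\log|\log\epsilon|$, and $R$ constant in case (2) versus $R$ slowly growing in case (1)) gives the bound $\exp(-B\{|\log\epsilon|\log|\log\epsilon|\}^{2-1/\alpha})$ of part (2) and the weaker $\exp(-B|\log\epsilon|^{1-1/\alpha}(\log|\log\epsilon|)^{2-1/\alpha})$ of \eqref{E:Rates1}.

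I expect the main obstacle to be exactly this quantitative tuning together with the one-step estimate under the weak structural hypotheses on $(H,\sigma)$: since no uniform-in-space upper bound on $u$ is available for measure-valued initial data, the martingale part can only be controlled after a stopping time whose level $R$ then re-enters and degrades the exponent, and one must balance $R$, the window lengths, the rate of decrease of the levels, and the buffer widths against one another, uniformly over the compact $K$ and over small $\epsilon$. A second, genuinely qualitative difficulty is the initialization: for a singular measure such as $\mu=\delta_{z_0}$ one cannot start arbitrarily close to $t=0$ on a fixed neighbourhood of $K_x$ (because $G(t,\cdot-z_0)$ concentrates there rather than converging to a positive density), which is precisely why part (1) must settle for the smaller exponent $1-1/\alpha$. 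Subsidiary technical points are the chaining estimate for $\sup|N_{a,s}(x)|$ and checking the uniform positive-moment bounds \eqref{E:K_p}--\eqref{E:MomentT} on the relevant regions when $\mu$ is a general measure or merely a continuous density.
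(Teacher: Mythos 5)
First, note that the paper itself does not prove this statement: it is quoted verbatim as Theorem 1.4 of \cite{ChenKim14Comparison}, so the only in-paper argument to measure your proposal against is the proof of the strengthened Theorem \ref{T:NegMom} in Section \ref{S:NegMom}. That proof does follow the Mueller--Nualart restart philosophy you adopt, but with two structural differences that matter. (i) The restarts occur at stopping times $T_k$, defined as the first time the infimum of $u$ over a growing space--time cone drops below $\beta^k$, rather than at deterministic windows whose lengths, levels and buffers are tuned to $\epsilon$. (ii) No truncation level $R$ appears at all: after each restart the solution is renormalized, $w_k=\beta^{1-k}u_k$, and the hypothesis \eqref{E:Lambda} together with $\sigma(t,x,0)=0$ makes $w_k$ solve an equation of the same type with the \emph{same} constant $\Lambda$; Lemma 4.2 of \cite{ChenKim14Comparison} then gives moment bounds of the form $Q^p\exp\big(Qp^{(2\alpha-1)/(\alpha-1)}\tau\big)$ uniformly in the level $k$, and the per-step probability bound \eqref{E:CondProb} is obtained by optimizing over $p$ as in \eqref{E:pA}. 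It is exactly this optimization in $p$, with $p\asymp(|\log\epsilon|\log|\log\epsilon|)^{1-1/\alpha}$, that produces the exponent $2-1/\alpha$ and the $\log\log$ factor in the stated rates.

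This is also where your plan has a genuine gap. Your one-step estimate controls the stochastic convolution by stopping $u$ at a level $R$ and invoking a sub-Gaussian bound $\exp\big(-c\,\eta^2/(\Lambda^2R^2h^{1-1/\alpha})\big)$; but the probability that this stopping time fires cannot be made small enough using only the positive-moment bounds \eqref{E:K_p}--\eqref{E:MomentT} unless $R$ grows with $|\log\epsilon|$, and once $R$ grows the parameter tuning you describe is no longer shown to yield \eqref{E:Rates1} or the part-(2) rate --- you name this circularity as the main obstacle but do not resolve it, and the claimed exponents are asserted rather than derived. The known resolution (used both in \cite{ChenKim14Comparison} and in the paper's proof of Theorem \ref{T:NegMom}) is to eliminate $R$ by multiplicative renormalization: restart from data bounded below by $\ell\one_I$, divide by $\ell$, exploit the scale invariance of the bound $|H\sigma(t,x,z)|\le\Lambda|z|$, obtain level-independent $p$-th moment bounds on the short window, and then optimize in $p$; without this step (or an equivalent) your telescoping delivers a bound with an unresolved dependence on $R$, not the stated rates. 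Two smaller points: the pointwise lower bound you write after a restart should be justified via the weak comparison principle (Lemma \ref{L:WeakComp}) rather than by manipulating the mild form directly, since the martingale term of the comparison solution is not your $N_{a,s}$; and your explanation of why part (1) only reaches the exponent $1-1/\alpha$ (bootstrapping the initialization ``at the cost of one power of $|\log\epsilon|$'') is plausible but remains heuristic as written.
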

\begin{remark}\label{R:NegMom}
As a consequence of  part (2) of Theorem \ref{T:NegMom-old}, we have that 
\begin{align}\label{E_:PinfK}
P\left(\inf_{x\in K}u(T,x) <\epsilon\right)\le \exp\left(-B \left\{ |\log(\epsilon)|\cdot \log\left(|\log(\epsilon)|\right)\right\}^{2-1/a}\right).
\end{align}
Theorem \ref{T:NegMom} improves both \eqref{E_:PinfK} and \eqref{E:Rates1} by both removing the condition $K\subseteq \spt{f}$ and increasing the power of $\log(1/\epsilon)$ from $1-1/\alpha$ to $2-1/\alpha$, respectively. This is achieved through a stopping time argument.
\end{remark}

Let $\Phi_\alpha(x)$ be the cumulative distribution function of $G(1,x)$, i.e.,
$\Phi_\alpha(x):=\int_{-\infty}^x G(1,y)\ud y$.
We need a slight extension of the weak comparison principle in \cite{ChenKim14Comparison}
with $\sigma(u)$ replaced by $H(t,x)\sigma(t,x,u)$.
The proof of following lemma \ref{L:WeakComp} is similar to that in \cite{ChenKim14Comparison}. We leave the details to the interested readers.

\begin{lemma}[Weak comparison principle]\label{L:WeakComp}
Let $u_i(t,x)$, $i=1,2$, be two solutions to \eqref{E:SPDE-Var} starting from initial measures $u_{0,i}$ that satisfy \eqref{E:InitD}, respectively.
If $u_{0,1}-u_{0,2}\ge 0$, then
\[
\bbP\Big(u_1(t,x)\ge u_2(t,x)\:\:\text{for all $t\ge 0$ and $x\in\R$}\Big)=1.
\]
\end{lemma}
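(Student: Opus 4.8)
The plan is to reduce to bounded, smooth initial data and then to run a Gronwall argument on a suitable nonlinear functional of the difference $v=u_2-u_1$, after regularizing the noise so that $v$ becomes a genuine function-valued semimartingale to which an It\^o formula applies. The structural input is the restriction \eqref{E:alphaDelta}: it forces the two coefficients $\tfrac{\Gamma(1+\alpha)}{\pi}\sin((\alpha\pm\delta)\pi/2)$ in the Riesz--Feller representation of $\Dxa$ to be nonnegative, so that $\Dxa$ generates a (possibly asymmetric) $\alpha$-stable L\'evy process and, in particular, satisfies the positive maximum principle. Beyond this, only the Lipschitz continuity of $\sigma$ in $z$ (uniform in $t,x$) and the boundedness and adaptedness of $H$ are used; the conditions $\sigma(t,x,0)=0$ and $|H\sigma|\le\Lambda|z|$ play no role here.

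\textbf{Step 1: reduction to bounded smooth data.} Write $u_{0,1}=u_{0,2}+\nu$ with $\nu\ge0$ a measure satisfying \eqref{E:InitD}. Choose bounded $C^\infty$ functions $g_n\to u_{0,2}$ and $h_n\ge0$ with $h_n\to\nu$ (e.g.\ $g_n=(\one_{[-n,n]}u_{0,2})*\phi_{1/n}$, similarly $h_n$), and set $u_{0,1}^{(n)}:=g_n+h_n$, $u_{0,2}^{(n)}:=g_n$, so that $u_{0,1}^{(n)}-u_{0,2}^{(n)}=h_n\ge0$ and both satisfy \eqref{E:InitD}. By \eqref{E:BddG} and dominated convergence, $(G(t,\cdot)*u_{0,i}^{(n)})(x)\to(G(t,\cdot)*u_{0,i})(x)$ for every $t>0$, $x\in\R$; combining this with the Burkholder inequality \eqref{E:BGD}, the estimate $|H\sigma(\cdot,u^{(n)}_i)-H\sigma(\cdot,u_i)|\le C\,|u^{(n)}_i-u_i|$, and Lemma \ref{L:Mom}, one gets $\|u_i^{(n)}(t,x)-u_i(t,x)\|_{L^2(\Omega)}\to0$ for each fixed $(t,x)$ with $t>0$. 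The comparison at $t=0$ is the hypothesis $u_{0,1}\ge u_{0,2}$, and for $t>0$ the solutions are a.s.\ jointly continuous; hence, once $u_1^{(n)}\ge u_2^{(n)}$ is known to hold identically a.s.\ for smooth data, passing to the limit along a subsequence gives $u_1(t,x)\ge u_2(t,x)$ a.s.\ for each $(t,x)$, and joint continuity upgrades this to a statement holding simultaneously for all $t>0$, $x\in\R$.

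\textbf{Step 2: comparison for bounded smooth data.} Replace the noise by its spatial mollification $\W_\epsilon(t,x):=\int_\R\phi_\epsilon(x-z)\,W(\ud t,\ud z)$, which is white in time and smooth in space, and let $u_{i,\epsilon}$ solve \eqref{E:SPDE-Var} with $\W$ replaced by $\W_\epsilon$; these have continuous, function-valued solutions. Set $v_\epsilon:=u_{2,\epsilon}-u_{1,\epsilon}$, which starts from $u_{0,2}-u_{0,1}\le0$ and solves $(\partial_t-\Dxa)v_\epsilon=b_\epsilon(t,x)\,\W_\epsilon$ with $b_\epsilon:=H\sigma(\cdot,u_{2,\epsilon})-H\sigma(\cdot,u_{1,\epsilon})$, $|b_\epsilon|\le C\,|v_\epsilon|$. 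Take $\psi(r):=(r_+)^3\in C^2(\R)$: it is convex, vanishes on $(-\infty,0]$, and $\psi''(r)\,r^2=6\,\psi(r)$. Take a weight $\rho(x):=(1+x^2)^{-q}$ with $q$ large enough that the formal $L^2$-adjoint $(\Dxa)^*$ (again a Riesz--Feller operator) satisfies the Lyapunov bound $(\Dxa)^*\rho\le C\rho$. Applying It\^o's formula to $t\mapsto\int_\R\psi(v_\epsilon(t,x))\,\rho(x)\,\ud x$ and taking expectations: the martingale term drops; the operator term $\int_\R\psi'(v_\epsilon)\,\Dxa v_\epsilon\,\rho\,\ud x$ is $\le C\int_\R\psi(v_\epsilon)\,\rho\,\ud x$ by convexity of $\psi$ together with the positive-maximum-principle structure of $\Dxa$ and the choice of $\rho$; and the It\^o-correction term $\tfrac{c_\epsilon}{2}\int_\R\psi''(v_\epsilon)\,b_\epsilon^2\,\rho\,\ud x$, with $c_\epsilon:=\|\phi_\epsilon\|_{L^2(\R)}^2$, is $\le C_\epsilon\int_\R\psi(v_\epsilon)\,\rho\,\ud x$ since $b_\epsilon^2\le C v_\epsilon^2$ and $\psi''(v_\epsilon)\,v_\epsilon^2=6\,\psi(v_\epsilon)$. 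All integrals are finite by the moment bounds \eqref{E:K_p}--\eqref{E:MomentT} (in their colored-noise form) and $\int_\R\rho<\infty$. Since $\int_\R\psi(v_\epsilon(0,x))\rho(x)\,\ud x=0$, Gronwall gives $\E\int_\R\psi(v_\epsilon(t,x))\rho(x)\,\ud x=0$ for all $t\ge0$, i.e.\ $u_{1,\epsilon}\ge u_{2,\epsilon}$. Finally $u_{i,\epsilon}(t,x)\to u_i(t,x)$ in $L^2(\Omega)$ as $\epsilon\downarrow0$ by a standard stability estimate based on \eqref{E:BGD}, which transfers the inequality; together with Step 1 this proves the lemma.

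\textbf{Main obstacle.} The delicate points all lie in Step 2: (i) justifying the It\^o formula for the space-integrated functional and, above all, controlling the contribution of the nonlocal operator --- whose representation carries the first-order correction terms $\mp\,\xi f'(x)$ that cannot be split off --- through the weighted estimate $\int_\R\psi'(v)\,\Dxa v\,\rho\,\ud x\le C\int_\R\psi(v)\,\rho\,\ud x$, which is precisely where the generator/positive-maximum-principle structure enforced by \eqref{E:alphaDelta} is indispensable; and (ii) the joint passage $\epsilon\downarrow0$ and $n\to\infty$ with measure-valued data, where the only uniform control on $u_i(t,x)$ is the $\calK$-kernel bound \eqref{E:K_bound}--\eqref{E:MomentT}, since \eqref{E:BddNorm} fails. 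A fully parallel alternative to Step 2 is to discretize space on $h\bbZ$, truncate the jump part of $\Dxa$, and observe that the resulting finite It\^o system has a quasi-monotone (cooperative) drift --- again by nonnegativity of the jump kernel --- so that the classical multidimensional SDE comparison theorem applies, followed by a continuum limit.
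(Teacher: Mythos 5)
The paper does not actually prove this lemma: it states that the proof ``is similar to that in \cite{ChenKim14Comparison}'' and leaves the details to the reader, so there is no in-text proof to compare against. Your proposal reproduces the standard Shiga-style argument that Chen--Kim use for the fractional case — smooth/bounded approximation of the data, mollified noise, It\^o formula applied to a weighted integral of a convex function of the difference, Kato's inequality for the L\'evy generator, and Gronwall — so you are on the same route.

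One concrete error in Step 2: your parameter choice for the weight goes in the wrong direction. For $\alpha\in(1,2)$ the operator $\Dxa$ has heavy jump tails $\nu(\ud\xi)\sim|\xi|^{-1-\alpha}$, and for $\rho(x)=(1+x^2)^{-q}$ the incoming-jump contribution gives $(\Dxa)^{*}\rho(x)\sim c\,|x|^{-1-\alpha}$ as $|x|\to\infty$ (a mass-transport term $\int_{|x|/2}^{2|x|}\rho(x-\xi)\,|\xi|^{-1-\alpha}\ud\xi$, independent of $q$ once $\rho\in L^1$). Since $\rho(x)\sim|x|^{-2q}$, the Lyapunov inequality $(\Dxa)^{*}\rho\le C\rho$ requires $2q\le 1+\alpha$, i.e.\ you must take $q$ \emph{small}: the admissible range is $q\in(1/2,(1+\alpha)/2]$ (nonempty because $\alpha>1$; lower bound for $\rho\in L^1$). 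Taking ``$q$ large enough'' makes $\rho$ decay faster than the nonlocal tail and destroys exactly the estimate you are relying on. It would also be cleaner to state Kato's inequality explicitly: the terms $\mp\xi\,v'(x)$ cancel in $\Dxa[\psi(v)]-\psi'(v)\,\Dxa v = \int_{\R}\bigl[\psi(v(x+\xi))-\psi(v(x))-\psi'(v(x))(v(x+\xi)-v(x))\bigr]\nu(\ud\xi)\ge 0$, so the representation with the first-order corrections is not an obstacle, contrary to what your ``main obstacle'' paragraph suggests.

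The remaining points are standard: the convergence $u^{(n)}_i\to u_i$ in $L^2(\Omega)$ pointwise in $(t,x)$ plus a.s.\ joint continuity does upgrade to the simultaneous inequality, and the $\epsilon$-dependent Gronwall constant is harmless because the initial value of the functional is zero. With the $q$ correction and Kato's inequality spelled out, the proof is sound and matches the approach the paper references.
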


The following lemma is used to initialize the iteration in the proof of Theorem \ref{T:NegMom}.

\begin{lemma}\label{L:1Init}
 For any $\alpha\in [1,2]$, $a<b$, $\gamma\ge 0$ and $T>0$ with $b-a>\gamma T$,  it holds that
 \begin{align*}
 0<\inf_{0\le t+s\le T} \inf_{a-\gamma(t+s)\le x\le b+\gamma (t+s)}(\one_{[a-\gamma s,b+\gamma s]}*G(t,\cdot))(x)
  \le 1.
 \end{align*}
\end{lemma}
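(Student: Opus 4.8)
The plan is to estimate the quantity $(\one_{[a-\gamma s,b+\gamma s]}*G(t,\cdot))(x) = \int_{a-\gamma s}^{b+\gamma s} G(t,x-y)\,\ud y$ from below, uniformly over the compact parameter set $\{(t,s,x): 0\le t+s\le T,\ a-\gamma(t+s)\le x\le b+\gamma(t+s)\}$. The upper bound by $1$ is immediate since $G(t,\cdot)$ is a probability density and we are integrating it over a subset of $\R$. For the lower bound, I would first reduce to the case $s=0$ and the worst-case window: by monotonicity of the integrand in the width of the interval, $(\one_{[a-\gamma s,b+\gamma s]}*G(t,\cdot))(x) \ge (\one_{[a,b]}*G(t,\cdot))(x)$, so it suffices to bound $\int_a^b G(t,x-y)\,\ud y$ from below over $0\le t\le T$ and $a-\gamma t\le x\le b+\gamma t$ (note the hypothesis $b-a>\gamma T$ guarantees this window is nonempty and the interval $[a-\gamma t, b+\gamma t]$ never degenerates).

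Next I would split according to whether $t$ is small or bounded away from zero. Using the scaling property \eqref{E:ScaleG}, $G(t,z)=t^{-1/\alpha}G(1,t^{-1/\alpha}z)$, a change of variables gives $\int_a^b G(t,x-y)\,\ud y = \int_{(x-b)t^{-1/\alpha}}^{(x-a)t^{-1/\alpha}} G(1,w)\,\ud w = \Phi_\alpha\big((x-a)t^{-1/\alpha}\big) - \Phi_\alpha\big((x-b)t^{-1/\alpha}\big)$, where $\Phi_\alpha$ is the CDF of $G(1,\cdot)$. Since $G(1,\cdot)$ is a strictly positive continuous density (this follows from its Fourier representation, or from the lower bound in \eqref{E:BddG} when $\alpha<2$, and from \eqref{E:Gaussian} when $\alpha=2$), $\Phi_\alpha$ is strictly increasing; moreover, because $a-\gamma t\le x\le b+\gamma t$ we have $(x-a)t^{-1/\alpha}\ge -\gamma t^{1-1/\alpha}\ge -\gamma T^{1-1/\alpha}$ and $(x-b)t^{-1/\alpha}\le \gamma t^{1-1/\alpha}\le \gamma T^{1-1/\alpha}$, so both arguments stay in a fixed bounded interval as $t\downarrow 0$ is approached along the relevant region — here I use $1-1/\alpha\ge 0$. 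For $t$ bounded below, say $t\in[t_0,T]$, the map $(t,x)\mapsto \int_a^b G(t,x-y)\,\ud y$ is continuous and strictly positive on the compact region, hence bounded below by a positive constant. For $t\in(0,t_0)$, one shows $(x-b)t^{-1/\alpha}$ and $(x-a)t^{-1/\alpha}$ are separated by at least $(b-a)t^{-1/\alpha}-2\gamma t^{1-1/\alpha}$, which for small $t$ is large, so the difference of CDF values tends to $1$; alternatively one argues directly that the mass $G(1,\cdot)$ assigns to the window $[(x-b)t^{-1/\alpha},(x-a)t^{-1/\alpha}]$, which always contains an interval of the form $[-\gamma T^{1-1/\alpha}, c]$ with $c\to+\infty$, stays bounded below.

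The only genuinely delicate point is the behavior as $t\downarrow 0$: one must confirm that the left endpoint of the rescaled integration window, $(x-b)t^{-1/\alpha}$, cannot run off to $+\infty$ and that the right endpoint cannot run off to $-\infty$, which would make the integral vanish. This is exactly where the constraint $a-\gamma(t+s)\le x\le b+\gamma(t+s)$ together with $1-1/\alpha\ge 0$ (i.e. $\alpha\ge 1$) is essential: it pins $(x-b)t^{-1/\alpha}\le \gamma t^{1-1/\alpha}\le \gamma T^{1-1/\alpha}$ and $(x-a)t^{-1/\alpha}\ge -\gamma T^{1-1/\alpha}$, so the window always contains a fixed neighborhood of $0$ (or an interval whose $G(1,\cdot)$-mass is bounded below by the mass near $0$), on which $G(1,\cdot)$ has a strictly positive minimum. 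Assembling the three regimes via a compactness/continuity argument yields a single positive lower bound, completing the proof.
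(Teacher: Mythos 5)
The upper bound and the $\Phi_\alpha$-representation are fine, and the "endpoints/unimodality plus $\alpha\ge 1$" idea is essentially what the paper uses; but your reduction to $s=0$ has a genuine gap. You correctly note the pointwise inequality $(\one_{[a-\gamma s,b+\gamma s]}*G(t,\cdot))(x)\ge (\one_{[a,b]}*G(t,\cdot))(x)$, and then jump to ``it suffices to bound $\int_a^b G(t,x-y)\,\ud y$ over $a-\gamma t\le x\le b+\gamma t$.'' That last step is unjustified: the infimum in the lemma is over the \emph{wider} window $a-\gamma(t+s)\le x\le b+\gamma(t+s)$, and a pointwise comparison of integrands does not let you shrink the $x$-window. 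If you keep the wider window, the reduced quantity is not bounded below: at $x=b+\gamma(t+s)$,
\begin{align*}
(\one_{[a,b]}*G(t,\cdot))\bigl(b+\gamma(t+s)\bigr)
=\int_{\gamma(t+s)}^{b-a+\gamma(t+s)}G(t,z)\,\ud z
=\int_{\gamma(t+s)\,t^{-1/\alpha}}^{(b-a+\gamma(t+s))\,t^{-1/\alpha}}G(1,w)\,\ud w,
\end{align*}
and with $s>0$ fixed the lower limit $\gamma(t+s)t^{-1/\alpha}\to+\infty$ as $t\downarrow 0$, so this tends to $0$. So the $s=0$ problem you reduce to is strictly harder on the wider window, and strictly easier on the narrower window, and neither one matches the original.

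What saves the original quantity is precisely the compensation between the widening of the integration interval $[a-\gamma s,b+\gamma s]$ and the widening of the $x$-window, and this must be tracked rather than discarded. The paper's argument does this: writing the convolution as $\Phi_\alpha\bigl((x-a+\gamma s)/t^{1/\alpha}\bigr)-\Phi_\alpha\bigl((x-b-\gamma s)/t^{1/\alpha}\bigr)$, it uses unimodality to pass to the two endpoints $x=a-\gamma(t+s)$ and $x=b+\gamma(t+s)$; there the ``inner'' arguments of $\Phi_\alpha$ collapse to $\mp\gamma t^{1-1/\alpha}$ (which are \emph{free of $s$}, and bounded since $\alpha\ge 1$), while the ``outer'' arguments $\pm\bigl((b-a)+\gamma t+2\gamma s\bigr)/t^{1/\alpha}$ only improve with $s$. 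Dropping the favorable $s$-dependence and then, by monotonicity, replacing $t$ with $T$ gives a lower bound $\min\bigl\{\Phi_\alpha(-\gamma T^{1-1/\alpha})-\Phi_\alpha\bigl((a-b)/T^{1/\alpha}\bigr),\ \Phi_\alpha\bigl((b-a)/T^{1/\alpha}\bigr)-\Phi_\alpha(\gamma T^{1-1/\alpha})\bigr\}$, positive because $\gamma T<b-a$. If you want to rescue your reduction, replace the pointwise comparison by an endpoint comparison: at $x=b+\gamma(t+s)$ the original integral equals $\int_{\gamma t}^{b-a+2\gamma s+\gamma t}G(t,z)\,\ud z\ge \int_{\gamma t}^{b-a+\gamma t}G(t,z)\,\ud z$, which is the $s=0$ endpoint value; together with unimodality this legitimizes passing to $s=0$ \emph{and} to the $s=0$ window simultaneously.
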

\begin{proof}
The upper bound is true for all $t>0$ and $x\in\R$: $(\one_{[a-\gamma s,b+\gamma s]}*G(t,\cdot))(x)\le \int_\R G(t,x-y)\ud y=1$.
As for the lower bound,
fix $t,s\ge 0$ such that $t+s\le T$. Notice that
\[
(\one_{[a-\gamma s,b+\gamma s]}*G(t,\cdot))(x) =
\Phi_\alpha\left(\frac{x-a+\gamma s}{t^{1/\alpha}}\right)
-\Phi_\alpha\left(\frac{x-b-\gamma s}{t^{1/\alpha}}\right).
\]
For any $x\in [a-\gamma(t+s),b+\gamma(t+s)]$, the above quantity achieves the minimum at one of
the two end points, i.e.,
\begin{align*}
(\one_{[a-\gamma s,b+\gamma s]}*G(t,\cdot))(x) \ge&
\min\Bigg\{
\Phi_\alpha\left(-\gamma t^{1-1/\alpha}\right)
-\Phi_\alpha\left(\frac{(a-b)-\gamma t-2\gamma s}{t^{1/\alpha}}\right),\\
&
\hspace{3em}\Phi_\alpha\left(\frac{(b-a)+\gamma t+2\gamma s}{t^{1/\alpha}}\right)
-\Phi_\alpha\left(\gamma t^{1-1/\alpha}\right)
\Bigg\}\\
\ge &
\min\Bigg\{
\Phi_\alpha\left(-\gamma t^{1-1/\alpha}\right)
-\Phi_\alpha\left(\frac{a-b}{t^{1/\alpha}}\right),\\
&
\hspace{3em}\Phi_\alpha\left(\frac{b-a}{t^{1/\alpha}}\right)
-\Phi_\alpha\left(\gamma t^{1-1/\alpha}\right)
\Bigg\}\\
\ge &
\min\Bigg\{
\Phi_\alpha\left(-\gamma T^{1-1/\alpha}\right)
-\Phi_\alpha\left(\frac{a-b}{T^{1/\alpha}}\right),\\
&
\hspace{3em}\Phi_\alpha\left(\frac{b-a}{T^{1/\alpha}}\right)
-\Phi_\alpha\left(\gamma T^{1-1/\alpha}\right)
\Bigg\}>0,
\end{align*}
where the last inequality is due to $\gamma T<b-a$.
Note that in the above inequalities, we have used the fact that
$\alpha\ge 1$ (so that $1-1/\alpha\ge 0$).
This proves Lemma \ref{L:1Init}.
\end{proof}

\bigskip
\begin{proof}[Proof of Theorem \ref{T:NegMom}]
Let $\calF_t$ be the natural filtration generated by the white noise (see \eqref{E:Ft}). 
Fix an arbitrary compact set $K\subset\R$ and let $T>0$.
We are going to prove Theorem \ref{T:NegMom} for $\inf_{x\in K} u(T,x)$ in two steps.

{\bigskip\bf\noindent Case I.~}
In this case, we make the following assumption: 
\begin{itemize}
 \item[(H)] Assume that for some nonempty interval $(a,b)$ and some nonnegative function $f$ the initial measure $\mu$ satisfies that $\mu(\ud x) = f(x)\ud x$. Moreover, for some $c>0$, $f(x)\ge c \one_{(a,b)}(x)$ for all $x\in\R$. 
\end{itemize}
Thanks to the weak comparison principle (Lemma \ref{L:WeakComp}), we may assume that $f(x)= \one_{[a,b]}(x)$.
Choose $\gamma\ge 0$ such that $K\subseteq\calI_T$. 
For any $t>0$, denote
\[
\calI_t:=[ a-\gamma t,\:b+\gamma t].
\]
Denote
\[
\beta:=\frac{1}{2}\: \inf_{0\le t+s\le T} \: \inf_{x\in \calI_{t+s}}(\one_{[a-\gamma s,b+\gamma s]}*G(t,\cdot))(x) .
\]
By Lemma \ref{L:1Init}, we see that $\beta\in (0,1/2)$.

Define a sequence $\{\:T_n, n\ge 0\}$ of $\{\calF_t,t\ge 0\}$-stopping times as follows:
let $T_0\equiv 0$ and
\[
T_k:=\inf\left\{
t>T_{k-1}: \inf_{x\in\calI_t} u(t,x)\le \beta^k
\right\},\qquad k\ge 1,
\]
where we use the convention that $\inf\emptyset =\infty$.
By these settings, one can see that $T_1>0$ a.s.

Let $\W_k(t,x)=\W(t+T_{k-1},x)$ be the time-shifted space-time white noise (see \eqref{E:W*}) and
similarly, let $H_k(t,x)=H(t+T_{k-1},x)$.
For each $k$, let $u_k(t,x)$ be the unique solution to \eqref{E:SPDE-Var} starting from
$u_k(0,x)= \beta^{k-1} \one_{[a-\gamma T_{k-1},b+\gamma T_{k-1}]}(x)$.
Then
\[
w_k(t,x):= \beta^{1-k}u_k(t,x)
\]
solves the following SPDE
\begin{align*}
 \begin{cases}
\left(\displaystyle\frac{\partial}{\partial t} - \Dxa \right) w_k(t,x) =
H_k(t,x)\sigma_k(t,x,w_k(t,x))  \dot{W}_k(t,x),& t>0\;,\: x\in\R,\cr
w_k(0,x) = \one_{[a-\gamma T_{k-1},b+\gamma T_{k-1}]}(x),
 \end{cases}
\end{align*}
with $\sigma_k(t,x,z):=\beta^{1-k}\sigma(t,x,\beta^{k-1} z)$.
Note that $\sigma_k$ has the same Lipschitz constant as $\sigma$ in the third argument.
Let $S(t_1,t_2)$ be a space-time cone defined as
\[
S(t_1,t_2):=
\Big\{
  (t,x): \: t\in [0,t_2-t_1], \: x\in [a-\gamma(t_1+t),b+\gamma(t_1+t)]
\Big\}.
\]
Set
\[
\tau_n:=\frac{2T}{n},
\]
and define the events
\[
\mathfrak{D}_{k,n}:=\left\{
T_{k}-T_{k-1}\le \tau_n
\right\},\quad\text{for $1\le k\le n$.}
\]
By the definition of the stopping times $T_k$,
\[
\beta^{1-k}u(T_{k-1},x) \ge \beta^{1-k}u_k(0,x) =w_k(0,x)\quad
\text{for all $x\in\R$, a.s. on $\{T_{k-1}< +\infty\}$, }
\]
for all $k\ge 1$.
Therefore, by the strong Markov property and the weak comparison principle
(Lemma \ref{L:WeakComp}), we see that on $\{T_n<T\}$, for $k\le n$,
\[
\bbP\left(\mathfrak{D}_{k,n}\middle|\calF_{T_{k-1}}\right)=
\bbP\left(\mathfrak{D}_{k,n}\bigcap \left\{ \sup_{(t,x)\in S(T_{k-1},T_k)} \left|w_k(t,x)-w_k(0,x)\right|\ge 1-\beta\right\}\middle|\calF_{T_{k-1}}\right).
\]
Write $w_k(t,x)$ in the mild form
\begin{align*}
w_k(t,x)=&\quad (\one_{[a-\gamma T_{k-1},b+\gamma T_{k-1}]}*G(t,\cdot))(x)\\
& + \int_0^t \int_\R H_k(s,y)\sigma_k(s,y,w_k(s,y))G(t-s,x-y) W_k(\ud s,\ud y)\\
=: & J_k(t,x)+I_k(t,x).
\end{align*}
Notice that
\[
\left|w_k(t,x)-w_k(0,x)\right|\le
\left|J_k(t,x)-\one_{[a-\gamma T_{k-1},b+\gamma T_{k-1}]}(x)\right|+
\left|I_k(t,x)\right|.
\]
Hence, on $\{T_n<T\}$, for $k\le n$,
\begin{align*}
 &\hspace{-4em}
 \bbP\left(\mathfrak{D}_{k,n}\bigcap\left\{\sup_{(t,x)\in S(T_{k-1},T_k)} \left|w_k(t,x)-w_k(0,x)\right|\ge 1-\beta\right\} \middle| \calF_{T_{k-1}} \right)\\
 \le&\quad \bbP\left(\sup_{(t,x)\in S(T_{k-1},T_k)} \left|J_k(t,x)-\one_{[a-\gamma T_{k-1},b+\gamma T_{k-1}]}(x)\right|> 1-2\beta \:\middle|
 \calF_{T_{k-1}}\right)
 \\
 &+
 \bbP\left(\mathfrak{D}_{k,n}\bigcap\left\{\sup_{(t,x)\in S(T_{k-1},T_k)} \left|I_k(t,x)\right|\ge \beta \right\}\middle|\calF_{T_{k-1}}\right).
\end{align*}
By Lemma \ref{L:1Init},
\[
\sup_{(t,x)\in S(T_{k-1},T_k)} \left|J_k(t,x)-\one_{[a-\gamma T_{k-1},b+\gamma T_{k-1}]}(x)\right|\le
1- 2\beta,\quad
\text{a.s. on $\{T_{n}\le T\}$.}
\]
Hence, an application of Chebyschev's inequality shows that,
on $\{T_n<T\}$, for $k\le n$,
\begin{align}\notag
 &\hspace{-3em}\bbP\left(\mathfrak{D}_{k,n}\bigcap\left\{\sup_{(t,x)\in S(T_{k-1},T_k)} \left|w_k(t,x)-w_k(0,x)\right|\ge 1-\beta \right\} \middle| \calF_{T_{k-1}}\right)\\
 \notag
 &\le
 \bbP\left(\mathfrak{D}_{k,n}\bigcap\left\{\sup_{(t,x)\in S(T_{k-1},T_k)} \left|I_k(t,x)\right|\ge \beta\right\} \middle|\calF_{T_{k-1}}\right)\\
 &\le \notag
 \bbP\left(\sup_{(t,x)\in [0,\tau_n]\times \calI_T} \left|I_k(t,x)\right|\ge \beta \middle|\calF_{T_{k-1}}\right)\\
&\le
 \beta^{-p}\:
 \E\left[\sup_{(t,x)\in [0,\tau_n]\times \calI_T } \left|I_k(t,x)\right|^p\middle| \calF_{T_{k-1}}\right]
 \label{E_:bE},
\end{align}
where we have used the fact that
$S(T_{k-1},T_k)\subseteq [0,\tau_n]\times \calI_T$ a.s. on $\mathfrak{D}_{k,n}$.

\bigskip
Next we will find a deterministic upper bound for the conditional expectation in \eqref{E_:bE}.
By the Burkholder-Davis-Gundy inequality  and Proposition 4.4 of \cite{ChenDalang15FracHeat},
for some universal constant $C_1>0$ (see Proposition 4.4 of \cite{ChenDalang15FracHeat} for the value of this constant), it holds that
\[
\E\left[|I_k(s,x)-I_k(s',x)|^p\right]\le C_1  \left(|s-s'|^{\frac{\alpha-1}{2\alpha}}\right)^{p/2} \sup_{(t,y)\in [0,\tau_n]\times\R} \|w_k(t,y)\|_p^p\:,
\]
for all $(s,s',x)\in [0,\tau_n]^2\times\calI_T$.
Because $\sigma(t,x,0)=0$, Lemma \ref{L:WeakComp} and Lemma 4.2 in \cite{ChenKim14Comparison} together imply that
\[
\sup_{(t,y)\in [0,\tau_n]\times\R} \|w_k(t,y)\|_p^p \le Q^p \exp\left(Q p^{\frac{2\alpha-1}{\alpha-1}} \tau_n\right)=:C_{p,\tau_n},
\]
for some constant $Q:=Q(\Lambda)$ and for all $p\ge 2$.
Then by the Kolmogorov continuity theorem (see Theorem \ref{T:KolCont}),
for some constant $C>0$ and for all $0<\eta< 1-\frac{2(\alpha+1)}{p(\alpha-1)}$, we have that
\begin{align}
\E\left[
\sup_{(t,x)\in [0,\tau_n]\times \calI_T }\left|\frac{I_k(t,x)}{\tau_n^{\frac{\alpha-1}{2\alpha}\eta}}\right|^p\right]&\le
\E\left[
\sup_{(s,s',x')\in [0,\tau_n]^2\times \calI_T}\left|\frac{I_k(s,x)-I_k(s',x)}{|s-s'|^{\eta\frac{\alpha-1}{2\alpha}}}\right|^p\right]
\le C^p C_{p,\tau_n}\,,
\label{E_:Holder}
\end{align}
where we have used the fact that $I(0,x)\equiv 0$ for all $x\in\R$,  a.s.

We are interested in the case where $p=O\left([n\log n]^{1-1/\alpha}\right)$ as $n\rightarrow \infty$ (see \eqref{E:pA} below).
In this case, we have $p^{\alpha/(\alpha-1)}\tau_n=O(\log n)$ as $n\rightarrow\infty$ since $\tau_n=2T/n$.
This implies that there exists some constant $Q':=Q'(\beta, \Lambda, T)$ such that
\begin{align*}
\beta^{-p}\: \E\left[
\sup_{(s,x)\in [0,\tau_n]\times\calI_T}\left|I_k(s,x)\right|^p\right]
&\le
Q'\, \tau_n^{\frac{(\alpha-1)\eta}{2\alpha}p}\, \exp\left(Q' p^{\frac{2\alpha-1}{\alpha-1}} \tau_n\right)\\
&=
Q' \exp\left(Q' p^{\frac{2\alpha-1}{\alpha-1}} \tau_n+\frac{(\alpha-1)\eta}{2\alpha}p \log(\tau_n)\right).
\end{align*}
By denoting $\eta = \theta \left(1-\frac{2}{p}\:
\frac{\alpha+1}{\alpha-1}\right)$ with $\theta \in (0,1)$, the above exponent becomes
\[
f(p):=Q' \tau_n  p^{\frac{2\alpha-1}{\alpha-1}}+\frac{\log (\tau_n )\:\theta\: \left(p[\alpha-1]-2 [\alpha+1]\right)}{2 \alpha}.
\]
Some calculations show that $f(p)$ for $p\ge 2$ is minimized at
\begin{align*}
p= & \left(\frac{(\alpha-1)^2 \theta  \log (1/\tau_n )}{2 \alpha
   (2 \alpha-1) Q' \tau_n }\right)^{1-1/\alpha}=
\left(\frac{(\alpha-1)^2 \theta \: n \log(n/(2T))}{4 \alpha (2 \alpha-1) Q' T }\right)^{1-1/\alpha}.
\end{align*}
Thus, for some constants $A:=A(\beta, \Lambda,T)$ and $Q'':=Q''(\beta, \Lambda, T)$,
\begin{align}\label{E:pA}
\min_{p\ge 2} f(p) \le f(p') = - Q'' n^{1-1/\alpha} [\log(n)]^{2-1/\alpha}\qquad\text{with $p'= A  \left[n \log(n)\right]^{1-1/\alpha}.$}
\end{align}
Therefore, for some finite constant $Q:=Q(\beta,\Lambda,T)>0$,
\begin{equation}\label{E:CondProb}
\bbP\left(\mathfrak{D}_{k,n} \middle| \calF_{T_{k-1}}\right)
\leq Q \exp\left(-Q \,n^{(\alpha-1)/\alpha}(\log n)^{(2\alpha-1)/\alpha}\right).
\end{equation}
Note that the above upper bound is deterministic.

\bigskip
For convenience, assume that $n=2m$ is even.
Let $\Xi_n\subseteq\mathbb{N}^m$ be defined as follows
\[
(i_1,\dots,i_m)\in\Xi_n \quad\text{if and only if}\quad
1\le i_1<i_2<\dots<i_m\le n.
\]
The cardinality of $\Xi_n$ satisfies that $|\Xi_n|=\binom{n}{m}\le 2^n$. Then
\begin{align*}
\bbP\left(\inf_{x\in K}u(T,x)\le  \beta^{n}\right)& \le \bbP\left(\inf_{x\in \calI_T} u(T,x)\le  \beta^{n}\right)\\
&\le \bbP\left(T_n\leq T\right)\\
&\le \bbP\left(\bigcup_{(i_1,\dots,i_m)\in\Xi_n}\bigcap_{j=1}^m \mathfrak{D}_{i_j,n}\right)
\\ &
\le
\sum_{(i_1,\dots,i_m)\in\Xi_n} \bbP\left(\bigcap_{j=1}^m \mathfrak{D}_{i_j,n}\right).
\end{align*}
Applying \eqref{E:CondProb} for $m$ times, we see that
\begin{align*}
\bbP\left(\bigcap_{j=1}^m \mathfrak{D}_{i_j,n}\right) & =
\E\left[\prod_{j=1}^{m-1} \Indt{\mathfrak{D}_{i_j,n}}\E\left(\Indt{\mathfrak{D}_{i_m,n}} \middle| \calF_{T_{i_m-1}}\right)\right]\\
&\le
Q \exp\left(-Q \,n^{(\alpha-1)/\alpha}(\log n)^{(2\alpha-1)/\alpha}\right)
\bbP\left(\bigcap_{j=1}^{m-1} \mathfrak{D}_{i_j,n}\right)\\
&\le
Q^m \exp\left(-mQ \,n^{(\alpha-1)/\alpha}(\log n)^{(2\alpha-1)/\alpha}\right).
\end{align*}
Hence,
\begin{align*}
\bbP\left(\inf_{x\in K}u(T,x)\le  \beta^{n}\right)\le
2^n Q^m \exp\left(-m Q \, n^{(\alpha-1)/\alpha}(\log n)^{(2\alpha-1)/\alpha}\right).
\end{align*}
Finally, by adapting the value of $Q$ in the above inequality and setting $\epsilon=\beta^n$, we can conclude that for some constant $B=B(\Lambda,T)>0$ such that for $\epsilon>0$ small enough,
\begin{align}\label{E:CaseI}
\bbP\left(\inf_{x\in K} u(T,x) <\epsilon \right)\le \exp\left(-B(\Lambda,T)\left\{|\log(\epsilon)| \log\left(|\log(\epsilon)|\right) \right\}^{2-1/\alpha}\right).
\end{align}

{\bigskip\bf\noindent Case II.~}
Now we consider the general initial data. Choose and fix an arbitrary $\theta\in (0,T\wedge 1)$. Set 
\[
\Theta(\omega):=1\wedge \inf_{x\in (a,b)}  u(\theta,x,\omega).
\]
Since $u(\theta,x)>0$ for all $x\in\R$ a.s. (see Theorem \ref{T:NegMom-old}) and  $u(\theta,x,\omega)$ is continuous in $x$, we see that 
$\Theta>0$ a.s.
Hence, 
$u(\theta,x,\omega)\ge \Theta(\omega) \one_{(a,b)}(x)$ for all $x\in\R$.
Denote $V(t,x,\omega):=\Theta(\omega)^{-1} u(t+\theta,x,\omega)$. By the Markov property, $V(t,x)$ solves the following time-shifted SPDE 
\begin{align}
 \begin{cases}
\left(\displaystyle\frac{\partial}{\partial t} - \Dxa \right) V(t,x) =
\widetilde{H}(t,x) \widetilde{\sigma}(t,x,u(t,x))  \dot{W}_\theta(t,x),& t>0\;,\: x\in\R,\\[1em]
V(0,x) = \Theta^{-1} u(\theta,x),
 \end{cases}
\end{align}
where $\W_\theta (t,x)=\W(t+\theta,x)$ (see \eqref{E:W*}), $\widetilde{H}(t,x) = H(t+\theta,x)$ and 
\[
\widetilde{\sigma}(t,x,z,\omega)=\Theta(\omega)^{-1}\sigma\left(t+\theta,x,\Theta(\omega) z\right).
\]
A key observation is that condition \eqref{E:Lambda} is satisfied by $\widetilde{H}$ and $\widetilde{\sigma}$ with the same constant $\Lambda$, that is, 
\[
\left|\widetilde{H}(t,x,\omega)\widetilde{\sigma}(t,x,z,\omega)\right|\le \Lambda |z| \qquad\text{for all $(t,x,z,\omega)\in \R_+\times\R^2\times\Omega$.}
\]
The initial data $V(0,x)$ satisfies assumption (H) in Case I. 
Hence, we can conclude from \eqref{E:CaseI} that for $\epsilon>0$ small enough,
\begin{align*}
\bbP\left(\inf_{x\in K} u(T+\theta,x) <\epsilon \right) & \le
\bbP\left(\inf_{x\in K} V(T,x) <\epsilon \right)\\
&\le \exp\left(-B(\Lambda,T)\left\{|\log(\epsilon)| \log\left(|\log(\epsilon)|\right) \right\}^{2-1/\alpha}\right).
\end{align*}
Finally, using the fact that $(t,x)\mapsto u(t,x)$ is continuous a.s., by 
letting $\theta$ go to zero, we can conclude that \eqref{E:Rate} holds for $\epsilon>0$ small enough.
As a direct consequence of \eqref{E:Rate} and Lemma \ref{L:NegMom}, we establish the existence of the negative moments.
This completes the proof of Theorem \ref{T:NegMom}.
\end{proof}

\section{Proof of Lemma \ref{L:LimPmom}} \label{S:LimPmom}

We first prove a lemma.

\begin{lemma}\label{L:J0f}
If for some interval $[a',b']\subset \R$, $a'<b'$, the measure $\mu$ restricted to this interval
has a density $f(x)$ with $f(x)$ being $\beta$-H\"older continuous
for some $\beta\in (0,1)$, then for any interval $[a,b]\subset (a',b')$, $a<b$, there exists some finite constant $C:=C(a,b,\beta)>0$, it holds that
 \[
 \left|\int_\R G(t,x-y) \mu(\ud y) -f(x)\right| \le C t^{\beta/\alpha} \qquad\text{for all $t>0$ and $x\in (a,b)$.}
 \]
\end{lemma}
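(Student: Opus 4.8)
The plan is to control $J_0(t,x):=\int_\R G(t,x-y)\,\mu(\ud y)$ by subtracting $f(x)$ written in the form $f(x)=f(x)\int_\R G(t,x-y)\,\ud y$, which is legitimate because $G(1,\cdot)$ is a probability density and $G(t,\cdot)$ is its rescaling by \eqref{E:ScaleG}, so $\int_\R G(t,z)\,\ud z=1$. Splitting $\mu=\mu\big|_{[a',b']}+\mu\big|_{[a',b']^c}$ and using $\mu\big|_{[a',b']}(\ud y)=f(y)\,\ud y$ gives
\begin{align*}
J_0(t,x)-f(x)
= \int_{a'}^{b'} G(t,x-y)\bigl(f(y)-f(x)\bigr)\ud y
 + \int_{[a',b']^c} G(t,x-y)\,\mu(\ud y)
 - f(x)\int_{[a',b']^c} G(t,x-y)\,\ud y ,
\end{align*}
which I will call $A_1(t,x)+A_2(t,x)-A_3(t,x)$. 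Fix $x\in(a,b)$ throughout, set $\eta:=\min(a-a',b'-b)>0$, so that $|x-y|\ge\eta$ for every $y\in[a',b']^c$, and note that $f$ is bounded on $[a',b']\supseteq[a,b]$ by continuity.

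For $A_1$ I would use the $\beta$-Hölder continuity of $f$ on $[a',b']$ (valid since $x,y\in[a',b']$) and enlarge the integration set to $\R$: $|A_1(t,x)|\le[f]_\beta\int_\R G(t,z)|z|^\beta\ud z$, where $[f]_\beta$ denotes the Hölder seminorm of $f$ on $[a',b']$; by the scaling \eqref{E:ScaleG} this equals $[f]_\beta\,t^{\beta/\alpha}\int_\R G(1,w)|w|^\beta\ud w$, and the last integral is finite by the upper bound in \eqref{E:BddG} because $\beta<1<\alpha$. For $A_3$, since $f$ is bounded and $\{x-y:y\in[a',b']^c\}\subseteq\{|z|\ge\eta\}$, $|A_3(t,x)|\le C\int_{|z|\ge\eta}G(t,z)\ud z=C\int_{|w|\ge\eta t^{-1/\alpha}}G(1,w)\ud w$ by \eqref{E:ScaleG}; using \eqref{E:BddG} this is $\le C\eta^{-\alpha}t$ (exponentially small in $1/t$ when $\alpha=2$), hence $\le C t^{\beta/\alpha}$ for $0<t\le1$ since $\beta/\alpha<1$.

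The main obstacle is the tail term $A_2$, which is exactly where the integrability hypothesis \eqref{E:InitD} must enter, and the two ranges of $\alpha$ are handled separately. For $\alpha\in(1,2)$, combining \eqref{E:ScaleG} with \eqref{E:BddG} yields $G(t,z)\le K_0\,t\,|z|^{-(1+\alpha)}$, and for $|z|\ge\eta$ one has $|z|^{-(1+\alpha)}\le(1+\eta^{-1-\alpha})(1+|z|^{1+\alpha})^{-1}$, so $|A_2(t,x)|\le C\,t\,\sup_{z'\in\R}\int_\R(1+|z'-y|^{1+\alpha})^{-1}|\mu|(\ud y)\le C\,t\le C\,t^{\beta/\alpha}$ on $(0,1]$, the supremum being finite by the first line of \eqref{E:InitD}. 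For $\alpha=2$, using the Gaussian form \eqref{E:Gaussian}, for $|x-y|\ge\eta$ and $0<t\le1$ one has $e^{-(x-y)^2/4t}\le e^{-\eta^2/8t}e^{-(x-y)^2/8}$, and by the elementary bound $(x-y)^2\ge\frac12 y^2-x^2$, $e^{-(x-y)^2/8}\le e^{x^2/8}e^{-y^2/16}$; hence $|A_2(t,x)|\le C_{a,b}(4\pi t)^{-1/2}e^{-\eta^2/8t}\int_\R e^{-y^2/16}|\mu|(\ud y)$, which is finite by the second line of \eqref{E:InitD} and is $\le C\,t^{\beta/2}$ for $0<t\le1$. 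Collecting the bounds for $A_1,A_2,A_3$ proves the estimate for $0<t\le1$; for $t$ ranging in any bounded interval bounded away from $0$ the estimate is routine, since $t^{\beta/\alpha}$ is then bounded below while $\sup_{x\in(a,b)}J_0(t,x)<\infty$ follows from the local finiteness of $\mu$ together with the decay of $G$ in \eqref{E:BddG}–\eqref{E:Gaussian} and \eqref{E:InitD} (so the constant is then allowed to depend on the time horizon). The genuinely delicate point, and the only place where \eqref{E:InitD} is essential, is securing the \emph{uniformity in $x\in(a,b)$} of the estimate for $A_2$.
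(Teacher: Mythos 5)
Your proof is correct and uses the same decomposition as the paper's: $J_0(t,x)-f(x)$ is split into a Hölder term over $[a',b']$, a tail term against $\mu$ over $[a',b']^c$, and a tail term against $f(x)\,\ud y$. The estimates for $A_1$ (Hölder seminorm plus $G$-scaling) and $A_3$ (decay of $G$ on $\{|z|\ge\eta\}$) are identical in substance to the paper's $I_1$ and $I_3$.

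Where your argument genuinely improves on the paper's is the tail term $A_2$. For $\alpha\in(1,2)$ the paper bounds $G(t,z)\le Ct|z|^{-(1+\alpha)}$ and stops at $I_2\le Ct\int_{[a',b']^c}|x-y|^{-(1+\alpha)}|\mu|(\ud y)$, declaring the integral a finite constant without saying why; you spell out the needed step, namely that $|z|\ge\eta$ allows the replacement $|z|^{-(1+\alpha)}\le(1+\eta^{-1-\alpha})(1+|z|^{1+\alpha})^{-1}$, which matches the first line of \eqref{E:InitD} uniformly in $x$. More importantly, the paper's bound is actually \emph{inadequate} when $\alpha=2$: the condition \eqref{E:InitD} for $\alpha=2$ only gives $\int e^{-cy^2}|\mu|(\ud y)<\infty$, which does not control $\int_{[a',b']^c}|x-y|^{-3}|\mu|(\ud y)$ (take e.g.\ $|\mu|(\ud y)=e^{|y|}\ud y$). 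Your separate Gaussian treatment — extracting $e^{-\eta^2/8t}$ and then dominating the remainder by $e^{x^2/8}e^{-y^2/16}$ so that the integral becomes $\int e^{-y^2/16}|\mu|(\ud y)<\infty$ — is the right way to close this case, and it is where the heat-kernel case genuinely requires the second line of \eqref{E:InitD} rather than the polynomial one. You also correctly observe that all three terms yield a $t^{\beta/\alpha}$ bound only on bounded time intervals (the paper's $I_2\le C_{a,b}t$ plainly beats $t^{\beta/\alpha}$ only for $t$ bounded), so the constant should be read as depending on the time horizon; this is consistent with how the lemma is used inside Lemma \ref{L:LimPmom}, where $t$ is confined to $[0,T]$.
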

\begin{proof}
Fix $x\in(a,b)$. Notice that
\begin{align*}
\left|\int_\R G(t,x-y) \mu(\ud y) -f(x)\right| \le &
\quad \int_{a'}^{b'} G(t,x-y)\left|f(y)-f(x)\right|\ud y\\
 &+\int_{[a',b']^c} G(t,x-y) |\mu|(\ud y)+ |f(x)|\int_{[a',b']^c} G(t,x-y) \ud y\\
 =:& I_1+I_2+ I_3.
\end{align*}
By the H\"older continuity of $f$ and properties of $G(t,x)$ (see \eqref{E:ScaleG} and \eqref{E:BddG}),
\begin{align*}
I_1 &\le C \int_{a'}^{b'} G(t,x-y) |y-x|^\beta \ud y\\
&\le C \int_\R t^{-1/\alpha}G\left(1,\frac{y}{t^{1/\alpha}}\right) |y|^\beta \ud y\\
&= C \int_\R t^{-\beta/\alpha}G\left(1,z\right) |z|^\beta \ud z\\
&\le C t^{\beta/\alpha} \int_\R \frac{|z|^\beta}{1+|z|^{1+\alpha}}\ud z\\
&= C t^{\beta/\alpha},
\end{align*}
where in the last equality we have used the fact that $\alpha>1\ge \beta$.
Note that the above equality is also true for $\alpha=2$.
As for $I_3$, by the properties of $G(t,x)$, for all $\alpha\in (1,2]$,
\[
I_3\le |f(x)|\int_{[a',b']^c} \frac{C t}{t^{1+1/\alpha}+|x-y|^{1+\alpha}} \ud y
\le C |f(x)| t \int_{[a',b']^c} \frac{\ud y}{|x-y|^{1+\alpha}}
=C_{a,b} |f(x)| t,
\]
where in the last inequality we have used the fact that $x\in (a,b)$.
Similarly,
\[
I_2\le \int_{[a',b']^c} \frac{C t}{t^{1+1/\alpha}+|x-y|^{1+\alpha}} |\mu|(\ud y)
\le C t \int_{[a',b']^c} \frac{|\mu|(\ud y)}{|x-y|^{1+\alpha}}
=C_{a,b} t.
\]
This proves Lemma \ref{L:J0f}.
\end{proof}

\bigskip
\begin{proof}[Proof of Lemma \ref{L:LimPmom}]
Fix $x\in [a,b]$ and $T>0$.
Let $f(x)$, supported on $[a,b]$, be the density satisfying that $\mu (\ud x) \one_{[a,b]} = f(x)\ud x$.
Denote $\hat{\mu}:=\mu-f=\mu \one_{[a',b']^c}$.
Let $\Lip_\rho$ be the Lipschitz constants of $\rho$.
We claim that 
\begin{align}\label{E:Claim1}
\limsup_{t\rightarrow 0}\Norm{u(t,x)}_p <+\infty,
\quad\text{for all $x\in [a,b]$ and $p\ge 2$.}
\end{align}
Then part (1) is a direct consequence of \eqref{E:Claim1}.
Because the conditions in \eqref{E:InitD} for the cases $\alpha=2$ and $\alpha\in (1,2)$ are different in nature,
we need to consider the two cases separately.

{\bigskip\noindent\bf Case I.~}
When $\alpha\in(1,2)$, by \eqref{E:MomentT},
\begin{align*}
\Norm{u(t,x)}_p^2
\le C |J_0(t,x)|^2 + C R(t,x),
\end{align*}
for all $t>0$ and $x\in\R$, where $J_0(t,x)=(\mu* G(t,\cdot))(x)$ and
\begin{align}\label{E:Rtx}
R(t,x)=\int_0^t \ud s\int_\R \ud y\:
|J_0(s,y)|^2 \frac{G(t-s,x-y)}{(t-s)^{1/\alpha}}.
\end{align}
To prove \eqref{E:Claim1}, we may assume that $t\in(0,1)$.
It is clear that $\lim_{t\rightarrow 0} |J_0(t,x)| = |f(x)|$ for a.e. $x\in [a,b]$.
By \eqref{E:ScaleG} and \eqref{E:BddG},
\begin{align}\label{E:Gbdd}
G(t,x)\le \frac{C \: t}{t^{1+1/\alpha}+|x|^{1+\alpha} } .
\end{align}
Thus,
\begin{align}\notag
|J_0(s,y)| &\le \frac{1}{s^{1/\alpha}} \int_\R \frac{C'}{1+|s^{-1/\alpha} (y-z)|^{1+\alpha} }|\mu|(\ud z)
\\
&\le
\frac{1}{s^{1/\alpha} }\sup_{y\in\R}\int_\R \frac{C'}{1+|y-z|^{1+\alpha} }|\mu|(\ud z)
=: \frac{C}{s^{1/\alpha}},
\label{E:J0Bd}
\end{align}
where the second inequality is due to the fact that $s\in (0,1)$.
So,
\begin{align}
|J_0(s,y)|^2 &\le 2 \left(\int_\R G(t,y-z)|\hat{\mu}|(\ud z)\right)^2 +2 \left(\int_\R G(t,y-z)|f(z)| \ud z\right)^2\notag\\
&\le \frac{C}{s^{1/\alpha}}\int_\R G(t,y-z)|\hat{\mu}|(\ud z) + 2\Norm{f}_{L^\infty(\R)}^2 .\label{E:BoundJ0}
\end{align}
Hence, by \eqref{E:Gbdd},
\begin{align}\notag
R(t,x)\le & C \int_0^t \frac{\ud s}{[(t-s)s]^{1/\alpha} }\int_\R \ud y \: G(t-s,x-y) \int_\R |\hat{\mu}|(\ud z) G(s,y-z)
+ C t^{1-1/\alpha}  \notag \\
\notag
\le &C \: t^{1-2/\alpha}
\int_{\R} \frac{ t}{t^{1+1/\alpha}+|x-z|^{1+\alpha}}|\hat{\mu}|(\ud z)+ C t^{1-1/\alpha}\\
\le
&
 C \int_{[a',b']^c}\frac{1}{|x-z|^{1+\alpha}}|\mu|(\ud z)  \: t^{2(1-1/\alpha)}+ C t^{1-1/\alpha}
\label{E:BoundR}
\\
\rightarrow &\: 0,\quad\text{as $t\rightarrow 0$,}
\notag
\end{align}
where the integral on the right-hand of \eqref{E:BoundR} is finite because $x\in [a,b]\subset[a',b']$.
Therefore, for all $p\ge 2$ and a.e. $x\in [a,b]$,
\[
\Norm{u(t,x)}_p^2 \le C\: \left( |f(x)|^2 + t^{2(1-1/\alpha)}+t^{1-1/\alpha}\right)\rightarrow C f(x)^2,\quad\text{as $t\rightarrow 0$,}
\]
which proves both \eqref{E:Claim1} and part (1) for $\alpha\in(1,2)$.

As for part (2),
we need only consider the case that $s=0$. The case when $s>0$ is covered by Theorem 1.6 of \cite{ChenKim14Comparison}.
By the Burkholder-Davis-Gundy inequality \eqref{E:BGD-U}, for $x\in(a,b)$,
\begin{align}\label{E_:UF}
\Norm{u(t,x)-f(x)}_p^2
\le& C \left|J_0(t,x)-f(x)\right|^2 + C\int_0^t\ud r\int_\R \ud y
\: \Norm{u(r,y)}_p^2 G(t-r,x-y)^2.
\end{align}
By Lemma \ref{L:Mom} and \eqref{E:BoundR},
\[
\int_0^t\ud r\int_\R \ud y
\: \Norm{u(r,y)}_p^2 G(t-r,x-y)^2\le
C' R(t,x) \le C t^{1-1/\alpha}.
\]
Then applying  Lemma \ref{L:J0f} to bound the first term on the right-hand side of \eqref{E_:UF} yields the lemma.

{\bigskip\noindent\bf Case II.~}
If $\alpha=2$, $G(t,x)$ is the Gaussian kernel; see \eqref{E:Gaussian}.
Apply Lemma 3.9 (or equation (3.12)) in \cite{ChenDalang13Heat} with $\nu=2$ and $\lambda=\Lip_\rho$ to obtain that
\begin{align}\label{E_:NpL}
\Norm{u(t,x)}_p^2\le C \left[(|\mu|*G(2t,\cdot))(x)\right]^2
\end{align}
for all $t\in [0,T]$.
Now it is clear that the right-hand side of \eqref{E_:NpL} converges to $C|f(x)|^2$ for a.e. $x\in[a,b]$ as $t\rightarrow 0$.
This proves \eqref{E:Claim1} and hence part (1).
As for part (2),
{the case $s>0$ is covered by Theorem 3.1 of \cite{ChenDalang14Holder}.
Now we consider the case when $s=0$.
Notice that $G(t,x)^2=(8\pi t)^{-1/2}G(t/2,x)$.}
Hence, by \eqref{E_:NpL} and Lemma 3.9 of \cite{ChenDalang13Heat},
\[
\int_0^t\ud r\int_\R \ud y
\: \Norm{u(r,y)}_p^2 G(t-r,x-y)^2\le C \:t^{1/2}.
\]
Then we may apply the same arguments as those in Case I.
This completes the proof of Lemma \ref{L:LimPmom}.
\end{proof}

From the above proof for part (1) of Lemma \ref{L:LimPmom}, one can see that we have actually proved the following slightly stronger  result. Recall that the function $\calG$ is defined in \eqref{E:calG}.
\begin{lemma}\label{L:LimPmom2}
Suppose there are  $a'< b'$ such that
the initial measure $\mu$ restricted to $[a',b']$
has a bounded density $f(x)$.  Then for any $a'<a<b<b'$, 
and for all $T>0$, 
\begin{align}
\sup_{(t,x)\in [0,T]\times [a,b]} \left(J_0^2(t,x)+(J_0^2\star\calG)(t,x)\right)<+\infty.
\end{align}

\end{lemma}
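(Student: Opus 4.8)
The plan is to revisit the estimates already carried out in the proof of part (1) of Lemma \ref{L:LimPmom} and simply extract from them a supremum bound rather than a limit. Recall that the key moment bound \eqref{E:MomentT} reads
\[
\Norm{u(t,x)}_p^2 \le C\,J_0^2(t,x) + C\,(J_0^2\star\calG)(t,x),
\]
so the whole content of the present lemma is the finiteness of $J_0^2(t,x)+(J_0^2\star\calG)(t,x)$, uniformly over $(t,x)\in[0,T]\times[a,b]$, for a measure $\mu$ whose restriction to $[a',b']\supset[a,b]$ has a bounded density $f$.

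First I would split $\mu = f\one_{[a',b']} + \hat\mu$ with $\hat\mu = \mu\one_{[a',b']^c}$, exactly as in the proof of Lemma \ref{L:LimPmom}. For the $J_0^2$ term, one uses \eqref{E:J0Bd}: since $\mu$ satisfies \eqref{E:InitD}, $|J_0(t,x)|\le C\,t^{-1/\alpha}$ for $t\in(0,1]$, which is not yet uniform; the better bound comes from writing $J_0 = f*G(t,\cdot) + \hat\mu * G(t,\cdot)$ and noting that $\Norm{f*G(t,\cdot)}_{L^\infty}\le\Norm{f}_{L^\infty(\R)}$, while for $x\in[a,b]$ and the far-away mass $\hat\mu$, the kernel bound \eqref{E:Gbdd} gives $|(\hat\mu*G(t,\cdot))(x)|\le C t\int_{[a',b']^c}|x-z|^{-1-\alpha}|\mu|(\ud z)\le C_{a,b}\,t$, which is bounded on $[0,T]$ because $\dist([a,b],[a',b']^c)>0$. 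Hence $\sup_{[0,T]\times[a,b]}J_0^2(t,x)<\infty$. For the convolution term, I would follow verbatim the chain of inequalities leading to \eqref{E:BoundR}: using \eqref{E:BoundJ0} to bound $|J_0(s,y)|^2$ by $C s^{-1/\alpha}\int_\R G(s,y-z)|\hat\mu|(\ud z) + 2\Norm{f}_{L^\infty}^2$, then the semigroup property of $G$ together with the beta-integral estimate $\int_0^t [(t-s)s]^{-1/\alpha}\ud s = C t^{1-2/\alpha}$, to arrive at
\[
(J_0^2\star\calG)(t,x)\le C\!\int_{[a',b']^c}\!\frac{|\mu|(\ud z)}{|x-z|^{1+\alpha}}\,t^{2(1-1/\alpha)} + C\,t^{1-1/\alpha},
\]
for $\alpha\in(1,2)$; for $\alpha=2$ the analogous bound $(J_0^2\star\calG)(t,x)\le C t^{1/2}$ follows from \eqref{E_:NpL} and Lemma 3.9 of \cite{ChenDalang13Heat} as in Case II of the proof of Lemma \ref{L:LimPmom}. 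Since $t\mapsto t^{2(1-1/\alpha)}$, $t^{1-1/\alpha}$, $t^{1/2}$ are all bounded on $[0,T]$, and the spatial integral is bounded uniformly over $x\in[a,b]$ by the distance condition, the supremum is finite.

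I do not expect a genuine obstacle here: the lemma is essentially a bookkeeping corollary of estimates already established, and the only point requiring a little care is making sure every bound used (the far-field integral $\int_{[a',b']^c}|x-z|^{-1-\alpha}|\mu|(\ud z)$, the $L^\infty$ norm of $f$, the time integrals) is uniform in $(t,x)\in[0,T]\times[a,b]$ rather than merely giving a limit as $t\to0$ — which it is, precisely because $[a,b]$ is compactly contained in $[a',b']$ and $T<\infty$. So the proof would conclude by simply collecting these uniform bounds and invoking $\dist([a,b],\R\setminus[a',b'])>0$.
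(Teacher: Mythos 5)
Your strategy -- split $\mu$ into the absolutely continuous part on $[a',b']$ and the far-field part $\hat\mu$, and then read off uniform bounds from the chain of inequalities already proved in Lemma~\ref{L:LimPmom} -- is exactly what the paper intends; Lemma~\ref{L:LimPmom2} is merely presented in the paper as an observation that the proof of part (1) of Lemma~\ref{L:LimPmom} already delivers the stated supremum bound.

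There is, however, a real gap for $\alpha=2$. Your bound $|(\hat\mu*G(t,\cdot))(x)|\le Ct\int_{[a',b']^c}|x-z|^{-(1+\alpha)}|\mu|(\ud z)$ relies on the polynomial kernel estimate \eqref{E:Gbdd} together with the convergence of the far-field integral $\int_{[a',b']^c}|x-z|^{-(1+\alpha)}|\mu|(\ud z)$. For $\alpha\in(1,2)$ that integral is finite uniformly over $x\in[a,b]$ precisely because \eqref{E:InitD} imposes $\sup_y\int_\R\frac{|\mu|(\ud z)}{1+|y-z|^{1+\alpha}}<\infty$. But for $\alpha=2$, condition \eqref{E:InitD} only requires $\int_\R e^{-cz^2}|\mu|(\ud z)<\infty$ for all $c>0$, which permits $|\mu|(\ud z)=e^{|z|}\ud z$ and similar subgaussian growth, making $\int_{[a',b']^c}|x-z|^{-3}|\mu|(\ud z)$ divergent. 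The fix is to use the true Gaussian decay of $G(t,\cdot)$ rather than the algebraic bound: for $x\in[a,b]$, $z\in[a',b']^c$ and $t\le T$, write $e^{-(x-z)^2/(4t)}\le e^{-\epsilon^2/(8t)}\,e^{-(x-z)^2/(8T)}$ with $\epsilon:=\mathrm{dist}([a,b],[a',b']^c)>0$, so $(\hat\mu*G(t,\cdot))(x)\le \frac{e^{-\epsilon^2/(8t)}}{\sqrt{4\pi t}}\int_\R e^{-(x-z)^2/(8T)}|\mu|(\ud z)$; the prefactor is bounded on $(0,T]$ and the integral is bounded uniformly over $x\in[a,b]$ by \eqref{E:InitD}. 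A similar adjustment is needed in your claim that "$(J_0^2\star\calG)(t,x)\le Ct^{1/2}$ follows from \eqref{E_:NpL}": inequality \eqref{E_:NpL} bounds $\Norm{u(t,x)}_p^2$, which sits on the \emph{left}-hand side of \eqref{E:MomentT}, so it does not by itself control $(J_0^2\star\calG)$; one should instead run the \eqref{E:BoundR}-type computation directly using the Gaussian far-field estimate above. (To be fair, the paper's own Case~II is equally elliptical here.)
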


\section{Malliavin derivatives of \texorpdfstring{$u(t,x)$}{TEXT}}
\label{S:MalliavinD}
In this section, we will prove that $u(t,x)\in \D^{1,p}$ for all $p\ge 1$ and
that $D_{\theta,\xi}u(t,x)$ satisfies certain stochastic integral equation.
When the initial data is bounded, one can find a proof, e.g.,
in \cite[Proposition 2.4.4]{Nualart06} and \cite[Proposition 5.1]{NualartQuer07}.
For higher derivatives, we will show that under usual conditions on $\rho$ and if one can find some subset $S\subseteq [0,T]\times\R$ such that \eqref{E:supSNormU} below is satisfied, then we can establish the property that $u(t,x)\in \D^{k,p}_S$ for all $k\ge 1$ and $p\ge 2$.
Denote $\calH_T:=L^2([0,T]\times\R)$. 

\begin{proposition}\label{P:D1}
Suppose that $\rho$ is a $C^1$ function with bounded Lipschitz
continuous derivative. Suppose that the initial data $\mu$ satisfies
\eqref{E:InitD}. Then
\begin{enumerate}[parsep=0ex,topsep=1ex]
\item[(1)] For any
$(t,x)\in [0,T]\times \R$,
$u(t,x)$ belongs to $\D^{1,p}$ for all $p\ge 1$.
\item[(2)] The Malliavin derivative $Du(t,x)$ defines an $\calH_T$-valued
process that satisfies the following linear stochastic differential equation
\begin{equation}\label{E:SPDEM}
\begin{aligned}
D_{\theta,\xi}u(t,x) =&
\quad\rho(u(\theta,\xi)) G(t-\theta,x-\xi)\\
&+
\int_\theta^t \int_\R G(t-s,x-y)\rho'(u(s,y))D_{\theta,\xi}u(s,y) W(\ud s, \ud
y),
\end{aligned}
\end{equation}
for all $(\theta,\xi)\in[0,T]\times \R$.
\item[(3)] If $\rho\in C^\infty(\R)$ and it has bounded derivatives of all orders,
and if for some measurable set $S\subset [0,t]\times \R$ the initial data satisfies the following condition
\begin{align}\label{E:supSNormU}
\sup_{(t,x)\in S} \left(J_0^2(t,x)+(J_0^2\star\calG )(t,x)\right) <\infty,
\end{align}
then $u(t,x)\in \D_S^\infty$; recall that $\calG$ is defined in \eqref{E:calG} and $\D_S^\infty$ is defined in Section \ref{SS:Malliavin}.
\end{enumerate}
\end{proposition}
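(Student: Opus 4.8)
Parts (1) and (2) I would obtain by the usual Picard scheme. Put $u_0(t,x)=J_0(t,x)$ and $u_{n+1}(t,x)=J_0(t,x)+\int_0^t\int_\R G(t-s,x-y)\rho(u_n(s,y))\,W(\ud s,\ud y)$. An induction on $n$, combined with the rule for Malliavin-differentiating a \Ito integral, shows that each $u_n(t,x)$ lies in $\D^{1,p}$ for every $p\ge1$ and that $D_{\theta,\xi}u_{n+1}(t,x)=\rho(u_n(\theta,\xi))G(t-\theta,x-\xi)+\int_\theta^t\int_\R G(t-s,x-y)\rho'(u_n(s,y))D_{\theta,\xi}u_n(s,y)\,W(\ud s,\ud y)$. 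Taking the $L^p(\Omega;\calH_T)$-norm and applying the Hilbert-space form of the Burkholder--Davis--Gundy inequality \eqref{E:BGD} together with Minkowski's inequality and the boundedness of $\rho'$, one sees that $g_n(t,x):=\E\bigl[\Norm{Du_n(t,x)}_{\calH_T}^p\bigr]^{2/p}$ satisfies an inequality of the form \eqref{E:Mom-IntIneq} with $\vv=0$ and $f(t,x)=C\int_0^t\int_\R\bigl(1+\Norm{u(\theta,\xi)}_p^2\bigr)G(t-\theta,x-\xi)^2\,\ud\theta\,\ud\xi$, which is finite at every $(t,x)$ with $t>0$ because of the moment bound \eqref{E:MomentT}. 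Lemma \ref{L:Mom} then gives $\sup_n g_n(t,x)<\infty$; since $u_n(t,x)\to u(t,x)$ in $L^p(\Omega)$ by the standard Picard estimate, Lemma \ref{L:Dinfty0} yields $u(t,x)\in\D^{1,p}$, and passing to the limit in the recursion for $Du_n$ gives \eqref{E:SPDEM}. The only subtlety is that, since \eqref{E:BddNorm} fails, these bounds are not uniform in $(t,x)$; but their pointwise finiteness for $t>0$ is all that is needed here.

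For part (3) I would first reduce to regular initial data. Let $\mu_\epsilon:=G(\epsilon,\cdot)*\mu$; its density $J_0(\epsilon,\cdot)$ is continuous and inherits the growth \eqref{E:InitD}, so by the classical results for the stochastic heat equation with such initial data (\cite[Proposition~2.4.4]{Nualart06}, \cite[Proposition~5.1]{NualartQuer07}) and an induction on the order of differentiation, the solution $u^\epsilon$ started from $\mu_\epsilon$ satisfies $u^\epsilon(t,x)\in\D^\infty$, and $u^\epsilon(t,x)\to u(t,x)$ in $L^p(\Omega)$ for every $p$. Moreover $J_0^\epsilon(t,x)=(G(t,\cdot)*\mu_\epsilon)(x)=J_0(t+\epsilon,x)$, so by the semigroup property \eqref{E:supSNormU} continues to hold for $u^\epsilon$ with a bound uniform in $\epsilon\in(0,1)$. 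Iterating the differentiation of \eqref{E:SPDEM} by the higher-order product rule, for each $k\ge1$ the $k$-th Malliavin derivative $D^{(k)}u^\epsilon(t,x)$ solves a linear equation $D^{(k)}_{\vec z}u^\epsilon(t,x)=\zeta_k^\epsilon(\vec z;t,x)+\int_0^t\int_\R G(t-s,x-y)\rho'(u^\epsilon(s,y))\,D^{(k)}_{\vec z}u^\epsilon(s,y)\,W(\ud s,\ud y)$, where $\zeta_k^\epsilon$ is a finite sum of terms; each term is a product of a kernel $G$, factors $\rho^{(m)}(u^\epsilon)$ with $1\le m\le k$, and lower-order derivatives $D^{(j)}u^\epsilon$ with $1\le j\le k-1$, or a single stochastic integral over $[0,t]\times\R$ of such a product. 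The structural point, visible already in the computation of $\zeta_2$, is that $u^\epsilon$ never occurs by itself: it appears only inside the bounded functions $\rho^{(m)}$, and every explicit kernel $G(t-\theta_i,x-\xi_i)$ carried by a Malliavin variable $(\theta_i,\xi_i)$ comes multiplied by derivative factors evaluated at that same point $(\theta_i,\xi_i)$.

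I would then prove by induction on $k\ge1$ that $\sup_{\epsilon\in(0,1)}\sup_{(s,y)\in[0,t]\times\R}\Norm{D^{(k)}u^\epsilon(s,y)}_{L^2(S^k),p}<\infty$ for every $p\ge2$, where $\Norm{D^{(k)}F}_{L^2(S^k),p}:=\E\bigl[\bigl(\int_{S^k}|D_{z_1}\cdots D_{z_k}F|^2\,\ud z\bigr)^{p/2}\bigr]^{1/p}$. For $k=1$ this comes from \eqref{E:SPDEM} for $u^\epsilon$ with the Malliavin variable restricted to $S$: the source term has $L^{p/2}$-norm bounded by $C\bigl(1+\sup_{S}J_0^\epsilon(\cdot)^2\bigr)\int_0^t\int_\R G(t-\theta,x-\xi)^2\,\ud\theta\,\ud\xi$, which is finite (since $1/\alpha<1$) and uniform in $\epsilon$ (by the uniform version of \eqref{E:supSNormU}), so Lemma \ref{L:Mom} closes the case. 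For the inductive step, in each term of $\zeta_k^\epsilon$ one applies the Hilbert-space ($L^2(S^k)$-valued) Burkholder--Davis--Gundy inequality and H\"older's inequality, using: the boundedness of the $\rho^{(m)}$; the finiteness of $\int_0^t\int_\R G(t-s,x-y)^2\,\ud s\,\ud y$ and of $\int_S G(t-\theta_i,x-\xi_i)^2\,\ud\theta_i\,\ud\xi_i$; and the inductive bounds on $\Norm{D^{(j)}u^\epsilon(\cdot)}_{L^2(S^j),q}$ for $j<k$ and all $q$. This yields $\sup_{\epsilon}\sup_{(s,y)}\Norm{\zeta_k^\epsilon(s,y)}_{L^2(S^k),p}=:M_k<\infty$, whence $g_k^\epsilon(t,x):=\Norm{D^{(k)}u^\epsilon(t,x)}_{L^2(S^k),p}^2$ satisfies \eqref{E:Mom-IntIneq} with $f\equiv M_k$ and $\vv=0$ (the propagation term handled by $|\rho'|\le C$), so Lemma \ref{L:Mom} gives $g_k^\epsilon(t,x)\le CM_k\bigl(1+\int_0^t\int_\R\calG(t-s,x-y)\,\ud s\,\ud y\bigr)$, a bound uniform in $\epsilon$ and $(t,x)$. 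Consequently $\sup_\epsilon\Norm{u^\epsilon(t,x)}_{k,p,S}<\infty$ for all $k,p$; since $u^\epsilon(t,x)\to u(t,x)$ in $L^p(\Omega)$, Lemma \ref{L:Dinfty} gives $u(t,x)\in\D^{k,p}_S$, and letting $k,p$ vary, $u(t,x)\in\D^\infty_S$.

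The main obstacle is the estimate of $\zeta_k$ in the inductive step: one must write the higher-order product rule carefully enough to be sure that in every term of $\zeta_k$ the (possibly unbounded) solution $u^\epsilon$ enters only through a bounded derivative $\rho^{(m)}$ or localized to $S$ by a kernel in a Malliavin variable --- it never appears ``bare'' --- so that \eqref{E:supSNormU}, the integrability of $G^2$ and of $\calG$ over $[0,t]\times\R$, and the inductive hypothesis genuinely suffice to close the recursion. A secondary, routine point is the uniformity in $\epsilon$ of the $S$-localized initial-data bound, which follows from the identity $J_0^\epsilon(t,x)=J_0(t+\epsilon,x)$.
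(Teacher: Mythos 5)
Your treatment of parts (1) and (2) follows the paper's route: Picard iterates $u_m$, uniform $\D^{1,p}$ bounds via BDG and the boundedness of $\rho'$, Lemma~\ref{L:Mom}, then Lemma~\ref{L:Dinfty0} and a passage to the limit. The paper does this too, though it spends more care than you do on the weak-convergence argument needed to pass to the limit and identify \eqref{E:SPDEM} (treating the martingale and drift terms separately, and controlling $\Norm{u-u_{m}}_2$ by a factorial decay); your phrase ``passing to the limit in the recursion'' compresses several pages.

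For part (3) you diverge from the paper, and there is a genuine gap. You mollify, set $\mu_\epsilon=G(\epsilon,\cdot)*\mu$, and assert that since $J_0(\epsilon,\cdot)$ is a continuous function, the classical results (\cite[Prop.~2.4.4]{Nualart06}, \cite[Prop.~5.1]{NualartQuer07}) give $u^\epsilon(t,x)\in\D^\infty$. But those references assume bounded (or vanishing) initial data, and $J_0(\epsilon,\cdot)$ is in general unbounded: \eqref{E:InitD} with $\alpha=2$ allows $\mu(\ud y)=e^{|y|^{3/2}}\ud y$, for which $J_0(\epsilon,x)$ grows like $e^{|x|^{3/2}}$. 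The whole difficulty the paper isolates in Remark~\ref{R:LocalAnalize} is precisely that for such data one cannot establish $u\in\D^\infty$, and mollification does not change this: $u^\epsilon\in\D^\infty$ is itself an open question under your hypotheses. To make your induction rigorous you would need the $S$-localized norms of $D^{(k)}u^\epsilon$ to even make sense, i.e.\ $u^\epsilon\in\D^{k,p}_S$, and the only way to establish that without assuming what you are proving is to go through Picard iterates for $u^\epsilon$ — at which point the mollification buys nothing, and you are exactly where the paper starts. The paper instead proves directly that the Picard iterates $u_m$ of the \emph{original} $u$ satisfy $\sup_m\sup_{(t,x)\in[0,T]\times\R}\Norm{\,\Norm{D^n u_m(t,x)}_{L^2(S)^{\otimes n}}}_p<\infty$, using the key observation (which you also make in your ``structural point'') that the supremum over $(t,x)$ on the left is controlled by a supremum over $(s,y)\in S$ on the right, by \eqref{E:supSNormU}, precisely because the Malliavin variable is localized to $S$. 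Your remaining inductive machinery — the Leibniz rule, the role of the bounded $\rho^{(m)}$, and the splitting off of the highest-order term to form the Gronwall recursion — matches the paper's Lemmas~\ref{L:Leibniz}, \ref{L:LeibnizMoment}, \ref{L:DI} and the $A_1,A_{2,1},A_{2,2}$ decomposition, so the computation itself is sound; it is only the reduction to smooth data at the start that does not go through as stated.

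A small secondary point: your claim that ``\eqref{E:supSNormU} continues to hold for $u^\epsilon$ with a bound uniform in $\epsilon\in(0,1)$'' via $J_0^\epsilon(t,x)=J_0(t+\epsilon,x)$ requires that \eqref{E:supSNormU} holds not only for $S$ but for the time-shifted sets $S+(\epsilon,0)$, $0<\epsilon<1$, uniformly; this is true in the concrete choices of $S$ used later in the paper (Remark~\ref{R:D1}) but is an extra hypothesis rather than an automatic consequence of \eqref{E:supSNormU} for $S$.
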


\begin{remark}\label{R:D1}
We first give some sufficient conditions for \eqref{E:supSNormU}:
\begin{enumerate}[parsep=0ex,topsep=1ex]
\item[(1)] If there exists some compact set $K\subset\R$ such that the Lebesgue measure of $K$ is strictly positive and the initial data restricted on $K$ has a bounded density,
then by Lemma \ref{L:LimPmom2} and \eqref{E:calG}, condition \eqref{E:supSNormU} is satisfied for $S=[0,t]\times K$. This is how we choose $S$ in the proof of a special case of Theorem \ref{T:Single} -- Theorem \ref{T_:Single} by utilizing the properness of the initial data.
\item[(2)] If $S$ is a compact set on $[0,T]\times\R$ that is away from $t=0$, then condition \eqref{E:supSNormU} is trivially satisfied. In the proof of Theorems \ref{T:Mult} and \ref{T:LocDen}, this $S$ is chosen to be $[t/2,t]\times K$ where $K$ is some compact set in $\R$. 
\end{enumerate}
\end{remark}

In the following, we first establish parts (1) and (2) of Proposition \ref{R:D1}.
The proof of part (3) is more involved. We need to introduce some notation and prove some lemmas.
Then we prove part (3). At the end of this section, we point out the reason that we need to resort to the localized Malliavin analysis in Remark \ref{R:LocalAnalize}.

\begin{proof}[Proof of parts (1) and (2) of Proposition \ref{P:D1}]
Fix $p\ge 2$.
Consider the Picard approximations $u_m(t,x)$ in the proof of the existence of the random field solution in \cite{ChenDalang13Heat,ChenDalang15FracHeat}, i.e.,
$u_0(t,x)=J_0(t,x)$, and for $m\ge 1 $,
\begin{align}\label{E:Picard}
u_m(t,x)= J_0(t,x) + \int_0^t\int_\R G(t-s,x-y)\rho(u_{m-1}(s,y))W(\ud s\ud y).
\end{align}
It is proved in \cite{ChenDalang13Heat,ChenDalang15FracHeat} that
$u_m(t,x)$ converges to $u(t,x)$ in $L^p(\Omega)$ as $m\rightarrow\infty$ and
\begin{align}\label{E:unBds}
\Norm{u_m(t,x)}_p^2 \le 2 J_0^2(t,x)+
2\Lip_\rho^2 \left([1+J_0^2] \star \calK_\lambda \right)(t,x),
\end{align}
for all $t\in [0,T]$, $x\in\R$ and $m\in\bbN$,
where the kernel function $\calK_\lambda(t,x)$ is defined in \eqref{E:K} and $\lambda=2\sqrt{2}\Lip_\rho$.

We claim that for all $m\ge 0$ and for all $t\in [0,T]$ and $x\in\R$, 
\begin{align}\label{E:DunBds}
 \E\left(\Norm{D u_m(t,x)}_{\calH_T}^p\right)
 \le C
 \left[\left(
 [1+J_0^2] \star \calK_\lambda \right) (t,x)\right]^{p/2}.
\end{align}
It is clear that $0\equiv D u_0(t,x)$ satisfies \eqref{E:DunBds}.
Assume that $D u_k(t,x)$ satisfies \eqref{E:DunBds} for all $k<m$.
Now we shall show that $D u_m(t,x)$ satisfies the moment bound in \eqref{E:DunBds}.
Notice that
\begin{align}\notag
D_{\theta,\xi}u_m(t,x)
= &  \quad G(t-\theta,x-\xi)\rho\left(u_{m-1}(\theta,\xi)\right) \\
\label{E:DunSPDE}
& +
\int_\theta^t\int_\R G(t-s,x-y)\rho'\left(u_{m-1}(s,y)\right)
D_{\theta,\xi}u_{m-1}(s,y)W(\ud s\ud y)\\
\notag
=:& A_1 + A_2.
\end{align}
We first consider $A_1$. It is clear that by \eqref{E:unBds},
\begin{align*}
\E\left(\Norm{A_1}_{\calH_T}^p\right) =&
\E\left(\left[
\int_0^t\int_\R G(t-\theta,x-\xi)^2 \rho(u_{m-1}(\theta,\xi))^2\ud \theta\ud \xi
\right]^{p/2}\right)\\
\le&C \left(
\int_0^t\int_\R \lambda^2 G(t-\theta,x-\xi)^2 \left[1+ \Norm{u_{m-1}(\theta,\xi)}_p^2\right]\ud \theta\ud \xi
\right)^{p/2}\\
\le&C \left[\left(\left[1+J_0^2+\left(1+J_0^2\right)\star\calK_\lambda\right]\star \lambda^2 G^2
\right)(t,x)\right]^{p/2}.
\end{align*}
By the following recursion formula which is clear from the definition of $\calK_\lambda$ in \eqref{E:K},
\begin{align}\label{E:K_Recursion}
\left(\calK_\lambda\star \lambda^2 G^2\right)(t,x) = \calK_\lambda(t,x) - \lambda^2 G^2(t,x),
\end{align}
we see that
\[
\E\left(\Norm{A_1}_{\calH_T}^p\right) \le C \left[\left([1+J_0^2]\star\calK_\lambda\right)(t,x)\right]^{p/2}.
\]
As for $A_2$, by the Burkholder-Davis-Gundy inequality and by the boundedness of $\rho'$,
\begin{align*}
\E\left(\Norm{A_2}_{\calH_T}^p\right) \le &
C\; \E\left(\left[
\int_0^t\int_\R G(t-s,x-y)^2\Norm{D u_{m-1}(s,y)}_{\calH_T}^2\ud s\ud y
\right]^{p/2}\right)\\
\le &
C \left(
\int_0^t\int_\R \lambda^2 G(t-s,x-y)^2\Norm{\Norm{D u_{m-1}(s,y)}_{\calH_T}}_p^2 \ud s\ud y
\right)^{p/2}.
\end{align*}
Then by the induction assumption \eqref{E:DunBds},
\begin{align*}
\E\left(\Norm{A_2}_{\calH_T}^p\right) &\le
C \left[\left(
([1+J_0^2]\star\calK_\lambda)\star\lambda^2 G^2
\right)(t,x)\right]^{p/2}\\
&=
C \left[\left(
[1+J_0^2]\star\calK_\lambda
\right)(t,x)-\left(
[1+J_0^2]\star\lambda^2 G^2
\right)(t,x)\right]^{p/2}\\
&\le
C \left[\left(
[1+J_0^2]\star\calK_\lambda
\right)(t,x)\right]^{p/2},
\end{align*}
where we have applied \eqref{E:K_Recursion}.
Combining these two bounds shows that $D u_m(t,x)$ satisfies the moment bound in \eqref{E:DunBds}.
Therefore, 
\[
\sup_{m\in\bbN}\E\left(\Norm{Du_m(t,x)}_{\calH_T}^p\right)<\infty.
\]
By Lemma \ref{L:Dinfty0}, we can conclude that $u(t,x)\in \D^{1,p}$.
This proves part (1) of Proposition \ref{P:D1}.

\bigskip
Now we shall show that $D_{\theta,\xi}u(t,x)$ satisfies \eqref{E:SPDEM}.
Lemma 1.2.3 of \cite{Nualart06} implies that $D_{\theta,\xi}u_m(t,x)$ converges
to $D_{\theta,\xi}u(t,x)$
in the weak topology of $L^2\left(\Omega;\calH_T\right)$, namely, for any $h\in \calH_T$ and
any square integrable random variable $F\in \calF_t$,
\[
\lim_{n\rightarrow\infty}
\E\left(\InPrd{D_{\theta,\xi}u_m(t,x)-D_{\theta,\xi}u(t,x),h}_{\calH_T}F\right)=0.
\]
We need to show that the right-hand side of \eqref{E:DunSPDE} converges to the right-hand side of \eqref{E:SPDEM}
in this weak topology of $L^2\left(\Omega;\calH_T\right)$ as well.
Notice that by the Cauchy-Schwarz inequality, 
\begin{multline*}
\E\left(\left|\InPrd{\left(\rho(u)-\rho(u_m)\right)G(t-\cdot,x-\cdot),h}_{\calH_T}F\right|\right)
\\
\le \LIP_\rho\Norm{h}_{\calH_T}\Norm{F}_2
\left(
\int_0^t\int_\R \Norm{u(s,y)-u_m(s,y)}_2^2 G^2(t-s,x-y)\ud s\ud y
\right)^{1/2}.
\end{multline*}
We need to find bounds for $\Norm{u(t,x)-u_m(t,x)}_2^2$.
Let $\lambda=\max(\Lip_\rho,\LIP_\rho)$.
It is clear that
\begin{align*}
\Norm{u(t,x)-u_0(t,x)}_2^2 \le &
\Lip_\rho^2 \int_0^t\int_\R G^2(t-s,x-y) \Norm{u(s,y)}_2^2\ud s\ud y\\
\le & (\lambda^2 G^2 \star (J_0^2 + (1+J_0^2)\star\calK_\lambda))(t,x)\\
\le & ((1+J_0^2)\star\calK_\lambda)(t,x),
\end{align*}
where in the last step we have applied \eqref{E:K_Recursion}.
For $m\ge 1$, we have that
\begin{align*}
\Norm{u(t,x)-u_m(t,x)}_2^2 \le &
\LIP_\rho^2 \int_0^t\int_\R G^2(t-s,x-y) \Norm{u(s,y)-u_{m-1}(s,y)}_2^2\ud s\ud y\\
\le & (\lambda^2 G^2 \star \Norm{u-u_{m-1}}_2^2)(t,x)\\
\le & (\calL_m \star \Norm{u-u_0}_2^2)(t,x),
\end{align*}
where
\[
\calL_m(t,x) = \underbrace{\left(\lambda^2 G^2 \star \cdots \star \lambda^2 G^2\right)}_{\text{$m$'s $\lambda^2 G^2$}}(t,x).
\]
By Proposition 4.3 of \cite{ChenDalang15FracHeat}, we see that 
\[
\calL_m(t,x) \le \frac{C^m}{\Gamma(m(1-1/\alpha))}  G(t,x)
\le \frac{C^m}{\Gamma(m(1-1/\alpha))}  \lambda^2 G^2(t,x),
\]
for $t\in[0,T]$ and $x\in\R$.
Hence,
\begin{align*}
\int_0^t\int_\R &\Norm{u(s,y)-u_m(s,y)}_2^2 G^2(t-s,x-y)\ud s\ud y\\
\le&
C \left((1+J_0^2)\star\calK_\lambda \star\calL_m\star \lambda^2 G^2\right)(t,x)\\
\le &
\frac{C^m}{\Gamma(m(1-1/\alpha))}
\left((1+J_0^2)\star\calK_\lambda \star \lambda^2 G^2 \star \lambda^2 G^2\right)(t,x)\\
\le & \frac{C^m}{\Gamma(m(1-1/\alpha))}
\left((1+J_0^2)\star\calK_\lambda\right)(t,x),
\end{align*}
where in the last step we have applied twice the inequality $(\lambda^2G^2\star\calK_\lambda)(t,x)\le\calK_\lambda(t,x)$;
see \eqref{E:K_Recursion}.
By Stirling's approximation, one sees that 
\begin{align}\label{E:CG-Lim}
 \lim_{m\rightarrow\infty}C^m\Gamma(m(1-1/\alpha))^{-1}=
 0.
\end{align}
Therefore,
\[
\lim_{m\rightarrow\infty}
\E\left(\InPrd{\left(\rho(u)-\rho(u_m)\right)G(t-\cdot,x-\cdot),h}_{\calH_T}F\right)=0.
\]

Denote the second term on the right-hand side of \eqref{E:DunSPDE} (resp. \eqref{E:SPDEM}) by
$I_m(t,x)$ (resp. $I(t,x)$). It remains to show that
\begin{align}
\lim_{m\rightarrow\infty}\E\left(\InPrd{I(t,x)-I_m(t,x),h}_{\calH_T} F\right)=0.
\end{align}
Notice that
\begin{align*}
I(t,x)&-I_m(t,x)\\
=&\int_0^t\int_\R G(t-s,x-y)\left(\rho'(u(s,y))-\rho'(u_{m-1}(s,y))\right)
D_{\theta,\xi}u_{m-1}(s,y)W(\ud s\ud y)\\
&+\int_0^t\int_\R G(t-s,x-y)\rho'(u(s,y))\left(D_{\theta,\xi}u(s,y)-D_{\theta,\xi}u_{m-1}(s,y))\right)
W(\ud s\ud y)\\
=:& B_1(t,x) + B_2(t,x).
\end{align*}
Since $F$ is square integrable, it is known that for some adapted random field
$\{\Phi(s,y),s\in[0,T],y\in\R \}$ with $\int_0^t\int_\R \E\left[\Phi^2(s,y)\right]\ud s\ud y<\infty$ it holds that 
\[
F= \E[F] + \int_0^t\int_\R \Phi(s,y)W(\ud s\ud y);
\]
see, e.g., Theorem 1.1.3 of \cite{Nualart06}.
Hence,
\begin{align*}
\E\left(\InPrd{B_1(t,x),h}_{\calH_T}F\right)
=&\int_0^t\ud s \int_\R\ud y\: 
G(t-s,x-y)\\
&\times
\E\left[\Phi(s,y)\left(\rho'(u(s,y))-\rho'_{m-1}(u(s,y))\right)
\InPrd{Du_{m-1}(s,y),h}_{\calH_T}\right].
\end{align*}
Note that it suffices to consider the case when  $\Phi$ is bounded uniformly in
$(t,x,\omega)$ since these random fields are dense in the set of all adapted
random fields such that $\int_0^T\int_\R \E[\Phi^2(s,y)]\ud s\ud y$ is finite.
By the Cauchy-Schwarz inequality,
\begin{align*}
\E\left(\left|\InPrd{B_1(t,x),h}_{\calH_T}F\right|\right)
\le& \LIP_{\rho'} \Norm{\Phi}_{L^\infty(\R_+\times\R\times\Omega)} \Norm{h}_{\calH_T}
\int_0^t\int_\R G(t-s,x-y)\\
&\times 
\E\left(
|u(s,y)-u_{m-1}(s,y)| \Norm{Du_{m-1}(s,y)}_{\calH_T}
\right)
\ud s\ud y\\
\le& C \int_0^t\int_\R G(t-s,x-y)\\
&\times
\Norm{u(s,y)-u_{m-1}(s,y)}_2\:
\E\left(
\Norm{Du_{m-1}(s,y)}_{\calH_T}^2
\right)^{1/2}
\ud s\ud y.
\end{align*}
By the same arguments as above, we see that 
\begin{align*}
\Norm{u(s,y)-u_m(s,y)}_2^2
& \le
\frac{C^m}{\Gamma(m(1-1/\alpha))}
\left((1+J_0^2)\star\calK_{\lambda}\right)(s,y).
\end{align*}
Together with \eqref{E:DunBds}, we have that
\begin{align*}
\E\left(\left|\InPrd{B_1(t,x),h}_{\calH_T}F\right|\right)^2
\le & C^{m/2}\Gamma\left(m(1-1/\alpha)\right)^{-1/2}\\
&\times \int_0^t \ud s\int_\R\ud y\:
G(t-s,x-y)\left((1+J_0^2)\star\calK_{\lambda}\right)(s,y).
\end{align*}
By \eqref{E:K_bound}, we see that
\[
\left((1+J_0^2)\star\calK_{\lambda}\right)(s,y)
\le C \left((1+J_0^2)\star\calG\right)(s,y),
\]
where $\calG(t,x)$ is defined in \eqref{E:calG}.
Thanks to \eqref{E:CG-Lim}, we only need to prove that
\begin{align*}
M:=\int_0^t \ud s\int_\R\ud y\: G(t-s,x-y) \left((1+J_0^2)\star\calG\right)(s,y)<\infty.
\end{align*}
Notice that $M=\left((1+J_0^2)\star \calG\star G\right)(t,x)$ and for all $t\in [0,T]$, 
\[
(G\star\calG)(t,x)
=\int_0^t \ud s \: s^{-1/\alpha}\int_\R \ud y
\: G(s,y) G(t-s,x-y)
\le C T^{1-1/\alpha} G(t,x)
\le C \calG(t,x).
\]
Hence, $M\le C ((1+J_0^2)\star\calG)(t,x)$, which is clearly finite.  Therefore, we can conclude that
\[
\lim_{m\rightarrow\infty}
\E\left(\InPrd{B_1(t,x),h}_{\calH_T}F\right)=0.
\]

Similarly, for $B_2(t,x)$, we have that
\begin{align*}
\E\left(\InPrd{B_2(t,x),h}_{\calH_T}F\right)
=&\int_0^t\ud s \int_\R\ud y\: 
G(t-s,x-y)\\
&\times
\E\left[\Phi(s,y)\rho'(u(s,y))\InPrd{D(u(s,y)-u_{m-1}(s,y)),h}_{\calH_T}\right].
\end{align*}
The boundedness of $\rho'$ implies that $\Phi(s,y)\rho'(u(s,y))$ is again an $\calF_s$-measurable and square integral
random variable.
Since $D u_{m}(s,y)$ converges to $D u(s,y)$ in the weak topology as specified above, we have that
\[
\lim_{m\rightarrow\infty}
\E\left[\Phi(s,y)\rho'(u(s,y))\InPrd{D(u(s,y)-u_{m-1}(s,y)),h}_{\calH_T}\right] =0.
\]
Then an application of the dominated convergence theorem implies that
\[
\lim_{m\rightarrow\infty}
\E\left(\InPrd{B_2(t,x),h}_{\calH_T}F\right)=0.
\]
This proves part (2) of Proposition \ref{P:D1}.
\end{proof}

\bigskip
In order to prove part (3) of the proposition, we need several lemmas. We first introduce some notation. Let $\Lambda(n,k)$, $n\ge k\ge 1$, be the set of partitions of the integer $n$ of length $k$, that is, if $\lambda \in\Lambda(n,k)$, then $\lambda\in \bbN^k$, and by writing $\lambda=(\lambda_1,\dots,\lambda_k)$ it satisfies $\lambda_1\ge \dots\ge \lambda_k\ge 1$ and $\lambda_1+\dots+\lambda_k=n$.
For $\lambda\in \Lambda(n,k)$, let $\calP(n,\lambda)$ be all partitions 
of $n$ ordered objects, say $\{\theta_1,\dots,\theta_n\}$ with $\theta_1\ge \cdots \ge \theta_n$, into $k$ groups $\{\theta^1_1,\dots,\theta^1_{\lambda_1}\},\dots, \{\theta^k_1,\dots,\theta^k_{\lambda_k}\}$  such that within each group the elements are ordered, i.e., $\theta_1^j\ge \dots\ge \theta^j_{\lambda_j}$ for $1\le j\le k$.
It is clear that the cardinality of the set $\calP(n,\lambda)$ is equal to $\binom{n}{\lambda_1,\dots,\lambda_k}=\frac{n!}{\lambda_1!\dots\lambda_k!}$.

\begin{lemma}\label{L:Leibniz}
For $n\ge 1$, if $\rho\in C^n(\R)$ and $u\in \D^{n,2}$, then
\begin{align}\label{E:Leibniz}
 D^n_{\theta_1,\dots,\theta_n} \rho(u) = \sum_{k=1}^n \rho^{(k)}(u)\sum_{\lambda\in \Lambda(n,k)}
 \sum_{\calP(n,k)}
 \prod_{j=1}^k
  D^{\lambda_j}_{\theta_1^j,\dots,\theta_{\lambda_j}^j} u.
\end{align}
\end{lemma}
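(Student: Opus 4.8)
The plan is to prove the Fa\`a di Bruno type formula \eqref{E:Leibniz} by induction on $n$, using repeatedly the chain rule $D_{\theta}\rho(u)=\rho'(u)D_{\theta}u$ and the Leibniz rule $D_{\theta}(FG)=(D_{\theta}F)G+F(D_{\theta}G)$ for the Malliavin derivative. The base case $n=1$ is exactly the chain rule: $D_{\theta_1}\rho(u)=\rho'(u)D_{\theta_1}u$, which is the right-hand side of \eqref{E:Leibniz} since $\Lambda(1,1)=\{(1)\}$ and $\calP(1,(1))$ is a single partition. For the inductive step, I would assume \eqref{E:Leibniz} holds for $n$ and apply $D_{\theta_{n+1}}$ to both sides, where by relabelling we arrange $\theta_1\ge\dots\ge\theta_{n+1}$ so that $\theta_{n+1}$ is the smallest index.

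First I would differentiate the summand $\rho^{(k)}(u)\prod_{j=1}^k D^{\lambda_j}_{\theta_1^j,\dots,\theta^j_{\lambda_j}}u$. By the Leibniz rule, $D_{\theta_{n+1}}$ either hits the scalar factor $\rho^{(k)}(u)$, producing $\rho^{(k+1)}(u)D_{\theta_{n+1}}u$ times the unchanged product — this corresponds to creating a new singleton group $\{\theta_{n+1}\}$, hence a partition in $\Lambda(n+1,k+1)$ whose last part equals $1$ — or it hits one of the $k$ factors $D^{\lambda_j}_{\cdots}u$, turning it into $D^{\lambda_j+1}_{\theta_1^j,\dots,\theta^j_{\lambda_j},\theta_{n+1}}u$, which corresponds to adjoining $\theta_{n+1}$ as the last (smallest) element of the $j$-th group, hence a partition in $\Lambda(n+1,k)$. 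Collecting all these contributions over $k$, over $\lambda\in\Lambda(n,k)$ and over $\calP(n,\lambda)$, I would then check that the resulting terms are exactly indexed by $\Lambda(n+1,k')$ and $\calP(n+1,\lambda')$ for $k'\in\{1,\dots,n+1\}$, with the correct multiplicities — the combinatorial identity here is the standard recursion for set partitions refining integer partitions (each partition of $n+1$ is obtained from a partition of $n$ either by adding a new singleton block or by enlarging an existing block).

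The main obstacle is purely bookkeeping: verifying that every ordered partition of $\{\theta_1,\dots,\theta_{n+1}\}$ of the required shape arises exactly once, with the ordering conventions $\lambda_1\ge\dots\ge\lambda_k$ and $\theta^j_1\ge\dots\ge\theta^j_{\lambda_j}$ respected, and that no double counting occurs when two different $(\lambda,\calP)$ pairs map to the same term. Since $\theta_{n+1}$ is the globally smallest index, appending it to a group automatically preserves the within-group ordering, and the block containing $\theta_{n+1}$ is uniquely determined, so the map from ``where did $D_{\theta_{n+1}}$ act'' to ``which block of the refined partition contains $\theta_{n+1}$'' is a bijection; this resolves the multiplicity question. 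One also needs the justification that all the Malliavin derivatives in sight exist, i.e.\ that the formal manipulations are legitimate in $\D^{n,2}$; this is covered by the hypothesis $\rho\in C^n(\R)$ together with $u\in\D^{n,2}$, applying the chain and product rules for $D$ (see \cite{Nualart06}) at each stage, noting that the iterated derivatives appearing on the right-hand side are of order at most $n$.

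Alternatively, and perhaps more cleanly, one may invoke the classical Fa\`a di Bruno formula for the $n$-th derivative of a composition $g\circ h$ of smooth functions and transfer it to the Malliavin setting: the operator $D$ behaves as a derivation, so the same combinatorial expansion is valid with ordinary derivatives replaced by Malliavin derivatives and products of ordinary derivatives replaced by products in $L^2$. I would state \eqref{E:Leibniz} as the Malliavin-calculus incarnation of Fa\`a di Bruno, reduce it to the scalar identity by the derivation property, and relegate the combinatorial verification to a one-line reference. Either way, the proof is short; the substance is entirely in matching the index sets $\Lambda(n,k)$ and $\calP(n,\lambda)$ to the terms produced by repeated differentiation.
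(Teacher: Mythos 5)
Your induction via the chain and Leibniz rules is correct and is precisely the route the paper implicitly prescribes: the paper does not actually prove Lemma~\ref{L:Leibniz}, stating only that it is ``a consequence of the Leibniz rule of differentials'' and listing the explicit expansions for $n=1,\dots,4$ while leaving the proof to the reader. Your proposal fills in that omitted argument in the natural way, with the key bijection (the new index $\theta_{n+1}$ either forms a singleton block, matching the term where $D_{\theta_{n+1}}$ hits $\rho^{(k)}(u)$, or is adjoined to an existing block, matching the Leibniz term) correctly identified.
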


\begin{remark}
Lemma \ref{L:Leibniz} is simply a consequence of the Leibniz rule of differentials. We will leave the proof to the interested readers. Here we list several special cases instead:
\begin{align*}
D_{\theta_1}\rho(u) = &\rho'(u) \: D_{\theta_1}u, \\
D_{\theta_1,\theta_2}^2\rho(u) = &\rho''(u)\: D_{\theta_1}u\: D_{\theta_2}u + \rho'(u)\: D^2_{\theta_1,\theta_2}u,\\
D_{\theta_1,\theta_2,\theta_3}^3\rho(u) = &\rho^{(3)}(u) \: D_{\theta_1}u\: D_{\theta_2}u\: D_{\theta_3}u \\
&+ \rho^{(2)}(u)
\left(D^2_{\theta_1,\theta_2}u\: D_{\theta_3} u +D^2_{\theta_1,\theta_3}u\:  D_{\theta_2} u +D^2_{\theta_2,\theta_3}u \: D_{\theta_1} u \right)\\
& + \rho'(u)\: D^3_{\theta_1,\theta_2,\theta_3}u,\\
D_{\theta_1,\theta_2,\theta_3,\theta_4}^4\rho(u) = &\rho^{(4)}(u) \: D_{\theta_1}u\: D_{\theta_2}u\: D_{\theta_3}u\: D_{\theta_4}u \\
&+ \rho^{(3)}(u)
\Bigg[D^2_{\theta_1,\theta_2}u D_{\theta_3} uD_{\theta_4} u +D^2_{\theta_1,\theta_3}u  D_{\theta_2} u D_{\theta_4} u +D^2_{\theta_1,\theta_4}u D_{\theta_2} uD_{\theta_3} u\\
& \hspace{4em}+D^2_{\theta_2,\theta_3}u D_{\theta_1} uD_{\theta_4} u
+D^2_{\theta_2,\theta_4}u D_{\theta_1} uD_{\theta_3} u+D^2_{\theta_3,\theta_4}u D_{\theta_1} uD_{\theta_2} u\Bigg]\\
& + \rho^{(2)}(u)\Bigg[ \Big( 
D^3_{\theta_1,\theta_2,\theta_3}uD_{\theta_4}u+
D^3_{\theta_1,\theta_2,\theta_4}uD_{\theta_3}u+
D^3_{\theta_1,\theta_3,\theta_4}uD_{\theta_2}u+
D^3_{\theta_2,\theta_3,\theta_4}uD_{\theta_1}u
\Big)\\
&\hspace{4em}+
\Big( 
D^2_{\theta_1,\theta_2}uD^2_{\theta_3,\theta_4}u+
D^2_{\theta_1,\theta_3}uD^2_{\theta_2,\theta_4}u+
D^2_{\theta_1,\theta_4}uD^2_{\theta_2,\theta_3}u
\Big)
\Bigg]\\
&+\rho'(u)\: D_{\theta_1,\theta_2,\theta_3,\theta_4}^4 u.
\end{align*}
\end{remark}

\begin{lemma}\label{L:LeibnizMoment}
Suppose $\rho\in C^\infty(\R)$ with all derivatives bounded and $u$ is a smooth and cylindrical random variable.
Let $\calH$ be the corresponding Hilbert space. Then 
\begin{align}\label{E:LeibnizMoment}
\Norm{\: \Norm{D^n\rho(u)}_{\calH^{\otimes n}}}_p^2
\le C_n \sum_{k=1}^n \sum_{\lambda\in\Lambda(n,k)}
\binom{n}{\lambda_1,\dots,\lambda_k}
\prod_{j=1}^k \Norm{\:\Norm{D^{\lambda_j} u}_{\calH^{\otimes \lambda_j}}}_{kp}^2.
\end{align}
\end{lemma}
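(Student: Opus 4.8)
The plan is to take norms in the right order: first the Hilbert-space norm $\Norm{\cdot}_{\calH^{\otimes n}}$ pathwise in $\omega$, and only afterwards the $L^p(\Omega)$-norm. Start from the Leibniz-rule expansion of Lemma~\ref{L:Leibniz}. Since $u$ is smooth and cylindrical every term in \eqref{E:Leibniz} is a bona fide function of $(\theta_1,\dots,\theta_n)$; moreover the left-hand side is symmetric in its $n$ arguments, and so is the right-hand side (it runs over all set partitions of the labelled variables), so \eqref{E:Leibniz} may be read a.e.\ on all of $S^n$, not merely on the ordered simplex. Applying the triangle inequality in $\calH^{\otimes n}$ across the three sums (over $k$, over $\lambda\in\Lambda(n,k)$, and over the partitions $\calP(n,\lambda)$) and pulling out the scalar factor $\rho^{(k)}(u)$, which does not depend on the variables $\theta_i$, gives, for each $\omega$,
\[
\Norm{D^n\rho(u)}_{\calH^{\otimes n}}\le \sum_{k=1}^n \bigl|\rho^{(k)}(u)\bigr|\sum_{\lambda\in\Lambda(n,k)}\sum_{P\in\calP(n,\lambda)}\Norm{\prod_{j=1}^k D^{\lambda_j}_{B_j^P}u}_{\calH^{\otimes n}},
\]
where $B_1^P,\dots,B_k^P$ denote the $k$ blocks of the partition $P$; then bound $|\rho^{(k)}(u)|\le\|\rho^{(k)}\|_\infty$.

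The one structural point worth spelling out is the evaluation of $\Norm{\prod_{j=1}^k D^{\lambda_j}_{B_j^P}u}_{\calH^{\otimes n}}$. For a fixed partition $P$ the $n$ integration variables split into disjoint blocks $B_1^P,\dots,B_k^P$ with $|B_j^P|=\lambda_j$, and the $j$-th factor depends only on the variables of $B_j^P$. Hence, squaring and using Tonelli's theorem, the integral over $S^n$ factorizes into a product of $k$ integrals, each of which---after renaming dummy variables---equals $\Norm{D^{\lambda_j}u}_{\calH^{\otimes\lambda_j}}^2$; in particular the resulting quantity is independent of which $P\in\calP(n,\lambda)$ we picked. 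Summing over $\calP(n,\lambda)$ therefore contributes the factor $|\calP(n,\lambda)|\le\binom{n}{\lambda_1,\dots,\lambda_k}$, and we obtain the pathwise estimate
\[
\Norm{D^n\rho(u)}_{\calH^{\otimes n}}\le C_n\sum_{k=1}^n\sum_{\lambda\in\Lambda(n,k)}\binom{n}{\lambda_1,\dots,\lambda_k}\prod_{j=1}^k\Norm{D^{\lambda_j}u}_{\calH^{\otimes\lambda_j}},
\]
with $C_n$ depending only on $n$ and on $\max_{1\le k\le n}\|\rho^{(k)}\|_\infty$.

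It remains to apply $\Norm{\cdot}_p$ to both sides. The triangle inequality in $L^p(\Omega)$ moves it past the finitely many terms of the double sum, and for each fixed $\lambda\in\Lambda(n,k)$ the generalized H\"older inequality with the $k$ exponents all equal to $kp$---legitimate because $k\cdot\frac{1}{kp}=\frac1p$---yields
\[
\Norm{\prod_{j=1}^k\Norm{D^{\lambda_j}u}_{\calH^{\otimes\lambda_j}}}_p\le\prod_{j=1}^k\Norm{\Norm{D^{\lambda_j}u}_{\calH^{\otimes\lambda_j}}}_{kp}.
\]
Squaring both sides, and using $\bigl(\sum_{i=1}^{N_n}a_i\bigr)^2\le N_n\sum_{i=1}^{N_n}a_i^2$ with $N_n$ the ($n$-dependent) number of pairs $(k,\lambda)$, together with $\binom{n}{\lambda_1,\dots,\lambda_k}^2\le n!\,\binom{n}{\lambda_1,\dots,\lambda_k}$, one absorbs all $n$-dependent constants into $C_n$ and arrives at \eqref{E:LeibnizMoment}.

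I do not expect a genuine obstacle here: the lemma is essentially bookkeeping for the Leibniz/chain rule. The two points that need a little care are (i) the block factorization of the $\calH^{\otimes n}$-norm---one has to notice that within a fixed partition the factors act on disjoint coordinate blocks, so that Tonelli splits the norm exactly---and (ii) the choice of H\"older exponents (each equal to $kp$), so that the right-hand side carries precisely the $L^{kp}(\Omega)$-norms appearing in \eqref{E:LeibnizMoment}; using, say, all exponents equal to $np$ would also close the argument but would not reproduce the stated bound.
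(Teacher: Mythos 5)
Your proposal is correct and follows essentially the same route as the paper's proof: pathwise $\calH^{\otimes n}$-norm bound via the Leibniz expansion and $|\calP(n,\lambda)|=\binom{n}{\lambda_1,\dots,\lambda_k}$, then Minkowski and generalized H\"older with all exponents $kp$, then a Cauchy--Schwarz squaring step absorbing the resulting extra multinomial factor into $C_n$. You merely make explicit two points the paper leaves implicit, namely the Tonelli block-factorization of the $\calH^{\otimes n}$-norm across a fixed partition and the choice of H\"older exponents; both are correct.
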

\begin{proof}
By Lemma \ref{L:Leibniz} and by the boundedness of derivatives of $\rho$, we see that
\begin{align*}
 \Norm{D^n\rho(u)}_{\calH^{\otimes n}}
 &\le C \sum_{k=1}^n \sum_{\lambda\in\Lambda(n,k)}
 \binom{n}{\lambda_1,\dots,\lambda_k}
 \prod_{j=1}^k \Norm{D^{\lambda_j} u}_{\calH^{\otimes\lambda_j}},
\end{align*}
where we have used the fact that $|\calP(n,\lambda)|=\binom{n}{\lambda_1,\dots,\lambda_k}$.
Hence, by the Minkowski inequality and the H\"older inequality, 
\begin{align*}
\Norm{\: \Norm{D^n\rho(u)}_{\calH^{\otimes n}}}_p
 &\le C \sum_{k=1}^n \sum_{\lambda\in\Lambda(n,k)}\binom{n}{\lambda_1,\dots,\lambda_k}\prod_{j=1}^k \Norm{\:\Norm{D^{\lambda_j}u}_{\calH^{\otimes\lambda_j}}}_{kp}.
\end{align*}
Finally, an application of the inequality $(a_1+\dots+a_n)^2\le n(a_1^2+\dots+a_n^2)$ completes the proof of Lemma \ref{L:LeibnizMoment}.
\end{proof}

\begin{lemma}\label{L:DI}
Suppose $\rho\in C^\infty$ with all derivatives bounded and let $\{u(t,x),t>0,x\in\R\}$ be an adapted process such that for each $(t,x)$ fixed, $u(t,x)$ is a smooth and cylindrical random variable. Let 
 \[
 I(t,x)= \int_0^t\int_\R \rho(u(s,y)) G(t-s,x-y) W(\ud s\ud y).
 \]
Denote 
\begin{align}\label{E:NoT-Alpha}
\begin{aligned}
 \alpha &=((\theta_1,\xi_1),\dots, (\theta_n,\xi_n))
\quad\text{and}\\
\hat{\alpha}_k&=((\theta_1,\xi_1),\dots, (\theta_{k-1},\xi_{k-1}),(\theta_{k+1},\xi_{k+1}),\dots (\theta_n,\xi_n)).
\end{aligned}
\end{align}
Then 
\begin{align}\label{E:DI}
\begin{aligned}
D^{n}_{\alpha} I(t,x) =
& \sum_{k=1}^n D_{\hat{\alpha}_k}^{n-1}
\rho(u(\theta_k,\xi_k))G(t-\theta_k,x-\xi_k)\\
&+ \int_{\vee_{i=1}^n \theta_i}^t\int_{\R} D^{n}_\alpha \rho(u(s,y))\: 
G(t-s,x-y) W(\ud s \ud y).
\end{aligned}
\end{align}
\end{lemma}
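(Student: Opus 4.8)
The plan is to argue by induction on $n$. The engine is the commutation relation between the Malliavin derivative and the Itô stochastic integral: for an adapted random field $\Phi$ with $\int_0^t\int_\R\E[\Phi(s,y)^2]\,\ud s\,\ud y<\infty$ and $\Phi(s,y)\in\D^{1,2}$ for each $(s,y)$,
\[
D_{\theta,\xi}\left(\int_0^t\int_\R \Phi(s,y)\,W(\ud s\,\ud y)\right)
= \Phi(\theta,\xi) + \int_0^t\int_\R D_{\theta,\xi}\Phi(s,y)\,W(\ud s\,\ud y);
\]
see \cite{Nualart06}. Because $u(s,y)$ is assumed smooth and cylindrical (hence in $\D^\infty$ with all moments finite) and adapted, and $\rho\in C^\infty$ has bounded derivatives, Lemma \ref{L:Leibniz} shows that every iterated derivative $D^k_{(\cdots)}\rho(u(s,y))$ is again a smooth cylindrical random variable which is $\calF_s$-measurable on the set where all of its time indices are $\le s$. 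Thus every stochastic integral occurring below is a genuine Itô integral of an adapted, smooth integrand, and the commutation relation applies at each step with no further integrability verification; the proof then reduces to keeping track of indices.

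For the base case $n=1$, write $I(t,x)=\int_0^t\int_\R \Phi(s,y)\,W(\ud s\,\ud y)$ with $\Phi(s,y)=\rho(u(s,y))\,G(t-s,x-y)$. Since $G$ is deterministic, the commutation relation gives
\[
D_{\theta_1,\xi_1}I(t,x) = \rho(u(\theta_1,\xi_1))\,G(t-\theta_1,x-\xi_1)
+ \int_0^t\int_\R D_{\theta_1,\xi_1}\rho(u(s,y))\,G(t-s,x-y)\,W(\ud s\,\ud y).
\]
By adaptedness $D_{\theta_1,\xi_1}u(s,y)=0$, hence $D_{\theta_1,\xi_1}\rho(u(s,y))=0$, for $s<\theta_1$, so the lower limit may be raised to $\theta_1=\vee_{i=1}^1\theta_i$; with $D^0=\mathrm{id}$ and $\hat\alpha_1$ the empty tuple this is exactly \eqref{E:DI} for $n=1$.

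For the inductive step, assume \eqref{E:DI} at level $n-1$ for the tuple $((\theta_1,\xi_1),\dots,(\theta_{n-1},\xi_{n-1}))$, and apply $D_{\theta_n,\xi_n}$. In the finite sum, $D_{\theta_n,\xi_n}$ passes the deterministic factor $G(t-\theta_k,x-\xi_k)$ and raises the $(n-2)$-fold derivative of $\rho(u(\theta_k,\xi_k))$ in the indices $\{(\theta_j,\xi_j):j\le n-1,\ j\ne k\}$ to the $(n-1)$-fold derivative $D^{n-1}_{\hat\alpha_k}\rho(u(\theta_k,\xi_k))$, for $1\le k\le n-1$, producing the terms $k=1,\dots,n-1$ of \eqref{E:DI}. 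Differentiating the stochastic integral $\int_{\vee_{i=1}^{n-1}\theta_i}^t\int_\R D^{n-1}_{((\theta_1,\xi_1),\dots,(\theta_{n-1},\xi_{n-1}))}\rho(u(s,y))\,G(t-s,x-y)\,W(\ud s\,\ud y)$ by the commutation relation produces a boundary term $D^{n-1}_{((\theta_1,\xi_1),\dots,(\theta_{n-1},\xi_{n-1}))}\rho(u(\theta_n,\xi_n))\,G(t-\theta_n,x-\xi_n)$ — which, since $\hat\alpha_n=((\theta_1,\xi_1),\dots,(\theta_{n-1},\xi_{n-1}))$, is the missing $k=n$ term of the sum, and which by adaptedness is automatically zero unless $\theta_n\ge\vee_{i=1}^{n-1}\theta_i$ — together with $\int_{\vee_{i=1}^{n-1}\theta_i}^t\int_\R D^n_\alpha\rho(u(s,y))\,G(t-s,x-y)\,W(\ud s\,\ud y)$. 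Finally, adaptedness forces $D^n_\alpha\rho(u(s,y))=0$ for $s<\vee_{i=1}^n\theta_i$, so the lower limit of this last integral may be replaced by $\vee_{i=1}^n\theta_i$; assembling the pieces gives \eqref{E:DI} at level $n$.

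The only genuinely substantive point — and the one I would state with care — is the validity of the commutation relation at each stage; everything else is index bookkeeping with the deletion $\alpha\mapsto\hat\alpha_k$ and the maximum $\vee_{i=1}^n\theta_i$. Here it is automatic, because the smooth-cylindrical hypothesis on $u$ makes every integrand adapted, smooth, and in $\D^\infty$ with all moments finite; the moment estimates needed to legitimize the same identities for the actual solution $u(t,x)$ with measure-valued initial data — where one instead works with the Picard approximations and passes to the limit, as in the proof of parts (1) and (2) of Proposition \ref{P:D1} — are dealt with separately.
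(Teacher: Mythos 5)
Your proposal is correct and follows essentially the same route as the paper: induction on $n$, with the base case and inductive step both driven by the commutation rule between the Malliavin derivative and the It\^o stochastic integral, the boundary term supplying the missing $k=n$ summand and adaptedness justifying the lower limit $\vee_{i=1}^n\theta_i$. Your explicit remarks on why the commutation rule applies (smooth cylindrical integrands) and on the adaptedness-based vanishing when $\theta_n<\vee_{i=1}^{n-1}\theta_i$ are minor refinements of details the paper leaves implicit.
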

\begin{proof}
It is clear that when $n=1$, 
\[
D_{(\theta_1,\xi_1)} I(t,x) = \rho(u(\theta_1,\xi_1)) G(t-\theta_1,x-\xi_1)
+\int_{\theta_1}^t\int_\R D_{(\theta_1,\xi_1)} \rho(u(s,y)) G(t-s,x-y) 
W(\ud s\ud y).
\]
Assume \eqref{E:DI} is true for all $k=1,\cdots n-1$. Now we consider the case when $k=n$.
Let $\alpha=((\theta_1,\xi_1),\dots, (\theta_{n-1},\xi_{n-1}))$
and $\beta=((\theta_1,\xi_1),\dots, (\theta_{n},\xi_{n}))$.
By the induction assumption,
\begin{align*}
D^{n}_\beta I(t,x)   = &
D_{(\theta_n,\xi_n)} D^{n-1}_\alpha  X(t,x)\\
=& D_{(\theta_n,\xi_n)}\sum_{k=1}^{n-1} D_{\hat{\alpha}_k}^{n-2}
\rho(u(\theta_k,\xi_k))G(t-\theta_k,x-\xi_k)\\
&+
D^{n-1}_\alpha \rho(u(\theta_n,\xi_n))\: 
G(t-\theta_n,x-\xi_n)
\\
&+ \int_{\vee_{i=1}^n \theta_i}^t\int_{\R} D_{(\theta_n,\xi_n)} D^{n-1}_\alpha \rho(u(s,y))\: 
G(t-s,x-y)W(\ud s \ud y)\\
=&\sum_{k=1}^{n} D_{\hat{\beta}_k}^{n-1}
\rho(u(\theta_k,\xi_k))G(t-\theta_k,x-\xi_k)\\
&+ \int_{\vee_{i=1}^n \theta_i}^t\int_{\R} D^n_\beta \rho(u(s,y))\: 
G(t-s,x-y)W(\ud s \ud y).
\end{align*}
Hence, \eqref{E:DI} is true for $k=n$. This proves Lemma \ref{L:DI}.
\end{proof}

\bigskip
Now we are ready to prove part (3) or Proposition \ref{P:D1}.

\begin{proof}[Proof of part (3) of  Proposition \ref{P:D1}]
Fix some $T>0$. Throughout the proof, we assume that $t\in [0,T]$.
Let $u_m(t,x)$ be the Picard approximations given in \eqref{E:Picard}.
We will prove by induction that
\begin{align}\label{E:InductDInfy}
\sup_{m\in\bbN} 
\Norm{ u_m(t,x)}_{n,p,S} <\infty,\quad\text{for all $n\ge 1$ and all $p\ge 2$.}
\end{align}
This result together with Lemma \ref{L:Dinfty} implies that $u(t,x)\in \D_S^\infty$. 
Set $\calH = L^2(S)$.
Notice that 
\[
\Norm{F}_{n,p,S}^p  =  \E(|F|^p) + \sum_{k=1}^n \E\left(\Norm{D^k F}_{\calH^{\otimes k}}^{p}\right).
\]
Hence, it suffices to prove the following property by induction:
\begin{align}\label{E:InductDInfy2}
\sup_{m\in\bbN} \sup_{(t,x)\in[0,T]\times \R}
\Norm{\: \Norm{D^n u_m(t,x)}_{\calH^{\otimes n}}}_p <\infty,\quad\text{for all $n\ge 1$ and all $p\ge 2$.}
\end{align}
Fix an arbitrary $p\ge 2$.
Let 
\[
\lambda:=4\sqrt{p}(\Norm{\rho'}_{L^\infty(\R)}\vee \Lip_\rho),\quad
\widetilde{\calK}(t,x)：=\calK_{2\sqrt{p}\lambda}(t,x),
\quad
K_T:= (1\star\calK_{2\sqrt{p}\lambda})(T,x)\vee 1.
\] 
It is clear that $K_T$ is a constant that does not depend on $x$.

{\bigskip\bf\noindent Step 1.~}
We first consider the case $n=1$. 
We will prove by induction that 
\begin{align}\label{E:IndInd1}
\begin{aligned}
  \sup_{(t,x)\in[0,T]\times \R}
\Norm{\: \Norm{D u_m(t,x)}_{\calH}}_p^2 &\le 
4\lambda^2 K_T^2 \sup_{(s,y)\in S} \left[(1+J_0^2(s,y)) + ((1+J_0^2)\star\widetilde{\calK})(s,y)\right]\\
&=:\Theta,
\end{aligned}
\end{align}
where the constant $\Theta$ does not depend on $m$.
Note that the above constant $\Theta$ is finite due to the assumption on the initial data \eqref{E:supSNormU} and \eqref{E:K_bound}.
It is clear that $D_{\theta_1,\xi_1}u_0(t,x)\equiv 0$ satisfies  \eqref{E:IndInd1}. Suppose that \eqref{E:IndInd1} is true for $k\le m$. Now we consider the case $k=m+1$.
Notice that
\[
\begin{aligned}
 D_{\theta_1,\xi_1}u_{m+1}(t,x)= &\rho(u_{m}(\theta_1,\xi_1))G(t-\theta_1,x-\xi_1)\\
&+\int_{\theta_1}^t\int_\R G(t-s,x-y) \rho'(u_{m}(s,y)) D_{\theta_1,\xi_1} u_{m}(s,y) W(\ud s\ud y).
\end{aligned}
\]
For the first term, by the moment bounds for $\Norm{u_m(s,y)}_p$ in \eqref{E:unBds}, we see that
\begin{align*}
\sup_{(t,x)\in [0,T]\times\R}\E&\left[\left(\iint_{S}\rho(u_m(\theta_1,\xi_1))^2 G^2(t-\theta_1,x-\xi)\ud \theta_1\ud \xi_1\right)^{p/2}\right]^{2/p}
\\
&\le 
\sup_{(t,x)\in [0,T]\times\R} \iint_{S}\Norm{\rho(u_m(\theta_1,\xi_1))}_p^2 G^2(t-\theta_1,x-\xi)\ud \theta_1\ud \xi_1\\
&\le
\lambda^2 K_T \sup_{(s,y)\in S} \left(1+\Norm{u_m(s,y)}_p^2\right)\\
&\le \lambda^2 K_T
\sup_{(s,y)\in S} \left[(1+J_0^2(s,y)) + ((1+J_0^2)\star \widetilde{\calK})(s,y)\right] = \frac{\Theta}{4 K_T}.
\end{align*}
Hence, applying the Burkholder-Davis-Gundy inequality to the second term, we obtain that
\begin{align}\label{E_:Ind1}
\Norm{\:\Norm{Du_{m+1}(t,x)}_{\calH}}_p^2
\le & \frac{\Theta}{2K_T} +
2\lambda^2 \int_0^t\int_\R
G(t-\theta_1,x-\xi_1)^2 
\Norm{\:\Norm{Du_{m}(\theta_1,\xi_1)}_{\calH}}_p^2\ud \theta_1\ud \xi_1.
\end{align}
Therefore, by \eqref{E:K_p} with $\Vip=0$,
\[
\Norm{\:\Norm{Du_{m+1}(t,x)}_{\calH}}_p^2\le 
\frac{\Theta}{2K_T} + 
\frac{ \Theta}{2K_T} (1\star\widetilde{\calK})(T,x)
=\Theta \left(\frac{1}{2K_T}+\frac{1}{2}\right)\le \Theta,
\]
which proves \eqref{E:IndInd1} for $m+1$. One can conclude that \eqref{E:InductDInfy2} holds true for $n=1$ .

{\bigskip\bf\noindent Step 2.~}
Assume that \eqref{E:InductDInfy2} holds for $n-1$.
Now we will prove that \eqref{E:InductDInfy} is still true for $n$.
Similar to Step 1, we will prove by induction that for some constant $\Theta$ that does not depend on $m$, it holds that
\begin{align}\label{E:IndInd2}
 \sup_{(t,x)\in[0,T]\times \R}
\Norm{\: \Norm{D^n u_m(t,x)}_{\calH^{\otimes n}}}_p^2 <\Theta.
\end{align}
It is clear that $D^nu_0(t,x)\equiv 0$ satisfies  \eqref{E:IndInd2}. Suppose that \eqref{E:IndInd2} is true for $k\le m$. Now we consider the case $k=m+1$.
By Lemma \ref{L:DI}, we see  that
\begin{align*}
 D^n_{\alpha} u_{m+1}(t,x) =
& \sum_{k=1}^n D_{\hat{\alpha}_k}^{n-1}
\rho(u_{m}(\theta_k,\xi_k))G(t-\theta_k,x-\xi_k)\\
&+ \int_{\vee_{i=1}^n \theta_i}^t\int_{\R} D^n_\alpha \rho(u_{m}(s,y))\: 
G(t-s,x-y) W(\ud s \ud y);
\end{align*}
see \eqref{E:NoT-Alpha} for the notation for $\alpha$ and $\hat{\alpha}_k$.
Hence, 
\begin{align*}
\Norm{D^n u_{m+1}(t,x)}_{\calH^{\otimes n}}
\le & \sum_{k=1}^n \left(
\iint_S \ud \theta_k\ud\xi_k \:  G^2(t-\theta_k,x-\xi_k)\Norm{D^{n-1}\rho(u_{m}(\theta_k,\xi_k))}_{\calH^{\otimes (n-1)}}^2
\right)^{1/2}\\
&+\Norm{\int_{\vee_{i=1}^n\theta_i}^t \int_{\R} D^n \rho(u_{m}(s,y))G(t-s,x-y)W(\ud s\ud y)}_{\calH^{\otimes n}}.
\end{align*}
Then by the Burkholder-Davis-Gundy inequality and $(a_1+\dots+a_n)^2\le n(a_1^2+\dots+a_n^2)$, 
\begin{align*}
\Norm{\:\Norm{D^n u_{m+1}(t,x)}_{\calH^{\otimes n}}}_p^2 
\le & C_n \iint_S G^2(t-s,x-y) \Norm{\:\Norm{D^{n-1} \rho(u_{m}(s,y))}_{\calH^{\otimes (n-1)}}}_p^2 \ud s\ud y\\
&+C_n
\int_0^t\int_\R G^2(t-s,x-y) 
\Norm{\:\Norm{D^n \rho(u_{m}(s,y))}_{\calH^{\otimes n}}}_p^2 \ud s\ud y\\
=:& A_1 + A_2,
\end{align*}
where $C_n=n+1$.
By Lemma \ref{L:LeibnizMoment} and the induction assumption, 
\[
A_1 \le C_n K_T  \sup_{m\in\bbN}\sup_{(s,y)\in [0,T]\times\R}\Norm{\:\Norm{D^{n-1} \rho(u_{m}(s,y))}_{\calH^{\otimes (n-1)}}}_p^2  <\infty.
\]
Following the notation in \cite{NualartQuer07}, let 
\[
\Delta_\alpha^n (\rho, u) := D_\alpha^n \rho(u) - \rho'(u)D^n_\alpha u
\]
be all terms in the summation of $D_\alpha^n \rho(u)$ that have Malliavin derivatives of order less than or equal to $n-1$.
Then 
\begin{align}
\notag
A_2\le & 2 \lambda^2 C_n
\int_0^t\int_\R G^2(t-s,x-y) 
\Norm{\:\Norm{D^n u_{m}(s,y)}_{\calH^{\otimes n}}}_p^2 \ud s\ud y\\
\notag
&+2C_n
\int_0^t\int_\R G^2(t-s,x-y) 
\Norm{\:\Norm{\Delta^n (\rho,u_{m}(s,y))}_{\calH^{\otimes n}}}_p^2 \ud s\ud y\\
=: &A_{2,1} + A_{2,2}.\label{E_:A2}
\end{align}
By the induction assumption, we see that 
\[
A_{2,2} \le 2 C_n K_T  \sup_{m\in\bbN}\sup_{(s,y)\in [0,T]\times\R}\Norm{\:\Norm{\Delta^{n}(\rho,u_{m}(s,y))}_{\calH^{\otimes n}}}_p^2  <\infty.
\]
Therefore, for some constant $C_n'>0$,
\[
\Norm{\:\Norm{D^n u_{m+1}(t,x)}_{\calH^{\otimes n}}}_p^2 
\le C_n' + 2C_n\lambda^2 \int_0^t\int_\R G^2(t-s,x-y) \Norm{\:\Norm{D^{n} u_{m}(s,y)}_{\calH^{\otimes n}}}_p^2 \ud s\ud y.
\]
Comparing the above inequality with \eqref{E_:Ind1}, we see that one can prove \eqref{E:IndInd2} by the same arguments as those in Step 1. Therefore, we have proved \eqref{E:InductDInfy2}, which completes the proof of part (3) of Proposition \ref{P:D1}.
\end{proof}

\bigskip

\begin{remark}\label{R:LocalAnalize}
In this remark, we point out why we need the above localized Malliavin analysis.
Actually, if $\rho(t,x,u)=\lambda u$, then there is no need to resort to the localized Malliavin analysis. 
One can prove
that $u(t,x)\in \D^\infty$ because $\rho^{(n)}(u)\equiv 0$ for $n\ge 2$.
However, if $\rho(t,x,u)$ is not linear in the third argument, we are not able to prove that $u(t,x)\in\D^\infty$ and we need to resort to a larger space $\D^\infty_S$.
The reason is explained as follows:
Let $\calH_T=L^2((0,\infty)\times \R)$ and $u_m(t,x)$ be the Picard approximation of $u(t,x)$ in \eqref{E:Picard}.
From \eqref{E:DunBds}, we see that 
\begin{align}\label{E:RDum}
\sup_{m\in\bbN}\Norm{\:\Norm{D u_m(t,x)}_{\calH_T}}_p^2 \le  ((1+J_0^2)\star\calG)(t,x).
\end{align}
Now for the Malliavin derivatives of order $2$, the $A_{2,2}$ term in \eqref{E_:A2} is bounded as 
\begin{align*}
A_{2,2} \le & 2 C_2 \Norm{\rho''}_{L^\infty(\R)}^2\int_0^t\int_\R G^2(t-s,x-y) \Norm{\:\Norm{D u_m(s,y)}_{\calH_T}^2}_p^2 \ud s\ud y \\
\le & C
\int_0^t\int_\R G^2(t-s,x-y) \Norm{\:\Norm{D u_m(s,y)}_{\calH_T}}_{2p}^4 \ud s\ud y.
\end{align*}
Then from \eqref{E:RDum}, we see that a sufficient condition for $A_{2,2}$ to be finite is 
\[
\left((J_0^2\star\calG)^2 \star G^2 \right)(t,x) <\infty.
\]
In general, for the Malliavin derivatives of all orders, one needs to impose the following condition
\begin{align}\label{E_:RJn}
\left((J_0^2\star\calG)^n \star G^2\right)(t,x) <\infty,\quad\text{for all $n\in\bbN$.}
\end{align}
If the initial data is bounded, then $\sup_{t>0,x\in\R}(J_0^2\star\calG)(t,x)<\infty$, which implies \eqref{E_:RJn}.
However, condition \eqref{E_:RJn} is too restrictive for measure-valued initial data. For example, if the initial data is the Dirac delta measure $\delta_0(x)$, then $J_0(t,x)= G(t,x)$ and  $(J_0^2\star\calG)(t,x) \sim  t^{1-2/\alpha} G(t,x)$. Thus,
\begin{align*}
\label{E_:simsim}
(J_0^2\star\calG)^n(t,x)\sim t^{n(1-2/\alpha)} G^n(t,x)
\sim t^{n(1-2/\alpha)-(n-1)/\alpha} G(t,x)=:f(t,x),
\end{align*}
and condition \eqref{E_:RJn} can be written as 
$(f\star G^2)(t,x)<\infty$. One can bound $G^2(t,x)$ by $C \calG(t,x)$ and the integral in the spatial variable can be evaluated using the semigroup property. As for the integral in the time variable in the convolution $f\star G^2$, it is easy to verify that it is finite if 
\[
n(1-2/\alpha) -(n-1)/\alpha > -1 \quad\Longleftrightarrow\quad
n<\frac{1+\alpha}{3-\alpha}.
\]
Therefore, condition \eqref{E_:RJn} is true only for $n<(1+\alpha)/(3-\alpha)$, where the upper bound is a number in $(1,3]$ since $\alpha\in (1,2]$.
\end{remark}

\section{Existence and smoothness of density at a single point}\label{S:Main}

In this section, we will first generalize the sufficient condition for the existence and smoothness of density by Mueller and Nualart \cite{MuellerNualart08} from function-valued initial data to measure-valued initial data; see Theorem \ref{T_:Single} in Section \ref{SS:MuellerNualart}. Then based on a stopping time argument, we establish the necessary and sufficient condition \eqref{E:lip} in Section \ref{SS:Single}.

\subsection{A sufficient condition}
\label{SS:MuellerNualart}

%

In this subsection, we will prove the following theorem:

\begin{theorem}\label{T_:Single}
Let $u(t,x)$ be the solution  to \eqref{E:FracHt} starting from an initial measure $\mu$
that satisfies \eqref{E:InitD}.
Assume that $\mu$ is proper at some point $x_0\in\R$ with a
density function $f$ over a neighborhood $(a,b)$ of $x_0$.
Suppose that  $\rho(0,x_0,f(x_0))\ne 0$ and that
$f$ is $\beta$-H\"older continuous on $(a,b)$ for some $\beta\in (0,1)$.
Then we have the following two statements:
\begin{enumerate}[parsep=0ex,topsep=1ex]
 \item[(a)] If $\rho$ is differentiable in the third argument with bounded Lipschitz continuous derivative,
 then  for all $t>0$ and $x\in\R$,  $u(t,x)$ has an absolutely continuous law with respect to the Lebesgue measure.
 \item[(b)] If $\rho$ is infinitely differentiable in the third argument with bounded derivatives,
 then  for all $t>0$ and $x\in\R$,  $u(t,x)$ has a smooth  density.
\end{enumerate}
\end{theorem}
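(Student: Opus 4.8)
The plan is to apply the localized Malliavin criterion, Theorem \ref{T:Density2}, to the scalar random variable $u(t,x)$ with the set $S:=[0,t]\times K$, where $K$ is a fixed compact neighbourhood of $x_0$ contained in $(a,b)$. Since $\mu$ restricted to $K$ has a bounded density, Lemma \ref{L:LimPmom2} shows that condition \eqref{E:supSNormU} holds on $S$, so part (3) of Proposition \ref{P:D1} gives $u(t,x)\in\D^\infty_S$ in case (b) (for case (a) only $u(t,x)\in\D^{1,2}$ is needed, which is part (1) of the same proposition). It then remains to bound the scalar localized Malliavin matrix $\sigma_{u(t,x),S}=\int_0^t\int_K(D_{\theta,\xi}u(t,x))^2\,\ud\xi\,\ud\theta$ from below: strict positivity a.s.\ for case (a), and $\E[\sigma_{u(t,x),S}^{-p}]<\infty$ for all $p$ in case (b).

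By equation \eqref{E:SPDEM}, for fixed $(\theta,\xi)$ the process $\{D_{\theta,\xi}u(s,y):s>\theta\}$ solves on $[\theta,\infty)$ the linearized equation $(\partial_s-\Dxa)v=\rho'(u(s,y))\,v\,\dot W$ started from the measure $\rho(u(\theta,\xi))\delta_\xi$ at time $\theta$; since $\rho(u(\theta,\xi))$ is $\calF_\theta$-measurable this yields the factorization $D_{\theta,\xi}u(t,x)=\rho(u(\theta,\xi))\,\Psi^{\theta,\xi}(t,x)$, where $\Psi^{\theta,\xi}$ is the linearized solution from $\delta_\xi$. Conditionally on $\calF_\theta$ and with respect to the time-shifted noise \eqref{E:W*}, this equation has the form \eqref{E:SPDE-Var} with $H(s,y)=\rho'(u(s,y))$ bounded and adapted, $\sigma(z)=z$, $\Lambda=\Norm{\rho'}_{L^\infty(\R)}$, and nonvanishing initial measure $\delta_\xi$; hence Theorem \ref{T:NegMom} (and part (1) of Theorem \ref{T:NegMom-old}) applies to $\Psi^{\theta,\xi}$ and to its restarts at later times. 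To bring in the non-degeneracy, note that since $f$ is $\beta$-Hölder on $(a,b)$, Lemma \ref{L:LimPmom} and the Kolmogorov continuity theorem show that $u$ extends continuously to $[0,T]\times K'$ with $u(0,\cdot)=f$ on a compact neighbourhood $K'\subseteq K$ of $x_0$, and that $\E[\sup_{\theta\le h,\,\xi\in K'}|u(\theta,\xi)-f(\xi)|^p]\le C_ph^{\gamma_0 p}$ for every $p$, with $\gamma_0=\tfrac{(\alpha-1)\wedge\beta}{2\alpha}>0$. Because $\rho(0,x_0,f(x_0))\ne0$ and $\rho$ is jointly continuous, after shrinking $K'$ we fix $c_0>0$ with $|\rho(0,\xi,f(\xi))|\ge c_0$ on $K'$ and set $\tau:=\inf\{\theta>0:|\rho(\theta,\xi,u(\theta,\xi))|<c_0/2\text{ for some }\xi\in K'\}$; then $\tau>0$ a.s., a Chebyshev estimate gives $\bbP(\tau<h)\le C_ph^{\gamma_0 p}$ for all $p$, and on $\{\tau\ge h\}$ one has $\sigma_{u(t,x),S}\ge(c_0/2)^2\int_{h/2}^h\int_{K'}\Psi^{\theta,\xi}(t,x)^2\,\ud\xi\,\ud\theta$.

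Part (a) follows by taking $h=\tau(\omega)\wedge(t/2)>0$ and using that $\Psi^{\theta,\xi}(t,x)>0$ for a.e.\ $(\theta,\xi)$ a.s.\ (Fubini applied to the a.s.\ positivity of $\Psi^{\theta,\xi}(t,x)$ coming from Theorem \ref{T:NegMom}): the lower bound is positive a.s., so Theorem \ref{T:Density2}(1) gives the absolutely continuous law. For part (b) take $h=\epsilon^{\eta}$ with $\eta\in(0,1)$ small and split $\bbP(\sigma_{u(t,x),S}<\epsilon)\le\bbP(\tau<h)+\bbP(\sigma_{u(t,x),S}<\epsilon,\ \tau\ge h)$; the first term is $\le C_p\epsilon^{\eta\gamma_0 p}$, hence faster than any polynomial. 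For the second, Cauchy--Schwarz together with the identity $\int_{K'}\Psi^{\theta,\xi}(t,x)\,\ud\xi=\psi^\theta(t,x)$ (the linearized solution from $\one_{K'}$ at time $\theta$) reduces matters to a lower bound for $Z(t,x):=\int_{h/2}^h\psi^\theta(t,x)\,\ud\theta$, which by a stochastic-Fubini argument is the linearized solution with zero initial data and nonnegative deterministic source $\one_{[h/2,h]\times K'}$; restarting $Z$ at time $h$ and applying the weak comparison principle, Lemma \ref{L:WeakComp}, bounds $Z(t,x)$ below, off an event of probability $\le C_ph^{p(1-1/\alpha)/2}$, by $c\,h\,\inf_{y\in K''}\psi^0(t,y)$ for a smaller box $K''$, with $\psi^0$ the linearized solution from $\one_{K''}$ at time $0$. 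Theorem \ref{T:NegMom} then gives $\bbP(\inf_{y\in K''}\psi^0(t,y)<a)\le\exp(-B\{|\log a|\log|\log a|\}^{2-1/\alpha})$; choosing $a$ of order $\sqrt{\epsilon/h}=\epsilon^{(1-\eta)/2}$ makes every term decay faster than any polynomial in $\epsilon$, so $\E[\sigma_{u(t,x),S}^{-p}]<\infty$ for all $p$ and Theorem \ref{T:Density2}(2) yields the smooth density.

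The hard part — and the place where the measure-valued initial data forces the localized analysis — is the reduction in case (b): the space-time region near $(0,x_0)$ on which $\rho(u(\theta,\xi))$ is non-degenerate is random, so it must be quantified uniformly via the $L^p$ and Hölder estimates of Lemma \ref{L:LimPmom} (which also dictate the choice $S=[0,t]\times K$, so that \eqref{E:supSNormU} holds and Proposition \ref{P:D1}(3) applies), and the residual lower bound on $\int\int\Psi^{\theta,\xi}(t,x)^2$ must then be turned into a small-ball estimate for a single linearized solution to which Theorem \ref{T:NegMom} applies. The comparison/restart step controlling $Z(t,x)$ and its probability estimate are the delicate computations; the remainder is the standard Malliavin machinery of Mueller and Nualart \cite{MuellerNualart08}.
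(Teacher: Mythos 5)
Your argument is correct in outline and shares the same framework as the paper's --- localization on $S=[0,t]\times K$, Proposition \ref{P:D1}(3) with Lemma \ref{L:LimPmom2} to get $u(t,x)\in\D^\infty_S$, and Theorem \ref{T:NegMom} as the engine for the small-ball estimate --- but your mechanism for bounding $\sigma_{u(t,x),S}$ from below is genuinely different. You factorize $D_{\theta,\xi}u(t,x)=\rho(u(\theta,\xi))\Psi^{\theta,\xi}(t,x)$, control the random factor $\rho(u(\theta,\xi))$ near $t=0$ with a stopping time $\tau$, and (after Cauchy--Schwarz, stochastic Fubini, and a restart-plus-comparison step) reduce matters to a small-ball estimate for a single linearized solution started from $\one_{K''}$ at time $h=\epsilon^\eta$. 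The paper instead smears $D_{\theta,\xi}u(t,x)$ against a fixed nonnegative $\psi(\xi)$ supported where $\rho(0,\cdot,f(\cdot))$ is bounded away from zero and works with $Y_\theta(t,x)=\int D_{\theta,\xi}u(t,x)\psi(\xi)\,\ud\xi$: the key observation is that $Y_0$ \emph{itself} solves the autonomous linearized SPDE \eqref{E:Y_0} with the deterministic strictly positive initial datum $\psi(x)\rho(0,x,f(x))$, so Theorem \ref{T:NegMom} applies to $Y_0$ directly, and the remainder $Y_\theta-Y_0$ for $\theta\le\epsilon^r$ is controlled perturbatively via the $L^p$-H\"older estimates of Lemma \ref{L:LimPmom} and a Gronwall argument --- no stopping time and no comparison/restart at all. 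Your route works, but it has more moving parts that need care: the tail bound $\bbP(\tau<h)\lesssim h^{\gamma_0 p}$ requires a $\sup$-in-$\xi$ version of Lemma \ref{L:LimPmom}(2) uniformly over $K'$, which is not what the lemma states pointwise in $x$ and needs an additional two-parameter Kolmogorov argument; the comparison principle after restart must be justified for the $\calF_h$-measurable random initial datum $Z(h,\cdot)$; and the object you call $\psi^0$ is really the linearized solution restarted at time $h$ (not $0$), so one should note that the constant in Theorem \ref{T:NegMom} is uniform in the initial measure and in $h$ small. The paper's choice of $Y_0$ folds the non-degeneracy of $\rho$ into a deterministic initial condition and makes the lower bound considerably shorter.
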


We need one lemma regarding the SPDE \eqref{E:SPDE-Var}.
Recall that the constant $\Lambda$ is defined in \eqref{E:Lambda}.

\begin{lemma}\label{L:Moment}
Suppose that $u_0(x)$ is an $\calF_\theta$-measurable process indexed by $x\in\R$ with $\theta\ge 0$ such that
$u_0(x)\in L^p(\Omega)$ for all $p\ge 2$ and $x\in\R$, and
\[
\int_\R \Norm{u_0(y)}_p^2 G(t,x-y)\ud y<+\infty,\quad\text{for all $t>0$ and $x\in\R$}.
\]
Then there is a unique solution $u(t,x)$ to \eqref{E:SPDE-Var} starting from $u_0$ with $\W$ replaced by $\W_\theta$.
Moreover, for all $p\ge 2$ and $T>0$, there is a finite constant $C:=C_{p,T,\Lambda}>0$ such that
\begin{align}\label{E:Moment}
\Norm{u(t,x)}_p^2 \le C \int_\R \Norm{u_0(y)}_p^2 G(t,x-y)\ud y<+\infty,\quad\text{for all $(t,x)\in [0,T]\times\R$}.
\end{align}
\end{lemma}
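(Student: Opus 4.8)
The plan is to run a Picard iteration in the mild formulation and to close every estimate by combining the two–parameter Gronwall bound (Lemma \ref{L:Mom}) with a single ``self–reproducing'' convolution identity. First I would replace the filtration $\{\calF_t\}$ by the shifted one $\calG_t:=\calF_{t+\theta}$; under $\{\calG_t\}$ the time–shifted noise $W_\theta$ (see \eqref{E:W*}) is again a space–time white noise and $u_0$ is $\calG_0$–measurable, so the problem is a standard semilinear SPDE with a random initial condition measurable with respect to the initial $\sigma$–field, for which the Walsh integral against $W_\theta$ is available exactly as in \cite{Walsh86,ChenDalang13Heat}. The mild equation reads
\[
u(t,x)=J_0(t,x)+\int_0^t\!\!\int_\R G(t-s,x-y)\,H(s+\theta,y)\sigma(s+\theta,y,u(s,y))\,W_\theta(\ud s\,\ud y),
\]
with $J_0(t,x):=\int_\R G(t,x-y)u_0(y)\,\ud y$; the (possibly time–shifted) coefficients still satisfy \eqref{E:Lambda} with the same $\Lambda$, and $z\mapsto H(t,x)\sigma(t,x,z)$ is Lipschitz with some constant $\Lambda'$ (finite since $H$ is bounded and $\sigma$ is uniformly Lipschitz in $z$). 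Writing $\Psi(t,x):=\int_\R G(t,x-y)\Norm{u_0(y)}_p^2\,\ud y$, which is finite by hypothesis, the Cauchy--Schwarz inequality applied to the probability density $y\mapsto G(t,x-y)$ gives at once $\Norm{J_0(t,x)}_p^2\le\Psi(t,x)$.

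The crucial observation, which is what makes the argument close up cleanly, is the identity
\[
\int_0^t\!\!\int_\R \calG(t-s,x-y)\,\Psi(s,y)\,\ud y\,\ud s=\frac{t^{1-1/\alpha}}{1-1/\alpha}\,\Psi(t,x),
\]
obtained by writing $\calG(t-s,x-y)=(t-s)^{-1/\alpha}G(t-s,x-y)$, using the semigroup property $\int_\R G(t-s,x-y)G(s,y-z)\,\ud y=G(t,x-z)$ (so that $\int_\R G(t-s,x-y)\Psi(s,y)\,\ud y=\Psi(t,x)$), and then integrating $(t-s)^{-1/\alpha}$ in $s$, which converges since $\alpha>1$ by \eqref{E:alphaDelta}. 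Combined with the scaling bound $G(t,x)^2\le K_0\,\calG(t,x)$, which is immediate from \eqref{E:ScaleG} and \eqref{E:BddG}, this also gives $\int_0^t\int_\R G(t-s,x-y)^2\Psi(s,y)\,\ud y\,\ud s\le K_0\,\frac{T^{1-1/\alpha}}{1-1/\alpha}\,\Psi(t,x)$ for $t\le T$.

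With these in hand I would set $u^{(0)}=J_0$ and $u^{(n+1)}(t,x)=J_0(t,x)+\int_0^t\int_\R G(t-s,x-y)H(\cdot)\sigma(\cdot,u^{(n)}(s,y))W_\theta(\ud s\,\ud y)$. The Burkholder--Davis--Gundy inequality \eqref{E:BGD} together with \eqref{E:Lambda} yields
\[
\Norm{u^{(n+1)}(t,x)}_p^2\le 2\Norm{J_0(t,x)}_p^2+8p\Lambda^2\int_0^t\!\!\int_\R G(t-s,x-y)^2\,\Norm{u^{(n)}(s,y)}_p^2\,\ud y\,\ud s,
\]
so that $g:=\sup_n\Norm{u^{(n)}(\cdot,\cdot)}_p^2$, viewed as an $[0,\infty]$–valued function, satisfies an inequality of the form \eqref{E:Mom-IntIneq} with $f=2\Norm{J_0}_p^2$ and $\varsigma=0$. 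Lemma \ref{L:Mom} then bounds $g(t,x)$ by $2\Psi(t,x)+C_{\Lambda,p,T}\int_0^t\int_\R\calG(t-s,x-y)\,2\Psi(s,y)\,\ud y\,\ud s\le C\Psi(t,x)<\infty$ by the reproducing identity, which is precisely \eqref{E:Moment} for the iterates, hence by Fatou's lemma for the limit.

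For convergence and uniqueness I would estimate the increments: with $v_n(t,x)=\Norm{u^{(n+1)}(t,x)-u^{(n)}(t,x)}_p^2$, \eqref{E:BGD} and the Lipschitz bound give $v_n\le 4p(\Lambda')^2\,G^2\star v_{n-1}$, hence $v_n\le (4p(\Lambda')^2)^n\,(G^2)^{\star n}\star v_0$; invoking $(G^2)^{\star n}(t,x)\le C^n\Gamma(n(1-1/\alpha))^{-1}G(t,x)$ from Proposition 4.3 of \cite{ChenDalang15FracHeat}, the bound $v_0\le C_0\Psi$ obtained above, and the reproducing identity once more, one gets $v_n(t,x)\le C^{n+1}T\,\Gamma(n(1-1/\alpha))^{-1}\Psi(t,x)$, and Stirling's formula makes $\sum_n v_n(t,x)^{1/2}$ finite, so $\{u^{(n)}(t,x)\}$ is Cauchy in $L^p(\Omega)$ for each $(t,x)$ and the limit $u(t,x)$ solves the mild equation (passing to the limit inside the stochastic integral via \eqref{E:BGD}). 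Uniqueness follows from the same Gronwall-type iteration applied to the difference of two solutions, each of which satisfies the $C\Psi$-bound (apply Lemma \ref{L:Mom} directly to $\Norm{\cdot}_p^2$). The only point needing a little care is that the limit field admits a jointly measurable, $\calG$–adapted version; this is routine, since every iterate has one and the convergence is in $L^p$ locally uniformly. I do not expect a genuine obstacle here: the one substantive ingredient is the reproducing identity, which is exactly what makes the right-hand side produced by Lemma \ref{L:Mom} collapse back to a constant multiple of $\Psi$, thereby giving a bound of the shape claimed in \eqref{E:Moment}.
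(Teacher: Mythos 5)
Your proposal is correct and follows essentially the same route as the paper: bound the random-initial-data term by Minkowski plus H\"older (you phrase it as Cauchy--Schwarz against the probability density, but one still needs Minkowski's integral inequality first to pass the $L^p(\Omega)$ norm inside; the idea is the same), apply Lemma \ref{L:Mom}, and then collapse the resulting convolution with $\calG$ via the semigroup property of $G$, which is precisely your ``reproducing identity.'' The only genuine difference is that you spell out the Picard iteration (Cauchy in $L^p$ via $(G^2)^{\star n}\lesssim C^n\Gamma(n(1-1/\alpha))^{-1}G$) where the paper simply says existence and uniqueness follow from the ``standard Picard iteration'' and devotes the proof to the moment bound; that extra scaffolding is consistent with the paper's argument rather than an alternative to it.
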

\begin{proof}
Fix $(t,x)\in [0,T]\times\R$ and $p\ge 2$.
The proof of the existence and uniqueness of a solution follows from
the standard Picard iteration. Here we only show the bounds for $\Norm{u(t,x)}_p$.
Note that the moment formulas in \cite{ChenDalang13Heat} and \cite{ChenDalang15FracHeat}
are for deterministic initial conditions.
In the current case, similar formulas hold due to the Lipschitz continuity of $\sigma(t,x,z)$ uniformly in $(t,x)$.

Note that the moment bound in \eqref{E:BGD-U} is for the case when the initial data are deterministic.
When initial data are random, we need to replace $J_0^2(t,x)$ by $\Norm{\int_\R u_0(y) G(t,x-y)\ud y}_p^2$.
By Minkowski's inequality and H\"older's inequality,
\begin{align}\label{E:J*}
\Norm{\int_\R u_0(y) G(t,x-y)\ud y}_p^2
\le \int_\R \Norm{u_0(y)}_p^2 G(t,x-y)\ud y=:J_*(t,x).
\end{align}
Hence, by \eqref{E:BGD-U},
\[
\Norm{u(t,x)}_p^2
\le C J_*(t,x)
+ C \int_0^t\ud s\int_\R \ud y\: G(t-s,x-y)^2 \Norm{u(s,y)}_p^2\ud s\ud y.
\]
Then by Lemma \ref{L:Mom},
\begin{align*}
\Norm{u(t,x)}_p^2
\le & C J_*(t,x)+C \int_0^t \frac{\ud s}{(t-s)^{1/\alpha}}
\int_\R \ud y\: G(t-s,x-y)
\int_\R \ud z \: G(s,y-z)   \Norm{u_0(z)}_p^2 \\
\le&
CJ_*(t,x) +
C \int_0^t \frac{\ud s}{(t-s)^{1/\alpha}}
\int_\R \ud z \: G(t,x-z)   \Norm{u_0(z)}_p^2\\
=&C\left(1+\int_0^t \frac{\ud s}{(t-s)^{1/\alpha}}\right)J_*(t,x)
\end{align*}
which is finite because $\alpha>1$. This completes the proof of Lemma \ref{L:Moment}.
\end{proof}

\begin{proof}[Proof of Theorem \ref{T_:Single}]
Recall that $\rho (u(t,x))$ is a short-hand notation for $\rho (t,x,u(t,x))$.
By Proposition \ref{P:D1}, we know that 
\begin{equation}\label{E:SPDE-M}
\begin{aligned}
D_{\theta,\xi}u(t,x) =&
\quad\rho(u(\theta,\xi)) G(t-\theta,x-\xi)\\
&+
\int_\theta^t \int_\R G(t-s,x-y)\rho'(u(s,y))D_{\theta,\xi}u(s,y) W(\ud s, \ud y),
\end{aligned}
\end{equation}
for $0\le \theta\le t$ and $\xi\in\R$.
By the assumptions on $\mu$, for some constants $a<b$, we have that
$\one_{[a,b]}(x) \mu(\ud x) = f(x)\ud x$,  where $f$ is a $\beta$-H\"older
continuous (and hence bounded) function n $[a,b]$ for some $\beta\in(0,1)$.
Set $S=[0,t]\times [a,b]$.

For part (b), notice that Lemma \ref{L:LimPmom} and part (3) of Proposition \ref{P:D1}
imply that $u(t,x)\in \D^\infty_S$. Denote
\begin{align}\label{E:C}
C(t,x)=\int_0^t \int_a^b \left[D_{\theta,\xi}u(t,x)\right]^2\ud \xi\ud \theta.
\end{align}
By Theorem \ref{T:Density2}, both parts (a) and (b) are proved once we can show
$\E\left[C(t,x)^{-p}\right]<\infty$ for all $p\ge 2$.

Because $\rho(0,x_0,f(x_0))> 0$, by the  continuity we can find $a'$ and $b'$
such that $b'-a'\le 1$, $[a',b']\subseteq[a,b]$ and $\rho(0,x,f(x))\ge \delta>0$
for all $x\in [a',b']$. Let $\psi$ be a continuous function with support
$[a',b']$ and $0\le \psi(\xi)\le 1$. 
Set
\begin{align}\label{E:Y}
Y_\theta(t,x):=\int_\R D_{\theta,\xi}u(t,x) \psi(\xi)\ud \xi.
\end{align}
Choose $r\in(0,1)$ and $\epsilon$ such that $0<\epsilon^r<t$.
Then
\begin{align*}
C(t,x)&\ge \int_0^t \ud\theta\int_\R \ud \xi \: \psi(\xi) \left[D_{\theta,\xi}u(t,x)\right]^2\\
&\ge \int_0^t \left(\int_\R D_{\theta,\xi}u(t,x) \psi(\xi)\ud \xi\right)^2\ud \theta\\
&= \int_0^t Y_\theta^2(t,x) \ud \theta\ge \int_0^{\epsilon^r} Y_\theta^2(t,x) \ud \theta\\
&\ge \epsilon^r Y_0^2(t,x)-\int_0^{\epsilon^r}
\left|Y_\theta^2(t,x)-Y_0^2(t,x)\right|\ud \theta.
\end{align*}
Hence,
\begin{align*}
 \bbP\left(C(t,x)<\epsilon\right) &\le
 \bbP\left(\left|Y_0(t,x)\right|< \sqrt{2}\: \epsilon^{\frac{1-r}{2}}\right)+
 \bbP\left(\int_0^{\epsilon^r} \left|Y_\theta^2(t,x)-Y_0^2(t,x)\right|\ud \theta>\epsilon\right)
 \\
 &=: \bbP(A_1)+\bbP(A_2).
\end{align*}
In the following, we consider $\bbP(A_1)$ and $\bbP(A_2)$ separately in two steps.

{\bigskip\noindent\bf Step 1.~~} We first consider $\bbP(A_1)$.
By integrating both sides of \eqref{E:SPDE-M} against $\psi(\xi)\ud \xi$,
we see that $Y_\theta(t,x)$ solves the following integral equation
\begin{align}
\label{E:SI-Y}
\begin{aligned}
 Y_\theta(t,x)=&\quad \int_\R  \psi(\xi) \rho(u(\theta,\xi)) G(t-\theta,x-\xi)\ud \xi\\
 &+ \int_\theta^t \int_\R G(t-s,x-y)\rho'(u(s,y))Y_\theta(s,y)W(\ud s,\ud y),\quad \text{for $t\ge \theta$}.
\end{aligned}
\end{align}
In particular, $Y_0(t,x)$ is a mild solution to the following SPDE
\begin{align}\label{E:Y_0}
 \begin{cases}
  \left(\displaystyle\frac{\partial}{\partial t} - \Dxa \right) Y_0(t,x) =
\rho'(u(t,x)) Y_0(t,x) \dot{W}(t,x),& t>0\;,\: x\in\R,\cr
Y_0(0,x) = \psi(x)\rho(0,x,f(x)).
 \end{cases}
\end{align}
Because for some $\delta'>0$, $\psi(x)\rho(0,x,f(x))\ge \delta'$ for all $x\in[a',b']$,
the assumption in Theorem \ref{T:NegMom} is satisfied. Hence, \eqref{E:Rate} implies that
for all $p\ge 1$, $t>0$ and $x\in\R$,
\begin{align}\label{E:PA2}
\bbP(A_1)\le C_{t,x,p}\: \epsilon^p,\quad\text{for $\epsilon$ small enough.}
\end{align}

{\bigskip\noindent\bf Step 2.~~} Now we consider $P(A_2)$.
For all $t>0$, $x\in\R$ and for all $q\ge 1$, we see  by Chebyshev inequality that
\[
\bbP(A_2)\le\epsilon^{q(r-1)}
\sup_{(\theta,x)\in [0,\epsilon^r]\times\R}
\E\left[\left|Y_\theta(t,x)-Y_0(t,x)\right|^{2q}\right]^{1/2}
\E\left[\left|Y_\theta(t,x)+Y_0(t,x)\right|^{2q}\right]^{1/2}.
\]
We claim that
\begin{align} \label{E:NormY}
\sup_{(\theta, x)\in [0,t]\times\R} \E\left[|Y_\theta(t,x)|^{2q}\right]<+\infty.
\end{align}
From \eqref{E:SI-Y}, we see that $X_\theta(t,x):=Y_\theta(t+\theta,x)$, $t\ge 0$, solves the following SPDE
\begin{align}\label{E:X_theta}
 \begin{cases}
  \left(\displaystyle\frac{\partial}{\partial t} - \Dxa \right) X_\theta(t,x) =
\rho'(u(t,x)) X_\theta(t,x) \dot{W}_\theta(t,x),& t>0\;,\: x\in\R,\cr
X_\theta(0,x) = \psi(x)\rho(\theta,x,u(\theta,x)),
 \end{cases}
\end{align}
where $\W_\theta(t,x):=\W(t+\theta,x)$ is a time-shifted white noise.
By the linear growth condition \eqref{E:LinGrw} of $\rho$ and part (1) of Lemma \ref{L:LimPmom},
we see that for all $t>0$ and $x\in\R$,
\begin{align*}
\int_\R \Norm{\psi(y)\rho(u(\theta, y))}_{2q}^2 G(t,x-y)\ud y&\le
\Lip_\rho^2
\int_\R\psi(y)\left(\vv+\Norm{u(\theta, y)}_{2q}^2 \right)G(t,x-y)\ud y\\
&\le
\Lip_\rho^2 \left(\vv+\sup_{(s,y)\in[0,t]\times [a',b']}\Norm{u(s, y)}_{2q}^2\right)<+\infty.
\end{align*}
Hence, Lemma \ref{L:Moment} implies that for $\theta\in [0,t]$, there is a solution to \eqref{E:X_theta} with
\begin{align*}
\sup_{(\theta,x)\in [0,t]\times\R}\Norm{Y_\theta(t,x)}_{2q}^2
& = \sup_{(\theta,x)\in [0,t]\times\R} \Norm{X_\theta(t-\theta,x)}_{2q}^2<+\infty,
\end{align*}
which proves \eqref{E:NormY}.

\bigskip
Now we consider the other term:
\begin{align}\notag
 Y_\theta(t,x)-Y_0(t,x) =
 &\quad  \int_\R \psi(\xi)\left[\rho(u(\theta,\xi)) G(t-\theta,x-\xi) - \rho(u(0,\xi)) G(t,x-\xi)\right]\ud \xi\\
 \notag
 & -\int_0^\theta \int_\R G(t-s,x-y)  \rho'(u(s,y)) Y_0(s,y) W(\ud s,\ud y)\\
 \notag
 & + \int_\theta^t\int_\R G(t-s,x-y)\rho'(u(s,y)) \left(Y_\theta(s,y)-Y_0(s,y)\right)W(\ud s,\ud y)\\
 =:&\Psi_1-\Psi_2+\Psi_3.
 \label{E:Y-Y}
\end{align}
By the Lipschitz continuity and linear growth condition of $\rho$, for $q\ge 2$ and some constant $C_q>0$,
\begin{align*}
 \E\left[|\Psi_1|^{2q}\right]
 \le&
 \quad C_q\: \E\left[\left|\int_\R \psi(\xi) |u(\theta,\xi)-u(0,\xi)|G(t-\theta,x-\xi)\ud \xi\right|^{2q}\right]\\
 &+C_q\Lip_\rho^2 \left|\int_\R \psi(\xi) \left| G(t-\theta,x-\xi)-G(t,x-\xi)\right| (\vv+u(0,\xi)^2)\ud \xi\right|^{2q}\\
 =:& C_q \Psi_{11}+C_q\Lip_\rho^2 \Psi_{12}.
\end{align*}
By the Minkowski inequality and the H\"older continuity of $u(t,x)$ (part (2) of Lemma \ref{L:LimPmom}),
\[
\Psi_{11}\le \left(\int_\R \psi(\xi) G(t-\theta,x-\xi)\Norm{u(\theta,\xi)-u(0,\xi)}_{2q}\ud \xi\right)^{2q}
\le C_K \: \theta^{\frac{q\min(\alpha-1,\beta)}{\alpha}}.
\]
%
As for $\Psi_{12}$, applying H\"older's inequality twice, we see that
\begin{align*}
 \Psi_{12}\le &
 \left(\int_\R \psi(\xi)  [G(t-\theta,x-\xi)-G(t,x-\xi)]^2 \: (1+u(0,\xi)^2) \ud \xi\right)^{q}\\
 \le &\sup_{\xi\in[a',b']} (1+f(\xi)^{2})^q \: \left(\int_\R  [G(t-\theta,x-\xi)-G(t,x-\xi)]^2 \ud \xi\right)^{q}\\
 \le & C \sup_{\xi\in[a',b']} (1+f(\xi)^{2})^q\: \theta^{\frac{(\alpha-1)q}{\alpha}},
\end{align*}
where in the last step we have applied \cite[Proposition 4.4]{ChenDalang15FracHeat} and $C$ is a universal constant.
Therefore, for all $q\ge 1$, there exists some constant $C_{\mu,t,x,q}>0$,
\[
\E\left[|\Psi_1|^{2q}\right]\le  C_{\mu,t,x,q} \: \theta^{\frac{q\min(\alpha-1,\beta)}{\alpha}},\quad
\text{for all $\theta\in (0,t]$.}
\]
As for $\Psi_2$, set $\Lambda:=\sup_{(t,x,z)}|\rho'(t,x,z)|$.
Because the initial data for $Y_0(t,x)$ is a bounded function,
$\sup_{(s,y)\in [0,t]\times\R}\Norm{Y_0(s,y)}_{2q}<\infty$.
By \eqref{E:BGD} and \eqref{E:TildeG},
\begin{align*}
\Norm{\Psi_2}_{2q}^2&\le
\Lambda^2 C_q
\int_0^\theta\ud s \int_\R \ud y\: G(t-s,x-y)^2 \Norm{Y_0(s,y)}_{2q}^2\\
&\le
\Lambda^2 C_q\sup_{(s,y)\in [0,t]\times\R}\Norm{Y_0(s,y)}_{2q}^2
\int_0^\theta \ud s \int_\R \ud y\: G(t-s,x-y)^2\\
&\le
\Lambda^2 C_q C_{a,\delta}^2 \sup_{(s,y)\in [0,t]\times\R}\Norm{Y_0(s,y)}_{2q}^2
\int_0^\theta \widetilde{G}(2(t-s),0) \ud s
\\
&=
\Lambda^2 C_q C_{a,\delta}^2 \widetilde{G}(1,0) \sup_{(s,y)\in [0,t]\times\R}\Norm{Y_0(s,y)}_{2q}^2
\int_0^\theta  \frac{1}{(2(t-s))^{1/\alpha}}\ud s\\
&\le C \theta^{1-1/\alpha},\quad
\text{for all $\theta\in (0,t]$.}
\end{align*}
Thus, for some constant $C_{\mu,t,x,q}'>0$,
\[
\E\left[|\Psi_2|^{2q}\right]\le  C_{\mu,t,x,q}' \: \theta^{\frac{(\alpha-1)q}{\alpha}},\quad
\text{for all $\theta\in (0,t]$.}
\]
Applying   the Burkholder-Gundy-Davis inequality \eqref{E:BGD-U}
to the $\Psi_3$ in \eqref{E:Y-Y}, we can write
\begin{align}
 \Norm{Y_\theta(t,x)-Y_0(t,x)}_{2q}^2
 \le&\quad C_q \left(\Norm{\Psi_1}_{2q}^2 +\Norm{\Psi_2}_{2q}^2\right)
 \label{e.5.12} \\
 &+C_q\Lambda^2\int_\theta^t\ud s \int_\R \ud y\: G^2(t-s,x-y) \Norm{Y_\theta(s,y)-Y_0(s,y)}_{2q}^2. \nonumber
\end{align}
By \eqref{E:NormY}, we know that
$g_{\theta}(t):=\sup_{x\in\R}\Norm{Y_\theta(t,x)-Y_0(t,x)}_{2q}^2$ is well defined.
Thus, we can write inequality \eqref{e.5.12} as
\[
g_{\theta}(t)\le C_{\Lambda,\mu,q,x}  \left( \theta^{\frac{\min(\alpha-1,\beta)}{\alpha}}+\int_\theta^t \frac{1}{(t-s)^{1/\alpha}} g_\theta(s)\ud s\right).
\]
By Gronwall's lemma (see Lemma \ref{L:Mittag}),  we see that
\[
g_{\theta}(t)\le C_{\Lambda,\mu,q,t,x}^* \: \theta^{\frac{\min(\alpha-1,\beta)}{\alpha}}
\quad\text{for all $\theta\in (0,t]$.}
\]
Therefore,
\begin{align}
\sup_{0<\theta \le \epsilon^r ,\:x\in\R} \E\left[
\left|Y_\theta(t,x)-Y_0(t,x)\right|^{2q}
\right]
\le C_{\Lambda,\mu,q,t,x,r} \: \epsilon^{\frac{rq\min(\alpha-1,\beta)}{\alpha}},
\end{align}
and consequently, we have
\begin{align}\label{E:PA1}
\bbP(A_2)\le C_{\Lambda,\mu,q,t,x,r}'\: \epsilon^{(r-1)q+\frac{rq\min(\alpha-1,\beta)}{2\alpha}},\qquad\text{for all $t>0$ and $x\in \R$.}
\end{align}
Notice that
\begin{align*}
(r-1) +\frac{r \beta}{2\alpha}>0&\quad\Rightarrow \quad r>\frac{2\alpha}{2\alpha+\beta}\in [0,1),\quad\text{and}\\
(r-1) +\frac{r (\alpha-1)}{2\alpha}>0&\quad\Rightarrow \quad r>\frac{2\alpha}{3\alpha-1}\in [4/5,1) \quad\text{for $\alpha\in (1,2]$}.
\end{align*}
By choosing $r$ such that,
\[
\frac{2\alpha}{\min(3\alpha-1,2\alpha+\beta)} < r<1,
\]
we see that $(r-1)q+\frac{rq \min(\alpha-1,\beta)}{2\alpha}>0$ for all $q\ge 2$.

\bigskip
Finally, Theorem \ref{T_:Single} is proved by an application of Lemma \ref{L:NegMom} with \eqref{E:PA1} and \eqref{E:PA2}.
\end{proof}

\subsection{Proof of Theorem \ref{T:Single}}
\label{SS:Single}
Now we are ready to prove Theorem \ref{T:Single}.

\begin{proof}[Proof of Theorem \ref{T:Single}]
Recall that the solution $u(t,x)$ to \eqref{E:FracHt} is understood in the mild form \eqref{E:mild}.
Let $I(t,x)$ be the stochastic integral part in \eqref{E:mild}, i.e., $u(t,x)=J_0(t,x)+I(t,x)$.
Denote 
\[
t_0:=\inf\left\{s>0,\: \sup_{y\in\R}\left|\rho\left(s,y,(G(s,\cdot)*\mu)(y)\right)\right|\ne 0\right\}.
\]
If condition \eqref{E:IFF} is not satisfied, i.e., $t\le t_0$, then $I(t,x)\equiv 0$, which implies that 
$u(t,x)=J_0(t,x)$ is deterministic. Hence, $u(t,x)$ doesn't have a density. This proves one direction for both parts (a) and (b). 

Now we assume that condition \eqref{E:IFF} is satisfied, i.e., $t>t_0$.
By the continuity of the function
\[
(0,\infty)\times\R^2 \ni (t,x,z) \mapsto \rho\left(t,x, (G(t,\cdot)*\mu)(x)+z\right) \in\R,
\]
we know that for some $\epsilon_0\in (0,t-t_0)$ and some $x_0\in\R$, it holds that 
\begin{align}\label{E:rho=0}
\rho\left(t_0+\epsilon, y, (G(t_0+\epsilon,\cdot)*\mu)(y)+z\right)\ne 0
\end{align}
for all $(\epsilon,y,z)\in (0,\epsilon_0)\times [x_0-\epsilon_0,x_0+\epsilon_0]\times [-\epsilon_0,\epsilon_0]$.
Let $\tau$ be the stopping time defined as follows
\[
\tau:= \left(t_0+\epsilon_0\right)\wedge \inf\left\{t>t_0, \: \sup_{y\in [x_0-\epsilon_0,x_0+\epsilon_0]} |I(t,y)|\ge \epsilon_0\right\}.
\] 
Let $\W_*(t,x)=\W(t+\tau,x)$ be the time shifted space-time white noise (see \eqref{E:W*}) and similarly,
let $\rho_*(t,x,z) = \rho(t+\tau,x,z)$. Let $u_*(t,x)$ be the solution to the following stochastic heat equation 
\begin{align}\label{E:FracHt*}
 \begin{cases}
  \left(\displaystyle\frac{\partial}{\partial t} - \Dxa \right) u_*(t,x) =
\rho_*(t,x,u_*(t,x))  \dot{W}_*(t,x),& t>0\;,\: x\in\R,\cr
u_*(0,x) = u(\tau,x),\:& x\in\R.
 \end{cases}
\end{align}
By the construction, we see that 
\[
\sup_{y\in[x_0-\epsilon_0,x_0+\epsilon_0]}|I(\tau,y)|\le \epsilon_0.
\]
Hence, property \eqref{E:rho=0} implies that
\[
\rho_*\left(0,y,u(\tau,y)\right)=
\rho\left(\tau,y,J_0(\tau,y)+I(\tau,y)\right)\ne 0, \quad\text{for all $y\in [x_0-\epsilon_0,x_0+\epsilon_0]$.}
\]
Notice that $y\mapsto u(\tau,y)$ is $\beta$-H\"older continuous a.s. for any $\beta\in (0,1/2)$.
Therefore, we can apply Theorem \ref{T_:Single} to \eqref{E:FracHt*} to see that if $\rho$ is differentiable in the third argument with bounded Lipschitz continuous derivative, then $u_*(t,x)$ has a conditional density, denoted as  $f_t(x)$, that is absolutely continuous with respect to the Lebesgue measure. Moreover, if $\rho$ is infinitely differentiable in the third argument with bounded derivatives, then this conditional density $x\mapsto f_t(x)$ is smooth a.s.

Finally, for any nonnegative continuous function $g$ on $\R$ with compact support, it holds that 
\begin{align*}
\E\left[g(u(t,x))\right] &=\E\left[\E \left[g(u(t,x))| \calF_\tau\right]\right]\\ 
&=\E\left[\E \left[g(u_*(t-\tau,x))| \calF_\tau\right]\right]\\
&=\E\left[\int_\R g(x) f_{t-\tau}(x)\ud x\right] \\
&= \int_\R g(x)\: \E\left[f_{t-\tau}(x)\right]\ud x.
\end{align*}
Therefore, if $\rho$ is differentiable in the third argument with bounded Lipschitz continuous derivative, then $u(t,x)$ has a density, namely $\E\left[f_{t-\tau}(x)\right]$, which is absolutely continuous with respect to the Lebesgue measure. Moreover, if $\rho$ is infinitely differentiable in the third argument with bounded derivatives, then this density is smooth.
This completes the proof of Theorem \ref{T:Single}.
\end{proof}

\section{Smoothness of joint density at multiple points}\label{S:Mult}
In this section, we will establish both Theorem \ref{T:Mult} and Theorem \ref{T:LocDen}.

\subsection{Proof of Theorem \ref{T:Mult}} \label{SS:Mult}
Recall that $\rho (u(t,x))$ is a short-hand notation for $\rho (t,x,u(t,x))$.
By taking the Malliavin derivative on both sides of \eqref{E:mild}, we see that
\begin{align}
\label{E:SPDE-Du}
D_{r,z}u(t,x) =&
\rho(u(r,z)) G(t-r,x-z) + Q_{r,z}(t,x)
\end{align}
for $0\le r\le t$ and $z\in\R$, where
\begin{align}
Q_{r,z}(t,x):=\int_r^t \int_\R G(t-s,x-y)\rho'(u(s,y))D_{r,z}u(s,y) W(\ud s, \ud y).
\end{align}
Let $S_{r,z}(t,x)$ be the solution to the equation
\[
S_{r,z}(t,x)=G(t-r,x-z)+\int_r^t\int_\R G(t-s,x-y)\rho'(u(s,y)) S_{r,z}(s,y)W(\ud s,\ud y).
\]
Then
\[
\Psi_{r,z}(t,x):=D_{r,z}u(t,x) = S_{r,z}(t,x)\rho(u(r,z)).
\]

We first prove a lemma.
\begin{lemma}\label{L:S}
For $t\in (0,T]$, $x\in\R$ and $p\ge 2$, there exists a constant
$C>0$, which depends on $T$, $p$, and $\sup_{x\in\R}|\rho'(x)|$,  such that
\[
\Norm{S_{r,z}(t,x)}_p^2\le C\: (t-r)^{-1/\alpha}G(t-r,x-z).
\]
\end{lemma}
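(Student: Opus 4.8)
The plan is to view $S_{r,z}$ as the mild solution of a linear SPDE with bounded (random) coefficient $\rho'(u(\cdot,\cdot))$ whose inhomogeneous term plays the role of $J_0$, namely $G(t-r,x-z)$, and to rerun on $S_{r,z}$ the same moment argument that produces \eqref{E:BGD-U}--\eqref{E:MomentT} for $u$. To legitimize the use of the Burkholder--Davis--Gundy inequality \eqref{E:BGD} (which a priori requires the integrand defining $S_{r,z}$ to have finite $p$-th moments) I would work with the Picard scheme $S^{(0)}_{r,z}(t,x)=G(t-r,x-z)$ and $S^{(n+1)}_{r,z}(t,x)=G(t-r,x-z)+\int_r^t\int_\R G(t-s,x-y)\rho'(u(s,y))S^{(n)}_{r,z}(s,y)\,W(\ud s,\ud y)$, prove the claimed bound uniformly in $n$, and let the limit inherit it. Set $\Lambda:=\sup_x|\rho'(x)|$ and shift time via $\tau=t-r\in[0,T]$ (driving by the time-shifted noise), so everything becomes an honest initial value problem. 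Applying \eqref{E:BGD} together with $\Norm{\rho'(u(s,y))S_{r,z}(s,y)}_p\le\Lambda\Norm{S_{r,z}(s,y)}_p$, the function $g(\tau,x):=\Norm{S_{r,z}(r+\tau,x)}_p^2$ satisfies
\[
g(\tau,x)\le 2G(\tau,x-z)^2+8p\Lambda^2\int_0^\tau\int_\R G(\tau-s,x-y)^2\,g(s,y)\,\ud s\,\ud y,
\]
which is exactly the integral inequality \eqref{E:Mom-IntIneq} with $f(\tau,x)=2G(\tau,x-z)^2$, $\lambda=2\sqrt{2p}\,\Lambda$ and $\varsigma=0$.

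Now I would invoke the two-parameter Gronwall lemma, Lemma \ref{L:Mom}, in its sharper form from its proof: $g(\tau,x)\le f(\tau,x)+\big((\varsigma+f)\star\calK_\lambda\big)(\tau,x)$. Since $\varsigma=0$ and $f(\tau,x)=2G(\tau,x-z)^2$, the definition \eqref{E:K} of $\calK_\lambda$ gives the telescoping identity $\big(2G(\cdot-z)^2\star\calK_\lambda\big)(\tau,x)=\tfrac{2}{\lambda^2}\calK_\lambda(\tau,x-z)-2G(\tau,x-z)^2$, hence $g(\tau,x)\le\tfrac{2}{\lambda^2}\calK_\lambda(\tau,x-z)$. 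The bound \eqref{E:K_bound} then yields $\calK_\lambda(\tau,x-z)\le C\calG(\tau,x-z)\big(1+\tau^{1/\alpha}e^{C\tau}\big)\le C'\calG(\tau,x-z)$ for $\tau\in[0,T]$, with $C'$ depending only on $\alpha,\delta,\lambda,T$, i.e. on $T$, $p$ and $\Lambda$. Translating back to $t=r+\tau$ gives $\Norm{S_{r,z}(t,x)}_p^2\le C(t-r)^{-1/\alpha}G(t-r,x-z)$. (Alternatively, one avoids $\calK_\lambda$ altogether: from the pointwise estimate $G(\sigma,y)^2=\sigma^{-1/\alpha}G(\sigma,y)\,G(1,\sigma^{-1/\alpha}y)\le K_0\,\calG(\sigma,y)$ — using the scaling \eqref{E:ScaleG} and $G(1,\cdot)\le K_0$ from \eqref{E:BddG} — plus the Chapman--Kolmogorov identity $\int_\R G(\tau-s,x-y)G(s,y-z)\,\ud y=G(\tau,x-z)$, the convolution term produced by Lemma \ref{L:Mom} reduces to $K_0\,B(1-\tfrac1\alpha,1-\tfrac1\alpha)\,\tau^{1-2/\alpha}G(\tau,x-z)$, the Beta integral being finite since $\alpha>1$; and $\tau^{1-2/\alpha}=\tau^{1-1/\alpha}\tau^{-1/\alpha}\le T^{1-1/\alpha}\tau^{-1/\alpha}$ on $[0,T]$ gives the same conclusion.)

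The only genuinely delicate point is the convolution estimate $\calG\star G^2\le C\calG$ on $[0,T]$ — equivalently the finiteness of $B(1-\tfrac1\alpha,1-\tfrac1\alpha)$, or the bound \eqref{E:K_bound} on $\calK_\lambda$ — which is precisely where the standing hypothesis $\alpha\in(1,2]$ (so that $1-1/\alpha>0$) is used; everything else is the routine Picard-plus-Gronwall machinery already employed for \eqref{E:DunBds}. Tracking constants: the final constant is $2C'/\lambda^2=C'/(4p\Lambda^2)$ with $C'$ as above, so it depends only on $T$, $p$ and $\sup_x|\rho'(x)|$ (and on the fixed parameters $\alpha,\delta$), as asserted.
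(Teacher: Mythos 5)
Your proof is correct and follows essentially the same route as the paper: apply BDG to obtain the renewal inequality $\Norm{S_{r,z}(t,x)}_p^2\le C_p G(t-r,x-z)^2+C_p\lambda^2\int_r^t\!\int_\R G(t-s,x-y)^2\Norm{S_{r,z}(s,y)}_p^2\,\ud s\,\ud y$, shift time so it becomes a clean initial-value problem, and close it against the kernel $\calK_\lambda$ with the bound \eqref{E:K_bound}. The paper simply cites Proposition 2.2 of \cite{ChenDalang13Heat} (for $\alpha=2$) and Proposition 3.2 of \cite{ChenDalang15FracHeat} (for $\alpha\in(1,2)$) at this last step; you fill in what those propositions encapsulate, via the telescoping identity \eqref{E:K_Recursion} and Lemma \ref{L:Mom} (or, equivalently, the Beta-integral computation in your alternative), together with the Picard scheme to legitimize the a priori $L^p$ bound — all consistent with what the citation bundles up.
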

\begin{proof}
Let $\lambda:=\sup_{x\in\R}|\rho'(x)|$.
By the Burkholder-Gundy-Davis inequality (see \eqref{E:BGD-U}),
\[
\Norm{S_{r,z}(t,x)}_p^2
\le C_p  G(t-r,x-z)^2 + C_{p}\lambda^2 \int_r^t\ud s\int_\R \ud y\:
G(t-s,x-y)^2 \Norm{S_{r,z}(s,y)}_p^2.
\]
Denote $f(\theta,\eta)=\Norm{S_{r,z}(\theta+r,\eta+z)}_p^2$.
Then by setting $\theta=t-r$ and $\eta=x-z$, we see that $f$ satisfies the inequality
\[
f(\theta,\eta)
\le C_p  G(\theta,\eta)^2 + C_{p}\lambda^2 \int_0^\theta \ud s\int_\R \ud y\:
G(\theta-s,\eta-y)^2 f(s,y).
\]
Then the lemma is proved by an application of Proposition 2.2 of \cite{ChenDalang13Heat} in case of $\alpha=2$,
or by Proposition 3.2 of \cite{ChenDalang15FracHeat} in case of $\alpha\in (1,2)$.
\end{proof}

\bigskip
\begin{proof}[Proof of Theorem \ref{T:Mult}]
Fix $t>0$ and $x_1<\dots<x_d$.
Let $\epsilon_0>0$ such that
\begin{align}\label{E:epsilon}
\epsilon_0\le \min(t/2,1)\quad\text{and}\quad
2\: \epsilon_0^{\frac{\alpha-1}{\alpha(1+\alpha)}} < \min_{i,j} |x_i-x_j|.
\end{align}
Choose a compact set $K$ such that $\bigcup_{i=1}^d
\left[x_i-\epsilon_0^{1/\alpha},x_i+\epsilon_0^{1/\alpha}\right]\subseteq K$.
Set $S=[t/2,t]\times K$.
The localized Malliavin matrix of the random vector $(u(t,x_1),\dots,u(t,x_d))$
with respect to $S$ is equal to
\[
\sigma_{ij} := \sigma_{ij,S} = \int_{t/2}^t\ud r \int_K \ud z \:
\Psi_{r,z}(t,x_i)\Psi_{r,z}(t,x_j).
\]
Hence, for any $\xi\in\R^d$ and any $\epsilon\in (0,\epsilon_0)$,
\begin{align*}
\InPrd{\sigma \xi,\xi}&=\int_{t/2}^t\ud r \int_K \ud z \left(\sum_{i=1}^d
\Psi_{r,z}(t,x_i)\xi_i\right)^2\\
&\ge
\sum_{j=1}^d\int_{t-\epsilon}^t \ud r \int_{x_j-\epsilon^{1/\alpha}}^{x_j+\epsilon^{1/\alpha}}\ud z \left(\sum_{i=1}^d \Psi_{r,z}(t,x_i)\xi_i\right)^2\\
&\ge
\sum_{j=1}^d\int_{t-\epsilon}^t \ud r \int_{x_j-\epsilon^{1/\alpha}}^{x_j+\epsilon^{1/\alpha}}\ud z  \: \Psi_{r,z}^2(t,x_j)\xi_j^2\\
&\quad +2\sum_{j=1}^d\sum_{i\ne j}\int_{t-\epsilon}^t \ud r \int_{x_j-\epsilon^{1/\alpha}}^{x_j+\epsilon^{1/\alpha}}\ud z  \: \Psi_{r,z}(t,x_j)\xi_j
\Psi_{r,z}(t,x_i)\xi_i\\
&=:I_\epsilon^*(\xi)+2I_\epsilon^{(1)}(\xi),
\end{align*}
where we have applied the following inequality
\[
\left(\sum_{i=1}^d a_i\right)^2=
 a_j^2 +2\sum_{i\ne j}a_ia_j + \left(\sum_{i\ne j} a_i\right)^2
\ge a_j^2 +2\sum_{i\ne j}a_ia_j.
\]
Then by the inequality $(a+b)^2\ge \frac{2}{3}a^2-2b^2$, from \eqref{E:SPDE-Du}, we see that
\begin{align*}
I_\epsilon^*(\xi)\ge&
\quad \frac{2}{3}\sum_{j=1}^d\int_{t-\epsilon}^t \ud r \int_{x_j-\epsilon^{1/\alpha}}^{x_j+\epsilon^{1/\alpha}}\ud z  \: \rho(u(r,z))^2 G(t-r,x_j-z)^2\xi_j^2\\
&-2 \sum_{j=1}^d\int_{t-\epsilon}^t \ud r \int_{x_j-\epsilon^{1/\alpha}}^{x_j+\epsilon^{1/\alpha}}\ud z  \:
Q_{r,z}^2(t,x_j)\xi_j^2\\
\ge&
\quad \frac{2}{3}\sum_{j=1}^d\int_{t-\epsilon}^t \ud r \int_{x_j-\epsilon^{1/\alpha}}^{x_j+\epsilon^{1/\alpha}}\ud z  \:
\rho(u(t,x_j))^2 G(t-r,x_j-z)^2\xi_j^2\\
& -\frac{2}{3}\sum_{j=1}^d\int_{t-\epsilon}^t \ud r \int_{x_j-\epsilon^{1/\alpha}}^{x_j+\epsilon^{1/\alpha}}\ud z  \:
\left|\rho(u(r,z))^2-\rho(u(t,x_j))^2 \right| G(t-r,x_j-z)^2\xi_j^2\\
&-2 \sum_{j=1}^d\int_{t-\epsilon}^t \ud r \int_{x_j-\epsilon^{1/\alpha}}^{x_j+\epsilon^{1/\alpha}}\ud z  \:
Q_{r,z}^2(t,x_j)\xi_j^2\\
=:&
\: \frac{2}{3}I_\epsilon^{(0)}(\xi)-\frac{2}{3}I_\epsilon^{(2)}(\xi) -2I_\epsilon^{(3)}(\xi).
\end{align*}
Hence,
\[
(\det \sigma)^{1/d}
\ge\inf_{|\xi|=1}
\InPrd{\sigma \xi,\xi}\ge
\frac{2}{3}\inf_{|\xi|=1} I_\epsilon^{(0)}(\xi)-
2\sum_{i=1}^3 \sup_{|\xi|=1}|I_\epsilon^{(i)}(\xi)|.
\]
Now, we will consider each of the four terms separately.

{\bigskip\bf\noindent Step 1.~} We first consider $I_\epsilon^{(0)}(\xi)$. 
Let $\gamma$ and $\beta$ be the constants in condition \eqref{E:lip}.
Denote 
\[
f_{\gamma,\beta}(x):=\exp\left\{-2\beta
\left[\log\frac{1}{|x|\wedge 1}\right]^\gamma
\right\},\quad\text{for $x\in\R$.}
\]
Notice that for $|\xi|=1$, by \eqref{E:lip}, for some $\beta>0$,
\begin{align*}
I_\epsilon^{(0)}(\xi) & \ge
\lip_\rho^2 \sum_{j=1}^d\int_{t-\epsilon}^t \ud r \int_{x_j-\epsilon^{1/\alpha}}^{x_j+\epsilon^{1/\alpha}}\ud z\:
f_{\beta,\gamma}(u(t,x_j))
G(t-r,x_j-z)^2\xi_j^2\\
&\ge
\lip_\rho^2 
\left[\inf_{x\in K} f_{\beta,\gamma}(u(t,x))\right] \sum_{j=1}^d\int_{t-\epsilon}^t \ud r \int_{x_j-\epsilon^{1/\alpha}}^{x_j+\epsilon^{1/\alpha}}\ud z\:
 G(t-r,x_j-z)^2\xi_j^2\\
 &=
\lip_\rho^2 \left[\inf_{x\in K} f_{\beta,\gamma}(u(t,x))\right] \sum_{j=1}^d\int_{t-\epsilon}^t \ud r \int_{-\epsilon^{1/\alpha}}^{\epsilon^{1/\alpha}}\ud z\:
 G(t-r,z)^2 \xi_j^2\\
 &=
\lip_\rho^2 \left[\inf_{x\in K} f_{\beta,\gamma}(u(t,x))\right] \int_{t-\epsilon}^t \ud r \int_{-\epsilon^{1/\alpha}}^{\epsilon^{1/\alpha}}\ud z\:
 G(t-r,z)^2.
\end{align*}
By the scaling property of $G(t,x)$ (see \eqref{E:ScaleG}), we see that
\begin{align*}
\int_{t-\epsilon}^t \ud r \int_{-\epsilon^{1/\alpha}}^{\epsilon^{1/\alpha}}\ud z\:
 G(t-r,z)^2 &=
\int_{t-\epsilon}^{t} \ud r\: (t-r)^{-1/\alpha} \int_{|y|\le \frac{\epsilon^{1/\alpha}}{(t-r)^{1/\alpha}}}\ud y\:
 G(1,y)^2\\
 &\ge
 \int_{t-\epsilon}^{t} \ud r\: (t-r)^{-1/\alpha} \int_{|y|\le 1}\ud y\:
 G(1,y)^2\\
 &= C \epsilon^{1-1/\alpha}.
\end{align*}
Therefore,
\[
\inf_{|\xi|=1}\left|I_\epsilon^{(0)}(\xi)\right|\ge C_0\: \epsilon^{1-1/\alpha} 
\inf_{x\in K} f_{\beta,\gamma}(u(t,x)).
\]

{\bigskip\bf\noindent Step 2.~} Now we consider $I_\epsilon^{(1)}(\xi)$.
Because $|\xi_i|\le 1$, by Minkowski's inequality, for $p\ge 2$,
\begin{align*}
\Norm{\sup_{|\xi|=1}\left|I_\epsilon^{(1)}(\xi)\right|}_p
\le&
\Norm{\sum_{j=1}^d\sum_{i\ne j}\int_{t-\epsilon}^t\ud r\int_{x_j-\epsilon^{1/\alpha}}^{x_j+\epsilon^{1/\alpha}}\ud z
|\Psi_{r,z}(t,x_i)\Psi_{r,z}(t,x_j)| }_p\\
\le&
\sum_{j=1}^d\sum_{i\ne j}\int_{t-\epsilon}^t\ud r\int_{x_j-\epsilon^{1/\alpha}}^{x_j+\epsilon^{1/\alpha}}\ud z
\Norm{\Psi_{r,z}(t,x_i)\Psi_{r,z}(t,x_j)}_p.
\end{align*}
Set
\begin{align}\label{E:CtK}
 C_{t,K}:=\sup_{(s,y)\in [t/2,t]\times K}\Norm{\rho(u(s,y))^2}_{2p}.
\end{align}
By H\"older's inequality,
for $r\in(t-\epsilon,t)\subseteq (t/2,t)$ and $z\in K$,
\[
\Norm{\Psi_{r,z}(t,x_i)\Psi_{r,z}(t,x_j)}_p\le
C_{t,K} \Norm{S_{r,z}(t,x_i)}_{4p}\Norm{S_{r,z}(t,x_j)}_{4p}.
\]
Then, by Lemma \ref{L:S} and by H\"older's inequality,
\begin{align*}
\int_{t-\epsilon}^t\ud r\int_{x_j-\epsilon^{1/\alpha}}^{x_j+\epsilon^{1/\alpha}}& \ud z
\Norm{\Psi_{r,z}(t,x_i)\Psi_{r,z}(t,x_j)}_p\\
\le&
C \int_{t-\epsilon}^t\ud r\: (t-r)^{-1/\alpha}\int_{x_j-\epsilon^{1/\alpha}}^{x_j+\epsilon^{1/\alpha}} \sqrt{G(t-r,x_i-z)G(t-r,x_j-z)}\ud z\\
\le&
C \int_{0}^\epsilon \ud r\: r^{-1/\alpha}
\left(\int_{x_j-\epsilon^{1/\alpha}}^{x_j+\epsilon^{1/\alpha}} G(r,x_j-z)\ud z\right)^{1/2}
\left(\int_{x_j-\epsilon^{1/\alpha}}^{x_j+\epsilon^{1/\alpha}} G(r,x_i-z)\ud z\right)^{1/2}\\
\le &
C \int_{0}^\epsilon \ud r\: r^{-1/\alpha}
\left(\int_{x_j-\epsilon^{1/\alpha}}^{x_j+\epsilon^{1/\alpha}} G(r,x_i-z)\ud z\right)^{1/2}.
\end{align*}
Notice that for some constant $C>0$ (see (4.2) of \cite{ChenDalang15FracHeat}),
\begin{align*}
 \int_{x_j-\epsilon^{1/\alpha}}^{x_j+\epsilon^{1/\alpha}} G(r,x_i-z)\ud z
 &=
 r^{-1/\alpha}\int_{-\epsilon^{1/\alpha}}^{\epsilon^{1/\alpha}} G(1,r^{-1/\alpha}(x_i-x_j-z))\ud z\\
 &\le C r^{-1/\alpha} \int_{-\epsilon^{1/\alpha}}^{\epsilon^{1/\alpha}}
 \frac{1}{1+[r^{-1/\alpha}|x_i-x_j-z|]^{1+\alpha}} \ud z\\
& =C r \int_{-\epsilon^{1/\alpha}}^{\epsilon^{1/\alpha}}
 \frac{1}{r^{1+1/\alpha}+|x_i-x_j-z|^{1+\alpha}} \ud z.
\end{align*}
Thanks to \eqref{E:epsilon}, for $|z|\le \epsilon^{1/\alpha}$,
\[
|x_i-x_j-z|\ge |x_i-x_j|-|z| \ge 2 \epsilon^{\frac{\alpha-1}{\alpha(1+\alpha)}} - \epsilon^{1/\alpha}
=\epsilon^{\frac{\alpha-1}{\alpha(1+\alpha)}}\left(2-\epsilon^{\frac{2}{\alpha(1+\alpha)}}\right)
\ge
\epsilon^{\frac{\alpha-1}{\alpha(1+\alpha)}}.
\]
Hence,
\begin{align*}
 \int_{x_j-\epsilon^{1/\alpha}}^{x_j+\epsilon^{1/\alpha}} G(r,x_i-z)\ud z
 &\le C r
 \int_{-\epsilon^{1/\alpha}}^{\epsilon^{1/\alpha}}
 \frac{1}{r^{1+1/\alpha}+\epsilon^{1-1/\alpha}} \ud z\\
 &=
  \frac{2 C r\epsilon^{1/\alpha}}{r^{1+1/\alpha}+\epsilon^{1-1/\alpha}} \\
 &\le  \frac{2 C r \epsilon^{1/\alpha}}{2\sqrt{r^{1+1/\alpha}\epsilon^{1-1/\alpha}}}
 = C r^{\frac{1}{2}-\frac{1}{2\alpha}} \: \epsilon^{-\frac{1}{2} + \frac{3}{2\alpha}},
\end{align*}
and
\[
\int_{t-\epsilon}^t\ud r\int_{x_j-\epsilon^{1/\alpha}}^{x_j+\epsilon^{1/\alpha}} \ud z
\Norm{\Psi_{r,z}(t,x_i)\Psi_{r,z}(t,x_j)}_p\le
C' \epsilon^{-\frac{1}{4} + \frac{3}{4\alpha}}
\int_0^\epsilon r^{\frac{1}{4}-\frac{1}{4\alpha}}r^{-\frac{1}{\alpha}}\ud r
= C'' \epsilon^{1-\frac{1}{2\alpha}}.
\]
Therefore,
\[
\Norm{\sup_{|\xi|=1}\left|I_\epsilon^{(1)}(\xi)\right|}_p\le C\: \epsilon^{1-\frac{1}{2\alpha}}.
\]

{\bigskip\bf\noindent Step 3.~} Now we  consider $I_\epsilon^{(2)}(\xi)$.
Similarly,
\[
\Norm{\sup_{|\xi|=1}\left|I_\epsilon^{(2)}(\xi)\right|}_p
\le
\sum_{j=1}^d\int_{t-\epsilon}^t \ud r \int_{x_j-\epsilon^{1/\alpha}}^{x_j+\epsilon^{1/\alpha}}\ud z  \:
\Norm{\rho(u(r,z))^2-\rho(u(t,x_j))^2}_p G(t-r,x_j-z)^2.
\]
By the H\"older regularities of the solution
(see \cite{ChenKim14Comparison} for the case $\alpha\in (1,2)$ and \cite{ChenDalang14Holder} for the case $\alpha=2$),
\begin{align}\label{E:Holder}
\Norm{\rho(u(r,z))^2-\rho(u(t,x_j))^2}_p\le C_{t,K} \left((t-r)^{1-1/\alpha}+(z-x_j)^{\alpha-1}\right).
\end{align}
Hence,
\begin{align*}
\int_{t-\epsilon}^t \ud r &\int_{x_j-\epsilon^{1/\alpha}}^{x_j+\epsilon^{1/\alpha}}\ud z  \:
\Norm{\rho(u(r,z))^2-\rho(u(t,x_j))^2}_p G(t-r,x_j-z)^2\\
&\le C
\int_{t-\epsilon}^t \ud r \int_{x_j-\epsilon^{1/\alpha}}^{x_j+\epsilon^{1/\alpha}}\ud z  \:
 \left((t-r)^{1-1/\alpha}+\epsilon^{1-1/\alpha}\right) G(t-r,x_j-z)^2\\
&\le C
\int_{t-\epsilon}^t \ud r \left((t-r)^{1-1/\alpha}+\epsilon^{1-1/\alpha}\right)
\int_{\R}\ud z  \: G(t-r,x_j-z)^2\\
&\le C
\int_{t-\epsilon}^t \ud r \left((t-r)^{1-1/\alpha}+\epsilon^{1-1/\alpha}\right)
\widetilde{G}(2(t-r),0)\\
&\le C
\int_{t-\epsilon}^t \ud r \left((t-r)^{1-1/\alpha}+\epsilon^{1-1/\alpha}\right)(t-r)^{-1/\alpha}\\
&=C \: \epsilon^{2(1-1/\alpha)},
\end{align*}
where we have used the fact \eqref{E:TildeG}.
Therefore,
\[
\Norm{\sup_{|\xi|=1}\left|I_\epsilon^{(2)}(\xi)\right|}_p\le C\: \epsilon^{2(1-1/\alpha)}.
\]

{\bigskip\bf\noindent Step 4.~} Now we  consider $I_\epsilon^{(3)}(\xi)$.
Similarly,
\[
\Norm{\sup_{|\xi|=1}\left|I_\epsilon^{(3)}(\xi)\right|}_p
\le
\sum_{j=1}^d\int_{t-\epsilon}^t \ud r \int_{\R}\ud z  \:
\Norm{Q_{r,z}^2(t,x_j)}_p.
\]
Let $\lambda:=\sup_{x\in\R}|\rho'(x)|$.
By the Burkholder-Gundy-Davis inequality (see \eqref{E:BGD}),
\begin{align*}
\Norm{Q_{r,z}^2(t,x_j)}_p&=\Norm{Q_{r,z}(t,x_j)}_{2p}^2\\
&\le C_p \lambda^2 \int_r^t\ud s\int_\R\ud y \: G(t-s,x_j-y)^2 \Norm{\rho(u(r,z)) S_{r,z}(s,y)}_{2p}^2\,.
\end{align*}
By H\"older's inequality,
for $r\in(t-\epsilon,t)$ and $z\in K$,
\[
\Norm{\rho(u(r,z)) S_{r,z}(s,y)}_{2p}\le
C_{t,K}^{1/2} \Norm{S_{r,z}(s,y)}_{4p},
\]
where the constant $C_{t,K}$ is defined in \eqref{E:CtK}. Hence, by Lemma \ref{L:S},
\begin{align*}
\Norm{Q_{r,z}^2(t,x_j)}_p&\le C
\int_r^t\ud s\: (s-r)^{-1/\alpha}\int_\R\ud y \: G(t-s,x_j-y)^2 G(s-r,y-z)\\
&\le C
\int_r^t\ud s\: (s-r)^{-1/\alpha}(t-s)^{-1/\alpha}\int_\R\ud y \: G(t-s,x_j-y) G(s-r,y-z)\\
&\le C
\int_r^t\ud s\: (s-r)^{-1/\alpha}(t-s)^{-1/\alpha}G(t-r,x_j-z).
\end{align*}
Then integrating over $\ud z$ gives that
\begin{align*}
\int_{t-\epsilon}^t \ud r \int_{\R}\ud z  \:
\Norm{Q_{r,z}^2(t,x_j)}_p & \le
C\int_{t-\epsilon}^t\ud r\int_r^t\ud s\: (s-r)^{-1/\alpha}(t-s)^{-1/\alpha}\\
&=
C\int_{t-\epsilon}^t\ud r\: (t-r)^{1-2/\alpha}\\
&=
C\epsilon^{2(1-1/\alpha)}.
\end{align*}
Therefore,
\[
\Norm{\sup_{|\xi|=1}\left|I_\epsilon^{(3)}(\xi)\right|}_p\le C\: \epsilon^{2(1-1/\alpha)}.
\]

{\bigskip\bf\noindent Step 5.~} Finally, by choosing
\begin{align}\label{E:eta}
1<\eta <\min\left(2,\frac{2\alpha-1}{2(\alpha-1)}\right),
\end{align}
we see that for all $p>1$, 
\begin{align}\notag
\bbP\left((\det \sigma)^{1/d}< \epsilon^{(1-1/\alpha)\eta} \right)\le&
\quad \bbP\left(\frac{2}{3}\inf_{|\xi|=1}\left|I_\epsilon^{(0)}(\xi)\right|< 2 \epsilon^{(1-1/\alpha)\eta} \right)\\
\notag
&+
\sum_{i=1}^3 \bbP\left(2\sup_{|\xi|=1}\left|I_\epsilon^{(i)}(\xi)\right|> \frac{1}{3} \epsilon^{(1-1/\alpha)\eta} \right)\\
\notag
\le&
\quad \bbP\left(\inf_{x\in K} f_{\beta,\gamma}(u(t,x)) < C_0' \: \epsilon^{(1-1/\alpha)(\eta-1)} \right)\\
\label{E:4Prob}
&+
\sum_{i=1}^3 C_i' \epsilon^{p\left(\min\left(1-\frac{1}{2\alpha},2-\frac{2}{\alpha}\right) - (1-1/\alpha)\eta\right)}.
\end{align}
Notice that for any $\theta$ and $x\in (0,1)$,
\[
\exp\left\{-2\beta\left[\log \frac{1}{x}\right]^\gamma\right\}
< \theta \quad\Longleftrightarrow\quad
x <
\exp\left\{-(2\beta)^{-1/\gamma}\left[\log \frac{1}{\theta}\right]^{1/\gamma}\right\}.
\]
Hence, as $\epsilon$ is small enough, for some constant $C_0''>0$,
\begin{align*}
\bbP&\left(\inf_{x\in K} f_{\beta,\gamma}(u(t,x))< C_0' \: \epsilon^{(1-1/\alpha)(\eta-1)} \right)\\
&= \bbP\left(\inf_{x\in K} u(t,x) \wedge 1< 
\exp\left\{-C_0''\left(\frac{(1-1/\alpha)(\eta-1)}{2\beta}\right)^{\frac{1}{\gamma}}\left[\log\frac{1}{\epsilon}\right]^{\frac{1}{\gamma}}\right\}\right)\\
&= \bbP\left(\inf_{x\in K} u(t,x) < 
\exp\left\{-C_0''\left(\frac{(1-1/\alpha)(\eta-1)}{2\beta}\right)^{\frac{1}{\gamma}}\left[\log\frac{1}{\epsilon}\right]^{\frac{1}{\gamma}}\right\}\right).
\end{align*}
Then Theorem \ref{T:NegMom} implies  that for some $B>0$ which depends on $\alpha$, $\beta$, $\eta$, $\gamma$, $K$ and $t$, such that
\begin{align}\label{E:gepsilon}
 \bbP&\left(\inf_{x\in K} f_{\beta,\gamma}(u(t,x))< C_0' \: \epsilon^{(1-1/\alpha)(\eta-1)} \right)\le 
 \exp\left(-B \left[\log(1/\epsilon)\right]^{\frac{2-1/\alpha}{\gamma}}\right)=:g(\epsilon).
\end{align}
Because $\gamma\in (0,2-1/\alpha)$, $\lim_{\epsilon\downarrow 0}g(\epsilon)=0$.
The above choice of $\eta$ guarantees the following two inequalities
\[
\min\left(1-\frac{1}{2\alpha},2-\frac{2}{\alpha}\right) - (1-1/\alpha)\eta>0\quad
\text{and}\quad
(1-1/\alpha)(\eta-1)>0.
\]
Hence, 
\[
\lim_{\epsilon' \downarrow 0}\bbP\left((\det \sigma)^{1/d}< \epsilon' \right)=0,
\]
that is, $\det \sigma>0$ a.s. By part (1) of Proposition \ref{P:D1} we know that $u(t,x)\in \D^{1,2}$. Therefore, we can conclude from part (1) of Theorem \ref{T:Density2} that part (a) of Theorem \ref{T:Mult} is true.

As for part (b), since our choice of $S=[t/2,t]\times K$ is a compact set away from $t=0$, condition \eqref{E:supSNormU} is satisfied. 
Hence, part (3) of Proposition \ref{P:D1} implies that $u(t,x)\in\D^\infty_S$. In order to apply part (2) of Theorem \ref{T:Density2}, we still need to establish the existence of  nonnegative moments of $\det \sigma$. 
Because $\gamma\in (0,2-1/\alpha)$, the function $g(\epsilon)$ defined in \eqref{E:gepsilon} goes to zero as $\epsilon$ tends to zero faster than any $\epsilon^p$ with $p\ge 1$.
Thanks to \eqref{E:4Prob}, we can apply Lemma \ref{L:NegMom} to see that 
$\E\left[(\det \sigma)^{-p}\right]<\infty$ for all $p>0$.
Part (b) of Theorem \ref{T:Mult} is then proved by an application of
Theorem \ref{T:Density} (b).
This completes the  proof of Theorem \ref{T:Mult}.
\end{proof}

\subsection{Proof of Theorem \ref{T:LocDen}}\label{SS:LocDen}
\begin{proof}[Proof of Theorem \ref{T:LocDen}]
The proof of Theorem \ref{T:LocDen} is similar to that of Theorem \ref{T:Mult} with some minor changes.
Denote $F=(u(t,x_1),\dots,u(t,x_d))$ and
\[
\Omega_c := \{\omega: \: \rho(u(t,x_i,\omega))^2>c, i=1,\dots, d\},\quad\text{for $c>0$.}
\]
Following the notation in the proof of Theorem \ref{T:Mult},
we need only to prove that
\[
\lim_{\epsilon\rightarrow 0_+}\epsilon^{-k} \bbP\left(\left(\det\sigma\right)^{1/d}< \epsilon,
\: \Omega_c
\right) =0,\quad\text{for all $c>0$ and $k\in\bbN$}.
\]
Now fix an arbitrary $c>0$.
In Step 1 in the proof of Theorem \ref{T:Mult}, we bound $I_\epsilon^{(0)}(\xi)$ simply as follows
\[
I_\epsilon^{(0)}(\xi)\ge c \lip_\rho^2 \sum_{j=1}^d\int_{t-\epsilon}^t \ud r \int_{x_j-\epsilon^{1/\alpha}}^{x_j+\epsilon^{1/\alpha}}\ud z\:
G(t-r,x_j-z)^2\xi_j^2 \ge C \epsilon^{1-1/\alpha} \quad\text{a.s. on $\Omega_c$}
\]
The upper bounds for $I_\epsilon^{(i)}(\xi)$, $i=1,2,3$, are the same as those in Steps 2-4 of the proof of
Theorem \ref{T:Mult}. Then by the same choice of $\eta$ as that in Step 5 of the proof of Theorem \ref{T:Mult},
\begin{align*}
\bbP\left((\det \sigma)^{1/d}< \epsilon^{(1-1/\alpha)\eta},\:\Omega_c \right)\le&
\quad \bbP\left(\frac{2}{3}\inf_{|\xi|=1}\left|I_\epsilon^{(0)}(\xi)\right|< 2 \epsilon^{(1-1/\alpha)\eta},\: \Omega_c \right)\\
&+
\sum_{i=1}^3 \bbP\left(2\sup_{|\xi|=1}\left|I_\epsilon^{(i)}(\xi)\right|> \frac{1}{3} \epsilon^{(1-1/\alpha)\eta} \right).
\end{align*}
Then the remaining part of proof is the same as that in Step 5 of the proof of Theorem \ref{T:Mult}.
This completes the proof of Theorem \ref{T:LocDen}.
\end{proof}

\begin{remark}\label{R:H1-H4}
In Hu {\it et al} \cite{HHNS14}, \textcolor{\myred}{the smoothness of density is studied for a general class of the} second order stochastic partial differential equations with a centered Gaussian noise that is white in time and homogeneous in space;
see Section \ref{S:HHNS} for a brief account of these results.
Our equation here falls into this class.
The main contribution of Theorem \ref{T:LocDen} is the measure-valued initial condition. Actually, if initial data is bounded, one can prove Theorem \ref{T:LocDen} through verifying conditions (H1)--(H4) of Theorem \ref{T:HHNS} and Remark \ref{R:HHNS}.
In our current case, the correlation function $f$ in \eqref{E:f} is the Dirac delta function $\delta_0$. (H1) is true because 
\[
\int_0^T \int_\R |\calF G(t,\cdot)(\xi)|^2 \ud \xi \ud t
= C \int_0^T \int_\R G(t,x)^2 \ud x\ud t= C T^{1-1/\alpha}<\infty,
\]
and  $\sup_{t\in[0,T]}\int_\R G(t,x)\ud x \equiv 1$. (H2) is true thanks to Theorem 1.6 of \cite{ChenKim14Comparison} with $\kappa_1:=(1-\alpha)/(2\alpha)$ and $\kappa_2:=(\alpha-1)/2$.
Because $\int_0^\epsilon\ud r \int_\R \ud x\: G(t,x)^2 =C \epsilon^{1-1/\alpha}$, (H3) is satisfied with any $\epsilon_0>0$ and $\eta:=1-1/\alpha>0$. Finally, for (H4),
for any $\epsilon>0$, 
\[
\int_0^\epsilon \ud r \: r^{\kappa_1} \int_\R \ud x \: G(r,x)^2
= C 
\int_0^\epsilon r^{\kappa_1 -1/\alpha} \ud r = C \epsilon^{1+\kappa_1 -1/\alpha} = C \epsilon^{3(\alpha-1)/(2\alpha)},
\]
from where we can choose any $\epsilon_1>0$ and $\eta_1:=3(\alpha-1)/(2\alpha)>\eta$. Since by \eqref{E:TildeG} and \eqref{E:BddG}, for $w\ne 0$,
\begin{align*}
\int_0^\epsilon \ud r \int_\R \ud x \: G(r,x)G(r,w+x)
&\le C \int_0^\epsilon \ud r \int_\R \ud x \: \widetilde{G}(r,x)\widetilde{G}(r,w+x)\\
&= C \int_0^\epsilon \widetilde{G}(r,w) \ud r =C \int_0^\epsilon r^{-1/\alpha}\widetilde{G}\left(1,\frac{w}{r^{1/\alpha}}\right) \ud r \\
&= C \int_0^\epsilon r^{-1/\alpha} \frac{1}{1+|w|^{1+\alpha} r^{-(1+\alpha)/\alpha}}\ud r \\
& = C \int_0^\epsilon \frac{r}{r^{1+1/\alpha}+|w|^{1+\alpha}}\ud r \le \frac{C}{|w|^{1+\alpha}} \epsilon^{2},
\end{align*}
we can choose any $\epsilon_2>0$ and $\eta_2:=2>\eta$. Finally, because
\[
x^{\kappa_2} G(t,x) \le C t^{-1/\alpha} \frac{x^{(\alpha-1)/2}}{1+|x|^{1+\alpha} t^{-(1+1/\alpha)}}
=C t^{(\alpha-3)/(2\alpha)} \sup_{y>0}\frac{y^{(\alpha-1)/2}}{1+y^{1+\alpha}}\le C t^{(\alpha-3)/(2\alpha)},
\]
where we have used the fact that $0<(\alpha-1)/2<1+\alpha$, we see that
\begin{align*}
\int_0^\epsilon \ud r \int_\R \ud x \: x^{\kappa_2}G(r,x)G(r,w+x)
&\le C \int_0^\epsilon \ud r \: r^{(\alpha-3)/(2\alpha)}\int_\R \ud x \: G(r,w+x)\\
&= C \int_0^\epsilon \ud r \: r^{(\alpha-3)/(2\alpha)}
= C \epsilon^{3(\alpha-1)/(2\alpha)}.
\end{align*}
Therefore, one can choose any $\epsilon_3>0$ and $\eta_3:=\eta_1= 3(\alpha-1)/(2\alpha) > \eta$. With this, we have verified all conditions (H1)--(H4) of Theorem 3.1 and Remark 3.2 in \cite{HHNS14}.
\end{remark}

\section{Strict positivity of density}\label{S:Pos}

We will first establish two general  criteria for the strict positivity of the density in Section \ref{SS:TwoCri}.
Then we will prove Theorem \ref{T:Pos} in Section \ref{SS:Pos} by verifying these two criteria.
Some technical lemmas are gathered in Section \ref{SS:Tech}.
\bigskip

We first introduce some notation.
We will use bold letters to denote vectors.
Denote
\[
|\mathbf{x}| := \sqrt{x_1^2+\dots+x_d^2},
\]
for $\mathbf{x}\in\R^d$ and
\[
\Norm{A}:=\left(\sum_{i_1,\dots,i_m = 1}^d A^2_{i_1,\dots,i_m}\right)^{1/2}
\]
for $A\in (\R^d)^{\otimes m}$ and $m\ge 2$ (i.e., the Frobenius norm when $m=2$).

\bigskip
Recall that $W=\{W_t,t\ge 0\}$ can be viewed as a cylindrical Wiener process in the Hilbert space $L^2(\R)$ with
the covariance given by \eqref{E:Cov}.
Let $\mathbf{h}=(h^1,\dots,h^d) \in L^2(\R_+\times\R)^d$ and $\mathbf{z}=(z_1,\dots,z_d)\in\R^d$.
Define a translation of $W_t$, denoted by  $\widehat{W}_t$, as follows:
\begin{equation}
\widehat{W}_t(g):=
W_t(g) + \sum_{i=1}^d z_i\int_0^t \int_\R h^i(s,y) g(y) \ud s \ud y, \quad\text{for any $g\in L^2(\R)$.}\label{e.def-hat-W}
\end{equation}
Then $\big\{\: \widehat{W}_t, \: t\ge 0\big\}$ is a cylindrical Wiener process in $L^2(\R)$ on the
probability space $(\Omega,\calF,\widehat{\mathbb{P}})$, where
\[
\frac{\ud \widehat{\mathbb{P}}}{\ud \mathbb{P}}=\exp\left(
-\sum_{i=1}^d z_i \int_0^\infty\int_\R h^i(s,y)W(\ud s, \ud y)-\frac{1}{2}\sum_{i=1}^dz_i^2\int_0^\infty\int_\R
|h^i(s,y)|^2\ud s\ud y
\right).
\]
For any predictable process $Z\in L^2(\Omega\times\R_+; L^2(\R))$, we have that
\[
\int_0^\infty \int_\R Z(s,y)\widehat{W}(\ud s,\ud y) =
\int_0^\infty \int_\R Z(s,y)W(\ud s,\ud y) +\sum_{i=1}^d z_i\int_0^\infty\int_\R Z(s,y)h^i(s,y)\ud s\ud y.
\]
In the following, we write $\sum_{i=1}^d z_i h^i(s,y)=:\InPrd{\mathbf{z},\mathbf{h}(s,y)}$.
Let $\widehat{u}_{\mathbf{z}}(t,x)$ be the solution to \eqref{E:FracHt} with respect to $\widehat{W}$, that is,
\begin{align}
\begin{aligned}
 \widehat{u}_{\mathbf{z}}(t,x) = J_0(t,x)&+ \int_0^t\int_\R G(t-s,x-y) \rho(\widehat{u}_{\mathbf{z}}(s,y))W(\ud s,\ud y)\\
 &+\int_{0}^t\int_\R G(t-s,x-y)\rho(\widehat{u}_{\mathbf{z}}(s,y)) \InPrd{\mathbf{z},\mathbf{h}(s,y)}\ud s\ud y.
\end{aligned}
\end{align}
Then, the law of $u(t,x)$ under $\mathbb{P}$ coincides with the law of $\widehat{u}_{\mathbf{z}}(t,x)$ under $\widehat{\mathbb{P}}$.

\subsection{Two criteria for strict positivity of densities}\label{SS:TwoCri}

We first state a technical lemma, which can be found in \cite[Lemma 3.2]{BP98} or \cite[Lemma 4.2.1]{Nualart95}.

\begin{lemma}\label{L:diff}
 For any $\beta_1>0$, $\beta_2>0$ and $\kappa>0$, there exist nonnegative constants $R$ and $\alpha$ such that
 any mapping $g:\R^d\mapsto\R^d$ that satisfies
 \begin{enumerate}[parsep=0ex,topsep=1ex]
  \item[(1)] $|\det g'(0)|\ge 1/\beta_1$
  \item[(2)] $\sup_{|\mathbf{z}|\le\kappa}\left(|g(\mathbf{z})|+\Norm{g'(\mathbf{z})}+\Norm{g''(\mathbf{z})}\right)\le \beta_2$
 \end{enumerate}
is a diffeomorphism from a neighborhood of zero contained in the ball $B(0,R)$ onto the ball $B(g(0),\alpha)$.
\end{lemma}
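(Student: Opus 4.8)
The plan is to read Lemma~\ref{L:diff} as a quantitative version of the inverse function theorem and to make every constant explicit in terms of $d$, $\beta_1$, $\beta_2$ and $\kappa$. First I would control the inverse of the linear map $g'(0)$: by hypothesis~(2) all entries of $g'(0)$ are bounded by $\beta_2$, so the adjugate formula $g'(0)^{-1}=\det(g'(0))^{-1}\,\mathrm{adj}(g'(0))$ together with hypothesis~(1) yields $\Norm{g'(0)^{-1}}\le M:=d!\,\beta_1\,\beta_2^{\,d-1}$, a bound depending only on $d,\beta_1,\beta_2$. Next, the bound on $g''$ in~(2) makes $g'$ Lipschitz, $\Norm{g'(\mathbf z)-g'(0)}\le\beta_2\,|\mathbf z|$ for $|\mathbf z|\le\kappa$; hence, setting $R:=\min\bigl(\kappa,\tfrac1{2M\beta_2}\bigr)$, for every $|\mathbf z|\le R$ one has $\Norm{g'(0)^{-1}\bigl(g'(\mathbf z)-g'(0)\bigr)}\le\tfrac12$, so $g'(\mathbf z)$ stays invertible on $B(0,R)$ with $\Norm{g'(\mathbf z)^{-1}}\le 2M$.

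The second step is a Banach fixed point argument to invert $g$. For $\mathbf w\in\R^d$ set $T_{\mathbf w}(\mathbf z):=\mathbf z-g'(0)^{-1}\bigl(g(\mathbf z)-\mathbf w\bigr)$ on $\overline{B(0,R)}$; its differential is $g'(0)^{-1}\bigl(g'(0)-g'(\mathbf z)\bigr)$, of norm $\le M\beta_2|\mathbf z|\le\tfrac12$, so $T_{\mathbf w}$ is a $\tfrac12$-contraction there. Since $T_{\mathbf w}(0)=g'(0)^{-1}(\mathbf w-g(0))$ has norm $\le M|\mathbf w-g(0)|$, for $|\mathbf w-g(0)|\le\alpha:=R/(2M)$ we get $|T_{\mathbf w}(\mathbf z)|\le M|\mathbf w-g(0)|+\tfrac12|\mathbf z|\le R$, i.e.\ $T_{\mathbf w}$ maps $\overline{B(0,R)}$ into itself. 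The contraction principle then gives a unique fixed point $\mathbf z(\mathbf w)$, which is precisely the unique solution of $g(\mathbf z)=\mathbf w$ in $\overline{B(0,R)}$; note $R$ and $\alpha$ depend only on $d,\beta_1,\beta_2,\kappa$, as required.

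Finally I would assemble the conclusion: the set $U:=g^{-1}\bigl(B(g(0),\alpha)\bigr)\cap B(0,R)$ is open, contains $0$ (because $\alpha>0$), and by the previous step $g|_U$ is a bijection onto $B(g(0),\alpha)$; since $g'$ is invertible throughout $B(0,R)\supseteq U$, the classical inverse function theorem makes the inverse map as smooth as $g$, so $g|_U$ is a diffeomorphism onto $B(g(0),\alpha)$. I expect the only real difficulty to be bookkeeping rather than a conceptual one: extracting the bound on $\Norm{g'(0)^{-1}}$ purely from the determinant lower bound and the operator-norm upper bound (the cofactor estimate and its $d$-dependence), and then balancing the two competing requirements on $R$ and $\alpha$ so that $T_{\mathbf w}$ is simultaneously a contraction and a self-map of $\overline{B(0,R)}$, with all constants traced back to $\beta_1,\beta_2,\kappa$ only. (Since this is standard, one may alternatively just cite \cite[Lemma~3.2]{BP98} or \cite[Lemma~4.2.1]{Nualart95}.)
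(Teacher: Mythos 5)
Your argument is correct. Note, however, that the paper does not prove this lemma at all: it is stated as a known technical result and dispatched with a citation to \cite[Lemma 3.2]{BP98} and \cite[Lemma 4.2.1]{Nualart95}, exactly as you suggest in your closing parenthesis. Your self-contained proof is the standard quantitative inverse function theorem that those references rely on: the cofactor bound $\Norm{g'(0)^{-1}}\le C_d\,\beta_1\beta_2^{d-1}$ from hypotheses (1)--(2), the Lipschitz estimate $\Norm{g'(\mathbf z)-g'(0)}\le \beta_2|\mathbf z|$ from the bound on $g''$, and the Banach fixed point argument for $T_{\mathbf w}(\mathbf z)=\mathbf z-g'(0)^{-1}(g(\mathbf z)-\mathbf w)$ with $R=\min(\kappa,\tfrac1{2M\beta_2})$ and $\alpha=R/(2M)$, so that $R$ and $\alpha$ depend only on $d,\beta_1,\beta_2,\kappa$, which is the whole point of the lemma as used in Theorem \ref{T:Criteria}. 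Two tiny bookkeeping remarks: the constants also depend on the dimension $d$ (harmless, since $d$ is fixed), and for surjectivity onto $B(g(0),\alpha)$ you should note explicitly that the fixed point satisfies $|\mathbf z(\mathbf w)|\le 2M|\mathbf w-g(0)|<R$, so it lies in the \emph{open} ball $B(0,R)$ and hence in your set $U$; with that line added, the bijectivity of $g|_U$ onto $B(g(0),\alpha)$ and the smoothness of the inverse (since $g'(\mathbf z)$ is invertible on all of $B(0,R)$) are complete.
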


To each $\mathbf{z}\in\R^d$ and $\mathbf{h}\in L^2(\R_+\times\R)^d$,
let $\widehat{W}_t^{\mathbf{z}}$ be the translation of $W_t$ with respect to $\mathbf{z}$ and $\mathbf{h}$ defined by \eqref{e.def-hat-W}.
As a consequence of Lemma 2.1.4 in \cite{Nualart06},
for any $d$-dimensional random vector $F$, measurable with respect to $W$,
with each component of $F$ belonging to $\D^{3,2}$,
both $\partial_{\mathbf{z}}F(\widehat{W}^{\mathbf{z}})$ and
$\partial_{\mathbf{z}}^2F(\widehat{W}^{\mathbf{z}})$ exist
and are continuous  with respect to $z$.
We shall explain the continuity.
 Fix $i\in \{1,\dots,d\}$ and set $g=\sum_{j\ne i} z_j h_j$.
By  Lemma 2.1.4 of \cite{Nualart06}, we have that $F(\widehat{W}^{\mathbf{z}})= F^{\InPrd{\mathbf{z},\mathbf{h}}}$ and
$\left\{F^{\InPrd{\mathbf{z},\mathbf{h}}} = F^{z_i h_i + g}, z_i\in\R \right\}$ has a version that is absolutely continuous with respect to the Lebesgue measure on $\R$ and $\partial_{z_i} F^{\InPrd{\mathbf{z},\mathbf{h}}}=\InPrd{DF,h_i}^{\InPrd{\mathbf{z},\mathbf{h}}}_{H}$,  where $H=L^2(\R)$.
Now fix another $i'\in \{1,\dots,d\}$ and set
$g'=\sum_{j\ne i'}z_j h_j$. Another application of Lemma 2.1.4 of \cite{Nualart06}
 shows that
\[
\left\{\partial_{z_i}F^{\InPrd{\mathbf{z},\mathbf{h}}}=\InPrd{DF,h_i}^{z_{i'}h_{i'}+g'}_{H},z_{i'}\in\R \right\}
\]
has a version that is absolutely continuous with respect to the Lebesgue measure on $\R$ and
\[
\partial_{z_i,z_{i'}}^2 F^{\InPrd{\mathbf{z},\mathbf{h}}} = \InPrd{D^2 F,h_i\otimes h_{i'}}_{H^{\otimes 2}}^{(z_ih_i+g)\otimes (z_{i'}h_{i'}+g')}.
\]  
Since $F\in \mathbb{D}^{3,2}$,  one can apply  Lemma 2.1.4 of \cite{Nualart06} for a third time to show that for any $i''\in \{1,\dots,d\}$
there is a continuous version of $z_{i''}\mapsto \partial_{z_i,z_{i'}}^2 F^{\InPrd{\mathbf{z},\mathbf{h}}}$. Therefore, both $\partial_{\mathbf{z}}F(\widehat{W}^{\mathbf{z}})$ and
$\partial_{\mathbf{z}}^2F(\widehat{W}^{\mathbf{z}})$ exist and are continuous.

%
%

The following theorem is an extension of Theorem 3.3 by Bally and Pardoux \cite{BP98}.
The difference is that we include a conditional probability in \eqref{E:my(ii)} below,
which is necessary in order to deal with the parabolic Anderson model.

\begin{theorem}\label{T:Criteria}
Let $F$ be a $d$-dimensional random vector measurable with respect to $W$,
such that each component of $F$ is in $\D^{3,2}$.
Assume that for some $f\in C(\R^d)$  and for some  open subset
 $\Gamma$ of $\R^d$, it holds that
\[
\one_\Gamma(\mathbf{y}) \: \left[\bbP\circ F^{-1}\right] (\ud \mathbf{y})
=
\one_\Gamma(\mathbf{y}) f(\mathbf{y}) \ud \mathbf{y}.
\]
Fix a point  $\mathbf{y}_*\in\Gamma$. Suppose that there exists a sequence $\{\mathbf{h}_n\}_{n\in\bbN} \subseteq  L^2(\R_+\times\R)^d$
such that the associated random field
$\phi_n(\mathbf{z})= F(\widehat{W}^{\mathbf{z},n})$
satisfies the following two conditions.
\begin{enumerate}[parsep=0ex,topsep=1ex]
\item[(i)] There are constants $c_0>0$ and $r_0>0$ such that for all $r\in (0,r_0]$,
the following limit holds true:
\begin{align}\label{E:my(i)}
\liminf_{n\rightarrow\infty}
\bbP\left(
|F-\mathbf{y}_*|\le r
\:\:\text{and}\:\:
|\det\partial_{\mathbf{z}} \phi_n(0)|\ge \frac{1}{c_0}
\right)>0.
\end{align}
\item[(ii)]  There are some constants $\kappa>0$  and $K>0$ such that
\begin{align}\label{E:my(ii)}
 \lim_{n\rightarrow\infty}
 \bbP\left(
 \sup_{|\mathbf{z}|\le \kappa}\Norm{\phi_n(\mathbf{z})}_{C^2} \le K
 \Bigg| \: |F-\mathbf{y}_*|\le r_0\right)=1,
\end{align}
where
\[
\Norm{\phi_n(\mathbf{z})}_{C^2}:=
|\phi_n(\mathbf{z})|+\Norm{\partial_{\mathbf{z}}\phi_n(\mathbf{z})}+\Norm{\partial_{\mathbf{z}}^2\phi_n(\mathbf{z})}.
\]
\end{enumerate}
Then $f(\mathbf{y_*})>0$.
\end{theorem}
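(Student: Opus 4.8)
The plan is to reduce the statement to an application of the diffeomorphism criterion of Lemma \ref{L:diff} on a set of positive probability, and then to exploit the absolute continuity of the law of $F$ on $\Gamma$ together with the Cameron--Martin change of measure to push a lower bound for a density at $\mathbf{y}_*$. First I would fix $r_0$ as in hypothesis (i), fix the constants $\kappa$ and $K$ from hypothesis (ii), and feed $\beta_1=c_0$, $\beta_2=K$ and $\kappa$ into Lemma \ref{L:diff} to obtain the associated radii $R>0$ and $\alpha>0$: thus, whenever $\omega$ is such that $|\det \partial_{\mathbf z}\phi_n(0)|\ge 1/c_0$ and $\sup_{|\mathbf z|\le \kappa}\Norm{\phi_n(\mathbf z)}_{C^2}\le K$, the map $\mathbf z\mapsto \phi_n(\mathbf z)(\omega)$ is a diffeomorphism from a neighbourhood of $0$ inside $B(0,R)$ onto the ball $B(\phi_n(0),\alpha)=B(F(\omega),\alpha)$. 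Shrinking $r_0$ if necessary, I may assume $r_0<\alpha/2$, so that on the event just described the ball $B(\mathbf{y}_*,\alpha/2)$ is contained in the image $B(F(\omega),\alpha)$ whenever additionally $|F(\omega)-\mathbf{y}_*|\le r_0$.

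Next I would combine the two hypotheses. By (ii), for $n$ large the conditional probability of the $C^2$-bound event given $\{|F-\mathbf{y}_*|\le r_0\}$ is close to $1$; multiplying by $\bbP(|F-\mathbf{y}_*|\le r_0)$ and intersecting with the determinant event in (i), a standard estimate shows that
\begin{align*}
\liminf_{n\to\infty}\bbP\!\left(|F-\mathbf{y}_*|\le r_0,\ |\det\partial_{\mathbf z}\phi_n(0)|\ge \tfrac1{c_0},\ \sup_{|\mathbf z|\le\kappa}\Norm{\phi_n(\mathbf z)}_{C^2}\le K\right)
=:p_0>0.
\end{align*}
Call this event $A_n$. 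On $A_n$, by the previous paragraph, $\mathbf z\mapsto \phi_n(\mathbf z)$ is a diffeomorphism onto a ball containing $B(\mathbf{y}_*,\alpha/2)$, with Jacobian controlled below by $1/c_0$ and above by (a constant depending on) $K$. Now fix any test function $g\in C_c(\R^d)$ with $g\ge 0$, $\spt g\subseteq B(\mathbf{y}_*,\alpha/2)\subseteq\Gamma$. Using that the law of $F$ under $\bbP$ equals the law of $\widehat{u}_{\mathbf z}(t,x)$, i.e.\ of $\phi_n(\mathbf z)$, under $\widehat{\bbP}^{\,\mathbf z,n}$, together with the explicit Girsanov density, I would write $\E[g(F)]$ as an average over a small ball $\{|\mathbf z|\le \delta\}$ of $\E_{\bbP}[g(\phi_n(\mathbf z))\,M_n(\mathbf z)]$, where $M_n(\mathbf z)$ is the Radon--Nikodym derivative $\ud\bbP/\ud\widehat{\bbP}^{\,\mathbf z,n}$; restricting the expectation to $A_n$ and performing the change of variables $\mathbf y=\phi_n(\mathbf z)$ (legitimate by the diffeomorphism property), the Jacobian factor is bounded below, $M_n$ is bounded below on the relevant set, and one obtains a lower bound of the form $\E[g(F)]\ge c\int_{B(\mathbf{y}_*,\alpha/2)} g(\mathbf y)\,\ud\mathbf y$ for a constant $c>0$ independent of $g$. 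Since on $\Gamma$ the left side equals $\int g(\mathbf y)f(\mathbf y)\,\ud\mathbf y$, letting $g$ concentrate at $\mathbf{y}_*$ and using the continuity of $f$ gives $f(\mathbf{y}_*)\ge c>0$.

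The main obstacle I anticipate is the bookkeeping in the change-of-measure/change-of-variables step: one must make sure that the localization to $A_n$ is compatible with the substitution $\mathbf y=\phi_n(\mathbf z)$ (the diffeomorphism is only onto a ball $B(F(\omega),\alpha)$ whose centre is random, which is why $r_0<\alpha/2$ and $\spt g\subseteq B(\mathbf{y}_*,\alpha/2)$ matter), that the Girsanov density $M_n(\mathbf z)$ has a uniform-in-$n$, uniform-in-$\mathbf z$ positive lower bound on the event and the small ball of $\mathbf z$'s (which follows once the $\mathbf h_n$ are chosen with controlled $L^2$ norms — this is where the concrete construction of the $\mathbf h_n$ in Section \ref{S:Pos} enters), and that the differentiability-in-$\mathbf z$ and continuity of $\phi_n(\mathbf z)$, $\partial_{\mathbf z}\phi_n(\mathbf z)$, $\partial_{\mathbf z}^2\phi_n(\mathbf z)$ — guaranteed by $F^i\in\D^{3,2}$ and Lemma 2.1.4 of \cite{Nualart06} as recalled above — are genuinely available so that the $C^2$ supremum in (ii) makes sense pathwise. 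Once these technical points are in place, the argument is essentially that of \cite[Theorem 3.3]{BP98}, the only novelty being that the determinant bound in (i) and the $C^2$ bound in (ii) are now required to hold only on an event of positive (respectively, asymptotically full conditional) probability rather than almost surely, which is exactly what the $\liminf$ in (i) and the conditional limit in (ii) provide.
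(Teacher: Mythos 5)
Your architecture matches the paper's proof (Lemma \ref{L:diff}, the inclusion--exclusion estimate to combine (i) and (ii), Girsanov, localization to a positive-probability event, then change of variables), but there is a concrete gap in the crucial last step: the claim that the Girsanov density $M_n(\mathbf z)$ admits a \emph{uniform-in-$n$, uniform-in-$\mathbf z$ positive lower bound} on the event is false, so your assertion $\E[g(F)]\ge c\int g$ with a constant $c>0$ does not follow.

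Two problems here. First, the $L^2$ norms of the $\mathbf h_n$ constructed in Section \ref{S:Pos} are \emph{not} controlled: from \eqref{E:hn} and \eqref{E:cn-bd} one has $\Norm{h_n^i}_{L^2(\R_+\times\R)}^2 = 2\,c_n^2\,2^{-2n}\sim 2^{2n(1-1/\alpha)}\to\infty$, so the deterministic exponent $-\tfrac12\sum_iz_i^2\Norm{h^i_n}^2$ blows up as $n\to\infty$. Second, even for a single fixed $n$, the density $\rho_n(\mathbf z)=\exp\bigl(\InPrd{\mathbf z,W(\mathbf h_n)}-\tfrac12\sum_i z_i^2\Norm{h^i_n}^2\bigr)$ contains the unbounded Gaussian random variable $W(\mathbf h_n)$, and the event $A_n$ (defined through $F$ and $\phi_n$) does not impose any lower bound on $W(\mathbf h_n)$; hence $\rho_n$ is only almost surely positive on $A_n$, not bounded away from zero.

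The paper's proof avoids the need for any uniform lower bound: once $\liminf_n\bbP(\Omega_{r,n})>0$ is established it \emph{fixes one index} $m$ with $\bbP(\Omega_{r,m})>0$, and then bounds $\E[g(F)]$ from below by $\int g(\mathbf y)\,\theta(\mathbf y)\,\ud\mathbf y$, where
\[
\theta(\mathbf y)=\E\!\left[\varphi(|F-\mathbf y|)\,
\min\!\left(1,\Bigl(\tfrac{G\,\rho_m}{|\det\partial_{\mathbf z}\phi_m|}\Bigr)\!\bigl(\phi_m^{-1}(\mathbf y)\bigr)\right);\ \Omega_{r,m}\right].
\]
Capping the integrand by $1$ makes dominated convergence apply, so $\theta$ is continuous; and on $\Omega_{r,m}$ the integrand is almost surely strictly positive at $\mathbf y=\mathbf y_*$, so $\theta(\mathbf y_*)>0$ even though it has no uniform lower bound. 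Comparing $f$ and $\theta$ through test functions supported near $\mathbf y_*$ inside $\Gamma$ then gives $f(\mathbf y_*)\ge\theta(\mathbf y_*)>0$. Your ``let $g$ concentrate at $\mathbf y_*$'' step is fine, but it should be applied to the inequality $\E[g(F)]\ge\int g\theta$ (with a continuous, merely pointwise positive $\theta$), not to $\E[g(F)]\ge c\int g$, which you cannot obtain.
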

\begin{proof}
Fix $\mathbf{y}_*\in\Gamma$, $\kappa$, $r_0$, $c_0$ and $K$.
Denote
\[
\Omega_{r,n}:=\left\{\omega:\: |F-\mathbf{y}_*|\le r,\;\;
|\det\partial_{\mathbf{z}} \phi_{n}(0)|\ge \frac{1}{c_0},\;\;
\sup_{|\mathbf{z}|\le\kappa}\Norm{\phi_{n}(\mathbf{z})}_{C^2}\le K
\right\}.
\]
By applying Lemma \ref{L:diff} with
$\beta_1=c_0$ and $\beta_2=K$,  we see that
there exist $\alpha>0$ and $R>0$ such that
for all $\omega\in\Omega_{r,n}$ and $n\in\bbN$,
the mapping $\mathbf{z}\mapsto\phi_n(\mathbf{z},\omega)$ is
a diffeomorphism between an open neighborhood $V_n(\omega)$ of $0$ in $\R^d$,
contained in some ball $B(0,R)$,
and the ball $B(F(\omega),\alpha)$.
When applying Lemma \ref{L:diff},
by restricting to  smaller neighborhoods (reducing values of $R$ and $\alpha$) when necessary,
we can and do assume that
\begin{align}\label{E_:D>bd}
\left|\det \partial_{\mathbf{z}}\phi_n(\mathbf{z},\omega)\right| \ge \frac{1}{2c_0},\quad\text{for all $\mathbf{z}\in V_n(\omega)$ and $\omega\in\Omega_{r_0, n}$,
$n\in\bbN$.}
\end{align}
Denote $r=r_0\wedge (\alpha/2)$.
\bigskip

We claim that
\begin{align}\label{E:OmegaRN}
 \liminf_{n\rightarrow\infty}P(\Omega_{r,n}) >0.
\end{align}
To see this, denote
\[
A=\bigg\{
\left|F-\mathbf{y}_*\right|\le r
\bigg\},\quad
B_n=\left\{|\det\partial_{\mathbf{z}} \phi_{n}(0)|\ge \frac{1}{c_0}
\right\},\quad
C_n=\bigg\{
\sup_{|\mathbf{z}|\le\kappa}\Norm{\phi_{n}(\mathbf{z})}_{C^2}\le K
\bigg\}.
\]
Notice that $P(A)>0$ and
\begin{align*}
P(B_n\cap C_n\big| A)  &=
1- P(B_n^c \cup C_n^c\big| A)\\
&\ge
1- P(B_n^c \big| A)
- P( C_n^c\big| A)\\
&=P(B_n \big| A)+ P( C_n\big| A)-1\,.
\end{align*}
From conditions \eqref{E:my(i)} and \eqref{E:my(ii)}, we see that
\begin{align*}
\liminf_{n\rightarrow\infty} P(B_n\cap C_n\big| A) &
\ge
\liminf_{n\rightarrow\infty} P(B_n \big| A)+ \liminf_{n\rightarrow\infty} P( C_n\big| A)-1\\
&=\liminf_{n\rightarrow\infty} P(B_n \big| A)>0.
\end{align*}
This proves \eqref{E:OmegaRN}.
Hence, there exists an $m \  (=m_r)\in\bbN$ such that
\[
\bbP\left(
\Omega_{r,m}\right)
>0.
\]
By \eqref{E_:D>bd}, we have that
\begin{align}\label{E2_:D>bd}
\Omega_{r,m}\subseteq
\Omega_{r_0,m}\subseteq
\left\{
\left|\det \partial_{\mathbf{z}}\phi_{m}(\mathbf{z},\omega)\right| \ge \frac{1}{2c_0}
\right\}.
\end{align}
From Girsanov's theorem,
\[
\left(\bbP\circ F^{-1}\right)(\ud \mathbf{z}) =
\rho_m(\mathbf{z})
\left(\bbP\circ \phi_{m}^{-1}\right)(\ud \mathbf{z}),
\]
where
\[
\rho_m(\mathbf{z}) = \exp\left(
\InPrd{\mathbf{z},W(\mathbf{h}_m)}-\frac{1}{2}\sum_{i=1}^d z_i^2 \Norm{h^i_m}_{L^2(\R_+\times\R)}^2
\right).
\]
Let $G(\mathbf{x})=(2\pi)^{-d/2}\exp\left(-|\mathbf{x}|^2/2\right)$ for $\mathbf{x}\in\R^d$. Then
for all $g \in C_b^{\infty}(\R^d;\R_+)$,
\begin{align*}
 \E\left[g(F)\right]&= \int_{\R^d} \E\left[g(F)\right] G(\mathbf{x}) \ud \mathbf{x}\\
 &=\int_{\R^d}  \E\left[\rho_m(\mathbf{x}) g(\phi_m(\mathbf{x}))\right] G(\mathbf{x}) \ud \mathbf{x}\\
 &\ge
 \E\left[ \int_{\R^d}  \rho_m(\mathbf{x}) g(\phi_m(\mathbf{x}))G(\mathbf{x}) \ud \mathbf{x};\: \Omega_{r,m}\right] \\
 &\ge
 \E\left[ \int_{V_m}  \rho_m(\mathbf{x}) g(\phi_m(\mathbf{x}))G(\mathbf{x}) \ud \mathbf{x};\: \Omega_{r,m} \right].
 \end{align*}
Thanks to \eqref{E2_:D>bd}, we see that
\[
\E\left[g(F)\right]
 \ge \E\left[
 \int_{B(F,\alpha)} g(\mathbf{y})
 \left(\frac{G\: \rho_m }{|\det \partial_{\mathbf{z}}\phi_m|}\right)\left(\phi_m^{-1}(\mathbf{y})\right) \ud \mathbf{y};\: \Omega_{r,m}\right]\\
 \ge
 \int_{\R^d}g(\mathbf{y}) \theta(\mathbf{y})\ud \mathbf{y},
\]
where
\[
\theta(\mathbf{y})=\E\left[\varphi(|F-\mathbf{y}|)\: \min\left(1,\left(\frac{G\: \rho_m }{|\det \partial_{\mathbf{z}}\phi_m|}\right)\left(\phi_m^{-1}(\mathbf{y})\right)\right);\Omega_{r,m}\right],
\]
and $\varphi:\R_+\mapsto[0,1]$ is a continuous function such that
\[
\one_{[0,r]}(t) \le \varphi(t) \le \one_{[0,\alpha]}(t).
\]
By the construction of $\theta$, we see that $\theta(\mathbf{y}_*)>0$.
Because the function
\[
\R^d \ni \mathbf{y}\mapsto
\varphi(|F-\mathbf{y}|)\: \min\left(1,\left(\frac{G\: \rho_m }{|\det \partial_{\mathbf{z}}\phi_m|}\right)\left(\phi_m^{-1}(\mathbf{y})\right)\right)
\]
is continuous a.s. on $\Omega_{r,m}$ and is bounded by $1$,
the dominated convergence theorem implies that $\theta$ is a continuous function.
Finally, we see that for all $g\in C_b^\infty(\R^d;\R_+)$ with $\mathbf{y}_*\in \spt{g}\subset\Gamma$,
\[
\int_{\R^d}g(\mathbf{y})f(\mathbf{y})\ud \mathbf{y}=\E[g(F)] \ge \int_{\R^d} g(\mathbf{y})\theta(\mathbf{y})\ud \mathbf{y}>0,
\]
which implies that $f(\mathbf{y}_*)>0$.
This completes the proof of Theorem \ref{T:Criteria}.
\end{proof}

\subsection{Proof of Theorem \ref{T:Pos}}\label{SS:Pos}

\begin{proof}[Proof of Theorem \ref{T:Pos}]
Choose and fix an arbitrary final time $T$. We will prove Theorem \ref{T:Pos}
for $t=T$.
Throughout the proof, we fix $\kappa>0$ and assume that $|\mathbf{z}|\le \kappa$ and $t\in (0,T]$.
Without loss of generality, one may assume that $T>1$ in order that $T-2^{-n}>0$ for all $n\ge 1$.
Otherwise we simply replace all ``$n\ge 1$" in the proof below by ``$n\ge N$" for some large $N>0$.
The proof consists of three steps.

{\bigskip\bf\noindent Step 1.~}
For $n\ge 1$, define $\mathbf{h}_n$ as follows:
\begin{align}\label{E:hn}
h_{n}^i(s,y):= c_n \one_{[T-2^{-n},T]}(s) \one_{[x_i-2^{-n},x_i+2^{-n}]}(y), \qquad\text{for $1\le i\le d$,}
\end{align}
where
\begin{align}\label{E:cni-1}
c_n^{-1} = \int_{T-2^{-n}}^T \ud s \int_{x_i-2^{-n}}^{x_i+2^{-n}} \ud y\: G(T-s,x_i-y)
= \int_0^{2^{-n}} \ud s \int_{-2^{-n}}^{2^{-n}} \ud y\: G(s,y).
\end{align}
Lemma \ref{L:LowG} below implies that for some universal constant $C>0$,
\begin{align}\label{E:cn-bd}
c_n \le C 2^{n (2-1/\alpha)},\quad\text{for all $n\ge 1$.}
\end{align}
See \eqref{E2:c2n} below for an explicit formula for $c_n$ when $\alpha=2$.

Let $\widehat{W}^n_{\mathbf{z}}$ be the cylindrical Wiener process translated by $\mathbf{h}_{n}$
and $\mathbf{z}=(z_1,\dots,z_d)$.
Let $\{\widehat{u}_{\mathbf{z}}^n(t,x), \:(t,x)\in (0,T]\times\R\}$ be
the random field shifted with respect to $\widehat{W}^n_{\mathbf{z}}$,
i.e., $ \widehat{u}_{\mathbf{z}}^n(t,x) $   satisfies the following equation:
\begin{align}\label{E:hatUn}
\begin{aligned}
 \widehat{u}_{\mathbf{z}}^n(t,x) = J_0(t,x)&+ \int_0^t\int_\R G(t-s,x-y) \rho(\widehat{u}_{\mathbf{z}}^n(s,y))W(\ud s,\ud y)\\
 &+\int_{0}^t\int_\R G(t-s,x-y)\rho(\widehat{u}_{\mathbf{z}}^n(s,y)) \InPrd{\mathbf{z},\mathbf{h}_n(s,y)}\ud s\ud y.
\end{aligned}
\end{align}

For $x\in\R$, denote the gradient vector and the Hessian matrix of $\widehat{u}_{\mathbf{z}}^n(t,x)$ by
\begin{align}\label{E:uij}
\widehat{u}^{n,i}_{\mathbf{z}}(t,x) := \partial_{z_i} \:\widehat{u}_{\mathbf{z}}^n (t,x)
\quad\text{and}\quad
\widehat{u}^{n,i,k}_{\mathbf{z}}(t,x) := \partial^2_{z_i z_k} \:\widehat{u}_{\mathbf{z}}^n (t,x),
\end{align}
respectively.
From \eqref{E:hatUn}, we see that
\[
\widehat{u}^n_{\mathbf{z}}(s,y)=u(s,y) \quad\text{for $s\le T-2^{-n}$ and $y\in\R$.}
\]
Hence, $\{\widehat{u}^{n,i}_{\mathbf{z}}(t,x),(t,x)\in(0,T]\times \R\}$ satisfies
\begin{align}
\label{E:hatUni}
\begin{aligned}
\widehat{u}^{n,i}_{\mathbf{z}}(t,x) =&
\theta_{\mathbf{z}}^{n,i}(t,x) \\
&+
\one_{\{t>T-2^{-n}\}}\int_{T-2^{-n}}^t \int_\R G(t-s,x-y)\rho'(\widehat{u}_{\mathbf{z}}^n(s,y))
\widehat{u}^{n,i}_{\mathbf{z}}(s,y) W(\ud s,\ud y)\\
 &+
\int_{0}^t \int_\R G(t-s,x-y)\rho'(\widehat{u}_{\mathbf{z}}^n(s,y))
\widehat{u}^{n,i}_{\mathbf{z}}(s,y)\InPrd{\mathbf{z},\mathbf{h}_n(s,y)} \ud s\ud y,
\end{aligned}
\end{align}
where
\begin{align}\label{E:thetazni}
\theta_{\mathbf{z}}^{n,i}(t,x) = \int_{0}^t \int_\R G(t-s,x-y)\rho(\widehat{u}_{\mathbf{z}}^n(s,y)) h_n^i(s,y)\ud s\ud y.
\end{align}
Similarly, $\{\widehat{u}^{n,i,k}_{\mathbf{z}}(t,x),(t,x)\in(0,T]\times \R\}$ satisfies
\begin{align}
\notag
 \widehat{u}^{n,i,k}_{\mathbf{z}}(t,x) =&\theta^{n,i,k}_{\mathbf{z}}(t,x)
 +\int_{0}^t\int_\R G(t-s,x-y)\rho'(\widehat{u}^{n}_{\mathbf{z}}(s,y)) \widehat{u}^{n,i}_{\mathbf{z}}(s,y) h_n^k(s,y)\ud s\ud y\\
\notag
 &+\one_{\{t>T-2^{-n}\}}\int_{T-2^{-n}}^t\int_\R G(t-s,x-y)\rho''(\widehat{u}^{n}_{\mathbf{z}}(s,y)) \widehat{u}^{n,i}_{\mathbf{z}}(s,y)\widehat{u}^{n,k}_{\mathbf{z}}(s,y) W(\ud s,\ud y)\\
\notag
 &+\int_{0}^t\int_\R G(t-s,x-y)\rho''(\widehat{u}^{n}_{\mathbf{z}}(s,y)) \widehat{u}^{n,i}_{\mathbf{z}}(s,y)\widehat{u}^{n,k}_{\mathbf{z}}(s,y) \InPrd{\mathbf{z},\mathbf{h}_n(s,y)}\ud s\ud y\\
\notag
 &+\one_{\{t>T-2^{-n}\}}\int_{T-2^{-n}}^t\int_\R G(t-s,x-y)\rho'(\widehat{u}^{n}_{\mathbf{z}}(s,y)) \widehat{u}^{n,i,k}_{\mathbf{z}}(s,y) W(\ud s,\ud y)\\
\label{E:hatUnik}
 &+\int_{0}^t\int_\R G(t-s,x-y)\rho'(\widehat{u}^{n}_{\mathbf{z}}(s,y)) \widehat{u}^{n,i,k}_{\mathbf{z}}(s,y) \InPrd{\mathbf{z},\mathbf{h}_n(s,y)}\ud s\ud y,
\end{align}
where
\begin{align}\label{E:thetaznik}
\theta^{n,i,k}_{\mathbf{z}}(t,x):=\partial_{z_k}\theta^{n,i}_{\mathbf{z}}(t,x)=
\int_{0}^t \int_\R G(t-s,x-y)\rho'(\widehat{u}_{\mathbf{z}}^n(s,y)) \widehat{u}_{\mathbf{z}}^{n,k}(s,y) h_n^i(s,y)\ud s\ud y.
\end{align}
Note that the second term on the right-hand side of \eqref{E:hatUnik} is equal to
$\theta^{n,k,i}_{\mathbf{z}}(t,x)$.

Denote
\begin{align}
\label{E:C2Norm}
\begin{aligned}
 &\hspace{-3em}\Norm{\left\{\widehat{u}^{n}_{\mathbf{z}}(t,x_i)\right\}_{1\le i\le d}}_{C^2}\\
&=
\left|\left\{\widehat{u}^{n}_{\mathbf{z}}(t,x_i)\right\}_{1\le i\le d}\right|
+
\Norm{\left\{\widehat{u}^{n,i}_{\mathbf{z}}(t,x_j)\right\}_{1\le i,j\le d}}
+
\Norm{\left\{\widehat{u}^{n,i,k}_{\mathbf{z}}(t,x_j)\right\}_{1\le i,j,k\le d}}.
\end{aligned}
\end{align}

Suppose that
$\mathbf{y}\in\R^d$ belongs to the interior
of the support of the joint law of
\[
(u(T,x_1),\dots, u(T,x_d))
\]
and $\rho(y_i)\ne 0$ for all $i=1,\dots,d$.
By Theorem \ref{T:Criteria},
Theorem \ref{T:Pos}  is proved once
we show that
there exist some positive constants $c_1$, $c_2$, $r_0$, and $\kappa$ such that
the following two conditions are satisfied:
\begin{align}\label{E:(i)}
 \liminf_{n\rightarrow\infty}\bbP\Bigg(\left| \left\{u(T,x_i)-y_i\right\}_{1\le i\le d}\right| \le r\:\:\text{and}\:\:
 \left| \det\left[\left\{\widehat{u}_0^{n,i}(T,x_j)\right\}_{1\le i,j\le d}\right]\right|\ge c_1\Bigg)>0,
\end{align}
for all $r\in (0,r_0]$, and
\begin{align}\label{E:(ii)}
\lim_{n\rightarrow\infty}\bbP\left(
\sup_{|\mathbf{z}|\le \kappa}
\Norm{\left\{\widehat{u}^{n}_{\mathbf{z}}(T,x_i)\right\}_{1\le i\le d}}_{C^2}
\le c_2
\:\:\Bigg|\:\:
\left|\left\{u(T,x_i)-y_i\right\}_{1\le i\le d}\right|\le r_0
\right) =1.
\end{align}

These two conditions are verified in the following two steps.
\bigskip


{\bigskip\noindent\bf Step 2.~} Let $\mathbf{y}$ be a point in the intersection of $\{\rho\ne 0\}^d$  and
the interior of the support of the joint law of $(u(T,x_1),\dots,u(T,x_d))$.
Then there exists $r_0\in (0,1)$ such that for all $0<r\le r_0$,
\[
\mathbb{P}\left(
\Big\{(u(T,x_1),\dots, u(T,x_d))\in B(\mathbf{y},r)\Big\}
\cap
\Big\{\prod_{i=1}^d \left|\rho(u(T,x_i))\right|\ge 2c_0\Big\}\right)>0,
\]
where
\[
c_0 = \frac{1}{2}\inf_{(z_1,\dots,z_d) \in B(\mathbf{y},r_0)}\prod_{i=1}^d \left|\rho(z_i)\right|.
\]
Due to \eqref{E2:UniU-Close} below, it holds that
\[
\lim_{n\rightarrow\infty}
\det\left[ \left\{\widehat{u}_{0}^{n,i}(T,x_j)\right\}_{1\le i,j\le d}\right]
= \prod_{i=1}^d \rho\left(u(T,x_i)\right)
\qquad\text{a.s.}
\]
Hence, by denoting
\begin{align*}
A&:=
\Big\{(u(T,x_1),\dots, u(T,x_d))\in B(\mathbf{y},r)\Big\},\\
D &:=\Big\{\prod_{i=1}^d \left|\rho(u(T,x_i))\right|\ge 2c_0\Big\},\\
E_n&:=\left\{
\left|\det\left[ \left\{\widehat{u}_{0}^{n,i}(T,x_j)\right\}_{1\le i,j\le d}\right]
- \prod_{i=1}^d \rho\left(u(T,x_i)\right)\right| <c_0\right\},\quad\text{and}\\
G_n & :=\Big\{\left|\det\left[\{\widehat{u}_{0}^{n,i}(T,x_j)\}_{1\le i,j\le d}\right]\right|
\ge c_0\Big\},
\end{align*}
we see that
\begin{align*}
 \bbP\left(A\cap G_n\right)
 &  \ge
 \bbP\left(A\cap D \cap E_n\right) \rightarrow \bbP(A\cap D)>0,\quad\text{as $n\rightarrow\infty$.}
\end{align*}
Therefore,
\begin{align*}
& \liminf_{n\rightarrow\infty}\mathbb{P}\left(\Big\{(u(T,x_1),\dots, u(T,x_d))\in B(\mathbf{y},r)\Big\}
\cap \Big\{\left|\det\left[\{\widehat{u}_{0}^{n,i}(T,x_j)\}_{1\le i,j\le d}\right]\right|
\ge c_0\Big\}\right)>0,
\end{align*}
which proves condition \eqref{E:(i)}.

{\bigskip\noindent\bf Step 3.~}
From \eqref{E:C2Norm}, we see that
\begin{align*}
\Norm{\left\{\widehat{u}^{n}_{\mathbf{z}}(T,x_i)\right\}_{1\le i\le d}}_{C^2}
&\le
\sum_{i=1}^d |\widehat{u}^{n}_{\mathbf{z}}(T,x_i)|
+
\sum_{i,j=1}^d |\widehat{u}^{n,i}_{\mathbf{z}}(T,x_j)|
+
\sum_{i,j,k=1}^d |\widehat{u}^{n,i,k}_{\mathbf{z}}(T,x_j)|.
\end{align*}
By Proposition \ref{P:thetaBdd} below,
there exists some constant $K_{r_0}$ independent of $n$ such that
\[
\lim_{n\rightarrow\infty}\bbP\left(
\sup_{|\mathbf{z}|\le\kappa}\Norm{\left\{\widehat{u}^{n}_{\mathbf{z}}(T,x_i)\right\}_{1\le i\le d}}_{C^2} \le K_{r_0} \:\Bigg|\:
|\{u(T,x_i)-y_i\}_{1\le i\le d}|\le r_0
\right) =1,
\]
where $\kappa$ is fixed as at the beginning of the proof.
Therefore, condition \eqref{E:(ii)} is also satisfied.
This completes the proof of Theorem \ref{T:Pos}.
\end{proof}

\subsection{Technical propositions}\label{SS:Tech}
In this part, we will prove three Propositions \ref{P:Psi}, \ref{P:SupU} and \ref{P:thetaBdd} in three Subsections \ref{sss:Psi}, \ref{sss:moment} and \ref{sss:bound}, respectively.
Throughout this section, we fix a final time $T$ and we assume that $T\ge 1$ for our convenience.

\subsubsection{Properties of the function \texorpdfstring{$\Psi_n^i(t,x)$}{Lg}}
\label{sss:Psi}
For $t\in (0,T]$ and $x\in\R$, denote
\begin{align}
\label{E:PsiN}
\begin{aligned}
\Psi_n^i(t,x)&:=
\int_0^t\int_\R G(t-s,x-y)h_n^i(s,y)\ud s\ud y,\\
\Psi_n(t,x)&:=
\int_0^t\int_\R G(t-s,x-y)\InPrd{\mathbf{1},\mathbf{h}_n(s,y)}\ud s\ud y,\\
\end{aligned}
\end{align}
where $\mathbf{1}=(1,\dots,1)\in\R^d$.
The aim of this subsection is to prove the following proposition.

\begin{proposition}\label{P:Psi}
For $i\in\{1,\dots,d\}$, the following properties for the functions \texorpdfstring{$\Psi_n^i(t,x)$}{Lg}   hold:
\begin{enumerate}[parsep=0ex,topsep=1ex]
\item[(1)] The function $t\mapsto\Psi_n^i(t,x)$ is nondecreasing and
\begin{align}\label{E:hG0}
0\le \Psi_n^i(t,x)\le C \left(1-\min((T-t)2^n, 1)\right)^{1-1/\alpha},\quad\text{for all $(t,x)\in[0,T]\times\R$.}
\end{align}
 \item[(2)] For all $(t,x)\in (0,T]\times\R$ and $n\in\bbN$, it holds that
 \begin{align}\label{E:hG1}
\Psi_n^i(t,x)\le \Psi_n^i(T,x)\le C \Psi_n^i(T,x_i)
\quad\text{and}\quad \Psi_n^i(T,x_i)=1.
\end{align}
 \item[(3)] If $x\ne x_i$, then
 \begin{align}\label{E:hG2}
\Psi_n^i(T,x) \le C_x \: 2^{-n(1+1/\alpha)}\rightarrow 0,\quad\text{as $n\rightarrow\infty$.}
\end{align}
 \item[(4)] For all $(t,x)\in (0,T]\times\R$ and $n\in\bbN$,
 \begin{align}\label{E:hG3}
  \int_0^t\int_\R (t-s)^{-1/\alpha}G(t-s,x-y) \Psi_n^i(s,y)\ud s\ud y\le C 2^{-n(1-1/\alpha)},
 \end{align}
 where the constant $C$ does not depend on $t$, $x$, $n$ and $i$.
\end{enumerate}
\end{proposition}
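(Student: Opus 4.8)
The plan is to exploit the explicit form of $\Psi_n^i$. Since $h_n^i$ is supported on $[T-2^{-n},T]\times[x_i-2^{-n},x_i+2^{-n}]$ and $T-2^{-n}>0$ for all $n\ge1$ (recall the standing assumption $T>1$), the change of variable $r=t-s$ gives, for $t\in[0,T]$,
\[
\Psi_n^i(t,x)=c_n\int_0^{(t-T+2^{-n})^{+}}\ud r\int_{x_i-2^{-n}}^{x_i+2^{-n}}G(r,x-y)\,\ud y,
\]
with $(t-T+2^{-n})^{+}=2^{-n}\bigl(1-\min((T-t)2^{n},1)\bigr)$; in particular $\Psi_n^i(t,x)=0$ for $t\le T-2^{-n}$. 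Nonnegativity and monotonicity of $t\mapsto\Psi_n^i(t,x)$ in part (1) are then immediate, since increasing $t$ only enlarges the range of the nonnegative $r$-integral. For the quantitative bound in part (1) I would not keep the double integral but estimate, for fixed $s$, $\int_\R G(t-s,x-y)h_n^i(s,y)\,\ud y\le\|G(t-s,\cdot)\|_{L^\infty(\R)}\,\|h_n^i(s,\cdot)\|_{L^1(\R)}$, use $\|G(r,\cdot)\|_{L^\infty(\R)}\le K_0 r^{-1/\alpha}$ (from \eqref{E:ScaleG} and \eqref{E:BddG}) together with $\|h_n^i(s,\cdot)\|_{L^1(\R)}=c_n 2^{1-n}\one_{[T-2^{-n},T]}(s)$, integrate the resulting integrable singularity $\int_{(T-2^{-n})^{+}}^{t}(t-s)^{-1/\alpha}\,\ud s=\frac{1}{1-1/\alpha}\bigl((t-T+2^{-n})^{+}\bigr)^{1-1/\alpha}$, and finally absorb the factor $c_n\,2^{-n(2-1/\alpha)}$ by \eqref{E:cn-bd}. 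This produces exactly $\Psi_n^i(t,x)\le C\bigl(1-\min((T-t)2^{n},1)\bigr)^{1-1/\alpha}$.

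Part (2) follows once (1) is in hand: evaluating the displayed formula at $(t,x)=(T,x_i)$ gives $\Psi_n^i(T,x_i)=c_n\int_0^{2^{-n}}\ud r\int_{-2^{-n}}^{2^{-n}}G(r,w)\,\ud w=c_n\,c_n^{-1}=1$ by the definition \eqref{E:cni-1} of $c_n$, while the chain $\Psi_n^i(t,x)\le\Psi_n^i(T,x)\le C=C\,\Psi_n^i(T,x_i)$ uses the monotonicity just proved and part (1) evaluated at $t=T$. For part (3), fix $x\ne x_i$ and restrict to $n$ with $2^{-n}<|x-x_i|/2$, so $|x-y|\ge|x-x_i|/2$ whenever $y\in[x_i-2^{-n},x_i+2^{-n}]$; the off-diagonal bound $G(r,z)\le K_0\,r/\bigl(r^{(1+\alpha)/\alpha}+|z|^{1+\alpha}\bigr)\le K_0\,r/|z|^{1+\alpha}$ (again from \eqref{E:ScaleG} and \eqref{E:BddG}) then gives $G(r,x-y)\le C_x\,r$ on the region of integration, whence $\Psi_n^i(T,x)\le C_x\,c_n\,2^{-3n}\le C_x'\,2^{-n(1+1/\alpha)}$ by \eqref{E:cn-bd}; the finitely many excluded small values of $n$ are absorbed into the constant since $\Psi_n^i\le1$.

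Finally, part (4) is the cheapest: using $\Psi_n^i(s,y)\le C$ (part (1) or (2)), $\Psi_n^i(s,y)=0$ for $s\le T-2^{-n}$, and $\int_\R G(t-s,x-y)\,\ud y=1$,
\[
\int_0^t\!\!\int_\R(t-s)^{-1/\alpha}G(t-s,x-y)\Psi_n^i(s,y)\,\ud s\,\ud y
\le C\!\!\int_{(T-2^{-n})^{+}}^{t}\!\!(t-s)^{-1/\alpha}\,\ud s
\le C\!\!\int_0^{2^{-n}}\!\!\tau^{-1/\alpha}\,\ud\tau=C'\,2^{-n(1-1/\alpha)},
\]
where the penultimate step uses $t\le T$, hence $t-(T-2^{-n})\le2^{-n}$. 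The one genuinely non-routine ingredient of the whole argument is the quantitative bound in part (1): the crude estimates $\|G(r,\cdot)\|_{L^1(\R)}=1$ and $\|G(r,\cdot)\|_{L^\infty(\R)}\le C r^{-1/\alpha}$ do not on their own overcome the growth of $c_n$, and it is precisely the sharp lower bound $c_n^{-1}\ge C\,2^{-n(2-1/\alpha)}$ encoded in \eqref{E:cn-bd} (via Lemma \ref{L:LowG}) that makes the estimate close; parts (2)--(4) are then straightforward consequences of part (1) and the explicit formula for $\Psi_n^i$.
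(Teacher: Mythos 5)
Your proof is correct and follows the paper's route for parts (1)--(3), with only minor cosmetic differences. Part (1) is the paper's own argument: write $\Psi_n^i$ in the $r=t-s$ form, read off monotonicity, bound $\|G(r,\cdot)\|_{L^\infty}\le Cr^{-1/\alpha}$ and $\|h_n^i(s,\cdot)\|_{L^1}=c_n2^{1-n}$, integrate the power singularity, and absorb $c_n$ via \eqref{E:cn-bd}. For part (2), the paper derives $\Psi_n^i(T,x)\le C\Psi_n^i(T,x_i)$ from the two-sided comparison \eqref{E:TildeG} between $G$ and its symmetric version $\widetilde G$; your observation that part (1) directly gives $\Psi_n^i(T,x)\le C$ while $\Psi_n^i(T,x_i)=1$ reaches the same inequality more cheaply. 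For part (3), the paper handles $\alpha=2$ by a separate exponential-decay estimate and $\alpha\ne 2$ by the polynomial tail of $G(1,\cdot)$; you use the polynomial bound uniformly, which is valid for $\alpha=2$ as well (the Gaussian decays faster than any polynomial) and is exactly sharp enough to give $C_x2^{-n(1+1/\alpha)}$. A small slip: the phrase ``since $\Psi_n^i\le 1$'' in disposing of small $n$ should read ``since $\Psi_n^i\le C$'' (the uniform bound from (1) or (2)); this is harmless.

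Part (4) is where you genuinely diverge from the paper. The paper's proof uses the semigroup property of $G$ to collapse the inner convolution to $G(t-r,x-z)h_n^i(r,z)$, swaps the $r,s$ integrals to produce a factor $(t-r)^{1-1/\alpha}$, and then performs a careful two-step estimate. Your argument is shorter: you bound $\Psi_n^i(s,y)\le C$, observe that $\Psi_n^i(s,y)=0$ for $s\le T-2^{-n}$, use $\int_\R G(t-s,x-y)\,\ud y=1$ to kill the spatial integral, and then the one-dimensional integral $\int_{T-2^{-n}}^t(t-s)^{-1/\alpha}\ud s\le\frac{\alpha}{\alpha-1}2^{-n(1-1/\alpha)}$ does the rest (using $t\le T$). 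This is a cleaner and more elementary argument than the paper's, at no loss of generality; the semigroup identity is not actually needed for this estimate.
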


See Figure \ref{fig:Psi} for some graphs of this function $\Psi_n(t,x)$.

\begin{figure}[h!tbp]
 \centering
 \subfloat[$n=1$]{\includegraphics[scale=0.4]{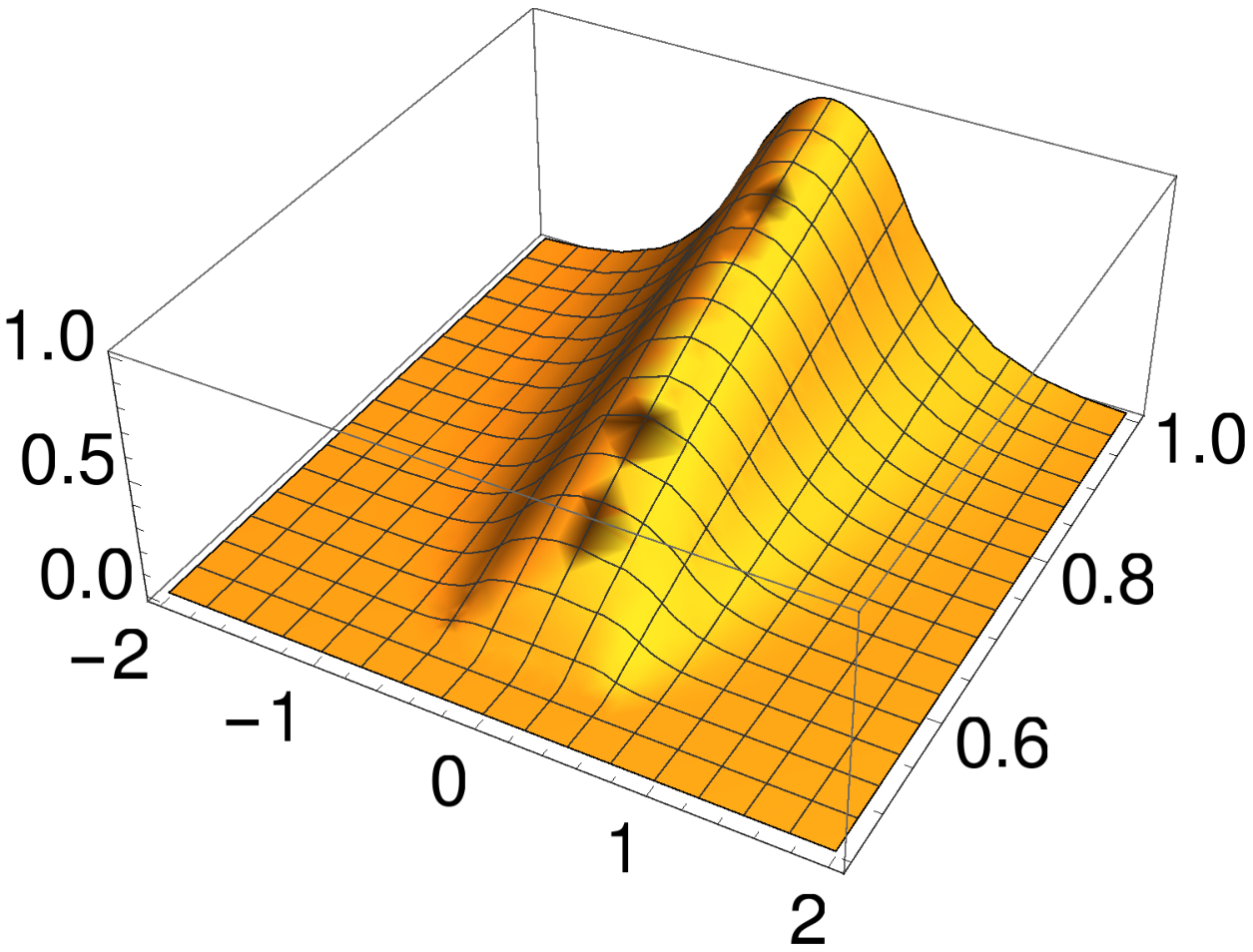}}%
 \hfill
 \subfloat[$n=2$]{\includegraphics[scale=0.4]{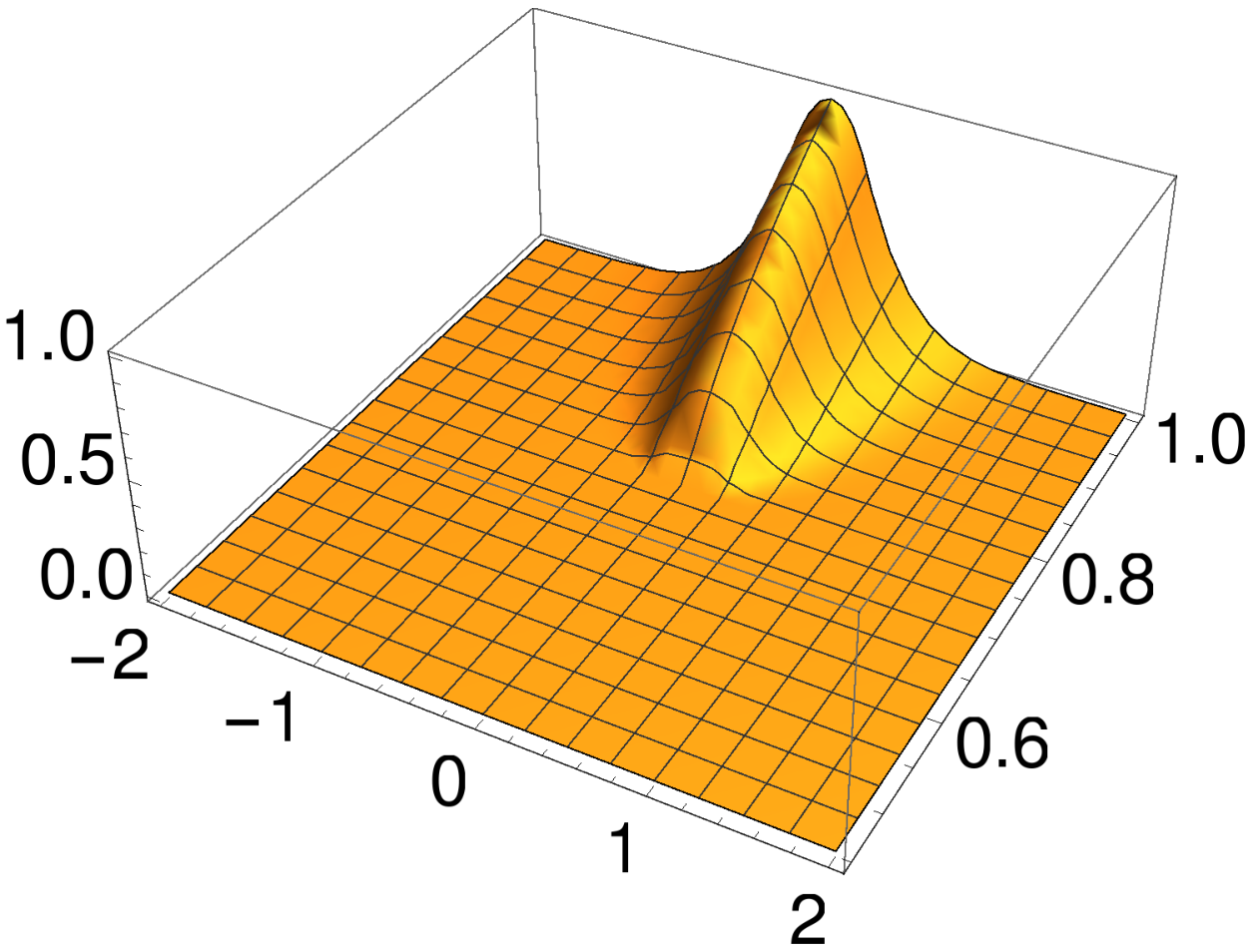}}
 \hfill
 \subfloat[$n=3$]{\includegraphics[scale=0.4]{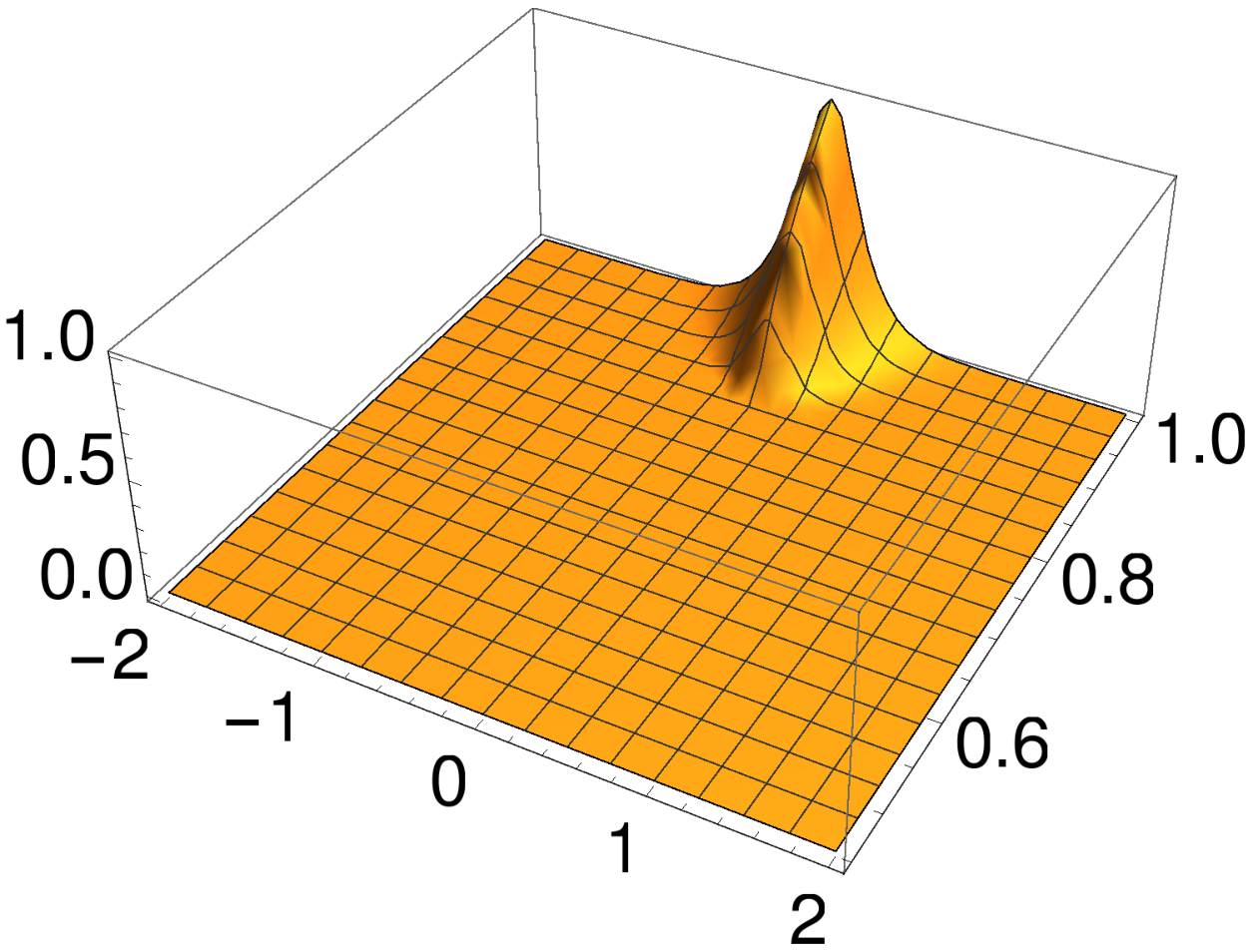}}
 \caption{$\Psi_n(t,x)$ with $\alpha=2$, $d=1$, $T=1$ and $x_1=0$
 for $t\in [0.45,1]$ and $x\in [-2,2]$.}
 \label{fig:Psi}
\end{figure}

We need one lemma.

\begin{lemma}\label{L:LowG}
If $\alpha\in [1,2]$, then there exists some constant $C=C(\alpha,\delta)>0$ such that
 \[
 \int_0^t\ud s\int_{-t}^t \ud y \: G(s,y)\ge C t^{2-1/\alpha}\quad\text{for all $t\in (0,1]$.}
 \]
\end{lemma}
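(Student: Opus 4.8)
The plan is to use the scaling property \eqref{E:ScaleG} to reduce the double integral to a one--dimensional integral in the time variable, and then bound the integrand from below by an elementary estimate on the truncated mass of $G(1,\cdot)$.

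First I would use \eqref{E:ScaleG} and the substitution $w=s^{-1/\alpha}y$ to write, for each $s>0$,
\[
\int_{-t}^{t} G(s,y)\,\ud y=\int_{-ts^{-1/\alpha}}^{\,ts^{-1/\alpha}} G(1,w)\,\ud w=:H\!\left(ts^{-1/\alpha}\right),
\]
where $H(a):=\int_{-a}^{a}G(1,w)\,\ud w$ is nondecreasing with $H(0)=0$ and $H(+\infty)=1$. The key elementary ingredient is the lower bound
\[
H(a)\ge 2m_\alpha\min(a,1)\qquad\text{for all }a\ge 0,
\]
where $m_\alpha:=\inf_{|w|\le 1}G(1,w)>0$; positivity of $m_\alpha$ follows from the continuity and strict positivity of the stable density $G(1,\cdot)$ (also recoverable from the lower bound in \eqref{E:BddG} together with \eqref{E:TildeG}, and immediate when $\alpha=2$). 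For $a\le 1$ this holds because $H(a)\ge 2a\,m_\alpha$, and for $a\ge 1$ because $H(a)\ge H(1)\ge 2m_\alpha$.

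Next, since $ts^{-1/\alpha}\ge 1$ exactly when $s\le t^\alpha$, and $t^\alpha\le t$ for $t\in(0,1]$ and $\alpha\ge 1$, I would estimate, for $\alpha\in(1,2]$,
\[
\int_0^t\!\!\int_{-t}^{t}G(s,y)\,\ud y\,\ud s=\int_0^t H\!\left(ts^{-1/\alpha}\right)\ud s\ge 2m_\alpha\left(t^\alpha+t\int_{t^\alpha}^{t}s^{-1/\alpha}\,\ud s\right)=2m_\alpha\,\frac{t^{2-1/\alpha}-\tfrac1\alpha t^\alpha}{1-1/\alpha}.
\]
Because $2-1/\alpha\le\alpha$ (equivalently $(\alpha-1)^2\ge 0$), one has $t^\alpha\le t^{2-1/\alpha}$ for $t\in(0,1]$, whence $t^{2-1/\alpha}-\tfrac1\alpha t^\alpha\ge(1-1/\alpha)t^{2-1/\alpha}$, and the right--hand side above is $\ge 2m_\alpha\,t^{2-1/\alpha}$. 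This gives the claim with $C=2m_\alpha$. The boundary case $\alpha=1$ is even simpler: then $\min(ts^{-1},1)=1$ throughout $[0,t]$, so $\int_0^tH(ts^{-1})\,\ud s\ge 2m_1t=2m_1t^{2-1/\alpha}$.

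I do not expect a genuine obstacle here. The only two points deserving a line of justification are the strict positivity of $m_\alpha$ (a standard property of $\alpha$-stable densities, and in any case consistent with the bounds already quoted in Section \ref{S:Pre}) and the comparison of exponents $t^\alpha$ versus $t^{2-1/\alpha}$ for $t\le 1$, which holds precisely because $(\alpha-1)^2\ge 0$.
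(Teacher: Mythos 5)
Your proof is correct, and it takes a genuinely different route from the paper's. The paper bounds the double integral directly and has to split into two cases: for $\alpha\ne 2$ it inserts the quantitative tail lower bound $G(1,x)\ge K'/(1+|x|^{1+\alpha})$ from \eqref{E:BddG} and then manipulates the resulting algebraic integral (crucially bounding $s^{1+1/\alpha}+t^{1+\alpha}\le t^{1/\alpha}+t^{\alpha}$ and then $t^\alpha\le t^{1/\alpha}$ on $(0,1]$), while for $\alpha=2$ it runs a separate hands-on Gaussian estimate. You instead collapse the spatial integral into the truncated mass $H(ts^{-1/\alpha})$, which requires only the soft fact $m_\alpha=\inf_{|w|\le 1}G(1,w)>0$, and then split the remaining one-dimensional time integral at $s=t^\alpha$, obtaining a unified argument for all $\alpha\in[1,2]$. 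What yours buys: a single clean computation, independence from the quantitative power-law lower bound (you only need strict positivity of the stable density near $0$), and transparency about where the exponent $2-1/\alpha$ comes from (it is exactly the contribution of the region $s\in[t^\alpha,t]$ where the truncation bites). What the paper's buys: a self-contained explicit evaluation in the Gaussian case (collected as Lemma \ref{L:c2n}, which is used in Remark \ref{R:c2n} to give a closed form for $c_n$), something your softer argument does not yield. Your exponent comparison $t^\alpha\le t^{2-1/\alpha}$ via $(\alpha-1)^2\ge 0$ is the exact counterpart of the paper's $t^\alpha\le t^{1/\alpha}$ step and is equally valid on $(0,1]$. No gaps.
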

\begin{proof}
Denote the integral by $I_t$. When $\alpha\ne 2$,
{by \eqref{E:ScaleG} and \eqref{E:BddG},}
\begin{align*}
 I_t &= \int_0^t\ud s\int_{-t}^t \ud y \: s^{-1/\alpha} G(1,s^{-1/\alpha}y)\\
 &\ge C
 \int_0^t\ud s\int_{-t}^t \ud y \: s^{-1/\alpha} \frac{1}{1+[s^{-1/\alpha}|y|]^{1+\alpha}}\\
 &\ge
 2C\int_0^t\ud s\int_{0}^t \ud y \: s^{-1/\alpha} \frac{1}{1+[s^{-1/\alpha} t]^{1+\alpha}}\\
 &= 2 C t\int_0^t \frac{ s }{s^{1+1/\alpha}+t^{1+\alpha}}\ud s\\
 &\ge 2 C t\int_0^t \frac{ s }{t^{1+1/\alpha}+t^{1+\alpha}}\ud s\\
 &=\frac{ C t^2 }{t^{1/\alpha}+t^{\alpha}}\ge \frac{C}{2} t^{2-1/\alpha},
\end{align*}
where in the last step we have used the fact that $t\in(0,1]$ and $\alpha\ge 1$.
When $\alpha=2$,
\begin{align*}
  I_t &= \int_0^t\ud s\int_{-t}^t \ud y \: \frac{1}{\sqrt{4\pi s}} e^{-\frac{y^2}{4s}}
  \ge 2t \int_{t/2}^t \frac{e^{-\frac{t^2}{4s}}}{\sqrt{4\pi s}} \ud s
  \ge 2t \int_{t/2}^t \frac{e^{-\frac{t^2}{4(t/2)}}}{\sqrt{4\pi t}} \ud s
  \ge C t^{3/2}.
\end{align*}
This completes the proof of Lemma \ref{L:LowG}.
\end{proof}

\begin{remark}\label{R:c2n}
Even though the explicit expression
for the double integral in Lemma \ref{L:LowG} as a function of $t$
is not needed in our later proof, it is interesting to
evaluate this double integral in some special case.
Actually, when $\alpha=2$, it is proved in Lemma \ref{L:c2n} with $\nu=2$ that
\begin{align}
\label{E:c2n}
\int_0^t\ud s\int_{-t}^t \ud y \: G(s,y)
&=t \left((t+2) \Phi\left(\sqrt{t/2}\right)-
   t+\sqrt{\frac{2}{\pi }} e^{-t/4}
   \sqrt{t/2}-1\right),
\end{align}
where $\Phi(x)=\int_{-\infty}^x(2\pi)^{-1/2}e^{-y^2/2}\ud y$.
By setting $t=2^{-n}$, we obtain an
explicit expression for $c_n$ defined in \eqref{E:cni-1}
\begin{align}\notag
c_n&=
2^{-n} \left( (2^{-n}+2) \Phi\left(2^{-\frac{1+n}{2}}\right)-
   2^{-n}+\sqrt{\frac{2}{\pi }} e^{-2^{-n-2}}
   2^{-\frac{1+n}{2}}-1\right)\\
&= \frac{2}{\sqrt{\pi}} \: 2^{-\frac{3n}{2}} -\frac{1}{2}2^{-2n}+ o(2^{-2n}),
\label{E2:c2n}
\end{align}
where
the last equality is due to the fact that
$\Phi(x)=1/2+(2\pi)^{-1/2}x+o(x)$ for small $x$.
\end{remark}
\bigskip

\begin{proof}[Proof of Proposition \ref{P:Psi}]
{\bigskip\noindent (1)~} Since the fundamental solutions are nonnegative, we see that $\Psi_n^i(t,x)\ge 0$.
Because
\begin{align}
\label{E:Psi-Rs}
 \Psi_n^i(t,x)&=c_n \one_{\{t>T-2^{-n}\}}\int_0^{2^{-n}-(T-t)}\ud s \int_{-2^{-n}}^{2^{-n}} \ud y \: G(s, x-x_i-y),
\end{align}
we see that the function $t\mapsto \Psi_n^i(t,x)$ is nondecreasing.
By the scaling property \eqref{E:ScaleG} of $G$ and \eqref{E:cn-bd},
\[
 \Psi_n^i(t,x) \le C
 2^{n(1-1/\alpha)}\one_{\{t>T-2^{-n}\}}\int_0^{2^{-n}-(T-t)}s^{-1/\alpha}\ud s
 =C \left(1-\min((T-t)2^n, 1)\right)^{1-1/\alpha}.
\]
\medskip

{\bigskip\noindent(2)~} From Step 1 we see $\Psi_n^i(t,x)\le \Psi_n^i(T,x)$.
By \eqref{E:TildeG},
$\Psi_n^i(T,x)\le C_{\alpha,\delta} (C_{\alpha,\delta}')^{-1} \Psi_n^i(T,x_i)$.
Finally, \eqref{E:cni-1} implies $\Psi_n^i(T,x_i)=1$. This proves \eqref{E:hG1}.
\medskip

{\bigskip\noindent(3)~}  As for \eqref{E:hG2}, notice that
\[
\Psi_n^i(t,x)\le c_n \int_0^{2^{-n}} \ud s\: s^{-1/\alpha} \int_{-2^{-n}}^{2^{-n}}\ud y \: G(1, s^{-1/\alpha}(x-x_i - y)).
\]
Suppose $n$ is large enough such that
$\Delta:=|x-x_i|>2^{1-n}$. Then for $|y|\le 2^{-n}<\Delta/2$,
\begin{align}\label{E_:Delta}
|x-x_i-y| \ge |x-x_i|-|y|\ge \Delta/2.
\end{align}
When $\alpha=2$, for $|y|\le 2^{-n}<\Delta/2$, we can write
\[
G(1, s^{-1/2}(x-x_i - y)) = \frac{1}{\sqrt{4\pi}}\exp\left(-\frac{(x-x_i-y)^2}{4s}\right)
\le \frac{1}{\sqrt{4\pi}}e^{-\frac{\Delta^2}{16s}}.
\]
Hence, for any $k>1$, by \eqref{E:cn-bd},
\begin{align*}
\Psi_n^i(t,x)&\le c_n 2^{1-n}
\int_0^{2^{-n}}  \frac{e^{-\frac{\Delta^2}{16s}}}{\sqrt{4\pi s} }\ud s\\
& \le c_n 2^{1-n} \left(\sup_{x\in[0,1]} x^{-k}e^{-\frac{\Delta^2}{16x}}\right) \int_0^{2^{-n}} \frac{u^k}{\sqrt{4\pi u}}\ud u\\
&= C\: c_n 2^{-n(k+3/2) }
\le C' 2^{-n k } \rightarrow 0,\quad\text{as $n\rightarrow\infty$.}
\end{align*}
In particular,
for the upper bound in \eqref{E:hG2} we may choose $k=3/2$.
When $\alpha\ne 2$, by \eqref{E_:Delta} and \eqref{E:cn-bd},
\begin{align*}
 \Psi_n^i(t,x)& \le C\: c_n \int_0^{2^{-n}}\ud s \: s^{-1/\alpha}\int_0^{2^{-n}}
 \frac{\ud y}{1+[s^{-1/\alpha}|x-x_i-y|]^{1+\alpha}}\\
 &=
 C\: c_n \int_0^{2^{-n}}\ud s \: s \int_0^{2^{-n}}
 \frac{\ud y}{s^{1+1/\alpha}+(\Delta/2)^{1+\alpha}}\\
 &\le C\: c_n \frac{2^{-n}}{(\Delta/2)^{1+\alpha}}\int_0^{2^{-n}}s \ud s \\
 & =
 C' \: c_n 2^{-3n} \le C'' 2^{-n(1+1/\alpha)} \rightarrow 0,\quad\text{as $n\rightarrow\infty$.}
\end{align*}
\medskip

{\bigskip\noindent(4)~} Denote the double integral in \eqref{E:hG3} by $I(t,x)$. By the semigroup property of $G$,
and by \eqref{E:ScaleG}, \eqref{E:BddG} and \eqref{E:Psi-Rs}, we see that
\begin{align*}
I(t,x)&= \int_0^t\ud s \: (t-s)^{-1/\alpha}\int_\R \ud y \: G(t-s,x-y) \int_0^s\ud r\int_\R \ud z\:
G(s-r,y-z)h_n^i(r,z)\\
&= \int_0^t\ud s \: (t-s)^{-1/\alpha}\int_0^s\ud r \int_\R \ud z \: G(t-r,x-z) h_n^i(r,z)\\
&=C \int_0^t\ud r\: (t-r)^{1-1/\alpha} \int_\R \ud z \: G(t-r,x-z) h_n^i(r,z) \\
&\le
C c_n \int_0^{2^{-n}}\ud r\: r^{1-1/\alpha} \int_{-2^{-n}}^{2^{-n}} \ud z \: r^{-1/\alpha}\\
&= C c_n 2^{-n(3-2/\alpha)} \le C 2^{-n(1-1/\alpha)}.
\end{align*}
This completes the proof of Proposition \ref{P:Psi}.
\end{proof}


A fact that we are going to use several times is that for any $x\in\R$ and $t\in [ T-2^{-n},T]$,
\begin{align}\notag
\int_{T-2^{-n}}^t\int_\R G^2(t-s,x-y)\ud s\ud y
&\le C_{\alpha,\delta}^2\int_0^{2^{-n}} \widetilde{G}(2s,0)\ud s
\\
\notag
&=C_{\alpha,\delta}^2 \widetilde{G}(1,0)\int_0^{2^{-n}} (2s)^{-1/\alpha}\ud s\\
&=C_{\alpha,\delta}^2 \widetilde{G}(1,0)2^{\frac{n(1-\alpha)-1}{\alpha}}
{= C 2^{-n(1-1/\alpha)},}
\label{E:G10}
\end{align}
{which goes to zero as $n\rightarrow\infty$.}

\subsubsection{Moments of \texorpdfstring{$\widehat{u}_{\mathbf{z}}^n(t,x)$}{Lg} and its first two derivatives}
\label{sss:moment}

The aim of this subsection is to prove the following proposition.

\begin{proposition}\label{P:SupU}
For all $\kappa>0$, $1\le i, k\le d$, $n\in\bbN$, 
{$p\ge 2$}, $t\in [0,T]$ and $x\in\R$, we have that
\begin{gather}\label{E:SupU}
\Norm{\sup_{|\mathbf{z}|\le \kappa}\left|\widehat{u}^n_{\mathbf{z}}(t,x)\right|}_p^2
\le C +C J_0^2(t,x) + C \left(J_0^2 \star \calG \right)(t,x),\\
\label{E2:SupU}
\Norm{\sup_{|\mathbf{z}|\le \kappa} \left|\widehat{u}^{n,i}_{\mathbf{z}}(t,x)\right|
}_p^2
\le C \left[2^{-n(1-1/\alpha)}+\Psi_n(t,x)^2\right],\\
\label{E3:SupU}
\Norm{\sup_{|\mathbf{z}|\le \kappa} \left|\widehat{u}^{n,i,k}_{\mathbf{z}}(t,x)\right|
}_p^2
\le C \left[2^{-n(1-1/\alpha)} + \Psi_n(t,x)^2\right],\\
\label{E4:SupU}
\Norm{\sup_{|\mathbf{z}|\le \kappa}\left| \theta^{n,i}_{\mathbf{z}}(t,x)\right|
}_p^2
\le C \Psi_n^i(t,x)^2,\\
\label{E5:SupU}
\Norm{\sup_{|\mathbf{z}|\le \kappa}\left| \theta^{n,i,k}_{\mathbf{z}}(t,x)\right|
}_p^2
\le C \:\Psi_n^i(t,x)^2,
\end{gather}
where the function $\calG(t,x)$ is defined in \eqref{E:calG}.
\end{proposition}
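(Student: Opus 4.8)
The plan is to establish the five bounds in the order \eqref{E:SupU}, \eqref{E4:SupU}, \eqref{E2:SupU}, \eqref{E5:SupU}, \eqref{E3:SupU}, since the forcing term of each of the later equations is controlled by the earlier bounds and by Proposition \ref{P:Psi}. The supremum over $|\mathbf{z}|\le\kappa$ is reduced to pointwise-in-$\mathbf{z}$ moment estimates by a Sobolev embedding (or a Garsia--Rodemich--Rumsey argument) on the parameter ball $\{|\mathbf{z}|\le\kappa\}$; the required moment bounds on the $\mathbf{z}$-increments come from differentiating the equations once more in $\mathbf{z}$ and running the same scheme on that higher derivative (the shifts being deterministic and $\rho\in C^\infty$ with bounded derivatives, these higher $\mathbf{z}$-derivatives solve linear equations of the same type). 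So from now I fix $\mathbf{z}$ with $|\mathbf{z}|\le\kappa$, estimate $\Norm{\widehat{u}^n_{\mathbf{z}}(t,x)}_p$ and its $\mathbf{z}$-derivatives, and keep track that every constant depends only on $p,\kappa,T,\alpha,\delta,\Lip_\rho,\vv$ and the data, but \emph{not} on $n$. Two elementary facts are used constantly: first, $\mathbf{h}_n$ vanishes on $[0,T-2^{-n}]\times\R$, so $\widehat{u}^n_{\mathbf{z}}$ coincides with the unshifted solution $u$ there, and all Gronwall arguments may therefore be localized to the short window $[T-2^{-n},T]$; second, $c_n\le C2^{n(2-1/\alpha)}$ from \eqref{E:cn-bd} combines with the window length to give the crucial scaling $c_n\,2^{-n}\,(2^{-n})^{1-1/\alpha}=O(1)$.

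For \eqref{E:SupU} I would write $\widehat{u}^n_{\mathbf{z}}=J_0+(\text{stochastic integral})+(\text{Lebesgue drift against }\langle\mathbf{z},\mathbf{h}_n\rangle)$, apply \eqref{E:BGD-U}, bound $|\rho|$ by the linear growth \eqref{E:LinGrw}, and bound the drift by Cauchy--Schwarz against the weight $G(t-\cdot,x-\cdot)\,h_n^i$, whose total mass is $\Psi_n^i(t,x)\le C$ by Proposition \ref{P:Psi}(1),(2). On $[0,T-2^{-n}]$ this is just \eqref{E:MomentT} for $u$, so it remains to close an estimate on $[T-2^{-n},T]$. Resumming the $G^2$-part by the two-parameter Gronwall Lemma \ref{L:Mom} and using $G^2\le C\calG$, $(\calG\star\calG)(t,\cdot)\le C\calG(t,\cdot)$ and $(1\star\calG)(t,\cdot)\le C$ on $[0,T]$, one reduces to a scalar Volterra inequality for $\psi(s):=\sup\{\Norm{\widehat{u}^n_{\mathbf{z}}(s,y)}_p^2:\ y\in\bigcup_i[x_i-2^{-n},x_i+2^{-n}],\ |\mathbf{z}|\le\kappa\}$ of the shape $\psi(t)\le C\,C_0+C\,c_n2^{-n}\int_{T-2^{-n}}^t(t-s)^{-1/\alpha}\psi(s)\,\ud s$ — here the spatial integration is legitimate because $h_n^i$ is an indicator and $\int_\R G\,\ud y=1$, and $C_0$ bounds the natural potential $1+J_0^2+J_0^2\star\calG$ on $\bigcup_i[x_i-2^{-n},x_i+2^{-n}]$ uniformly in (large) $n$, by Lemma \ref{L:LimPmom2} since the $x_i$ are proper points with bounded density. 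Because $c_n2^{-n}(2^{-n})^{1-1/\alpha}=O(1)$, the generalized Gronwall Lemma \ref{L:Mittag} gives $\psi\le C\,C_0$ with the Mittag--Leffler factor evaluated at a bounded argument, hence uniformly in $n$; substituting this back into the pointwise inequality yields \eqref{E:SupU}. As usual all these steps are first run on the Picard iterates $\widehat{u}^n_{\mathbf{z},m}$, where finiteness is automatic, and then passed to the limit.

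The other four bounds are variations of the same computation. For \eqref{E4:SupU}, writing out $\theta^{n,i}_{\mathbf{z}}(t,x)$ from \eqref{E:thetazni}, Minkowski plus linear growth plus Cauchy--Schwarz against $G h_n^i$ (mass $\Psi_n^i$), combined with \eqref{E:SupU} and the boundedness of $1+J_0^2+J_0^2\star\calG$ on $\operatorname{supp}h_n^i$ (Lemma \ref{L:LimPmom2}, for $n$ large; the finitely many small $n$ are absorbed into the constant), give $\Norm{\theta^{n,i}_{\mathbf{z}}(t,x)}_p^2\le C\Psi_n^i(t,x)^2\le C\Psi_n(t,x)^2$. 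For \eqref{E2:SupU} and \eqref{E3:SupU}, equations \eqref{E:hatUni} and \eqref{E:hatUnik} present $\widehat{u}^{n,i}_{\mathbf{z}}$ (resp. $\widehat{u}^{n,i,k}_{\mathbf{z}}$) as a forcing term plus a linear $G^2$-stochastic integral and a linear drift, both with multiplier $|\rho'|\le\Lambda$; the forcing is $\theta^{n,i}_{\mathbf{z}}$ (resp. $\theta^{n,i,k}_{\mathbf{z}}+\theta^{n,k,i}_{\mathbf{z}}$ together with a $\rho''\,\widehat{u}^{n,i}_{\mathbf{z}}\widehat{u}^{n,k}_{\mathbf{z}}$-term), which is $\le C\Psi_n(t,x)^2$ in the $L^p$-squared norm by \eqref{E4:SupU}, \eqref{E5:SupU} and \eqref{E2:SupU} — the $\rho''$-term handled using \eqref{E2:SupU} on $\operatorname{supp}h_n^l$ together with $\int_{T-2^{-n}}^t\int_\R G^2\le C2^{-n(1-1/\alpha)}$ from \eqref{E:G10}. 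Since $\Psi_n(t,x)\le C$ these forcings are bounded, and the very same short-window Volterra/Mittag--Leffler argument as in \eqref{E:SupU} closes both at $\le C(2^{-n(1-1/\alpha)}+\Psi_n(t,x)^2)$. Finally \eqref{E5:SupU} is immediate from the explicit form \eqref{E:thetaznik} of $\theta^{n,i,k}_{\mathbf{z}}$, Cauchy--Schwarz against $G h_n^i$, $|\rho'|\le\Lambda$, and \eqref{E2:SupU} on $\operatorname{supp}h_n^i$.

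The main obstacle is \eqref{E:SupU} (and with it \eqref{E4:SupU}): the Lebesgue drift against $\langle\mathbf{z},\mathbf{h}_n\rangle$ carries the factor $c_n\sim 2^{n(2-1/\alpha)}$, which is \emph{not} small, so this drift is not a perturbation in any naive sense, and a direct Gronwall bound on the whole interval $[0,T]$ fails. The estimates close only because the drift is concentrated on the window $[T-2^{-n},T]$ of width $2^{-n}$, because $\widehat{u}^n_{\mathbf{z}}=u$ off that window, and because the potential $1+J_0^2+J_0^2\star\calG$ is bounded on the (shrinking) spatial supports of $\mathbf{h}_n$ independently of $n$ — this last point is exactly where properness of $x_1,\dots,x_d$ and Lemma \ref{L:LimPmom2} enter. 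Turning the cancellation $c_n2^{-n}(2^{-n})^{1-1/\alpha}=O(1)$ into a usable bound forces one to first integrate out the spatial variable, producing a scalar Volterra inequality, and then to invoke Lemma \ref{L:Mittag} with an argument that stays bounded as $n\to\infty$; everything else is a routine iteration of Burkholder--Davis--Gundy, Lemma \ref{L:Mom} and Proposition \ref{P:Psi}.
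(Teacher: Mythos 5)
Your proposal is correct in substance and follows essentially the same skeleton as the paper: write out the mild equations for $\widehat{u}^n_{\mathbf z}$ and its $\mathbf z$-derivatives, apply Burkholder--Davis--Gundy, linear growth, and Cauchy--Schwarz against the weight $G\,h_n^i$ of total mass $\Psi_n^i\le C$, resum the $G^2$-part via Lemma \ref{L:Mom}, exploit that the drift is supported on the short window $[T-2^{-n},T]$ where $c_n\,2^{-n}(2^{-n})^{1-1/\alpha}=O(1)$, close a scalar Volterra inequality (giving first the rough uniform-in-$(n,t,x)$ bound, then the refined bound upon substituting back), and use properness of the $x_i$ plus Lemma \ref{L:LimPmom2} to bound the potential $1+J_0^2+J_0^2\star\calG$ on the supports of $\mathbf h_n$. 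The cancellation you highlight is precisely what the paper packages into Lemma \ref{L:Contraction}, whose conclusion constant is independent of the window size $\epsilon=2^{-n}$; you re-derive the same fact from Lemma \ref{L:Mittag} by observing the Mittag--Leffler argument $\lambda\Gamma(1-1/\alpha)(t-s)^{1-1/\alpha}$ stays bounded when $t-s\le 2^{-n}$ and $\lambda\sim 2^{n(1-1/\alpha)}$.

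The one place where you take a genuinely different route from the paper is in converting pointwise-in-$\mathbf z$ bounds into $\sup_{|\mathbf z|\le\kappa}$-bounds inside the $L^p(\Omega)$-norm. The paper splits each quantity into its value at $\mathbf z=0$ plus an increment, bounds the value at $\mathbf z=0$ by Lemmas \ref{L:Moment-U}, \ref{L:Moment-DU}, \ref{L:Moment-DDU}, bounds the increment in $L^p$ by $C|\mathbf z-\mathbf z'|\cdot(\cdot)$ (Lemma \ref{L:HolderInZ}), and then applies the Kolmogorov continuity theorem (Theorem \ref{T:KolCont}) with exponent $\gamma=p>d$. If you choose the GRR version of your alternative, this is essentially the same device and you need exactly the Lipschitz increment estimates of Lemma \ref{L:HolderInZ}. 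If instead you choose the Sobolev-embedding version, then bounding $\sup_{\mathbf z}|\widehat{u}^{n,i,k}_{\mathbf z}|$ forces you to estimate the third $\mathbf z$-derivative $\widehat{u}^{n,i,k,l}_{\mathbf z}$ in $L^p$ uniformly on the ball, one order beyond what the paper establishes. This is doable (it solves a linear equation of the same type, since $\rho\in C^\infty$ with bounded derivatives and the shift is deterministic), but it is strictly more work than the Kolmogorov/Lipschitz route; the paper deliberately stops at Lipschitz increments because Kolmogorov only needs Hölder-type moment estimates, not higher derivatives. Apart from that, your choice to localize the spatial sup to $\bigcup_i[x_i-2^{-n},x_i+2^{-n}]$ rather than to the fixed compact set $K\supseteq\{x_1,\dots,x_d\}$ used in the paper is cosmetic: both localizations make the potential $1+J_0^2+J_0^2\star\calG$ uniformly bounded there (for $n$ large), and both close the Volterra inequality.
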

\begin{proof}
Because
\[
\Norm{\sup_{|\mathbf{z}|\le \kappa}
\left|\widehat{u}^n_{\mathbf{z}}(t,x)\right|}_p^2
\le
2\Norm{\sup_{|\mathbf{z}|\le \kappa}\left|\widehat{u}^n_{\mathbf{z}}(t,x)-\widehat{u}^n_{0}(t,x)\right|}_p^2
+2\Norm{\widehat{u}^n_{0}(t,x)}_p^2,
\]
one can apply the Kolmogorov continuity theorem (see Theorem \ref{T:KolCont}) and \eqref{E:IncHatU} to the first term
and apply \eqref{E:uPmnt} to the second term to see that
\[
\Norm{\sup_{|\mathbf{z}|\le \kappa}
\left|\widehat{u}^n_{\mathbf{z}}(t,x)\right|
}_p^2
\le
2 C\left[2^{-n(1-1/\alpha)}+\Psi_n(t,x)\right]
+2\left[C +C J_0^2(t,x) + C \left(J_0^2 \star \calG \right)(t,x)\right],
\]
which proves \eqref{E:SupU}.
In the same way, \eqref{E:UniLp0} and \eqref{E2:IncHatU} imply \eqref{E2:SupU};
\eqref{E:UnikLp0} and \eqref{E3:IncHatU} imply \eqref{E3:SupU}.
Now we show  \eqref{E4:SupU} and \eqref{E5:SupU}.
From \eqref{E:thetazni}, we see that
\[
\Norm{\sup_{|\mathbf{z}|\le \kappa}
\left|
\theta^{n,i}_{\mathbf{z}}(t,x)
\right|
}_p
\le C \int_0^t\int_\R G(t-s,x-y)
\Norm{\sup_{|\mathbf{z}|\le \kappa}
\left|
\widehat{u}^{n}_{\mathbf{z}}(s,y)
\right|
}_p h_n^i(s,y)\ud s\ud y.
\]
Then application of  \eqref{E:SupU} yields  \eqref{E4:SupU}.
Similarly, from \eqref{E:thetaznik}, since $\rho'$ is bounded,
\[
\Norm{\sup_{|\mathbf{z}|\le \kappa} \left|\theta^{n,i,k}_{\mathbf{z}}(t,x)
\right|
}_p
\le C \int_0^t\int_\R G(t-s,x-y)
\Norm{\sup_{|\mathbf{z}|\le \kappa}
\left|
\widehat{u}^{n,i}_{\mathbf{z}}(s,y)
\right|
}_p h_n^i(s,y)\ud s\ud y.
\]
Then we can apply \eqref{E2:SupU} to conclude \eqref{E5:SupU}.
This completes the proof of Proposition \ref{P:SupU}.
\end{proof}

In the next lemma, we study the moments of $\widehat{u}^n_{\mathbf{z}}(t,x)$.

\begin{lemma}\label{L:Moment-U}
For any $p\ge 2$, $(t,x)\in [0,T]\times\R$, $n\in\bbN$ and $\kappa>0$,
there exists some constant $C$ independent of $n$ such that
\begin{align}\label{E:uPmnt}
\sup_{n\in\bbN}\sup_{|\mathbf{z}|\le \kappa}\Norm{\widehat{u}_{\mathbf{z}}^n(t,x)}_p^2 \le
 C +C J_0^2(t,x) + C \left(J_0^2 \star \calG \right)(t,x),
\end{align}
where the function $\calG(t,x)$ is defined in \eqref{E:calG}.
As a consequence,
\begin{align}\label{E:theta-Lp0}
&\sup_{|\mathbf{z}|\le \kappa}\Norm{
\theta_{\mathbf{z}}^{n,i}(t,x)
}_p^2 \le C \Psi_n^i(t,x)^2,
\quad\text{and}\\
\label{E:theta-bd}
&\max_{1\le i\le d}\sup_{n\in\bbN} \sup_{(t,x)\in[0,T]\times \R}\sup_{|\mathbf{z}|\le \kappa}\Norm{\theta_{\mathbf{z}}^{n,i}(t,x)}_p < \infty,
\end{align}
and if $x\ne x_i$, then
\begin{align}\label{E:theta-AS0}
\lim_{n\rightarrow\infty}
\theta_{\mathbf{z}}^{n,i}(t,x)=0 \:\: \text{a.s. for all $t\in[0,T]$ and $|\mathbf{z}|\le\kappa$}.
\end{align}
\end{lemma}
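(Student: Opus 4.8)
The plan is to derive the moment bound \eqref{E:uPmnt} from the mild form \eqref{E:hatUn}, treating the drift term as the one genuinely new ingredient, and then to read off \eqref{E:theta-Lp0}--\eqref{E:theta-AS0} as corollaries. Write $g(t,x):=\Norm{\widehat{u}_{\mathbf{z}}^n(t,x)}_p^2$ (finite a priori, e.g.\ by running the estimate along a Picard approximation). Applying the Burkholder--Davis--Gundy inequality \eqref{E:BGD} to the stochastic integral in \eqref{E:hatUn}, Minkowski's and the Cauchy--Schwarz inequalities to the drift integral, the linear growth \eqref{E:LinGrw} of $\rho$, the elementary bound $|\InPrd{\mathbf{z},\mathbf{h}_n(s,y)}|\le\kappa\sum_{i=1}^d h_n^i(s,y)$, and the estimate $\int_0^t\int_\R G(t-s,x-y)h_n^i(s,y)\,\ud s\,\ud y=\Psi_n^i(t,x)\le C$ from Proposition \ref{P:Psi}, one obtains a moment inequality of the schematic form
\[
g(t,x)\le C+C J_0^2(t,x)+C_p\,(G^2\star g)(t,x)+C_\kappa\sum_{i=1}^d\int_0^t\!\int_\R G(t-s,x-y)\,g(s,y)\,h_n^i(s,y)\,\ud s\,\ud y,
\]
where the terms involving $\vv$ have been absorbed into the constant. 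The diffusion term $G^2\star g$ will be handled by the two-parameter Gronwall Lemma \ref{L:Mom}; the last (drift) term is where the work lies.

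First I would observe that for $t\le T-2^{-n}$ the weights $h_n^i$ vanish, so $\widehat{u}_{\mathbf{z}}^n\equiv u$ there and Lemma \ref{L:Mom} gives $g(t,x)\le g_*(t,x):=C\bigl(1+J_0^2(t,x)+(J_0^2\star\calG)(t,x)\bigr)$ (using $(1\star\calG)(t,x)\le C$ on $[0,T]$). For $t\in(T-2^{-n},T]$ I would apply Lemma \ref{L:Mom} once more, now with free term ``$h$ plus the drift term'', using the semigroup property of $G$ to see that convolving the drift term against $\calG$ again produces a drift-type term, so that
\[
g(t,x)\le g_*(t,x)+C\sum_{i=1}^d\int_{T-2^{-n}}^{t}\!\int_{[x_i-2^{-n},\,x_i+2^{-n}]} G(t-s,x-y)\,g(s,y)\,h_n^i(s,y)\,\ud s\,\ud y.
\]

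Next I would introduce the running supremum $N(t):=\max_{1\le i\le d}\sup_{s\le t,\ y\in[x_i-2^{-n},\,x_i+2^{-n}]}g(s,y)$, bound $g(s,y)\le N(s)$ under the integral, and use $h_n^i=c_n\one$ with $c_n\le C\,2^{n(2-1/\alpha)}$ (Proposition \ref{P:Psi}, via Lemma \ref{L:LowG}) together with $\int_{[x_i-2^{-n},x_i+2^{-n}]}G(t-s,x-y)\,\ud y\le C\,2^{-n}(t-s)^{-1/\alpha}$, to get, for $x$ in one of the cubes,
\[
N(t)\le\widetilde{N}_0+C\,2^{n(1-1/\alpha)}\int_{T-2^{-n}}^{t}(t-s)^{-1/\alpha}\,N(s)\,\ud s,
\]
where $\widetilde{N}_0$ is the supremum of $g_*$ over $[0,T]\times[x_i-2^{-n},x_i+2^{-n}]$, $i=1,\dots,d$; this is finite and, crucially, bounded uniformly in $n$, which is exactly where the standing hypothesis that each $x_i$ is a proper point of $\mu$ with a bounded density enters, through Lemma \ref{L:LimPmom2}, which bounds $J_0^2+J_0^2\star\calG$ on a neighbourhood of $x_i$. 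The fractional Gronwall inequality (Lemma \ref{L:Mittag}), applied on the interval of length $2^{-n}$, then gives $N(T)\le C\widetilde{N}_0$ uniformly in $n$: the blow-up $2^{n(1-1/\alpha)}$ of the drift coefficient is precisely cancelled by the factor $(2^{-n})^{1-1/\alpha}$ coming from the shrinking window, so the relevant Mittag--Leffler value stays bounded. Feeding $N(T)\le C$ back into the previous display and using $\sum_i\Psi_n^i(t,x)\le C$ yields \eqref{E:uPmnt}, with constants independent of $n$ and --- since $\mathbf{z}$ enters only through $|z_i|\le\kappa$ --- of $\mathbf{z}$.

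The corollaries are then routine. For \eqref{E:theta-Lp0}, Minkowski's inequality applied to \eqref{E:thetazni} gives $\Norm{\theta_{\mathbf{z}}^{n,i}(t,x)}_p\le C\int_0^t\int_\R G(t-s,x-y)\Norm{\widehat{u}_{\mathbf{z}}^n(s,y)}_p\,h_n^i(s,y)\,\ud s\,\ud y\le C\,\Psi_n^i(t,x)$, since on $\spt{h_n^i}$ the quantity $J_0^2+J_0^2\star\calG$, hence by \eqref{E:uPmnt} also $\Norm{\widehat{u}_{\mathbf{z}}^n(s,y)}_p$, is bounded (Lemma \ref{L:LimPmom2}); then \eqref{E:theta-bd} follows from $\Psi_n^i(t,x)\le C\,\Psi_n^i(T,x_i)=C$ (Proposition \ref{P:Psi}(2)), and \eqref{E:theta-AS0} from Proposition \ref{P:Psi}(3): for $x\ne x_i$ one has $\Norm{\theta_{\mathbf{z}}^{n,i}(t,x)}_p\le C_x\,2^{-n(1+1/\alpha)}$, which is summable in $n$, so Chebyshev's inequality plus the Borel--Cantelli lemma give a.s.\ convergence to $0$ for fixed $(t,\mathbf{z})$, and the upgrade to all $t\in[0,T]$ and $|\mathbf{z}|\le\kappa$ follows from Hölder continuity of $(t,\mathbf{z})\mapsto\theta_{\mathbf{z}}^{n,i}(t,x)$ (via the Kolmogorov criterion, Theorem \ref{T:KolCont}) and a density argument. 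I expect the main obstacle to be this drift estimate --- specifically, arranging the moment inequality so that Lemma \ref{L:Mom} and the fractional Gronwall Lemma \ref{L:Mittag} actually close instead of producing an a priori infinite bound, which is what forces both the running supremum over the small cubes and the $n$-uniform control of $\widetilde{N}_0$ via properness of $\mu$.
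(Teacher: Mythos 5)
Your proposal is correct and follows essentially the same route as the paper: bound the drift term so that convolving with $\calG$ reproduces a drift-type term (the paper's \eqref{E:DriftBd}, \eqref{E:FkBd2}, \eqref{E:FkBd}), take a supremum over a compact set where Lemma~\ref{L:LimPmom2} controls $J_0^2+J_0^2\star\calG$, and close the resulting Volterra inequality on the shrinking window $[T-2^{-n},T]$ using the exact cancellation of $2^{n(1-1/\alpha)}$ against $(2^{-n})^{1-1/\alpha}$. The only cosmetic differences are that you invoke Lemma~\ref{L:Mittag} directly and verify the cancellation by hand (the paper packages this as Lemma~\ref{L:Contraction}), you let the sup region shrink with $n$ rather than fixing a compact $K$, and you secure the a priori finiteness via Picard iteration rather than the paper's Girsanov/H\"older argument.
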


\begin{proof}[Proof of Lemma \ref{L:Moment-U}]
We first  show that for each $n$,
$\widehat{u}_{\mathbf{z}}^n(t,x)$ is in $L^p(\Omega)$ for $p\ge 2$.
Actually, following the step 1 in the proof of Lemma 2.1.4 of \cite{Nualart06},  because for $|\mathbf{z}|\le \kappa$,
\begin{gather*}
\Norm{W(\InPrd{\mathbf{z},\mathbf{h}_n})}_2^2=
\int_{0}^t\int_\R |\InPrd{\mathbf{z},\mathbf{h}_n(s,y)}|^2 \ud s\ud y\quad\text{and}\\
\int_{0}^t\int_\R |\InPrd{\mathbf{z},\mathbf{h}_n(s,y)}|^2 \ud s\ud y
\le
 \sum_{i=1}^d \kappa^2\int_{0}^t\int_\R |h_n^i(s,y)|^2 \ud s\ud y \notag \le \kappa^2 d c_n^2 \:  2^{1-2n}
\le C  2^{2n(1-1/\alpha)},
\end{gather*}
where the last inequality is due to \eqref{E:cn-bd},
one can apply the H\"older inequality to obtain that
\begin{align*}
\Norm{\widehat{u}^n_{\mathbf{z}}(t,x)}_p
&=\Norm{u(t,x)\exp\left(\frac{1}{p}W(\InPrd{\mathbf{z},\mathbf{h}_n})-\frac{1}{2p}\int_{0}^t\int_\R |\InPrd{\mathbf{z},\mathbf{h}_n(s,y)}|^2 \ud s\ud y\right)}_p\\
&\le \Norm{u_{\mathbf{z}}(t,x)}_{2p} \:
\Norm{\exp\left(\frac{1}{p}W(\InPrd{\mathbf{z},\mathbf{h}_n})-\frac{1}{2p}\int_{0}^t\int_\R |\InPrd{\mathbf{z},\mathbf{h}_n(s,y)}|^2 \ud s\ud y\right)}_{2p}\\
&= \Norm{u(t,x)}_{2p}
\E\left[\exp\left(2W(\InPrd{\mathbf{z},\mathbf{h}_n})\right)\right]^{\frac{1}{2p}}
\exp\left(-\frac{1}{2p}\int_{0}^t\int_\R |\InPrd{\mathbf{z},\mathbf{h}_n(s,y)}|^2 \ud s\ud y\right)
\\ & =
\Norm{u(t,x)}_{2p}
\exp\left(\frac{1}{2p}\int_{0}^t\int_\R |\InPrd{\mathbf{z},\mathbf{h}_n(s,y)}|^2 \ud s\ud y\right)\\
&\le
\Norm{u(t,x)}_{2p} \exp\left(C 2^{2n(1-1/\alpha)}\right)<\infty.
\end{align*}
To obtain  a moment bound that is uniform in $n$,
it requires much more efforts.
Recall that $\widehat{u}^n_{\mathbf{z}}(t,x)$ satisfies the integral equation \eqref{E:hatUn}.
By the Minkowski and the Cauchy-Schwarz inequalities,
\begin{align}
\notag
&\hspace{-4em}
\sup_{|\mathbf{z}|\le \kappa}\Norm{\int_{0}^t\int_\R G(t-s,x-y)\rho(\widehat{u}_{\mathbf{z}}^n(s,y)) \InPrd{\mathbf{z},\mathbf{h}_n(s,y)}\ud s\ud y
}_p^2\\
\notag
\le&
\kappa^2\left(\int_{0}^t\int_\R G(t-s,x-y)
\sup_{|\mathbf{z}|\le \kappa}
\Norm{\rho(\widehat{u}_{\mathbf{z}}^n(s,y))}_p \:\InPrd{\mathbf{1}, \mathbf{h}_n(s,y)}\ud s\ud y\right)^2\\
\notag
\le &
\kappa^2\left(\int_{0}^t\int_\R G(t-s,x-y)
\sup_{|\mathbf{z}|\le \kappa}
\Norm{
\rho(\widehat{u}_{\mathbf{z}}^n(s,y))}_p^2 \:\InPrd{\mathbf{1}, \mathbf{h}_n(s,y)}\ud s\ud y\right)\\
\notag
&\qquad\times
\left(\int_{0}^t\int_\R G(t-s,x-y) \InPrd{\mathbf{1}, \mathbf{h}_n(s,y)}\ud s\ud y\right)\\
\label{E:DriftBd}
\le &
C d \kappa^2 \int_{0}^t\int_\R G(t-s,x-y)
\sup_{|\mathbf{z}|\le \kappa}
\Norm{
\rho(\widehat{u}_{\mathbf{z}}^n(s,y))
}_p^2 \: \InPrd{\mathbf{1},\mathbf{h}_n(s,y)}\ud s\ud y,
\end{align}
where the last inequality is due to \eqref{E:hG1}.
Denote
\begin{align}\label{E:Fkappa}
F_\kappa(t,x):=
\int_{0}^t\int_\R G(t-s,x-y)
\sup_{|\mathbf{z}|\le \kappa}
\Norm{\rho(\widehat{u}_{\mathbf{z}}^n(s,y))}_p^2 \: \InPrd{\mathbf{1},\mathbf{h}_n(s,y)}\ud s\ud y.
\end{align}
Hence, for some constant $C>0$ independent of $n$,
\begin{align*}
 \sup_{|\mathbf{z}|\le \kappa}
 \Norm{\widehat{u}^n_{\mathbf{z}}(t,x)
 }_p^2
 \le & \quad C J_0^2(t,x) + C \kappa^2 F_\kappa(t,x) \\
 &+C \int_0^t\ud s\int_\R \ud y\:
 G^2(t-s,x-y) \left(1+
 \sup_{|\mathbf{z}|\le \kappa}
 \Norm{\widehat{u}^n_{\mathbf{z}}(s,y)}_p^2\right).
\end{align*}
Then by applying Lemma \ref{L:Mom} with $\vv=1$, we see that
\begin{align}\label{E_:JFG}
\sup_{|\mathbf{z}|\le \kappa}
 \Norm{\widehat{u}^n_{\mathbf{z}}(t,x)}_p^2
 \le &\quad  C \left[ J_0^2(t,x) +\kappa^2 F_\kappa(t,x)+1+ \left((J_0^2+\kappa^2 F_\kappa)\star\calG\right)(t,x)\right],
\end{align}
for all $t\in [0,T]$, where $\calG$ is defined in \eqref{E:calG}. Notice that
\begin{align*}
 \left(F_\kappa\star\calG\right)(t,x) \le&
 C  \int_0^t \ud s\int_\R \ud y\:
 G(t-s,x-y) \frac{1}{(t-s)^{1/\alpha}}\int_0^r \ud r\\
 &\times \int_\R \ud w\:
 G(s-r,y-w) \left(1+\sup_{|\mathbf{z}|\le\kappa}\Norm{\widehat{u}^n_{\mathbf{z}}(r,w)}_p^2 \right) \InPrd{\mathbf{1},\mathbf{h}_n(r,w)}\\
 =&C \int_0^t \ud r \int_r^t\ud s\: \frac{1}{(t-s)^{1/\alpha}}\\
 &\times \int_\R \ud w\:
 G(t-r,x-w) \left(1+\sup_{|\mathbf{z}|\le\kappa}\Norm{\widehat{u}^n_{\mathbf{z}}(r,w)}_p^2\right) \InPrd{\mathbf{1},\mathbf{h}_n(r,w)}\\
 =&C+C \int_0^t \ud r \:  (t-r)^{1-1/\alpha}\\
 &\times \int_\R \ud w\: G(t-r,x-w) \sup_{|\mathbf{z}|\le\kappa}\Norm{\widehat{u}^n_{\mathbf{z}}(r,w)}_p^2 \InPrd{\mathbf{1},\mathbf{h}_n(r,w)}.
\end{align*}
Since $(t-r)^{1-1/\alpha}\le T^{1-1/\alpha}$, we see that for some constant $C>0$ (independent of $n$),
\begin{align}\label{E:FkBd2}
\left(F_\kappa\star\calG\right)(t,x)\le
C+
C\int_0^t \ud r \int_\R \ud w\:
 G(t-r,x-w) \sup_{|\mathbf{z}|\le\kappa}\Norm{\widehat{u}^n_{\mathbf{z}}(r,w)}_p^2 \InPrd{\mathbf{1},\mathbf{h}_n(r,w)}.
\end{align}
From \eqref{E:Fkappa}, we see that $F_\kappa(t,x)$ has an upper bound that is the same as that of $\left(F_\kappa\star\calG\right)(t,x)$
\begin{align}\label{E:FkBd}
F_\kappa(t,x)\le
C+
C\int_0^t \ud r \int_\R \ud w\:
G(t-r,x-w) \sup_{|\mathbf{z}|\le\kappa}\Norm{\widehat{u}^n_{\mathbf{z}}(r,w)}_p^2 \InPrd{\mathbf{1},\mathbf{h}_n(r,w)}.
\end{align}
Therefore, plugging these two upper bounds into \eqref{E_:JFG}, we see that
\begin{align}
\label{E:uIntLocal}
\begin{aligned}
\sup_{|\mathbf{z}|\le\kappa}
\Norm{\widehat{u}^n_{\mathbf{z}}(t,x)}_p^2
 \le& \quad C+C J_0^2(t,x) +  C\left(J_0^2\star\calG\right)(t,x)
    \\
 & + C \kappa^2\int_0^t \ud r \int_\R \ud w\:
 G(t-r,x-w) \sup_{|\mathbf{z}|\le\kappa}\Norm{\widehat{u}^n_{\mathbf{z}}(r,w)}_p^2 \InPrd{\mathbf{1},\mathbf{h}_n(r,w)}.
\end{aligned}
\end{align}
In order to solve this integral inequality, we first claim that
\begin{align}\label{E:lalBdd-Un}
\sup_{(t,x)\in [0,T]\times K} \left[ J_0^2(t,x) +  \left(J_0^2\star\calG\right)(t,x)\right] <\infty.
\end{align}
Actually, since $x_i$ are proper points with respect to the initial data $\mu$,
there is a compact set $K\subseteq \R$ such that $x_i\in K$ for $1\le i\le d$,
and $\mu$ restricted on $K$ has a bounded density. This fact together with
\eqref{E:BoundJ0} and \eqref{E:BoundR} yields \eqref{E:lalBdd-Un} easily
(note that $R$ in \eqref{E:BoundR} is defined by \eqref{E:Rtx} and
$\calG$ is defined by \eqref{E:calG}).
Hence, by denoting
\[
U^n_{\kappa,K,p}(t):= \sup_{x\in K} \sup_{|\mathbf{z}|\le\kappa}\Norm{\widehat{u}^n_{\mathbf{z}}(t,x)}_p^2,
\]
the inequality  \eqref{E:uIntLocal} can be rewritten as
\begin{align}
\begin{aligned}
 U^n_{\kappa,K,p}(t)
 & \le C+ C \kappa^2
\int_{0}^t\ud s \: U^n_{\kappa,K,p}(s) \int_\R \ud y\: G(t-s,x-y)
\InPrd{\mathbf{1},\mathbf{h}_n(s,y)}\\
&\le C+
  C  \kappa^2 c_n\: 2^{1-n} d\: \sup_{y\in\R} G(1,y)
\one_{\{t>T-2^{-n}\}}\int_{T-2^{-n}}^{t} (t-s)^{-1/\alpha} U^n_{\kappa,K,p}(s) \ud s,
\end{aligned}
\end{align}
where the constant $C>0$ independent of $n$ and $\kappa$.
Therefore, by \eqref{E:cn-bd},
there exists some constant $C>0$, independent of $n$, such that
\begin{align}
 \label{E:U}
 U^n_{\kappa,K,p}(t) \le C + C 2^{n(1-1/\alpha)} \one_{\{t>T-2^{-n}\}}\int_{T-2^{-n}}^{t} (t-s)^{-1/\alpha}  U^n_{\kappa,K,p}(s)\ud s.
\end{align}
Then by applying Lemma \ref{L:Contraction} to $U^n_{\kappa,K,p}(t)$ with $\epsilon=2^{-n}$,
we see that
\begin{align}\label{E:supxKu}
\sup_{n\in\bbN}\sup_{(t,x)\in [0,T]\times K}
\sup_{|\mathbf{z}|\le \kappa}
\Norm{\widehat{u}^n_{\mathbf{z}}(t,x)}_p^2
<\infty.
\end{align}
Hence, \eqref{E:DriftBd} and \eqref{E:supxKu} imply that
\[
\sup_{n\in\bbN}\sup_{(t,x)\in [0,T]\times K}
\sup_{|\mathbf{z}|\le \kappa}
\Norm{
\int_{0}^t\int_\R G(t-s,x-y)\rho(\widehat{u}_{\mathbf{z}}^n(s,y)) \InPrd{\mathbf{z},\mathbf{h}_n(s,y)}\ud s\ud y
}_p^2
<\infty.
\]
Then by taking $L^p(\Omega)$-norm on both sides of \eqref{E:hatUn}, we see that
\begin{align*}
\sup_{|\mathbf{z}|\le \kappa}
\Norm{\widehat{u}_{\mathbf{z}}^n(t,x)}_p^2
\le& C J_0^2(t,x) +C  +C \int_0^t\ud s\int_\R\ud y \: G^2(t-s,x-y)
\sup_{|\mathbf{z}|\le \kappa}
\Norm{  \widehat{u}_{\mathbf{z}}^n(s,y) }_p^2,
\end{align*}
for all $t\le T$, where the constant $C$ does not depend on $n$.
Then an application of Lemma \ref{L:Mom} proves \eqref{E:uPmnt}.

\bigskip
Now we study the moments of $\theta_{\mathbf{z}}^{n,i}(t,x)$ defined by  \eqref{E:thetazni}.
By \eqref{E:supxKu}, we see that
for all $t\le T$, $x\in\R$, and $n\in\bbN$,
\begin{align}\notag
 \sup_{|\mathbf{z}|\le \kappa}
 \Norm{\theta_{\mathbf{z}}^{n,i}(t,x)}_p^2 & \le
 \left(\int_{0}^t \int_\R G(t-s,x-y) \sup_{|\mathbf{z}|\le \kappa}\Norm{\rho(\widehat{u}_{\mathbf{z}}^n(s,y))}_p h_n^i(s,y)\ud s \ud y\right)^2\\
 &\le C
 \left(\int_{0}^t \int_\R G(t-s,x-y) h_n^i(s,y)\ud s \ud y\right)^2 = C \Psi_n^i(t,x)^2.
\label{E:thetaNorm}
\end{align}
Note that the constant $C$ in the above inequalities does not depend on $n$.
By Proposition \ref{P:Psi}, we see that both \eqref{E:theta-bd} and \eqref{E:theta-Lp0} hold.
When $x\ne x_i$, an application of \eqref{E:hG2} and the Borel-Cantelli lemma implies \eqref{E:theta-AS0}.
This completes the proof of Lemma \ref{L:Moment-U}
\end{proof}

\bigskip
In the next lemma, we study the moments of $\widehat{u}^{n,i}_{\mathbf{z}}(t,x)$.

\begin{lemma}\label{L:Moment-DU}
For any $p\ge 2$, $n\in\bbN$, $i=1,\dots, d$, and $\kappa>0$,
we have that
\begin{align}\label{E:UniLp0}
\sup_{|\mathbf{z}|\le\kappa}
\Norm{\widehat{u}^{n,i}_{\mathbf{z}}(t,x)}_p^2
\le C \left[2^{-n(1-1/\alpha)}
+ \Psi_n(t,x)^2\right],
\end{align}
and
\begin{align}\label{E:supxKui}
\max_{1\le i\le d} \sup_{n\in\bbN}\sup_{(t,x)\in [0,T]\times \R}
 \sup_{|\mathbf{z}|\le \kappa}\Norm{\widehat{u}^{n,i}_{\mathbf{z}}(t,x)}_p<\infty.
\end{align}
As a consequence,
\begin{align}\label{E:thetaIK-bd}
 &\max_{1\le i,k\le d} \sup_{|\mathbf{z}|\le \kappa}
 \Norm{
 \theta_{\mathbf{z}}^{n,i,k}(t,x)}_p^2 \le
 C \Psi_n^i(t,x)^2.
\end{align}
\end{lemma}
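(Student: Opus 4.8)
The plan is to take $L^p(\Omega)$-norms in the linear equation \eqref{E:hatUni} for $\widehat u^{n,i}_{\mathbf z}(t,x)$ and run a two-parameter Gronwall argument \emph{twice}: first, using only the crude bound $\Psi_n^i\le C$ from \eqref{E:hG1}, to obtain the uniform-in-$n$ estimate \eqref{E:supxKui}; then, feeding that uniform bound back into the nonlinear (drift) term, to sharpen it to \eqref{E:UniLp0}. The consequence \eqref{E:thetaIK-bd} then falls out at once from \eqref{E:thetaznik}: since $\rho'$ is bounded, $\Norm{\theta^{n,i,k}_{\mathbf z}(t,x)}_p\le C\int_0^t\int_\R G(t-s,x-y)\Norm{\widehat u^{n,k}_{\mathbf z}(s,y)}_p\,h_n^i(s,y)\,\ud s\,\ud y\le C\Psi_n^i(t,x)$ by \eqref{E:supxKui}, whence $\Norm{\theta^{n,i,k}_{\mathbf z}(t,x)}_p^2\le C\Psi_n^i(t,x)^2$.

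First I would record that for each fixed $n$ a standard Picard iteration (using that $\rho'$ is bounded and that, for $n$ fixed, all coefficients in \eqref{E:hatUni} are bounded in $x$, after noting $\widehat u^{n,i}_{\mathbf z}(t,x)\equiv 0$ for $t\le T-2^{-n}$) gives $\sup_{(t,x)\in[0,T]\times\R}\sup_{|\mathbf z|\le\kappa}\Norm{\widehat u^{n,i}_{\mathbf z}(t,x)}_p<\infty$, so that $g(t,x):=\sup_{|\mathbf z|\le\kappa}\Norm{\widehat u^{n,i}_{\mathbf z}(t,x)}_p^2$ is finite and the Gronwall manipulations are legitimate. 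Applying the Burkholder--Davis--Gundy inequality \eqref{E:BGD} to the stochastic integral in \eqref{E:hatUni} (absorbing the bounded factor $\rho'$), the Cauchy--Schwarz inequality to the drift term exactly as in \eqref{E:DriftBd} — this is the one computation worth isolating, since it pulls an extra factor $\Psi_n(t,x)$ out front via $\int_0^t\int_\R G(t-s,x-y)\InPrd{\mathbf 1,\mathbf h_n(s,y)}\,\ud s\,\ud y=\Psi_n(t,x)$ — and \eqref{E:theta-Lp0} to the $\theta^{n,i}_{\mathbf z}$ term, one obtains
\begin{align*}
g(t,x)\le{}& C\,\Psi_n^i(t,x)^2+C\int_0^t\int_\R G^2(t-s,x-y)\,g(s,y)\,\ud s\,\ud y\\
& +C\,\Psi_n(t,x)\int_0^t\int_\R G(t-s,x-y)\,g(s,y)\,\InPrd{\mathbf 1,\mathbf h_n(s,y)}\,\ud s\,\ud y.
\end{align*}

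For \eqref{E:supxKui} I would bound $\Psi_n^i,\Psi_n\le C$, use Lemma \ref{L:Mom} to absorb the $G^2$-convolution (the semigroup identity $\int_\R G(t-s,x-y)G(s-r,y-w)\,\ud y=G(t-r,x-w)$ together with $\int_0^t(t-s)^{-1/\alpha}\,\ud s\le CT^{1-1/\alpha}$ shows the $\calG$-convolution of the leftover drift term is again controlled by that drift term), then reduce, via the spatial localisation of $\mathbf h_n$ and the estimate $c_n\le C2^{n(2-1/\alpha)}$ from \eqref{E:cn-bd}, to the one-dimensional inequality $\bar g(t):=\sup_x g(t,x)\le C+C\,2^{n(1-1/\alpha)}\one_{\{t>T-2^{-n}\}}\int_{T-2^{-n}}^t(t-s)^{-1/\alpha}\bar g(s)\,\ud s$, which is precisely of the form handled by Lemma \ref{L:Contraction} with $\epsilon=2^{-n}$ in the proof of Lemma \ref{L:Moment-U}; this yields $\sup_n\sup_t\bar g(t)<\infty$. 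Finally, with this uniform bound $g\le M$ in hand, the inner integral in the drift term is $\le M\,\Psi_n(t,x)$, so that term is $\le CM\,\Psi_n(t,x)^2$; one more application of Lemma \ref{L:Mom} with $f(t,x)=C\Psi_n^i(t,x)^2+CM\Psi_n(t,x)^2$, using $\Psi_n^i\le\Psi_n$, $\Psi_n\le C$, and the decay estimate \eqref{E:hG3} to bound the $\calG$-convolution of $f$ by $C\,2^{-n(1-1/\alpha)}$, produces $g(t,x)\le C\Psi_n(t,x)^2+C\,2^{-n(1-1/\alpha)}$, i.e.\ \eqref{E:UniLp0}.

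The main obstacle is the bootstrapping structure: a single Gronwall pass only delivers a bound of order $\Psi_n(t,x)$ rather than $\Psi_n(t,x)^2$, and it is exactly the interplay of the extra $\Psi_n(t,x)$ extracted by Cauchy--Schwarz with the already-established uniform bound $g\le M$ that upgrades the power. The second delicate point is making the localisation constants (the interaction of $c_n\sim 2^{n(2-1/\alpha)}$ with the widths $2^{-n}$ and with \eqref{E:hG3}) land precisely on the hypotheses of Lemma \ref{L:Contraction}; this is routine but must be done with the exponents tracked carefully.
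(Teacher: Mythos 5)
Your proof is correct and follows the same bootstrap as the paper: first establish finiteness of $\sup_{|\mathbf z|\le\kappa}\Norm{\widehat u^{n,i}_{\mathbf z}(t,x)}_p$ for each fixed $n$, then derive the uniform-in-$n$ estimate \eqref{E:supxKui} by bounding $\Psi_n^i,\Psi_n\le C$ (via \eqref{E:hG1}, \eqref{E:theta-bd}) and reducing to the one-dimensional inequality handled by Lemma~\ref{L:Contraction}, then feed that uniform bound back into the drift term to sharpen to \eqref{E:UniLp0}, with \eqref{E:thetaIK-bd} following immediately from \eqref{E:thetaznik}. The only cosmetic deviation is in the final sharpening step: the paper bounds the localized term $\int_{T-2^{-n}}^t\int_\R G^2(t-s,x-y)\,g(s,y)\,\ud s\,\ud y$ directly by the uniform bound together with \eqref{E:G10}, while you absorb it by one more pass through Lemma~\ref{L:Mom} followed by \eqref{E:hG3}; the two give the same $2^{-n(1-1/\alpha)}$ contribution.
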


\begin{proof}
The proof consists four steps.

{\bigskip\noindent\bf Step 1.~}
First, we  show that $\widehat{u}_{\mathbf{z}}^{n,i}(t,x)\in L^p(\Omega)$ for $i=1,\dots,d$ and $p\ge 2$.
Notice that
\begin{multline}\label{E:rho'p2}
\sup_{|\mathbf{z}|\le \kappa}
\Norm{
\int_{0}^t\int_\R G(t-s,x-y)\rho'(\widehat{u}_{\mathbf{z}}^n(s,y)) \widehat{u}_{\mathbf{z}}^{n,i}(s,y) \InPrd{\mathbf{z},\mathbf{h}_n(s,y)}\ud s\ud y
}_p^2\\
\le
C c_n^2
\int_0^t\int_\R G^2(t-s,x-y)
\sup_{|\mathbf{z}|\le \kappa}
\Norm{
\widehat{u}_{\mathbf{z}}^{n,i}(s,y)
}_p^2 \ud s\ud y,
\end{multline}
where $c_n$ is defined in \eqref{E:cni-1}.
Hence, by \eqref{E:theta-bd} and \eqref{E:BGD},
\begin{align*}
\sup_{|\mathbf{z}|\le \kappa}
\Norm{\widehat{u}_{\mathbf{z}}^{n,i}(t,x)
 }_p^2 \le &  C
 + C_n'
 \int_{0}^t \int_\R G^2(t-s,x-y)
 \sup_{|\mathbf{z}|\le \kappa}
 \Norm{
 \widehat{u}_{\mathbf{z}}^{n,i}(s,y)}_p^2 \ud s\ud y.
\end{align*}
Therefore, Lemma \ref{L:Mom} implies that
\begin{align}\label{E:BndUni}
\sup_{1\le i\le d}
\sup_{|\mathbf{z}|\le \kappa}
\Norm{\widehat{u}_{\mathbf{z}}^{n,i}(t,x)}_p^2\le
C + C_n \left(C \star \calG \right)(t,x).
\end{align}

Next,  we shall obtain a bound that is uniform in $n$.
This requires much more efforts.
We shall  prove the three statements in the lemma in the remaining three steps:

{\bigskip\noindent\bf Step 2.~}
In this step, we will prove that the $p$-th moment of
$\sup_{|\mathbf{z}|\le \kappa}|\widehat{u}_{\mathbf{z}}^{n,i}(t,x)|$
is bounded uniformly in $n$, $i$, and $(t,x)\in [0,T]\times\R$.
Denote
\[
I_{\mathbf{z}}(t,x):=
\left|
\int_0^t\int_\R G(t-s,x-y)\rho'(\widehat{u}^n_{\mathbf{z}}(s,y))\widehat{u}^{n,i}_{\mathbf{z}}(s,y)
 \InPrd{\mathbf{z},\mathbf{h}_n(s,y)}\ud s\ud y
 \right|.
\]
Clearly, for $|\mathbf{z}|\le\kappa$,
\[
I_{\mathbf{z}}(t,x)\le \kappa \Norm{\rho'}_{L^\infty(\R)}\int_0^t\int_\R G(t-s,x-y)
 |\widehat{u}^{n,i}_{\mathbf{z}}(s,y)|
 \InPrd{\mathbf{1},\mathbf{h}_n(s,y)}\ud s\ud y.
\]
By the same arguments as those led to  \eqref{E:DriftBd},
\begin{align}
\begin{aligned}
\sup_{|\mathbf{z}|\le\kappa}
\Norm{I_{\mathbf{z}}(t,x)}_p^2
 \label{E:Step5}
 &\le C \kappa^2
 \int_0^t\int_\R G(t-s,x-y)
 \sup_{|\mathbf{z}|\le\kappa}
 \Norm{
 \widehat{u}^{n,i}_{\mathbf{z}}(s,y)}_p^2
 \InPrd{\mathbf{1},\mathbf{h}_n(s,y)}\ud s\ud y\\
 &=:  C \kappa^2 F_\kappa(t,x).
\end{aligned}
\end{align}
%
Then, taking the $L^p(\Omega)$-norm on both sides of \eqref{E:hatUni}
and applying \eqref{E:theta-bd}, \eqref{E:Step5} and \eqref{E:BGD} on the three parts, we see that for some constant $C>0$ that does not depend on $n$,
\begin{align}
\label{E:UniAs}
\begin{aligned}
\sup_{|\mathbf{z}|\le\kappa}
\Norm{
\widehat{u}^{n,i}_{\mathbf{z}}(t,x)}_p^2\le &
C [1+ \kappa^2 F_\kappa(t,x) ] \\
&\hspace{-2em}+ C
\one_{\{t>T-2^{-n}\}}
\int_{T-2^{-n}}^t\ud s\int_\R \ud y \:
G^2(t-s,x-y)
\sup_{|\mathbf{z}|\le\kappa}
\Norm{\widehat{u}^{n,i}_{\mathbf{z}}(s,y)}_p^2.
\end{aligned}
\end{align}
Hence, by replacing the bound $T-2^{-n}$ by $0$ in \eqref{E:UniAs} and applying Lemma \ref{L:Mom} with $\vv=0$,
\begin{align*}
\sup_{|\mathbf{z}|\le\kappa}
\Norm{\widehat{u}^{n,i}_{\mathbf{z}}(t,x)}_p^2& \le
C [1+ \kappa^2 F_\kappa(t,x) ]
+ C \kappa^2 \left(  F_\kappa \star\calG\right)(t,x)\\
&\le
C + C\kappa^2\int_0^t\ud s\int_\R \ud y \:
G(t-s,x-y)
\sup_{|\mathbf{z}|\le\kappa}
\Norm{\widehat{u}^{n,i}_{\mathbf{z}}(s,y)}_p^2 \InPrd{\mathbf{1},\mathbf{h}_n(s,y)},
\end{align*}
where the second inequality is due to \eqref{E:FkBd2} and \eqref{E:FkBd}.
Note that the constant $C$ does not depend on $n$.
Denote
\begin{align}\label{E:DD-AuxU}
U^{n,i}_{\kappa,p}(t):= \sup_{x\in \R} \sup_{|\mathbf{z}|\le\kappa}\Norm{\widehat{u}^{n,i}_{\mathbf{z}}(t,x)}_p^2.
\end{align}
Then one can show in the same way as those in the proof of Lemma \ref{L:Moment-U} that there exists some constant $C>0$, independent of both $n$ and $i$, such that
\[
U^{n,i}_{\kappa,p}(t) \le C + C 2^{n(1-1/\alpha)}\one_{\{t>T-2^{-n}\}}
\int_{T-2^{-n}}^{t} (t-s)^{-1/\alpha} U^{n,i}_{\kappa,p}(s)\ud s,\quad t\le T.
\]
Therefore, applying Lemma \ref{L:Contraction} to $\max_{1\le i\le d} U^{n,i}_{\kappa,p}(t)$ with $\epsilon=2^{-n}$,
we derive \eqref{E:supxKui}.

{\bigskip\noindent\bf Step 3.~}
Now we prove \eqref{E:UniLp0}.
By \eqref{E:theta-Lp0}, \eqref{E:G10} and \eqref{E:supxKui},
\begin{align*}
\sup_{|\mathbf{z}|\le\kappa}
\Norm{\widehat{u}^{n,i}_{\mathbf{z}}(t,x)}_p^2
\le &
\quad C  \sup_{|\mathbf{z}|\le\kappa}
\Norm{\theta^{n,i}_{\mathbf{z}}(t,x)}_p^2\\
&+C\one_{\{t>T-2^{-n}\}} \int_{T-2^{-n}}^t \int_\R G(t-s,x-y)^2\sup_{|\mathbf{z}|\le\kappa}
\Norm{\widehat{u}^{n,i}_{\mathbf{z}}(s,y)}_p^2 \ud s\ud y\\
&+C \kappa^2 \left(
\int_0^t\int_\R G(t-s,x-y)\sup_{|\mathbf{z}|\le\kappa}\Norm{\widehat{u}_{\mathbf{z}}^{n,i}(s,y)}_p \InPrd{\mathbf{1},
\mathbf{h}_n(s,y)}\ud s\ud y
\right)^2\\
\le& C \Psi_n^i(t,x)^2+ C 2^{-n(1-1/\alpha)} +C \kappa^2 \left(
\int_0^t\int_\R G(t-s,x-y)\InPrd{\mathbf{1},
\mathbf{h}_n(s,y)}\ud s\ud y
\right)^2\\
\le & C 2^{-n(1-1/\alpha)} + C \Psi_n(t,x)^2.
\end{align*}

{\bigskip\noindent\bf Step 4.~}
As for $\theta^{n,i,k}_{\mathbf{z}}(t,x)$ defined in \eqref{E:thetaznik}, by \eqref{E:supxKui}, we have that
\begin{align}
\notag
\sup_{|\mathbf{z}|\le\kappa}\Norm{\theta^{n,i,k}_{\mathbf{z}}(t,x)}_p^2
&\le C \left(\int_0^t\int_\R G(t-s,x-y) \sup_{|\mathbf{z}|\le\kappa}
\Norm{\widehat{u}^{n,k}_{\mathbf{z}}(s,y)}_p h_n^i(s,y)\ud s\ud y\right)^2\\
\notag
&\le
C \left(\int_0^t\int_\R G(t-s,x-y)  h_n^i(s,y)\ud s\ud y\right)^2\\
&\le
C \Psi_n^i(t,x)^2.
\label{E2:thetaNorm}
\end{align}
This completes the proof of Lemma \ref{L:Moment-DU}.
\end{proof}

\bigskip
In the next lemma, we study the moments of $\widehat{u}^{n,i,k}_{\mathbf{z}}(t,x)$.

\begin{lemma}\label{L:Moment-DDU}
For any $p\ge 2$, $n\in\bbN$, $1\le i,k\le d$, and $\kappa>0$,
we have that
\begin{align}\label{E:UnikLp0}
\sup_{|\mathbf{z}|\le\kappa}
\Norm{\widehat{u}^{n,i,k}_{\mathbf{z}}(t,x)}_p^2
\le C \left[2^{-n(1-1/\alpha)} + \Psi_n(t,x)^2 \right],
\end{align}
and
\begin{align}\label{E:supxKuik}
\max_{1\le i,k\le d} \sup_{n\in\bbN}\sup_{(t,x)\in [0,T]\times \R}
 \sup_{|\mathbf{z}|\le \kappa}\Norm{\widehat{u}^{n,i,k}_{\mathbf{z}}(t,x)}_p<\infty.
\end{align}
\end{lemma}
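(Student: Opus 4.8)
The plan is to follow the three-step template already used for Lemma~\ref{L:Moment-DU}, since the integral equation \eqref{E:hatUnik} for $\widehat u^{n,i,k}_{\mathbf z}$ has the same architecture as \eqref{E:hatUni} for $\widehat u^{n,i}_{\mathbf z}$: a deterministic forcing term (here $\theta^{n,i,k}_{\mathbf z}+\theta^{n,k,i}_{\mathbf z}$, the latter being the second term on the right of \eqref{E:hatUnik}), two ``source'' terms built from lower-order derivatives that carry $\rho''(\widehat u^n_{\mathbf z})\,\widehat u^{n,i}_{\mathbf z}\widehat u^{n,k}_{\mathbf z}$ — one integrated against $W(\ud s,\ud y)$ over the short window $[T-2^{-n},t]$ and one against the drift $\InPrd{\mathbf z,\mathbf h_n}$ — and a linear self-interaction term carrying $\rho'(\widehat u^n_{\mathbf z})\,\widehat u^{n,i,k}_{\mathbf z}$. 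First I would fix $n$ and check $\widehat u^{n,i,k}_{\mathbf z}(t,x)\in L^p(\Omega)$: taking $L^p(\Omega)$-norms in \eqref{E:hatUnik}, using the boundedness of $\rho'$ and $\rho''$, the bounds \eqref{E:thetaIK-bd} on $\theta^{n,i,k}_{\mathbf z}$, the uniform first-derivative bound \eqref{E:supxKui}, the Burkholder--Davis--Gundy inequality \eqref{E:BGD} on the two stochastic integrals, and the Cauchy--Schwarz estimate for the $\InPrd{\mathbf z,\mathbf h_n}$-drift term exactly as in \eqref{E:rho'p2}, yields $\sup_{|\mathbf z|\le\kappa}\Norm{\widehat u^{n,i,k}_{\mathbf z}(t,x)}_p^2\le C_n+C_n\int_0^t\int_\R G^2(t-s,x-y)\sup_{|\mathbf z|\le\kappa}\Norm{\widehat u^{n,i,k}_{\mathbf z}(s,y)}_p^2\ud s\ud y$, so Lemma~\ref{L:Mom} gives finiteness.

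Next, to obtain a bound uniform in $n$, I would handle the $\InPrd{\mathbf z,\mathbf h_n}$-drift self-interaction term through the auxiliary function $F_\kappa$ as in \eqref{E:Step5}--\eqref{E:UniAs}, apply Lemma~\ref{L:Mom}, and use the convolution estimates \eqref{E:FkBd2}--\eqref{E:FkBd} together with \eqref{E:hG1} to reduce everything to an integral inequality for $U^{n,i,k}_{\kappa,p}(t):=\sup_{x\in\R}\sup_{|\mathbf z|\le\kappa}\Norm{\widehat u^{n,i,k}_{\mathbf z}(t,x)}_p^2$ of the form $U^{n,i,k}_{\kappa,p}(t)\le C+C\,2^{n(1-1/\alpha)}\one_{\{t>T-2^{-n}\}}\int_{T-2^{-n}}^t(t-s)^{-1/\alpha}U^{n,i,k}_{\kappa,p}(s)\ud s$, where the contribution of the $\rho''$-terms is absorbed into $C$ using \eqref{E:supxKui} at the doubled exponent $2p$ together with \eqref{E:cn-bd}, \eqref{E:G10}, and \eqref{E:hG1}. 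Then Lemma~\ref{L:Contraction} with $\epsilon=2^{-n}$ gives \eqref{E:supxKuik}.

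Finally, to get the sharp bound \eqref{E:UnikLp0}, I would feed \eqref{E:supxKuik} back into \eqref{E:hatUnik}: the $\theta^{n,i,k}_{\mathbf z}$ and $\theta^{n,k,i}_{\mathbf z}$ terms are $\le C\Psi_n^i(t,x)^2\le C\Psi_n(t,x)^2$ by \eqref{E:thetaIK-bd}; the two stochastic integrals, being over $[T-2^{-n},t]$, contribute $\le C\,2^{-n(1-1/\alpha)}$ via \eqref{E:G10} and the uniform bounds \eqref{E:supxKui}, \eqref{E:supxKuik}; and both $\InPrd{\mathbf z,\mathbf h_n}$-drift terms, after pulling the uniform constants out of the space--time integral and invoking \eqref{E:hG1}, are bounded by $C\big(\int_0^t\int_\R G(t-s,x-y)\InPrd{\mathbf 1,\mathbf h_n(s,y)}\ud s\ud y\big)^2=C\,\Psi_n(t,x)^2$. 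Combining these five bounds gives \eqref{E:UnikLp0}.

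The step I expect to be the main obstacle is controlling the $\rho''$-term carrying the product $\widehat u^{n,i}_{\mathbf z}\widehat u^{n,k}_{\mathbf z}$, which has no counterpart in the first-derivative equation \eqref{E:hatUni}: one must verify that H\"older's inequality together with the \emph{uniform} first-derivative bound \eqref{E:supxKui} (used at the doubled exponent) makes $\Norm{\widehat u^{n,i}_{\mathbf z}(s,y)\widehat u^{n,k}_{\mathbf z}(s,y)}_p$ bounded uniformly in $n$, so that its integral against $G^2$ over the shrinking window $[T-2^{-n},t]$ is $O\!\left(2^{-n(1-1/\alpha)}\right)$ and its integral against $\InPrd{\mathbf z,\mathbf h_n}$ is $O\!\left(\Psi_n(t,x)^2\right)$ — exactly the orders claimed and no worse. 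Once this is in place, every remaining estimate is a routine transcription of the arguments already carried out for $\widehat u^{n,i}_{\mathbf z}$, together with $\Psi_n^i(t,x)\le\Psi_n(t,x)$.
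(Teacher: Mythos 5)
Your proposal is correct and takes essentially the same route as the paper's proof: the same six-term decomposition of \eqref{E:hatUnik}, the same use of \eqref{E:thetaIK-bd} for the $\theta$-terms, of boundedness of $\rho''$ together with \eqref{E:supxKui} (at doubled exponent) and \eqref{E:G10} for the $U_1,U_2$ terms, and the same integral-inequality/Lemma~\ref{L:Contraction} argument with $\epsilon=2^{-n}$ on the $U_3,U_4$ self-interaction terms to obtain \eqref{E:supxKuik} before feeding it back to get \eqref{E:UnikLp0}. Your version merely spells out the details that the paper explicitly leaves to the reader, in particular the Hölder step at exponent $2p$ on the product $\widehat u^{n,i}_{\mathbf z}\widehat u^{n,k}_{\mathbf z}$, which you rightly identify as the only genuinely new ingredient relative to Lemma~\ref{L:Moment-DU}.
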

\begin{proof}
 Write the six parts of $\widehat{u}^{n,i,k}_{\mathbf{z}}(t,x)$
in \eqref{E:hatUnik} as
\begin{align}\label{E:Unik6}
\widehat{u}^{n,i,k}_{\mathbf{z}}(t,x) =
\theta^{n,i,k}_{\mathbf{z}}(t,x) +\theta^{n,k,i}_{\mathbf{z}}(t,x)+ \sum_{\ell=1}^4 U_\ell^n(t,x).
\end{align}
The moment bounds for $\theta^{n,i,k}_{\mathbf{z}}(t,x)$ are given by \eqref{E:thetaIK-bd}.
Since $\rho''$ is bounded, by the moments bound for $\widehat{u}^{n,i}_{\mathbf{z}}(t,x)$ as in \eqref{E:supxKui}
and by \eqref{E:G10},
\begin{align}\label{E:U13}
\sup_{|\mathbf{z}|\le\kappa}\Norm{U_1^n(t,x)}_p^2 \le C 2^{-n(1-1/\alpha)}.
\end{align}
Similarly, by \eqref{E:supxKui},
\[
\sup_{|\mathbf{z}|\le\kappa}\Norm{U_2^n(t,x)}_p^2 \le
C \Psi_n(t,x)^2.
\]
Then use the moment bounds for $U_3$ and $U_4$ to form an integral inequality similar to that in the proof of
Lemma \ref{L:Moment-U}.
By the same arguments as those in the proof of Lemma \ref{L:Moment-U},
one can show \eqref{E:supxKuik}. We leave the details for the interested readers.

As for \eqref{E:UnikLp0},
since $\rho'$ is bounded, by \eqref{E:supxKuik}, we see that
\[
\sup_{|\mathbf{z}|\le\kappa}\Norm{U_3^n(t,x)}_p^2 \le C 2^{-n(1-1/\alpha)}
\quad\text{and}\quad\sup_{|\mathbf{z}|\le\kappa}\Norm{U_4^n(t,x)}_p^2 \le C \Psi_n(t,x)^2.
\]
Combining these six bounds proves \eqref{E:UnikLp0}.
\end{proof}

\bigskip
The next lemma is  on  the H\"older continuity (in norm) of the
random fields $\mathbf{z}\mapsto \widehat{u}_{\mathbf{z}}^n(t,x)$ and its first two derivatives.

\begin{lemma}\label{L:HolderInZ}
For all $\kappa>0$, $1\le i, k\le d$, $n\in\bbN$, $t\in [0,T]$ and $x\in\R$, we have that
\begin{align}\label{E:IncHatU}
\sup_{|\mathbf{z}|\vee |\mathbf{z}'|\le \kappa}
\Norm{\widehat{u}^n_{\mathbf{z}}(t,x)-\widehat{u}^n_{\mathbf{z}'}(t,x)}_p^2
&\le C\left[2^{-n(1-1/\alpha)}+\Psi_n(t,x)\right]\:
|\mathbf{z}-\mathbf{z}'|^2,\\
\label{E2:IncHatU}
\sup_{|\mathbf{z}|\vee |\mathbf{z}'|\le \kappa}
\Norm{\widehat{u}^{n,i}_{\mathbf{z}}(t,x)-\widehat{u}^{n,i}_{\mathbf{z}'}(t,x)}_p^2
&\le C \left[2^{-n(1-1/\alpha)}+\Psi_n(t,x)^2\right] |\mathbf{z}-\mathbf{z}'|^2,\\
\label{E3:IncHatU}
\sup_{|\mathbf{z}|\vee |\mathbf{z}'|\le \kappa}
\Norm{\widehat{u}^{n,i,k}_{\mathbf{z}}(t,x)-\widehat{u}^{n,i,k}_{\mathbf{z}'}(t,x)}_p^2
&\le C \left[2^{-n(1-1/\alpha)} + \Psi_n(t,x)^2\right]|\mathbf{z}-\mathbf{z}'|^2.
\end{align}
\end{lemma}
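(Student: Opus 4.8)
The plan is to obtain each of the three increment estimates from the respective mild formulations \eqref{E:hatUn}, \eqref{E:hatUni} and \eqref{E:hatUnik}, by subtracting the equation at $\mathbf{z}$ from the one at $\mathbf{z}'$, estimating each resulting term, and closing a Gronwall-type inequality; the tools are exactly those already in play: the Lipschitz continuity of $\rho$, $\rho'$ and $\rho''$, the Burkholder-Davis-Gundy inequality \eqref{E:BGD}, the two-parameter Gronwall lemma (Lemma \ref{L:Mom}), the cone-type contraction estimate (Lemma \ref{L:Contraction}), the properties of $\Psi_n^i$ in Proposition \ref{P:Psi}, and the uniform moment bounds of Lemmas \ref{L:Moment-U}, \ref{L:Moment-DU}, \ref{L:Moment-DDU} and Proposition \ref{P:SupU}. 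The three estimates are proved in this order, so that \eqref{E:IncHatU} is available when proving \eqref{E2:IncHatU}, and both are available when proving \eqref{E3:IncHatU}.

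For \eqref{E:IncHatU} I subtract \eqref{E:hatUn} at $\mathbf{z}$ and $\mathbf{z}'$; the difference is a stochastic integral of $G(t-s,x-y)\bigl[\rho(\widehat{u}^n_{\mathbf{z}}(s,y))-\rho(\widehat{u}^n_{\mathbf{z}'}(s,y))\bigr]$ plus a drift term, which I split by writing $\rho(\widehat{u}^n_{\mathbf{z}})\InPrd{\mathbf{z},\mathbf{h}_n}-\rho(\widehat{u}^n_{\mathbf{z}'})\InPrd{\mathbf{z}',\mathbf{h}_n}=\bigl[\rho(\widehat{u}^n_{\mathbf{z}})-\rho(\widehat{u}^n_{\mathbf{z}'})\bigr]\InPrd{\mathbf{z},\mathbf{h}_n}+\rho(\widehat{u}^n_{\mathbf{z}'})\InPrd{\mathbf{z}-\mathbf{z}',\mathbf{h}_n}$. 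The stochastic term (via \eqref{E:BGD} and the Lipschitz bound for $\rho$) and the first drift piece (via Cauchy-Schwarz exactly as in \eqref{E:DriftBd}, using $|\mathbf{z}|\le\kappa$ and \eqref{E:hG1}) produce a recursive contribution in $\sup_{|\mathbf{z}|\vee|\mathbf{z}'|\le\kappa}\Norm{\widehat{u}^n_{\mathbf{z}}(s,y)-\widehat{u}^n_{\mathbf{z}'}(s,y)}_p^2$; the second drift piece, by the linear growth of $\rho$ together with the uniform moment bound \eqref{E:uPmnt} and Cauchy-Schwarz, contributes $C\bigl[2^{-n(1-1/\alpha)}+\Psi_n(t,x)\bigr]|\mathbf{z}-\mathbf{z}'|^2$ after the convolution with $\calG$ as in the proof of Lemma \ref{L:Moment-U}. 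Setting $V^n_{\kappa,p}(t):=\sup_x\sup_{|\mathbf{z}|\vee|\mathbf{z}'|\le\kappa}\Norm{\widehat{u}^n_{\mathbf{z}}(t,x)-\widehat{u}^n_{\mathbf{z}'}(t,x)}_p^2$ and repeating the reduction of the drift convolution to a cone integral over $[T-2^{-n},t]$ used in Lemma \ref{L:Moment-U}, one arrives at an inequality of the form $V^n_{\kappa,p}(t)\le C\bigl[2^{-n(1-1/\alpha)}+\Psi_n(t,x)\bigr]|\mathbf{z}-\mathbf{z}'|^2+C\,2^{n(1-1/\alpha)}\one_{\{t>T-2^{-n}\}}\int_{T-2^{-n}}^t(t-s)^{-1/\alpha}V^n_{\kappa,p}(s)\,\ud s$, and Lemma \ref{L:Contraction} with $\epsilon=2^{-n}$ yields \eqref{E:IncHatU}.

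For \eqref{E2:IncHatU} and \eqref{E3:IncHatU} the scheme is identical, applied to \eqref{E:hatUni} and \eqref{E:hatUnik}. In \eqref{E2:IncHatU} the difference $\theta^{n,i}_{\mathbf{z}}(t,x)-\theta^{n,i}_{\mathbf{z}'}(t,x)$ is bounded, using the Lipschitz property of $\rho$ and the already-proved \eqref{E:IncHatU}, by a multiple of $\Psi_n^i(t,x)\,|\mathbf{z}-\mathbf{z}'|$ times $[2^{-n(1-1/\alpha)}+\Psi_n(t,x)]^{1/2}$; the term $\bigl[\rho'(\widehat{u}^n_{\mathbf{z}})-\rho'(\widehat{u}^n_{\mathbf{z}'})\bigr]\widehat{u}^{n,i}_{\mathbf{z}}$ is handled by the Lipschitz property of $\rho'$, \eqref{E:IncHatU} and the uniform bound \eqref{E:supxKui}, while the term $\rho'(\widehat{u}^n_{\mathbf{z}'})\bigl[\widehat{u}^{n,i}_{\mathbf{z}}-\widehat{u}^{n,i}_{\mathbf{z}'}\bigr]$ and the drift term give the recursion, closed by Lemma \ref{L:Contraction}. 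For \eqref{E3:IncHatU} the right-hand side of \eqref{E:hatUnik} has six pieces: the two $\theta^{n,i,k}$-type pieces are treated via \eqref{E2:IncHatU}; the two $\rho''$-pieces via the identity $ab-a'b'=(a-a')b+a'(b-b')$ combined with \eqref{E2:IncHatU}, \eqref{E:supxKui} and the Lipschitz continuity of $\rho''$ (for the $\rho''(\widehat{u}^n_{\mathbf{z}})-\rho''(\widehat{u}^n_{\mathbf{z}'})$ factor one uses \eqref{E:IncHatU}); and the two $\rho'(\widehat{u}^n_{\mathbf{z}})\widehat{u}^{n,i,k}_{\mathbf{z}}$-pieces produce the recursion, again closed by Lemma \ref{L:Contraction} with $\epsilon=2^{-n}$.

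The main obstacle is bookkeeping rather than any genuinely new difficulty: each increment equation contains several terms, some carrying the factor $c_n\sim 2^{n(2-1/\alpha)}$ hidden in $\mathbf{h}_n$, and one must systematically balance this blow-up against the shrinking time window $[T-2^{-n},T]$ and the smallness of the quantities $\Psi^i_n$ from Proposition \ref{P:Psi}, which is precisely what the cone-type estimate Lemma \ref{L:Contraction} accomplishes when the standard Gronwall lemma would fail. Once the decompositions are written out, every individual estimate is of a type already used in Lemmas \ref{L:Moment-U}--\ref{L:Moment-DDU}, and the constants produced are independent of $n$, so the inductive structure over the three functions goes through without surprises.
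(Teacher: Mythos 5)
Your decompositions and choice of tools (BDG, Lemma~\ref{L:Mom}, Lemma~\ref{L:Contraction}, Proposition~\ref{P:Psi}, and the prior moment bounds) agree with the paper, and the inductive order \eqref{E:IncHatU} $\Rightarrow$ \eqref{E2:IncHatU} $\Rightarrow$ \eqref{E3:IncHatU} is correct. However, there is a genuine gap in the way you close the recursion: the one-pass cone/Gronwall argument you describe cannot produce the pointwise factor $\bigl[2^{-n(1-1/\alpha)}+\Psi_n(t,x)\bigr]$.

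The difficulty is this. Your recursive inequality involves the cone-type term
$\int G(t-s,x-y)\Norm{\widehat u^n_{\mathbf z}(s,y)-\widehat u^n_{\mathbf z'}(s,y)}_p^2\InPrd{\mathbf 1,\mathbf h_n(s,y)}\ud s\ud y$
(the analogue of $I_3$ in the paper). To feed this into Lemma~\ref{L:Contraction} you must first reduce to a one-variable inequality by setting $V^n(t)=\sup_x\Norm{\cdot}_p^2$; at that moment $\Psi_n(t,x)$ in the forcing term must be replaced by $\sup_x\Psi_n(t,x)\le C$ (by \eqref{E:hG1}), so Lemma~\ref{L:Contraction} can only deliver the $x$-uniform ``rough'' bound $V^n(t)\le C|\mathbf z-\mathbf z'|^2$. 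Even if you keep $x$ free and apply Lemma~\ref{L:Contraction} with an $x$-dependent $\beta_\epsilon$, the corrective integral $C\,2^{n(1-1/\alpha)}\int_{T-2^{-n}}^t(t-s)^{-1/\alpha}\beta_\epsilon(s)\ud s$ is of size $O(|\mathbf z-\mathbf z'|^2)$ with no vanishing prefactor, so you again only recover $C|\mathbf z-\mathbf z'|^2$. That is strictly weaker than \eqref{E:IncHatU}, whose right-hand side is $o(1)$ when $t<T-2^{-n}$ (where $\Psi_n(t,x)=0$) or when $x$ is away from the $x_i$'s.

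What the paper actually does is a two-stage bootstrap that your outline omits: (1)~use Lemma~\ref{L:Mom} and Lemma~\ref{L:Contraction} exactly as you describe to obtain the rough bound $\sup_x\Norm{\cdot}_p^2\le C|\mathbf z-\mathbf z'|^2$; (2)~substitute this rough bound back into the cone term $F_n$, turning $I_3$ into the genuinely $(t,x)$-dependent forcing $C\Psi_n(t,x)|\mathbf z-\mathbf z'|^2$; (3)~return to the ordinary integral inequality $\Norm{\cdot}_p^2\le C\Psi_n(t,x)|\mathbf z-\mathbf z'|^2+C\int G^2\Norm{\cdot}_p^2$ and apply the two-parameter Gronwall estimate Lemma~\ref{L:Mom} together with \eqref{E:hG3} (the fact that $\Psi_n\star\calG\lesssim 2^{-n(1-1/\alpha)}$). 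Only this second pass produces the refined factor $\bigl[\Psi_n(t,x)+2^{-n(1-1/\alpha)}\bigr]$. The same bootstrap is needed in Steps~2 and~3 for \eqref{E2:IncHatU} and \eqref{E3:IncHatU}. You should make this two-stage structure explicit; as written, ``Lemma~\ref{L:Contraction} \ldots\ yields \eqref{E:IncHatU}'' overclaims.
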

\begin{proof}
We will prove these three properties in three steps.
We need only to prove the case when $t>T-2^{-n}$.
Hence, in the following proof, we assume that $t\in[T-2^{-n},T]$.

{\bigskip\noindent\bf Step 1.~}
In this step, we will prove \eqref{E:IncHatU}.
Assume that $|\mathbf{z}|\le \kappa$ and $|\mathbf{z}'|\le \kappa$.
Notice that
\begin{align*}
&  \Norm{\widehat{u}^n_{\mathbf{z}}(t,x)-\widehat{u}^n_{\mathbf{z}'}(t,x)}_p^2\\
 \le & \hspace{1.3em} 3 \LIP_\rho^2 4p \int_0^t \int_\R G^2(t-s,x-y)
\Norm{\widehat{u}^n_{\mathbf{z}}(s,y)-\widehat{u}^n_{\mathbf{z}'}(s,y)}_p^2 \ud s\ud y\\
&+3\Norm{\int_{0}^t\int_\R G(t-s,x-y)\rho(\widehat{u}^n_{\mathbf{z}}(s,y))\InPrd{\mathbf{z}-\mathbf{z}',\mathbf{h}_n(s,y)}\ud s \ud y}_p^2\\
&+3\Norm{\int_{0}^t\int_\R G(t-s,x-y)\left[\rho(\widehat{u}^n_{\mathbf{z}}(s,y))-\rho(\widehat{u}^n_{\mathbf{z}'}(s,y))\right]
\InPrd{\mathbf{z}',\mathbf{h}_n(s,y)}\ud s \ud y}_p^2\\
=: & \: 3 I_1 + 3 I_2 +3 I_3,
\end{align*}
where we have applied \eqref{E:BGD} for the bound in $I_1$.
By the same arguments as those led  to \eqref{E:DriftBd}, we see that
\begin{align*}
I_2\le & C |\mathbf{z}-\mathbf{z}'|^2
\left(\int_{0}^t \ud s \int_\R \ud y\: G(t-s,x-y) \left(1+ \Norm{\widehat{u}^n_{\mathbf{z}}(s,y)}_p\right)\InPrd{\mathbf{1},\mathbf{h}_n(s,y)}\right)^2,
\end{align*}
and
\begin{align}\label{E_:I3}
I_3 \le &
C \kappa^2\int_{0}^t \ud s \int_\R \ud y\: G(t-s,x-y) \Norm{\widehat{u}^n_{\mathbf{z}}(s,y)-\widehat{u}^n_{\mathbf{z}'}(s,y)}_p^2\InPrd{\mathbf{1},\mathbf{h}_n(s,y)}\\
=:& C \kappa^2 F_n(t,x),
\notag
\end{align}
where the constants $C$ do not depend on $n$, $t$ and $x$.
We claim that there exists a nonnegative constant $C$ independent of $n$, $t$ and $x$ such that
\begin{align}\label{E:boundI23}
\sup_{|\mathbf{z}|\vee |\mathbf{z}'|\le \kappa} I_i \le C \Psi_n(t,x)\: |\mathbf{z}-\mathbf{z}'|^2,\quad i=2,3.
\end{align}
The case $I_2$ is clear from \eqref{E:PsiN} and \eqref{E:uPmnt}.
As for $I_3$, since \eqref{E:boundI23} is true for $I_2$,
for some nonnegative constant $C$ independent of $n$, $t$ and $x$, it holds that
\begin{align}
\label{E:DuInt}
\begin{aligned}
 \Norm{\widehat{u}^n_{\mathbf{z}}(t,x)-\widehat{u}^n_{\mathbf{z}'}(t,x)}_p^2
\le & \: C\Psi_n(t,x)^2 |\mathbf{z}-\mathbf{z}'|^2 + C \kappa^2 F_n(t,x)\\
&+ C
\int_{0}^t \ud s \int_\R \ud y\: G^2(t-s,x-y) \Norm{\widehat{u}^n_{\mathbf{z}}(s,y)-\widehat{u}^n_{\mathbf{z}'}(s,y)}_p^2.
\end{aligned}
\end{align}
Therefore, by Lemma \ref{L:Mom}, for $t\in [0,T]$, it holds that
\[
\Norm{\widehat{u}^n_{\mathbf{z}}(t,x)-\widehat{u}^n_{\mathbf{z}'}(t,x)}_p^2
\le
 C\Psi_n(t,x)^2 |\mathbf{z}-\mathbf{z}'|^2 + C \kappa^2 F_n(t,x) +
 C \left(\left(|\mathbf{z}-\mathbf{z}'|^2 +  \kappa^2 F_n\right)\star\calG\right)(t,x),
\]
where $\calG$ is defined in \eqref{E:calG}.
By the same arguments as those led  to \eqref{E:uIntLocal}, we see that
\begin{align*}
 \Norm{\widehat{u}^n_{\mathbf{z}}(t,x)-\widehat{u}^n_{\mathbf{z}'}(t,x)}_p^2
 \le C\Psi_n(t,x)^2|\mathbf{z}-\mathbf{z}'|^2
 + &C\kappa^2
 \int_{0}^t  \ud s \int_\R \ud y\:  G(t-s,x-y) \\
 &\times \Norm{\widehat{u}^n_{\mathbf{z}}(s,y)-\widehat{u}^n_{\mathbf{z}'}(s,y)}_p^2\InPrd{\mathbf{1},\mathbf{h}_n(s,y)}.
\end{align*}
Now set
\[
U^n_{\mathbf{z},\mathbf{z}',K,p}(t):=\sup_{x\in K}\Norm{\widehat{u}^n_{\mathbf{z}}(t,x)-\widehat{u}^n_{\mathbf{z}'}(t,x)}_p^2.
\]
Hence, for some constants $C>0$,
\[
U^n_{\mathbf{z},\mathbf{z}',K,p}(t)\le C |\mathbf{z}-\mathbf{z}'|^2  + 2^{n(1-1/\alpha)}\lambda
\int_{T-2^{-n}}^{t}
(t-s)^{-1/\alpha} U^n_{\mathbf{z},\mathbf{z}',K,p}(s)\ud s.
\]
Then by applying Lemma \ref{L:Contraction},
\begin{align*}
\sup_{|\mathbf{z}|\vee |\mathbf{z}'|\le \kappa}\sup_{x\in K}
\Norm{\widehat{u}^n_{\mathbf{z}}(t,x)-\widehat{u}^n_{\mathbf{z}'}(t,x)}_p^2
\le C |\mathbf{z}-\mathbf{z}'|^2.
\end{align*}
Putting this upper bound into the right-hand of \eqref{E_:I3} proves that \eqref{E:boundI23} holds for $I_3$.
Therefore, \eqref{E:DuInt} becomes
\begin{align*}
\Norm{\widehat{u}^n_{\mathbf{z}}(t,x)-\widehat{u}^n_{\mathbf{z}'}(t,x)}_p^2
\le& \quad C\Psi_n(t,x)\:|\mathbf{z}-\mathbf{z}'|^2\\
& + C
\int_{0}^t \ud s \int_\R \ud y\: G^2(t-s,x-y) \Norm{\widehat{u}^n_{\mathbf{z}}(s,y)-\widehat{u}^n_{\mathbf{z}'}(s,y)}_p^2,
\end{align*}
where $C$ does not depend on $n$, $\mathbf{z}$, $\mathbf{z}'$, $t$ and $x$.
Finally, an application of Lemma \ref{L:Mom} and \eqref{E:hG3} implies that
\begin{align}
\label{E_:IncHatU2}
\sup_{|\mathbf{z}|\vee |\mathbf{z}'|\le \kappa}
\Norm{\widehat{u}^n_{\mathbf{z}}(t,x)-\widehat{u}^n_{\mathbf{z}'}(t,x)}_p^2
\le C \left(2^{-n(1-1/\alpha)}+\Psi_n(t,x)\right)|\mathbf{z}-\mathbf{z}'|^2.
\end{align}
This proves \eqref{E:IncHatU}.

{\bigskip\noindent\bf Step 2.~}
Now we prove \eqref{E2:IncHatU}.
Notice that for $t>T-2^{-n}$,
\begin{align*}
& \Norm{\widehat{u}^{n,i}_{\mathbf{z}}(t,x)-\widehat{u}^{n,i}_{\mathbf{z}'}(t,x)}_p^2\\
\le & \hspace{1.28em} 6 \Norm{\theta^{n,i}_{\mathbf{z}}(t,x)-\theta^{n,i}_{\mathbf{z}'}(t,x)}_p^2 \\
&+ 6\Norm{\int_0^t \ud s\int_\R \ud y\: G(t-s,x-y) \left[\rho'(\widehat{u}^n_{\mathbf{z}}(s,y))-\rho'(\widehat{u}^n_{\mathbf{z}'}(s,y))\right] \widehat{u}_{\mathbf{z}}^{n,i}(s,y)\InPrd{\mathbf{z},\mathbf{h}_n(s,y)}}_p^2\\
&+ 6\Norm{\int_0^t \ud s\int_\R \ud y\: G(t-s,x-y) \rho'(\widehat{u}^n_{\mathbf{z}'}(s,y))\left[\widehat{u}_{\mathbf{z}}^{n,i}(s,y)-\widehat{u}_{\mathbf{z}'}^{n,i}(s,y)\right] \InPrd{\mathbf{z},\mathbf{h}_n(s,y)}}_p^2\\
&+ 6\Norm{\int_0^t \ud s\int_\R \ud y\: G(t-s,x-y) \rho'(\widehat{u}^n_{\mathbf{z}'}(s,y))\widehat{u}_{\mathbf{z}'}^{n,i}(s,y)\InPrd{\mathbf{z}-\mathbf{z}',\mathbf{h}_n(s,y)}}_p^2\\
&+ 24p\int_{T-2^{-n}}^t\int_\R G^2(t-s,x-y)\Norm{\left[\rho'(\widehat{u}^n_{\mathbf{z}}(s,y))-\rho'(\widehat{u}^n_{\mathbf{z}'}(s,y))\right] \widehat{u}_{\mathbf{z}}^{n,i}(s,y)}_p^2 \ud s\ud y\\
&+ 24p\int_{T-2^{-n}}^t\int_\R G^2(t-s,x-y)\Norm{\rho'(\widehat{u}^n_{\mathbf{z}'}(s,y))\left[\widehat{u}_{\mathbf{z}}^{n,i}(s,y)-\widehat{u}_{\mathbf{z}'}^{n,i}(s,y)\right]}_p^2 \ud s\ud y\\
=:& 6\left(I_1+I_2+I_3+I_4+4p I_5+ 4p I_6\right).
\end{align*}
We claim that there exists some constant $C>0$ independent of $n$, $t$ and $x$ such that
\begin{align}\label{E:uI124}
 \sup_{|\mathbf{z}|\vee |\mathbf{z}'|\le \kappa} I_i \le
 C \left[2^{-n(1-1/\alpha)}+\Psi_n(t,x)^2\right]|\mathbf{z}-\mathbf{z}'|^2,\quad i=1,2,4.
\end{align}
Because
\[
\Norm{\theta_{\mathbf{z}}^{n,i}(t,x)-\theta_{\mathbf{z}'}^{n,i}(t,x)}_p
\le
C \int_{0}^t\int_\R G(t-s,x-y)\Norm{\widehat{u}_{\mathbf{z}}^{n}(s,y)-\widehat{u}_{\mathbf{z}'}^{n}(s,y)}_p h_n^i(s,y)
\ud s \ud y,
\]
as a direct consequence of \eqref{E_:IncHatU2}, we see that
\begin{align}
\sup_{|\mathbf{z}|\vee |\mathbf{z}'|\le \kappa}
\Norm{\theta_{\mathbf{z}}^{n,i}(t,x)-\theta_{\mathbf{z}'}^{n,i}(t,x)}_p^2
&\le   C \Psi_n^i(t,x)^2 |\mathbf{z}-\mathbf{z}'|^2,
\label{E:ThetaZZ}
\end{align}
which proves \eqref{E:uI124} for $I_1$.
The case for $I_4$ is a direct consequence of \eqref{E:supxKui} and \eqref{E:PsiN}.
As for $I_2$, by \eqref{E:supxKui} and \eqref{E_:IncHatU2},
\begin{align*}
I_2&\le
\left(\int_0^t\ud s\int_\R \ud y \: G(t-s,x-y) \Norm{\left[\rho'(\widehat{u}^n_{\mathbf{z}}(s,y))-\rho'(\widehat{u}^n_{\mathbf{z}'}(s,y))\right] \widehat{u}_{\mathbf{z}}^{n,i}(s,y)}_p \InPrd{\mathbf{z},\mathbf{h}_n(s,y)}\right)^2\\
&\le
C \left(\int_0^t\ud s\int_\R \ud y \: G(t-s,x-y) \Norm{\widehat{u}^n_{\mathbf{z}}(s,y)-\widehat{u}^n_{\mathbf{z}'}(s,y)}_{2p}\Norm{\widehat{u}_{\mathbf{z}}^{n,i}(s,y)}_{2p} \InPrd{\mathbf{z},\mathbf{h}_n(s,y)}\right)^2\\
&\le
\textcolor{\myred}{C \Psi_n(t,x)^2} \:
|\mathbf{z}-\mathbf{z}'|^2,
\end{align*}
where the constants $C$ do not depend on $n$, $\mathbf{z}$, $\mathbf{z}'$, $t$ and $x$.
Therefore, \eqref{E:uI124} holds.

For $I_3$ we have that
\[
I_3 \le  C
\int_0^t\int_\R
G(t-s,x-y)\Norm{\widehat{u}_{\mathbf{z}}^{n,i}(s,y)-\widehat{u}_{\mathbf{z}'}^{n,i}(s,y)}_p^2
\InPrd{\mathbf{1},\mathbf{h}_n(s,y)}
\ud s\ud y=: C F_n(t,x).
\]
As for $I_6$, we see that
\[
 I_6\le C \int_{T-2^{-n}}^t\int_\R G^2(t-s,x-y)
\Norm{\widehat{u}^{n,i}_{\mathbf{z}}(s,y)-\widehat{u}^{n,i}_{\mathbf{z}'}(s,y)}_{p}^2
 \ud s\ud y.
\]
As for $I_5$, by \eqref{E:supxKui} and \eqref{E:IncHatU}, we have that
\begin{align*}
 I_5&\le C
 \int_{T-2^{-n}}^t\int_\R G^2(t-s,x-y)\Norm{\widehat{u}^n_{\mathbf{z}}(s,y)-\widehat{u}^n_{\mathbf{z}'}(s,y)}_{2p}^2
 \Norm{\widehat{u}_{\mathbf{z}}^{n,i}(s,y)}_{2p}^2 \ud s\ud y\\
 &\le C|\mathbf{z}-\mathbf{z}'|^2
 \int_{T-2^{-n}}^t\int_\R G^2(t-s,x-y)
 \ud s\ud y\\
 &\le C
 2^{-n(1-1/\alpha)}\:|\mathbf{z}-\mathbf{z}'|^2.
\end{align*}
Therefore, for some constant $C$ independent of $n$, $t$ and $x$, it holds that
\begin{align*}
 \Norm{\widehat{u}^{n,i}_{\mathbf{z}}(t,x)-\widehat{u}^{n,i}_{\mathbf{z}'}(t,x)}_p^2
 \le & \: C\left[2^{-n(1-1/\alpha)}+\Psi_n(t,x)^2\right] |\mathbf{z}-\mathbf{z}'|^2 + C F_n(t,x)
\\
& + C \int_0^t\ud s \int_\R \ud y\: G^2(t-s,x-y)\Norm{\widehat{u}^{n,i}_{\mathbf{z}}(s,y)-\widehat{u}^{n,i}_{\mathbf{z}'}(s,y)}_{p}^2.
\end{align*}
Then, by Lemma \ref{L:Mom} with $\vv=0$,
\[
 \Norm{\widehat{u}^{n,i}_{\mathbf{z}}(t,x)-\widehat{u}^{n,i}_{\mathbf{z}'}(t,x)}_p^2
 \le
 C\left[2^{-n(1-1/\alpha)} +\Psi_n(t,x)^2\right] |\mathbf{z}-\mathbf{z}'|^2 + C F_n(t,x) + C (F_n\star\calG)(t,x),\quad t\le T.
\]
Therefore, through a similar argument, the above inequality reduces to
\begin{multline*}
\Norm{\widehat{u}^{n,i}_{\mathbf{z}}(t,x)-\widehat{u}^{n,i}_{\mathbf{z}'}(t,x)}_p^2
\le
C \left[2^{-n(1-1/\alpha)} +\Psi_n(t,x)^2\right]|\mathbf{z}-\mathbf{z}'|^2 \\
+  C
\int_0^t\int_\R
G(t-s,x-y)\Norm{\widehat{u}_{\mathbf{z}}^{n,i}(s,y)-\widehat{u}_{\mathbf{z}'}^{n,i}(s,y)}_p^2
\InPrd{\mathbf{1},\mathbf{h}_n(s,y)}
\ud s\ud y.\quad
\end{multline*}
Then by denoting
\[
U_{\mathbf{z},\mathbf{z}',p}^{n,i}(t):=
\sup_{x\in\R}\Norm{\widehat{u}_{\mathbf{z}}^{n,i}(t,x)-\widehat{u}_{\mathbf{z}'}^{n,i}(t,x)}_p^2
\]
and applying the same arguments as those led  to \eqref{E:supxKui}, we see that
\[
\sup_{x\in\R}\Norm{\widehat{u}_{\mathbf{z}}^{n,i}(t,x)-\widehat{u}_{\mathbf{z}'}^{n,i}(t,x)}_p^2\le
C|\mathbf{z}-\mathbf{z}'|^2.
\]
Then putting this upper bound into the upper bound for $I_6$, we see that
\[
I_6\le
C2^{-n(1-1/\alpha)}|\mathbf{z}-\mathbf{z}'|^2.
\]
Combining these six upper bounds proves \eqref{E2:IncHatU}.

{\bigskip\noindent\bf Step 3.~}
 The proof of  \eqref{E3:IncHatU} is similar and
 we only give an outline of the proof and leave
the details for the  readers.
Write $\widehat{u}_{\mathbf{z}}^{n,i,k}(t,x)$
in six parts as in \eqref{E:Unik6}.
Using the results obtained in Steps 1 and 2,
one first proves that
\[
\sup_{|\mathbf{z}|\vee |\mathbf{z}'|\le \kappa}
\Norm{V_{\mathbf{z}}(t,x)-V_{\mathbf{z}'}(t,x)}_p^2
\le C \left[2^{-n(1-1/\alpha)} + \Psi_n(t,x)^2\right]|\mathbf{z}-\mathbf{z}'|^2,
\]
with $V_{\mathbf{z}}$ being any one of
$\theta^{n,i,k}_{\mathbf{z}}$,
$\theta^{n,k,i}_{\mathbf{z}}$,
$U_1^n$, or $U_2^n$.
The terms $U_3^n$ and $U_4^n$ are used to form the recursion.
Then in the same way as above,
one can prove \eqref{E3:IncHatU}.
This completes the proof of Lemma \ref{L:HolderInZ}.
\end{proof}

\subsubsection{Conditional boundedness}
\label{sss:bound}

The aim of this subsection  is to prove the following proposition.

\begin{proposition}\label{P:thetaBdd}
For all $\kappa>0$ and $r>0$,
there exists constant  $K>0$ such that
for all $1\le i,k\le d$,
\begin{align}
\label{E:thetabdd}
\lim_{n\rightarrow\infty}\bbP\Bigg(
\Big[&\:
\sup_{|\mathbf{z}|\le\kappa} \left|\widehat{u}_{\mathbf{z}}^{n}(T,x_i)\right|
\vee
\sup_{|\mathbf{z}|\le\kappa} \left|\widehat{u}_{\mathbf{z}}^{n,i}(T,x_i)\right|
\vee
\sup_{|\mathbf{z}|\le\kappa} \left|\widehat{u}_{\mathbf{z}}^{n,i,k}(T,x_i)\right|
\Big]
\le K\:\: \Big| \:\: Q_r\Bigg) =1,
\end{align}
where
\[
 Q_r:=
\Big\{|\{u(T,x_i)-y_i\}_{1\le i\le d}|\le r\Big\}.
\]
\end{proposition}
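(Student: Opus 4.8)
The plan is to deduce \eqref{E:thetabdd}, for each of the three random variables and each fixed $(i,k)$, from two facts: a bound in every $L^p(\Omega)$ that is uniform in $n$, and convergence in probability of the quantity to a limit which, once we intersect with $Q_r$, is dominated by a \emph{deterministic} constant. Conditioning on $Q_r$ is indispensable for the second point: the perturbations $\mathbf{h}_n$ are tuned so that $\Psi_n^i(T,x_i)=1$ for every $n$, hence none of these quantities becomes small, and in the limit they converge to explicit expressions built from $\rho,\rho',\rho''$ evaluated at $u(T,x_1),\dots,u(T,x_d)$; the event $Q_r$ forces each $u(T,x_j)$ into the ball $\{|w-y_j|\le r\}$, on which those expressions are bounded. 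Throughout write $X_n^{i,k}$ for the maximum on the left-hand side of \eqref{E:thetabdd}, and note that we may assume $\bbP(Q_r)>0$ (otherwise there is nothing to prove).

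I would first record the uniform $L^p$ bounds. Since $x_1,\dots,x_d$ are proper points of $\mu$ with a bounded density on neighbourhoods, there is a compact $K\ni x_1,\dots,x_d$ on which $\mu$ has a bounded density, so by Lemma \ref{L:LimPmom2} (i.e.\ \eqref{E:lalBdd-Un}) $\sup_{(t,x)\in[0,T]\times K}(J_0^2(t,x)+(J_0^2\star\calG)(t,x))<\infty$. Plugging this into \eqref{E:SupU} bounds $\sup_{|\mathbf{z}|\le\kappa}|\widehat{u}_{\mathbf{z}}^{n}(T,x_i)|$ in $L^p$ uniformly in $n$; and since $\Psi_n(T,x_i)=\Psi_n^i(T,x_i)+\sum_{j\ne i}\Psi_n^j(T,x_i)=1+o(1)$ stays bounded in $n$ by \eqref{E:hG1}--\eqref{E:hG2}, the bounds \eqref{E2:SupU}, \eqref{E3:SupU} do the same for $\sup_{|\mathbf{z}|\le\kappa}|\widehat{u}_{\mathbf{z}}^{n,i}(T,x_i)|$ and $\sup_{|\mathbf{z}|\le\kappa}|\widehat{u}_{\mathbf{z}}^{n,i,k}(T,x_i)|$. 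Hence for each $p\ge 2$ there is $C_p<\infty$, independent of $n$ and of $1\le i,k\le d$, with $\sup_n\Norm{X_n^{i,k}}_p\le C_p$.

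The core step is the identification of the limit. At $\mathbf{z}=0$ the translated field is $u$ itself, and for $s<T$ we have $\widehat{u}_{\mathbf{z}}^{n}(s,y)=u(s,y)$ as soon as $n$ is large; feeding the uniform moment bounds of Lemmas \ref{L:Moment-U}, \ref{L:Moment-DU} and \ref{L:Moment-DDU} into the coupled mild equations \eqref{E:hatUn}, \eqref{E:hatUni}, \eqref{E:hatUnik}, using the concentration $\Psi_n^j(T,x_i)\to\delta_{ij}$ and the convergence of the driving terms $\theta_{\mathbf{z}}^{n,\cdot}(T,x_i)$ furnished by Proposition \ref{P:Psi}, and using \eqref{E:G10} to annihilate the It\^o contributions on $[T-2^{-n},T]$, one gets, for each fixed $\mathbf{z}$, $L^2(\Omega)$-convergence of $\widehat{u}_{\mathbf{z}}^{n}(T,x_i)$, $\widehat{u}_{\mathbf{z}}^{n,i}(T,x_i)$, $\widehat{u}_{\mathbf{z}}^{n,i,k}(T,x_i)$ to limits which are polynomials in $\rho,\rho',\rho''$ evaluated at $u(T,x_1),\dots,u(T,x_d)$ and in the entries of $\mathbf{z}$ --- this is exactly the content of \eqref{E2:UniU-Close} (established there at $\mathbf{z}=0$) and of its first- and second-derivative analogues, proved in the same vein as the lemmas of Section \ref{sss:moment}. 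To promote fixed-$\mathbf{z}$ convergence to convergence of the suprema over $|\mathbf{z}|\le\kappa$, I would invoke the H\"older-in-$\mathbf{z}$ estimates \eqref{E:IncHatU}, \eqref{E2:IncHatU}, \eqref{E3:IncHatU} --- uniform in $n$ since $\Psi_n(T,x_i)$ is bounded --- together with the Kolmogorov continuity theorem (Theorem \ref{T:KolCont}), giving a modulus of continuity for $\mathbf{z}\mapsto\widehat{u}_{\mathbf{z}}^{n}(T,x_i)$, etc., bounded in $L^p$ uniformly in $n$; equicontinuity plus pointwise convergence on the compact cube $\{|\mathbf{z}|\le\kappa\}$ then yields $X_n^{i,k}\to X_\infty^{i,k}$ in probability, where $X_\infty^{i,k}=\max_{|\mathbf{z}|\le\kappa}|g_{i,k}(u(T,x_1),\dots,u(T,x_d);\mathbf{z})|$ for a continuous function $g_{i,k}\colon\R^d\times\{|\mathbf{z}|\le\kappa\}\to\R$.

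Finally, by continuity of the $g_{i,k}$,
\[
K_0:=\max_{1\le i,k\le d}\,\sup\{\,|g_{i,k}(\mathbf{w};\mathbf{z})|:\ |\mathbf{w}-\mathbf{y}|\le r,\ |\mathbf{z}|\le\kappa\,\}<\infty,
\]
and on $Q_r$ every $|u(T,x_j)-y_j|\le r$, so $X_\infty^{i,k}\le K_0$ a.s.\ on $Q_r$. Set $K:=K_0+1$, which does not depend on $i,k$; then for each $(i,k)$,
\[
\bbP(X_n^{i,k}>K\mid Q_r)\le\bbP(X_\infty^{i,k}>K_0\mid Q_r)+\frac{1}{\bbP(Q_r)}\,\bbP(|X_n^{i,k}-X_\infty^{i,k}|>1)\longrightarrow 0,
\]
the first summand being $0$ and the second tending to $0$ by the convergence in probability established above, whence \eqref{E:thetabdd}. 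The only genuinely hard point is the limit identification: passing to the limit inside the coupled system \eqref{E:hatUn}--\eqref{E:hatUnik} while keeping track of the drift term, which does \emph{not} vanish because $\Psi_n^i(T,x_i)\equiv 1$, and doing this uniformly in $\mathbf{z}$; the uniform $L^p$ bounds and the H\"older-in-$\mathbf{z}$ estimates are what legitimise the passage, and the sole function of $Q_r$ is to confine the $u(T,x_j)$ so that the otherwise unbounded limit becomes deterministically bounded.
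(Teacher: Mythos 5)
Your plan hinges on an \emph{identification of the limit} of $\widehat{u}_{\mathbf{z}}^{n}(T,x_i)$, $\widehat{u}_{\mathbf{z}}^{n,i}(T,x_i)$ and $\widehat{u}_{\mathbf{z}}^{n,i,k}(T,x_i)$ for each fixed $\mathbf{z}$ with $|\mathbf{z}|\le\kappa$, claiming they converge in $L^2(\Omega)$ to explicit ``polynomials in $\rho,\rho',\rho''$ evaluated at $u(T,x_1),\dots,u(T,x_d)$ and in the entries of $\mathbf{z}$.'' That step is a genuine gap, and the paper itself flags it: the Remark immediately following Proposition~\ref{P:thetaBdd} states that finding these limits ``seems to be a very hard problem,'' and the authors deliberately prove only the weaker conditional-boundedness statement. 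Your citation of \eqref{E2:UniU-Close} as support is misplaced: that convergence is established only at $\mathbf{z}=0$, where the drift term vanishes identically. For $\mathbf{z}\ne 0$, the forcing $\int G\,\rho(\widehat{u}_{\mathbf{z}}^n)\,\InPrd{\mathbf{z},\mathbf{h}_n}$ is a unit ``kick'' concentrated on $[T-2^{-n},T]\times[x_i-2^{-n},x_i+2^{-n}]$, and the limiting behavior requires passing to the limit through that singular boundary layer in a fully coupled nonlinear system. Even in the scalar heuristic one does not get a polynomial: the limit (if it exists) satisfies a fixed-point equation of flow type, e.g.\ when $\rho(w)=\lambda w$ one formally obtains $v=u(T,x_i)/(1-\lambda z_i)$, not a polynomial in $z_i$. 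The Lipschitz-in-$\mathbf{z}$ estimates \eqref{E:IncHatU}--\eqref{E3:IncHatU} give equicontinuity of $\mathbf{z}\mapsto\widehat{u}^{n,\cdot}_{\mathbf{z}}(T,x_i)$ uniformly in $n$, but without pointwise convergence at fixed $\mathbf{z}\ne 0$ this only yields precompactness of the family, not convergence of the supremum.

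The paper's proof avoids the limit entirely. It writes $\widehat{u}^n_{\mathbf{z}}(T,x_i)-u(T,x_i)$, $\widehat{u}^{n,i}_{\mathbf{z}}(T,x_i)-\rho(u(T,x_i))$, and $\widehat{u}^{n,i,k}_{\mathbf{z}}(T,x_i)$ as sums of pieces, closes the recursive structure with the operator $F_n$ from \eqref{E:F}--\eqref{E:Fm} via Lemma~\ref{L:Contraction}, shows that all but a controlled number of pieces vanish almost surely by Borel--Cantelli, and bounds the surviving pieces by deterministic multiples of $|\rho(u(T,x_i))|$ and $\rho^2(u(T,x_i))$ using the key observation \eqref{e3-0:uHat}, namely $|\Psi_n^i(t,x_i)\rho(u(t,x_i))|\le C|\rho(u(t,x_i))|$, together with the functional identities \eqref{E:F1}--\eqref{E:F6}. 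This delivers an almost-sure bound
\[
\limsup_{n\to\infty}\,\sup_{|\mathbf{z}|\le\kappa}\left|\widehat{u}_{\mathbf{z}}^{n,\cdot}(T,x_i)\right|
\le C\bigl(|u(T,x_i)|+|\rho(u(T,x_i))|+\rho^2(u(T,x_i))\bigr)\quad\text{a.s.,}
\]
which is random but, on $Q_r$, lies in a deterministic ball, whence the conditional probability tends to $1$ by Fatou/dominated convergence without ever identifying a limit. Your final union-bound step and the use of $Q_r$ to localize $u(T,x_j)$ are fine, and your uniform $L^p$ bounds in the first paragraph are also correct and parallel the paper's Proposition~\ref{P:SupU}; the missing piece is that you must replace your claimed convergence of $X_n^{i,k}$ by an a.s.\ $\limsup$ bound, which is what the recursion/Borel--Cantelli machinery in Steps 1--4 of the paper's proof supplies.
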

\begin{remark}It  is natural and  interesting to express the following limits:
\begin{align*}
\lim_{n\rightarrow\infty}
\sup_{|\mathbf{z}|\le\kappa}|\widehat{u}_{\mathbf{z}}^{n}(T,x_i)|,
\quad
\lim_{n\rightarrow\infty}
\sup_{|\mathbf{z}|\le\kappa}|\widehat{u}_{\mathbf{z}}^{n,i}(T,x_i)|
\quad\text{and}\quad
\lim_{n\rightarrow\infty}
\sup_{|\mathbf{z}|\le\kappa}|\widehat{u}_{\mathbf{z}}^{n,i,k}(T,x_i)|
\end{align*}
 as functionals of $u(T,x_i)$.  However,
finding these exact limits seems to be  a very hard problem.
Here fortunately for our current purpose,
we only need the above  weaker result -- the conditional boundedness in \eqref{E:thetabdd}.
\end{remark}

We need some lemmas.
\begin{lemma}\label{L:Unz}
For any $\kappa>0$, $p\ge 2$, $n\in\bbN$, $t\in [0,T]$ and $x\in\R$, we have
\begin{align}\label{E:UnU-Close}
\Norm{
\sup_{|\mathbf{z}|\le\kappa} \left|
\widehat{u}_{\mathbf{z}}^{n}(t,x)-u(t,x)
\right|}_p^2 \le  C\left[2^{-n(1-1/\alpha)} + \Psi_n(t,x)\right].
\end{align}
\end{lemma}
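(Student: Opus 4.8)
The plan is to compare $\widehat{u}_{\mathbf{z}}^n(t,x)$ with $u(t,x)$ through their mild formulations and set up an integral inequality for the $L^p(\Omega)$-norm of the supremum of their difference over $|\mathbf{z}|\le\kappa$. Recall that $\widehat{u}^n_{\mathbf z}(s,y)=u(s,y)$ for $s\le T-2^{-n}$, so the difference is supported on $t>T-2^{-n}$; hence we may assume $t\in[T-2^{-n},T]$. Subtracting the mild equation \eqref{E:mild} for $u$ from \eqref{E:hatUn} for $\widehat{u}^n_{\mathbf z}$ gives
\[
\widehat{u}^n_{\mathbf z}(t,x)-u(t,x)
= \int_0^t\!\!\int_\R G(t-s,x-y)\left[\rho(\widehat u^n_{\mathbf z}(s,y))-\rho(u(s,y))\right]W(\ud s,\ud y)
+\int_0^t\!\!\int_\R G(t-s,x-y)\rho(\widehat u^n_{\mathbf z}(s,y))\InPrd{\mathbf z,\mathbf h_n(s,y)}\ud s\,\ud y.
\]

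First I would handle the supremum over $\mathbf z$. The standard device already used in the paper (e.g., for \eqref{E:SupU}) is to bound $\Norm{\sup_{|\mathbf z|\le\kappa}|X_{\mathbf z}(t,x)|}_p$ by $\Norm{X_0(t,x)}_p$ plus a Kolmogorov-continuity contribution controlled by the $\mathbf z$-increment bound; here $X_0\equiv 0$ since $\widehat u^n_0$ still satisfies the time-shifted drift equation but with the drift present—so actually one compares $\sup_{|\mathbf z|\le\kappa}|\widehat u^n_{\mathbf z}-u|$ directly, using $\widehat u^n_0\ne u$ in general. Concretely: estimate $\Norm{\widehat u^n_0(t,x)-u(t,x)}_p$ via the drift term bound (Cauchy-Schwarz exactly as in \eqref{E:DriftBd}, producing a factor $\Psi_n(t,x)$ and using the already-established uniform moment bound \eqref{E:uPmnt} for $\rho(\widehat u^n_{\mathbf z})$), combined with the stochastic-integral term via \eqref{E:BGD} and the Lipschitz property of $\rho$; then add the $\mathbf z$-increment estimate \eqref{E:IncHatU} from Lemma \ref{L:HolderInZ} through the Kolmogorov continuity theorem (Theorem \ref{T:KolCont}) to pass to the supremum. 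This yields
\[
\Norm{\sup_{|\mathbf z|\le\kappa}|\widehat u^n_{\mathbf z}(t,x)-u(t,x)|}_p^2
\le C\,\Psi_n(t,x)^2 + C\,2^{-n(1-1/\alpha)}
+ C\int_0^t\!\!\int_\R G^2(t-s,x-y)\,\Norm{\sup_{|\mathbf z|\le\kappa}|\widehat u^n_{\mathbf z}(s,y)-u(s,y)|}_p^2\ud s\,\ud y + C\kappa^2 F_n(t,x),
\]
where $F_n(t,x)=\int_0^t\!\int_\R G(t-s,x-y)\Norm{\sup_{|\mathbf z|\le\kappa}|\widehat u^n_{\mathbf z}(s,y)-u(s,y)|}_p^2\InPrd{\mathbf 1,\mathbf h_n(s,y)}\ud s\,\ud y$.

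Next I would resolve this inequality in two passes, exactly mirroring the proof of Lemma \ref{L:Moment-U}. Apply Lemma \ref{L:Mom} to absorb the $G^2$-convolution term, converting the $F_n\star\calG$ contribution (via \eqref{E:FkBd2}, \eqref{E:FkBd}) into a single convolution against $G$ weighted by $\InPrd{\mathbf 1,\mathbf h_n}$; set $U^n(t):=\sup_{x\in\R}\Norm{\sup_{|\mathbf z|\le\kappa}|\widehat u^n_{\mathbf z}(t,x)-u(t,x)|}_p^2$ and use \eqref{E:cn-bd} together with $\Psi_n(t,x)\le C$ (from \eqref{E:hG0}) to reach
\[
U^n(t)\le C\,2^{-n(1-1/\alpha)} + C\,2^{n(1-1/\alpha)}\one_{\{t>T-2^{-n}\}}\int_{T-2^{-n}}^t (t-s)^{-1/\alpha}U^n(s)\,\ud s.
\]
Then Lemma \ref{L:Contraction} with $\epsilon=2^{-n}$ gives $\sup_{t\le T}U^n(t)\le C\,2^{-n(1-1/\alpha)}$ for a constant independent of $n$. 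Feeding this back into the non-supremum form of the inequality once more (keeping the $\Psi_n(t,x)^2$ and $2^{-n(1-1/\alpha)}$ terms un-integrated) and applying Lemma \ref{L:Mom} together with \eqref{E:hG3} to bound the convolution of $\Psi_n$ yields the claimed pointwise estimate $\Norm{\sup_{|\mathbf z|\le\kappa}|\widehat u^n_{\mathbf z}(t,x)-u(t,x)|}_p^2\le C[2^{-n(1-1/\alpha)}+\Psi_n(t,x)]$.

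The main obstacle is the bookkeeping of the drift term: one must keep careful track of which estimates produce $\Psi_n$ (linear, from one pass through the drift) versus $\Psi_n^2$ (from nested drift terms) versus $2^{-n(1-1/\alpha)}$ (from the $G^2$-integral over the short time window, cf. \eqref{E:G10}), and ensure all constants are genuinely independent of $n$—which is exactly why the two-pass Gronwall/contraction argument (first a crude uniform-in-$n$ bound $2^{n(1-1/\alpha)}$-type iteration closed by Lemma \ref{L:Contraction}, then a refinement) is needed rather than a single application of Lemma \ref{L:Mom}. Everything else is routine once the estimates from Lemmas \ref{L:Moment-U}, \ref{L:HolderInZ} and Proposition \ref{P:Psi} are in hand.
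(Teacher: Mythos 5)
Your blueprint (difference equation, integral inequality, Gronwall-type absorption, Kolmogorov to move the supremum inside) matches the paper's in outline, but you miss the one fact that makes the actual proof short, and the contraction iteration you propose has an error that would invalidate it. The crucial fact is $\widehat{u}^n_{\mathbf{0}} = u$. You explicitly write ``using $\widehat{u}^n_0\ne u$ in general,'' but this is false: when $\mathbf{z}=\mathbf{0}$ the drift in \eqref{E:hatUn} carries the factor $\InPrd{\mathbf{0},\mathbf{h}_n}\equiv 0$, so \eqref{E:hatUn} is identical to \eqref{E:mild}, and $\widehat{u}^n_{\mathbf{0}} = u$ a.s.\ by uniqueness. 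The paper's proof rests on this: one first proves the $\sup$-\emph{outside}-the-norm bound $\sup_{|\mathbf{z}|\le\kappa}\Norm{\widehat{u}^n_{\mathbf{z}}(t,x)-u(t,x)}_p^2\le C\left[2^{-n(1-1/\alpha)}+\Psi_n(t,x)^2\right]$ by a \emph{single} application of Lemma~\ref{L:Mom} followed by \eqref{E:hG3}, with no contraction and no self-referential $F_n$ term, because the drift involves $\rho(\widehat{u}^n_{\mathbf{z}})$, which is already controlled uniformly in $n$ and $\mathbf{z}$ via \eqref{E:SupU}/\eqref{E:uPmnt} and hence contributes only a known forcing $C\kappa^2\Psi_n(t,x)^2$, not an unknown. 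Then the Kolmogorov theorem is applied with \eqref{E:IncHatU} as the $\mathbf{z}$-increment bound; since the process vanishes at $\mathbf{z}=\mathbf{0}$, the Kolmogorov supremum estimate \emph{is} the desired left-hand side, with no additional base term to account for.

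Your $F_n$ term and two-pass contraction arise from the unnecessary split $\rho(\widehat{u}^n_{\mathbf{z}})=\rho(u)+[\rho(\widehat{u}^n_{\mathbf{z}})-\rho(u)]$ and would be circular bookkeeping even if set up correctly, but more seriously the iteration you write down is incorrect: your displayed integral inequality for $U^n(t)$ drops the $\Psi_n(t,x)^2$ forcing and retains only $C\,2^{-n(1-1/\alpha)}$. But $\Psi_n(T,x_i)^2\ge\Psi_n^i(T,x_i)^2=1$ is of order one by \eqref{E:hG1}, not $O(2^{-n(1-1/\alpha)})$, so the claimed first-pass conclusion ``$\sup_{t\le T}U^n(t)\le C\,2^{-n(1-1/\alpha)}$'' is false; Lemma~\ref{L:Contraction} can only deliver $\sup_{t\le T}U^n(t)\le C$. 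The factor $2^{-n(1-1/\alpha)}$ in \eqref{E:UnU-Close} does not come from the forcing of a first pass---it enters through \eqref{E:hG3} applied to the convolution of $\Psi_n^2$ against $\calG$ after Lemma~\ref{L:Mom}, which is also why the final bound has both a $2^{-n(1-1/\alpha)}$ term (from the convolution) and a $\Psi_n(t,x)$ term (from the pointwise forcing, using $\Psi_n^2\le C\Psi_n$).
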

\begin{proof}
 Notice that
\begin{align*}
\widehat{u}^n_{\mathbf{z}}(t,x)-u(t,x) =&
\int_0^t\int_\R G(t-s,x-y)
\rho(\widehat{u}^n_{\mathbf{z}}(t,x))\InPrd{\mathbf{z},\mathbf{h}_n(s,y)}\ud s\ud y\\
&+
\int_0^t\int_\R G(t-s,x-y)
\left[\rho(\widehat{u}^n_{\mathbf{z}}(t,x))-\rho(u(t,x))\right]W(\ud s,\ud y).
\end{align*}
Then applying the arguments that led  to \eqref{E:DriftBd} and by \eqref{E:SupU} and \eqref{E:PsiN}, we have that
\begin{multline*}
\sup_{|\mathbf{z}|\le\kappa} \Norm{\widehat{u}^n_{\mathbf{z}}(t,x) -u(t,x)}_p^2
\\ \le
 \kappa^2 C \Psi_n(t,x)^2
 +  C\int_0^t\ud s\int_\R  \ud y \:  G^2(t-s,x-y)
\sup_{|\mathbf{z}|\le\kappa}
\Norm{\widehat{u}^n_{\mathbf{z}}(s,y) -u(s,y)}_p^2,\qquad
\end{multline*}
for all $t\in [0,T]$.
Then by Lemma \ref{L:Mom} with $\vv=0$, we see that
\[
\sup_{|\mathbf{z}|\le\kappa} \Norm{\widehat{u}^n_{\mathbf{z}}(t,x)-u(t,x)}_p^2
\le
\kappa^2 C \Psi_n(t,x)^2 +
C \int_0^t \int_\R (t-s)^{-1/\alpha}G(t-s,x-y)
\Psi_n(s,y)\ud s \ud y.
\]
Hence, an application of \eqref{E:hG3} shows that
\begin{align}\label{E:SupOutIn}
\sup_{|\mathbf{z}|\le\kappa} \Norm{\widehat{u}^n_{\mathbf{z}}(t,x)-u(t,x)}_p^2
\le  C\left[2^{-n(1-1/\alpha)} + \Psi_n(t,x)^2\right]
\le
C\left[2^{-n(1-1/\alpha)} + \Psi_n(t,x)\right].
\end{align}
Thanks to \eqref{E:IncHatU} and the fact that $\Norm{\widehat{u}^n_0(t,x)-u(t,x)}_p\equiv 0$, one can apply the Kolmogorov continuity theorem (see Theorem \ref{T:KolCont}) to move the supremum inside the $L^p(\Omega)$-norm in \eqref{E:SupOutIn}. This proves Lemma \ref{L:Unz}.
\end{proof}

\bigskip

\begin{lemma}
\label{L:thetaRho}
For any $\kappa>0$, $1\le i\le d$ and $n\in\bbN$,
it holds that
\begin{align}
\label{E:thetaRho}
\Norm{
\sup_{|\mathbf{z}|\le \kappa}\left|
\theta_{\mathbf{z}}^{n,i}(T,x_i) - \rho(u(T,x_i))
\right|
}_p^2\le C \left[2^{-n(1-1/\alpha)}+\kappa^2\right].
\end{align}
As a consequence, for all $x\in\R$, when $\kappa=0$,
\begin{align}
 \label{E2:thetaRho}
\lim_{n\rightarrow\infty}
\theta^{n,i}_{0}(T,x)
=
\rho(u(T,x_i)) \one_{\{x=x_i\}}
\quad \text{a.s.}
\end{align}
\end{lemma}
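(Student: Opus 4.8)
The plan is to first establish the quantitative bound \eqref{E:thetaRho} and then deduce the almost sure limit \eqref{E2:thetaRho} by a Borel--Cantelli argument together with \eqref{E:theta-AS0}. For \eqref{E:thetaRho} I would split
\[
\theta_{\mathbf z}^{n,i}(T,x_i) - \rho(u(T,x_i)) = \bigl[\theta_{\mathbf z}^{n,i}(T,x_i) - \theta_{0}^{n,i}(T,x_i)\bigr] + \bigl[\theta_{0}^{n,i}(T,x_i) - \rho(u(T,x_i))\bigr],
\]
the first bracket producing the $\kappa^2$ contribution and the second the $2^{-n(1-1/\alpha)}$ contribution. Throughout I use the key fact $\Psi_n^i(T,x_i)=1$ from \eqref{E:hG1}.

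For the first bracket, since $\mathbf z\mapsto\theta_{\mathbf z}^{n,i}(T,x_i)$ is $C^1$ with $\partial_{z_k}\theta_{\mathbf z}^{n,i}=\theta_{\mathbf z}^{n,i,k}$ (see \eqref{E:thetaznik}; the pathwise $C^1$ regularity follows from Lemma 2.1.4 of \cite{Nualart06} as in the discussion preceding Theorem \ref{T:Criteria}, applied to the continuous modifications constructed in Lemmas \ref{L:Moment-U}--\ref{L:Moment-DDU}), the mean value inequality gives pathwise
\[
\sup_{|\mathbf z|\le\kappa}\bigl|\theta_{\mathbf z}^{n,i}(T,x_i)-\theta_0^{n,i}(T,x_i)\bigr|\le \kappa\sum_{k=1}^d\sup_{|\mathbf w|\le\kappa}\bigl|\theta_{\mathbf w}^{n,i,k}(T,x_i)\bigr|.
\]
Taking the $L^p(\Omega)$-norm and invoking \eqref{E5:SupU} of Proposition \ref{P:SupU} with $\Psi_n^i(T,x_i)=1$ yields $\Norm{\sup_{|\mathbf z|\le\kappa}|\theta_{\mathbf z}^{n,i}(T,x_i)-\theta_0^{n,i}(T,x_i)|}_p^2\le C\kappa^2$, with $C$ independent of $n$. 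For the second bracket, note that $\widehat u_0^n=u$ since the drift in \eqref{E:hatUn} vanishes at $\mathbf z=0$; hence, using $\rho(u(T,x_i))=\rho(u(T,x_i))\Psi_n^i(T,x_i)$ (legitimate because $h_n^i\ge0$),
\[
\theta_0^{n,i}(T,x_i)-\rho(u(T,x_i))=\int_0^T\int_\R G(T-s,x_i-y)\bigl[\rho(u(s,y))-\rho(u(T,x_i))\bigr]h_n^i(s,y)\,\ud s\,\ud y.
\]
On the support of $h_n^i$ one has $|s-T|\le 2^{-n}$ and $|y-x_i|\le 2^{-n}$; since $\rho'$ is bounded and the solution is Hölder continuous in a compact set away from $t=0$ (Theorem 1.6 of \cite{ChenKim14Comparison} for $\alpha\in(1,2)$, Theorem 3.1 of \cite{ChenDalang14Holder} for $\alpha=2$), $\Norm{\rho(u(s,y))-\rho(u(T,x_i))}_p\le C\bigl(|s-T|^{(\alpha-1)/(2\alpha)}+|y-x_i|^{(\alpha-1)/2}\bigr)\le C\,2^{-n(1-1/\alpha)/2}$. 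Pulling this out of the integral by Minkowski's inequality and using $\int_0^T\int_\R G(T-s,x_i-y)h_n^i(s,y)\,\ud s\,\ud y=\Psi_n^i(T,x_i)=1$ gives $\Norm{\theta_0^{n,i}(T,x_i)-\rho(u(T,x_i))}_p^2\le C\,2^{-n(1-1/\alpha)}$. Combining the two estimates via $(a+b)^2\le2a^2+2b^2$ proves \eqref{E:thetaRho}.

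For the consequence \eqref{E2:thetaRho}, take $\kappa=0$. If $x=x_i$, then \eqref{E:thetaRho} with $\kappa=0$ and $p$ chosen large enough makes $\sum_n\bbP\bigl(|\theta_0^{n,i}(T,x_i)-\rho(u(T,x_i))|>2^{-n(1-1/\alpha)/4}\bigr)$ finite via Chebyshev, so Borel--Cantelli gives $\theta_0^{n,i}(T,x_i)\to\rho(u(T,x_i))$ almost surely; if $x\ne x_i$, the claimed limit is $0$, which is precisely \eqref{E:theta-AS0} of Lemma \ref{L:Moment-U} specialized to $\mathbf z=0$.

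The main obstacles are twofold. First, one must justify rigorously that $\theta^{n,i}_{\mathbf z}$ is pathwise continuously differentiable in $\mathbf z$ with derivative $\theta^{n,i,k}_{\mathbf z}$, so that the mean value inequality legitimately reduces the $\kappa^2$ estimate to \eqref{E5:SupU}; this is where the Malliavin-calculus machinery behind Theorem \ref{T:Criteria} and the moment bounds of Lemmas \ref{L:Moment-U}--\ref{L:Moment-DDU} are needed. Second, one must track the Hölder exponents carefully: the time-regularity exponent $(\alpha-1)/(2\alpha)$ is the slower-decaying of the two, and squaring it reproduces exactly $1-1/\alpha$, matching the target rate; everything else is routine once \eqref{E:hG1} and Proposition \ref{P:SupU} are available.
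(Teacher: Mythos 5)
Your proof is correct and yields the stated bound, but the decomposition differs from the paper's. You split at the level of $\theta$ itself, writing $\theta^{n,i}_{\mathbf z}(T,x_i)-\rho(u(T,x_i))=[\theta^{n,i}_{\mathbf z}-\theta^{n,i}_0]+[\theta^{n,i}_0-\rho(u(T,x_i))]$, and control the first bracket by a pathwise mean value inequality together with \eqref{E5:SupU}. The paper instead stays with the integral formula: using $\Psi_n^i(T,x_i)=1$ it writes $\theta^{n,i}_{\mathbf z}(T,x_i)-\rho(u(T,x_i))=\int G\,h_n^i\,[\rho(\widehat u^n_{\mathbf z}(s,y))-\rho(u(T,x_i))]$, moves the supremum over $\mathbf z$ inside the $\ud s\,\ud y$ integral (by the Cauchy--Schwarz/Minkowski manoeuvre used for \eqref{E:DriftBd}), uses the Lipschitz bound on $\rho$, and then splits $\widehat u^n_{\mathbf z}(s,y)-u(T,x_i)$ as $[\widehat u^n_{\mathbf z}(s,y)-u(s,y)]+[u(s,y)-u(T,x_i)]$, invoking Lemma \ref{L:Unz} (i.e.\ \eqref{E:UnU-Close}) for the first piece and the H\"older continuity of $u$ for the second.

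The practical advantage of the paper's route is that it never needs pathwise differentiability of $\mathbf z\mapsto\theta^{n,i}_{\mathbf z}(T,x_i)$: the supremum over $\mathbf z$ is handled entirely by the already-established Kolmogorov-type bound \eqref{E:UnU-Close}, which was derived from the increment estimate \eqref{E:IncHatU}. Your mean value step, by contrast, requires that $\mathbf z\mapsto\theta^{n,i}_{\mathbf z}(T,x_i)$ admits a version that is a.s.\ $C^1$ with pathwise derivative $\theta^{n,i,k}_{\mathbf z}(T,x_i)$. You correctly flag this as the main obstacle; note, though, that the discussion preceding Theorem \ref{T:Criteria} treats the fixed random vector $F=(u(t,x_1),\dots,u(t,x_d))\in(\D^{3,2})^d$, and to apply it to $\theta^{n,i}_{\mathbf z}$ one must first check that $\int G\,h_n^i\,\rho(u(s,y))\,\ud s\,\ud y\in\D^{3,2}$ and then justify the interchange of $\partial_{z_k}$ with the $\ud s\,\ud y$ integration. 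This is doable but is exactly the kind of work the paper's decomposition avoids. A small additional remark: your H\"older exponents $(\alpha-1)/(2\alpha)$ (time) and $(\alpha-1)/2$ (space) for $\Norm{u(s,y)-u(T,x_i)}_p$ are the correct ones from Theorem 1.6 of \cite{ChenKim14Comparison} (resp.\ Theorem 3.1 of \cite{ChenDalang14Holder}); the paper's display \eqref{E:Holder} and the $I_2^n$ bound use $1-1/\alpha$ and $\alpha-1$, which are these exponents doubled (i.e.\ the exponents for $\Norm{\cdot}_p^2$); both conventions produce the claimed $2^{-n(1-1/\alpha)}$ after squaring. The Borel--Cantelli deduction of \eqref{E2:thetaRho} is identical to the paper's, with \eqref{E:theta-AS0} (equivalently \eqref{E:theta-Lp0} with \eqref{E:hG2}) handling the $x\ne x_i$ case.
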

\begin{proof}
Notice that by \eqref{E:cni-1},
\[
\theta_{\mathbf{z}}^{n,i}(T,x_i) = \rho(u(T,x_i)) + \int_0^T \int_\R G(T-s,x_i-y)
\left[\rho(\widehat{u}^n_{\mathbf{z}}(s,y))-\rho(u(T,x_i))\right] h_n^i(s,y)\ud s\ud y.
\]
By the same arguments as those led  to \eqref{E:DriftBd}, we have that
\begin{align*}
&\Norm{\sup_{|\mathbf{z}|\le \kappa}
\left|\theta_{\mathbf{z}}^{n,i}(T,x_i) - \rho(u(T,x_i))\right|
}_p^2\\
&\le C
\int_0^T\int_\R G(T-s,x_i-y)h_n^i(s,y)\:
\Norm{
\sup_{|\mathbf{z}|\le \kappa}\left|
\widehat{u}^n_{\mathbf{z}}(s,y)-u(T,x_i)
\right|
}_p^2 \ud s\ud y\\
&=
C
c_n \int_0^{2^{-n}}\int_{-2^{-n}}^{2^{-n}} G(s,y)
\Norm{
\sup_{|\mathbf{z}|\le \kappa}\left|
\widehat{u}^n_{\mathbf{z}}(T-s,x_i-y)-u(T,x_i)
\right|
}_p^2 \ud s\ud y\\
&\le
C
c_n \int_0^{2^{-n}}\int_{-2^{-n}}^{2^{-n}} G(s,y)
\Norm{
\sup_{|\mathbf{z}|\le \kappa}\left|
\widehat{u}^n_{\mathbf{z}}(T-s,x_i-y)-u(T-s,x_i-y)
\right|
}_p^2 \ud s\ud y\\
&\quad +
C
c_n \int_0^{2^{-n}}\int_{-2^{-n}}^{2^{-n}} G(s,y)
\Norm{
u(T-s,x_i-y)-u(T,x_i)
}_p^2 \ud s\ud y\\
&=: I_1^n + I_2^n,
\end{align*}
where $c_n$ is defined in \eqref{E:cni-1}.
By \eqref{E:UnU-Close}, we have that
\[
I_1^n \le
C
c_n \int_0^{2^{-n}}\int_{-2^{-n}}^{2^{-n}} G(s,y)
\left[2^{-n(1-1/\alpha)}+\kappa^2\right] \ud s\ud y
\le C \left[2^{-n(1-1/\alpha)}+\kappa^2\right].
\]
As for $I_2^n$, by \eqref{E:ScaleG} and \eqref{E:BddG} and
by the H\"older regularity of $u(t,x)$ (see \eqref{E:Holder}), we see that
\begin{align}\notag
I_2^n
\le&
Cc_n \int_0^{2^{-n}}\int_{-2^{-n}}^{2^{-n}} s^{-1/\alpha}
\Norm{
u(T-s,x_i-y)-u(T,x_i)
}_p^2 \ud s\ud y\\
\notag
\le&
C c_n \int_0^{2^{-n}} \ud s \int_{-2^{-n}}^{2^{-n}}\ud y \:
\left[s^{2(1-1/\alpha)-1/\alpha}+ \: s^{-1/\alpha} |y|^{2(\alpha-1)}\right]\\
\le&
C 2^{-2n(1-1/\alpha)} + C2^{-2n(\alpha-1)}.
\label{E_:HolderT}
\end{align}
Therefore, \eqref{E:thetaRho} is proved by
combining these two cases and by using the fact that $1-1/\alpha<\alpha-1$.
Finally, \eqref{E2:thetaRho} is proved by an application of the Borel-Cantelli lemma
thanks to \eqref{E:thetaRho} when $x=x_i$ and \eqref{E:theta-Lp0} when $x\ne x_i$.
This completes the proof of Lemma \ref{L:thetaRho}.
\end{proof}

\bigskip
\begin{lemma}
\label{L:UniU-Close}
For any $\kappa>0$, $1\le i\le d$ and $n\in\bbN$,
it holds that
\begin{align}
\label{E:UniU-Close}
\Norm{
\sup_{|\mathbf{z}|\le \kappa}\left|
\widehat{u}_{\mathbf{z}}^{n,i}(T,x_i) - \rho(u(T,x_i))
\right|
}_p^2\le  C\left[2^{-n(1-1/\alpha)} + \kappa^2\right].
\end{align}
As a consequence, for all $x\in\R$, when $\kappa=0$,
\begin{align}
 \label{E2:UniU-Close}
\lim_{n\rightarrow\infty}
\widehat{u}^{n,i}_{0}(T,x)
&=
\rho(u(T,x_i))\one_{\{x=x_i\}}
\quad \text{a.s.}
\end{align}
\end{lemma}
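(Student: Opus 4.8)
The plan is to mirror the proof of Lemma \ref{L:thetaRho}, exploiting the integral equation \eqref{E:hatUni} for $\widehat{u}^{n,i}_{\mathbf{z}}$ together with the normalization $\Psi_n^i(T,x_i)=1$ coming from \eqref{E:hG1}. First, for a fixed $\mathbf{z}$ with $|\mathbf{z}|\le\kappa$, I would decompose $\widehat{u}^{n,i}_{\mathbf{z}}(T,x_i)-\rho(u(T,x_i))$ into three pieces according to \eqref{E:hatUni}: the drift-free term $\theta^{n,i}_{\mathbf{z}}(T,x_i)-\rho(u(T,x_i))$, the stochastic-integral term over $[T-2^{-n},T]\times\R$, and the Lebesgue-integral term carrying the factor $\InPrd{\mathbf{z},\mathbf{h}_n}$. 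The first piece is bounded by $C[2^{-n(1-1/\alpha)}+\kappa^2]$ directly by Lemma \ref{L:thetaRho}. For the stochastic-integral term, the Burkholder--Davis--Gundy inequality \eqref{E:BGD}, the boundedness of $\rho'$, the uniform moment bound \eqref{E:supxKui} for $\widehat{u}^{n,i}_{\mathbf{z}}$, and the estimate \eqref{E:G10} on $\int_{T-2^{-n}}^T\int_\R G^2(T-s,x_i-y)\,\ud s\,\ud y$ combine to give a bound $C\,2^{-n(1-1/\alpha)}$. For the Lebesgue-integral term I would use the Cauchy--Schwarz argument that produced \eqref{E:DriftBd}, together with $\Psi_n(T,x_i)\le C$ (a consequence of \eqref{E:hG1}) and again \eqref{E:supxKui}, to obtain a bound $C\kappa^2$. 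Summing the three gives $\Norm{\widehat{u}^{n,i}_{\mathbf{z}}(T,x_i)-\rho(u(T,x_i))}_p^2\le C[2^{-n(1-1/\alpha)}+\kappa^2]$ for each $\mathbf{z}$.

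Second, I would pass to the supremum over $|\mathbf{z}|\le\kappa$. The H\"older-in-$\mathbf{z}$ estimate \eqref{E2:IncHatU}, evaluated at $(t,x)=(T,x_i)$, has coefficient $C[2^{-n(1-1/\alpha)}+\Psi_n(T,x_i)^2]$, which is bounded uniformly in $n$ since $\Psi_n^i(T,x_i)=1$ and $\Psi_n^j(T,x_i)\le C$; hence $\mathbf{z}\mapsto\widehat{u}^{n,i}_{\mathbf{z}}(T,x_i)$ has Lipschitz $L^p$-increments with a constant independent of $n$. Applying the Kolmogorov continuity theorem (Theorem \ref{T:KolCont}) on the ball $\{|\mathbf{z}|\le\kappa\}\subset\R^d$ lets me move the supremum inside the $L^p$-norm at the cost of an ($n$-independent) power of $\kappa$; since the $\kappa$-term is needed below only with $\kappa=0$, this yields \eqref{E:UniU-Close}.

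Finally, for the a.s.\ statement \eqref{E2:UniU-Close} with $\kappa=0$: because $1-1/\alpha>0$, the series $\sum_n 2^{-n(1-1/\alpha)}$ converges, so Chebyshev's inequality and the Borel--Cantelli lemma upgrade \eqref{E:UniU-Close} to $\widehat{u}^{n,i}_0(T,x_i)\to\rho(u(T,x_i))$ a.s. For $x\ne x_i$ I would rerun the Step-1 computation at the point $x$ with $\mathbf{z}=0$ (so that $\widehat{u}^n_0=u$ and the Lebesgue-integral term vanishes): the companion limit \eqref{E:theta-AS0} gives $\theta^{n,i}_0(T,x)\to0$ a.s., while the stochastic-integral term has $L^p$-norm squared $\le C\,2^{-n(1-1/\alpha)}$ by \eqref{E:BGD}, \eqref{E:G10}, the moment bound \eqref{E:UniLp0}, and $\Psi_n(s,y)\le C$; Borel--Cantelli then gives $\widehat{u}^{n,i}_0(T,x)\to0=\rho(u(T,x_i))\one_{\{x=x_i\}}$ a.s.

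The main obstacle is the bookkeeping in Step 1: one must verify that every constant attached to the special evaluation point $(T,x_i)$---the $\theta$-correction from Lemma \ref{L:thetaRho}, the product of the BDG constant with the localized $L^2$-mass \eqref{E:G10} of $G$ over the window of length $2^{-n}$, and the uniform-in-$n$ moment bounds \eqref{E:supxKui}---collapses onto the single scale $2^{-n(1-1/\alpha)}$ (plus $\kappa^2$). This is precisely where the engineered normalization $c_n\asymp 2^{n(2-1/\alpha)}$, which forces $\Psi_n^i(T,x_i)=1$, pays off; everything else is a routine transcription of the arguments already used for Lemmas \ref{L:thetaRho} and \ref{L:Unz}.
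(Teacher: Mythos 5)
Your Step~1 decomposition mirrors the structure of the paper's proof (the paper further splits the drift into the off-diagonal $j\ne i$ and diagonal $j=i$ pieces, but the estimates you invoke are the correct ones and combine the same way). The genuine problem is in Step~2, and it is the one delicate point of the lemma: you pass to the supremum by applying Theorem~\ref{T:KolCont} to the whole map $\mathbf{z}\mapsto\widehat{u}^{n,i}_{\mathbf{z}}(T,x_i)$ through \eqref{E2:IncHatU}. At $(T,x_i)$ that H\"older-in-$\mathbf{z}$ estimate has constant $C\bigl[2^{-n(1-1/\alpha)}+\Psi_n(T,x_i)^2\bigr]$, which is $O(1)$ because $\Psi_n^i(T,x_i)=1$. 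When the $L^p$-increments of a $d$-parameter process are merely Lipschitz, Theorem~\ref{T:KolCont} forces $\theta<(\gamma-d)/p<1$, so the supremum bound you get is of order $\kappa^{2\theta}$ with $\theta<1$, which for small $\kappa$ is strictly weaker than the stated $\kappa^{2}$ in \eqref{E:UniU-Close}.

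The paper gets the sharp exponent by never applying Kolmogorov to the full derivative. The $\theta$-term already carries $\sup_{|\mathbf{z}|\le\kappa}$ inside the norm by Lemma~\ref{L:thetaRho}, and the Lebesgue pieces do too via the Minkowski/Cauchy--Schwarz argument behind \eqref{E:DriftBd}; those give $C[2^{-n(1-1/\alpha)}+\kappa^{2}]$ and $C\kappa^{2}$, with $\kappa^2$ appearing through the explicit $|z_i|$ factor. Kolmogorov is applied \emph{only} to the stochastic-integral piece $U_1^n$, and the key observation is that $U_1^n$ alone has H\"older-in-$\mathbf{z}$ constant $O(2^{-n(1-1/\alpha)})$ --- this comes from the $I_5+I_6$ estimates inside Step~2 of the proof of Lemma~\ref{L:HolderInZ}, not from the omnibus bound \eqref{E2:IncHatU}. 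With that refinement $\Norm{\sup_{|\mathbf{z}|\le\kappa}|U^n_{1,\mathbf{z}}|}_p^2\le C\,2^{-n(1-1/\alpha)}$ for fixed $\kappa$, and summing the four pieces lands exactly on \eqref{E:UniU-Close}. Your remark that only $\kappa=0$ is needed downstream is correct, so your weaker bound would not break the paper's applications, but it does not prove the lemma as stated. Your derivation of \eqref{E2:UniU-Close} at $\kappa=0$ is fine; in fact the explicit decomposition for $x\ne x_i$ (using \eqref{E:theta-AS0} plus a BDG/\eqref{E:G10} bound on the stochastic integral) is slightly more careful than the paper's one-line appeal to \eqref{E:UniLp0}, since as stated the latter involves the full $\Psi_n(T,x)$ and does not visibly vanish when $x=x_j$ for some $j\ne i$; the vanishing only becomes apparent from the $\Psi_n^i$-bound that the proof of \eqref{E:UniLp0} actually produces when $\kappa=0$.
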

\begin{proof}
The proof for \eqref{E:UniU-Close} is similar to that for \eqref{E:UnU-Close}, where we need \eqref{E:thetaRho}.
From \eqref{E:hatUni}, we see that
\begin{align*}
 \widehat{u}^{n,i}_{\mathbf{z}}(T,x_i)-\rho(u(T,x_i))
 =& \quad \theta^{n,i}_{\mathbf{z}}(T,x_i)-\rho(u(T,x_i))\\
  &+\int_{T-2^{-n}}^T\int_\R   G(T-s,x_i-y)
 \rho'(\widehat{u}^{n}_{\mathbf{z}}(s,y))\widehat{u}^{n,i}_{\mathbf{z}}(s,y) W(\ud s,\ud y)\\
 &+
 \sum_{j=1, j\ne i}^n \int_{0}^T\ud s\int_\R  \ud y \:  G(T-s,x_i-y)z_j
  \rho'(\widehat{u}^{n}_{\mathbf{z}}(s,y))\widehat{u}^{n,i}_{\mathbf{z}}(s,y)h_n^j(s,y)\\
 &+
 \int_{0}^T\ud s\int_\R  \ud y \:  G(T-s,x_i-y)
 \rho'(\widehat{u}^{n}_{\mathbf{z}}(s,y))\widehat{u}^{n,i}_{\mathbf{z}}(s,y)
 z_i h_n^i(s,y)\\
 =:& \: U_0^n+U_1^n + U_2^n + U_3^n.
\end{align*}
By \eqref{E:thetaRho},
\[
\Norm{\sup_{|\mathbf{z}|\le \kappa} \left| U_0^n \right| }_p^2\le C\left[2^{-n(1-1/\alpha)}+\kappa^2\right].
\]
By \eqref{E:BGD}, \eqref{E:supxKui} and \eqref{E:G10},
\[
\sup_{|\mathbf{z}|\le \kappa} \Norm{ U_1^n}_p^2\le C \int_0^{2^{-n}} \int_\R G(s,y)^2 \ud s\ud y
\le C 2^{-n(1-1/\alpha)}.
\]
Write $U_1^n$ as $U^n_{1,\mathbf{z}}$.
The difference
$\Norm{U_{1,\mathbf{z}}-U_{1,\mathbf{z}'}}_p^2$ has been
estimated in Step 2 in the proof of Lemma \ref{L:HolderInZ}, which is equal to $I_5+I_6$ (with the notations there). Hence,
\[
\sup_{|\mathbf{z}|\vee |\mathbf{z}'|\le \kappa}\Norm{U_{1,\mathbf{z}}-U_{1,\mathbf{z}'}}_p^2\le C 2^{-n(1-1/\alpha)} |\mathbf{z}-\mathbf{z}'|^2.
\]
Then one can apply the Kolmogorov continuity theorem (see Theorem \ref{T:KolCont}) to see that
\[
\Norm{\sup_{|\mathbf{z}|\le \kappa} \left| U_{1,\mathbf{z}}^n\right| }_p^2\le C 2^{-n(1-1/\alpha)}.
\]
By the same arguments led  to \eqref{E:DriftBd} and by \eqref{E2:SupU} and \eqref{E:PsiN},
using the boundedness of $\rho'$, we see that
\[
\Norm{\sup_{|\mathbf{z}|\le \kappa}\left|U_2^n\right|}_p^2\le \kappa^2 C \sum_{j\ne i}\Psi_n^i(T,x_j)\le \kappa^2 C.
\]
As for $U_3^n$, by \eqref{E2:SupU} and by the boundedness of $\rho'$, we have that
\begin{align*}
\Norm{\sup_{|\mathbf{z}|\le \kappa} \left| U_3^n \right| }_p^2
& \le \kappa^2 C
\int_0^T\ud s\int_\R \ud y\:
G(T-s,x_i-y)\left[2^{-n(1-1/\alpha)} + 1\right] h_n^i(s,y)\\
&\le
\kappa^2 C\left[2^{-n(1-1/\alpha)} + \Psi_n^i(T,x_i)\right]\\
&=
\kappa^2 C\left[2^{-n(1-1/\alpha)} + 1\right],
\end{align*}
where in the last step we have used the fact \eqref{E:hG1}.
Therefore,
\eqref{E:UniU-Close} is proved by combining these three terms.
Finally, \eqref{E2:UniU-Close} is application of the Borel-Cantelli lemma
thanks to \eqref{E:UniU-Close} when $x=x_i$ and \eqref{E:UniLp0} when $x\ne x_i$.
This completes the proof of Lemma \ref{L:UniU-Close}.
\end{proof}

\bigskip
\begin{lemma}\label{L:HolderuHatx}
For all $n\in\bbN$, $t\in [T-2^{-n},T]$, $p\ge 2$, $1\le i,k\le d$, $K>0$, and $x,y\in(-K,K)$,
we have that
\begin{align} \label{E1:Space}
\Norm{\sup_{|\mathbf{z}|\le\kappa}\left| \widehat{u}^n_{\mathbf{z}}(t,x)-\widehat{u}^n_{\mathbf{z}}(t,y)\right| }_p^2
&\le C \left(|x-y|^2+|x-y|^{\alpha-1}+2^{n/\alpha}|x-y|\right),\\
\Norm{\sup_{|\mathbf{z}|\le\kappa}\left|
\widehat{u}^{n,i}_{\mathbf{z}}(t,x)-\widehat{u}^{n,i}_{\mathbf{z}}(t,y)
\right|
}_p^2
&\le C \left(|x-y|^{\alpha-1}+2^{n/\alpha}|x-y|\right),
\label{E2:Space}\\
\label{E3:Space}
\Norm{
\sup_{|\mathbf{z}|\le\kappa}\left|
\widehat{u}^{n,i,k}_{\mathbf{z}}(t,x)-\widehat{u}^{n,i,k}_{\mathbf{z}}(t,y)
\right|}_p^2
&\le C \left(|x-y|^{\alpha-1}+2^{n/\alpha}|x-y|\right).
\end{align}
\end{lemma}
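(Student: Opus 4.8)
\textbf{Proof proposal for Lemma \ref{L:HolderuHatx}.}

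The plan is to prove the three spatial increment estimates by the same mild-formulation / Burkholder-Davis-Gundy / two-parameter Gronwall scheme used throughout Section \ref{SS:Tech}, carrying the increment in the spatial variable. The new feature, relative to the increments in $\mathbf{z}$ treated in Lemma \ref{L:HolderInZ}, is that the driving data now involves spatial increments of the kernel $G$ and of $J_0$, so the estimates must use the kernel-increment bounds from \cite{ChenDalang15FracHeat} (Proposition 4.4, together with \eqref{E:ScaleG} and \eqref{E:BddG}). Throughout I would fix $t\in[T-2^{-n},T]$, $x,y\in(-K,K)$, and write $v_{\mathbf z}^n(t,\cdot):=\widehat u_{\mathbf z}^n(t,x)-\widehat u_{\mathbf z}^n(t,y)$, and similarly for the derivatives; the claimed bounds can all be phrased in terms of the three elementary quantities $|x-y|^2$, $|x-y|^{\alpha-1}$, and $2^{n/\alpha}|x-y|$.

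For \eqref{E1:Space}, subtract the mild equations \eqref{E:hatUn} at $x$ and at $y$. One gets three contributions: the deterministic part $J_0(t,x)-J_0(t,y)$ (which is identically zero here since $x,y$ are proper points with bounded density near them and $t$ is away from $0$ — more precisely one bounds it using H\"older regularity of $J_0$ on a neighborhood, contributing $|x-y|^{\alpha-1}$, cf.\ Lemma \ref{L:LimPmom}); the stochastic integral term, where the integrand difference is $\big[G(t-s,x-\cdot)-G(t-s,y-\cdot)\big]\rho(\widehat u_{\mathbf z}^n)$ plus $G(t-s,y-\cdot)\big[\rho(\widehat u_{\mathbf z}^n(s,x\text{-shift}))-\rho(\widehat u_{\mathbf z}^n(s,y\text{-shift}))\big]$ — wait, more cleanly: one keeps the argument $y$ fixed in the second slot and writes the integrand as $G(t-s,x-w)\rho(\widehat u_{\mathbf z}^n(s,w))-G(t-s,y-w)\rho(\widehat u_{\mathbf z}^n(s,w))$, so after BGD \eqref{E:BGD} the leading term is $\int_{T-2^{-n}}^t\int_\R |G(t-s,x-w)-G(t-s,y-w)|^2\,\Norm{\rho(\widehat u_{\mathbf z}^n(s,w))}_p^2\,\ud s\,\ud w$, bounded by $C|x-y|^{\alpha-1}$ via \cite[Proposition 4.4]{ChenDalang15FracHeat} (using $t\in[T-2^{-n},T]$ to keep the time interval of length $\le 2^{-n}$ bounded and using the uniform moment bound \eqref{E:uPmnt}); and the drift term $\int_0^t\int_\R G(t-s,x-w)\rho(\widehat u_{\mathbf z}^n)\InPrd{\mathbf z,\mathbf h_n}\,\ud s\,\ud w$ differenced in $x$, which by the Cauchy–Schwarz trick that led to \eqref{E:DriftBd} and by $\InPrd{\mathbf1,\mathbf h_n}\le C2^{n(2-1/\alpha)}\one_{[T-2^{-n},T]\times\cup_i[x_i-2^{-n},x_i+2^{-n}]}$ contributes the term $2^{n/\alpha}|x-y|$ (here one crude bound is $\int |G(t-s,x-w)-G(t-s,y-w)|\InPrd{\mathbf1,\mathbf h_n}(s,w)\,\ud s\,\ud w\le C\,c_n\,2^{-n}\cdot 2^{-n}\cdot\sup|\partial_x G|\cdot|x-y|$, and $\sup_{s\le 2^{-n}}\sup_w|\partial_xG(s,w)|\sim 2^{2n/\alpha}$ by \eqref{E:ScaleG}, \eqref{E:BddG}, giving the stated $2^{n/\alpha}|x-y|$ after using $c_n\le C2^{n(2-1/\alpha)}$ from \eqref{E:cn-bd}). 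Collecting terms and applying Lemma \ref{L:Mom} with $\varsigma=0$ (after bounding the convolution tail using \eqref{E:hG3}-type estimates) closes the inequality for $v_{\mathbf z}^n$, then one moves the $\sup_{|\mathbf z|\le\kappa}$ inside the $L^p$ norm via the Kolmogorov continuity theorem (Theorem \ref{T:KolCont}) using the $\mathbf z$-H\"older bound \eqref{E:IncHatU}.

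For \eqref{E2:Space} and \eqref{E3:Space} one differentiates the equations \eqref{E:hatUni} and \eqref{E:hatUnik} in $x$: the source terms $\theta_{\mathbf z}^{n,i}(t,x)-\theta_{\mathbf z}^{n,i}(t,y)$ and $\theta_{\mathbf z}^{n,i,k}(t,x)-\theta_{\mathbf z}^{n,i,k}(t,y)$ are controlled using \eqref{E1:Space} (through the dependence of $\theta$ on $\widehat u_{\mathbf z}^n$) together with spatial increments of $G$ against $h_n^i$, and the stochastic and drift recursion terms are treated exactly as in \eqref{E1:Space} but now with moment inputs \eqref{E:supxKui}, \eqref{E:supxKuik} for $\widehat u_{\mathbf z}^{n,i}$, $\widehat u_{\mathbf z}^{n,i,k}$ and increment inputs \eqref{E2:IncHatU}, \eqref{E3:IncHatU} for the $\mathbf z$-continuity. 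One obtains an integral inequality of the form $\Norm{\sup_{|\mathbf z|\le\kappa}|w_{\mathbf z}^n(t)|}_p^2\le C\big(|x-y|^{\alpha-1}+2^{n/\alpha}|x-y|\big)+C\int_0^t\int_\R G^2(t-s,x'-w)\Norm{\sup|w_{\mathbf z}^n(s)|}_p^2\,\ud s\,\ud w + (\text{drift term handled as in Lemma \ref{L:Moment-DU}, Step 2})$, and Lemma \ref{L:Mom} followed by the $U^{n,i}$-type argument of \eqref{E:supxKui} (applying Lemma \ref{L:Contraction} with $\epsilon=2^{-n}$) and finally Theorem \ref{T:KolCont} yields the claim; the $|x-y|^2$ term does not appear for the derivatives because there is no analogue of the deterministic $J_0$ increment.

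The main obstacle I anticipate is bookkeeping the $n$-dependence in the drift term carefully: because $\InPrd{\mathbf1,\mathbf h_n}$ blows up like $2^{n(2-1/\alpha)}$ while being supported on a set of measure $\sim 2^{-2n}$, one must be sure that the spatial increment of $G$ integrated against it produces exactly a factor $2^{n/\alpha}|x-y|$ and not a worse power of $2^n$ — this requires using the pointwise derivative bound $|\partial_x G(s,w)|\le C s^{-2/\alpha}(1+|s^{-1/\alpha}w|^{2+\alpha})^{-1}$ from \eqref{E:ScaleG}–\eqref{E:BddG} rather than a crude mean-value estimate, and interpolating $|G(s,x-w)-G(s,y-w)|\le C\min(G(s,x-w)+G(s,y-w),\ |x-y|\sup|\partial_x G|)$ to balance the two regimes $|x-y|\le 2^{-n/\alpha}$ and $|x-y|>2^{-n/\alpha}$. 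Once that interpolation is set up, the rest is the routine Gronwall/Kolmogorov machinery already used repeatedly in this section, so I would present \eqref{E1:Space} in full and only sketch \eqref{E2:Space}–\eqref{E3:Space}, leaving details to the reader as is done for Lemma \ref{L:HolderInZ}, Step 3.
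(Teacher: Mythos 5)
Your overall scaffolding (split the mild equation at $x$ and at $y$; estimate the $J_0$, stochastic-integral, and drift pieces separately; move $\sup_{|\mathbf z|\le\kappa}$ inside the $L^p$-norm by Kolmogorov) is the right one, and the citation of Proposition 4.4 of \cite{ChenDalang15FracHeat} for the $|x-y|^{\alpha-1}$ term is exactly what the paper uses. However there are three concrete discrepancies/gaps worth flagging.

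First, once you settle (as you eventually do, after the ``wait, more cleanly'' self-correction) on writing the integrand as $[G(t-s,x-z)-G(t-s,y-z)]\rho(\widehat u^n_{\mathbf z}(s,z))$ with the \emph{same} inner argument $(s,z)$ in both terms, there is no recursion to close: the uniform moment bound \eqref{E:SupU} controls $\Norm{\rho(\widehat u^n_{\mathbf z}(s,z))}_p$ directly, and the whole estimate is a single application of BGD plus the kernel-increment bound. Invoking Lemma \ref{L:Mom} ``to close the inequality'' is a Gronwall loop that is never entered; the same applies to \eqref{E2:Space}--\eqref{E3:Space}, where the uniform bounds \eqref{E2:SupU}, \eqref{E3:SupU} make the recursion terms directly estimable. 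This is not a fatal flaw, but it misrepresents the structure and adds needless complexity.

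Second, your treatment of the $J_0$-increment is wrong in its first formulation ($J_0(t,x)-J_0(t,y)$ is not identically zero) and imprecise in the backtrack: the paper does not invoke H\"older regularity of the density $f$, but the fact that for $t\in[T-2^{-n},T]$ with $T\ge 1$ the map $x\mapsto J_0(t,x)=(\mu*G(t,\cdot))(x)$ is Lipschitz (Lemma 4.10 of \cite{ChenDalang15FracHeat}); squaring gives $|x-y|^2$, which is precisely the first term in \eqref{E1:Space} and is unrelated to Lemma \ref{L:LimPmom}.

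Third, and most substantively, your ``crude'' bound for the drift term fails: bounding $|G(s,x-w)-G(s,y-w)|$ by $|x-y|\sup_w|\partial_x G(s,\cdot)|\sim |x-y| s^{-2/\alpha}$ and integrating $\ud s$ over $(0,2^{-n})$ gives $\int_0^{2^{-n}} s^{-2/\alpha}\ud s$, which \emph{diverges} for $\alpha\le 2$ since $2/\alpha\ge 1$. You do flag the need for a more careful argument, but the two-regime interpolation you sketch is not what the paper does, and you do not carry it out. The paper's mechanism is Lemma \ref{L:IntG-G}, which applies the mean-value theorem to the \emph{rescaled} kernel $G(1,\cdot)$ (whose derivative is bounded) and then distributes the time singularity $s^{-1/\alpha}$ by H\"older's inequality between two conjugate exponents $\beta\in(1,\alpha)$ and $\beta^*$; this yields exactly $\int_0^\epsilon\int_{-\epsilon}^\epsilon|G(s,y-z)-G(s,z)|\ud s\ud z\le C\epsilon^{2(1-1/\alpha)}|y|$, which multiplied by $c_n\sim 2^{n(2-1/\alpha)}$ produces $2^{n/\alpha}|x-y|$. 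Relatedly, in your sketch of \eqref{E2:Space} the increment $\theta^{n,i}_{\mathbf z}(t,x)-\theta^{n,i}_{\mathbf z}(t,y)$ is governed by the \emph{outer} kernel increment $G(t-s,x-\cdot)-G(t-s,y-\cdot)$ against $h_n^i$ (hence again Lemma \ref{L:IntG-G}), not by the $\widehat u^n_{\mathbf z}$-increment from \eqref{E1:Space} as you suggest.
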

\begin{proof}
We first prove \eqref{E1:Space}. Notice that
\begin{align*}
\widehat{u}^n_{\mathbf{z}}(t,x)-\widehat{u}^n_{\mathbf{z}}(t,y)
=&J_0(t,x)-J_0(t,y)\\
&+\int_0^t\int_\R \left[G(t-s,x-z)-G(t-s,y-z)\right]\rho\left(\widehat{u}^n_{\mathbf{z}}(s,z)\right)W(\ud s,\ud z)\\
&+
\sum_{i=1}^d z_i \int_0^t\int_\R \left[G(t-s,x-z)-G(t-s,y-z)\right]\rho\left(\widehat{u}^n_{\mathbf{z}}(s,z)\right)h_n^i(s,z)\ud s\ud z\\
=:&I_1 + I_2 +I_3.
\end{align*}
The continuity of $x\mapsto J_0(t,x)$ (see Lemma 4.10 in \cite{ChenDalang15FracHeat}) implies that $|I_1|\le C |x-y|$.
By \eqref{E:SupU} and by the same arguments as those in the proof of Theorem 1.6 in \cite{ChenKim14Comparison},
we see that
\[
\sup_{|\mathbf{z}|\le \kappa} \Norm{I_2}_p^2 \le C |x-y|^{\alpha-1}.
\]
Write $I_2$ as $I_{2,\mathbf{z}}$. Then by \eqref{E:IncHatU} and \eqref{E:G-x},
\begin{align*}
 \sup_{|\mathbf{z}|\vee |\mathbf{z}'|\le\kappa}
 \Norm{I_{2,\mathbf{z}} - I_{2,\mathbf{z}'}}_p^2
 &\le
 C \int_0^t \int_\R \left[G(t-s,x-w)-G(t-s,y-w)\right]^2\\
 &\qquad\qquad \times
 \sup_{|\mathbf{z}|\vee |\mathbf{z}'|\le\kappa}
 \Norm{\hat{u}^n_{\mathbf{z}}(s,w)-\hat{u}^n_{\mathbf{z}'}(s,w)}_p^2\ud s\ud w\\
 &\le
 C |\mathbf{z}-\mathbf{z}'|^2\int_0^t \int_\R \left[G(t-s,x-w)-G(t-s,y-w)\right]^2\ud s\ud w\\
 &\le
 C |x-y|^{\alpha-1}|\mathbf{z}-\mathbf{z}'|^2.
\end{align*}
Hence, an application of the Kolmogorov continuity theorem (see Theorem \ref{T:KolCont}) implies that
\[
\Norm{\sup_{|\mathbf{z}|\le \kappa} \left| I_{2,\mathbf{z}}\right| }_p^2 \le C |x-y|^{\alpha-1}.
\]
As for $I_3$, by \eqref{E:SupU}, we see that
\begin{align*}
\Norm{\sup_{|\mathbf{z}|\le \kappa}\left| I_3\right| }_p^2 \le &C \one_{\{t>T-2^{-n}\}}2^{n(2-1/\alpha)}\int_{T-2^{-n}}^t\int_{x_i-2^{-n}}^{x_i+2^{-n}}
\left|G(t-s,x-z)-G(t-s,y-z)\right| \ud s\ud z\\
\le &C 2^{n(2-1/\alpha)}\int_0^{2^{-n}} \int_{x_i-2^{-n}}^{x_i+2^{-n}}
\left|G(s,x-z)-G(s,y-z)\right| \ud s\ud z\\
\le &C 2^{n/\alpha} |x-y|,
\end{align*}
where in the last step we have applied Lemma \ref{L:IntG-G}.

Now we prove \eqref{E2:Space}.
From \eqref{E:hatUni}, write the difference $\hat{u}^{n,i}_{\mathbf{z}}(t,x)-
\hat{u}^{n,i}_{\mathbf{z}}(t,y)$ in three parts as above.
By moment bound \eqref{E:SupU}, we see that
the difference for $\theta^{n,i}_{\mathbf{z}}$ reduces to
\begin{align*}
 \Norm{\sup_{|\mathbf{z}|\le\kappa}\left|
 \theta^{n,i}_{\mathbf{z}}(t,x)-\theta^{n,i}_{\mathbf{z}}(t,y)
 \right|
 }_p^2
 \le&
 C \left(\int_0^t\int_\R \left|G(t-s,x-z)-G(t-s,y-z)\right|h_n^i(s,z)\ud s\ud z\right)^2.
\end{align*}
Then by Lemma \ref{L:IntG-G}, this part is bounded by $C 2^{n/\alpha} |x-y|$.
Thanks to the boundedness of $\rho'$, by the same arguments (using moment bound \eqref{E2:SupU}), the third part of  $\widehat{u}^{n,i}_{\mathbf{z}}$ in \eqref{E:hatUni}
has the same moment bound as $I_3$ above.
The second part can be proved in the same way as those for $I_2$.
This proves \eqref{E2:Space}.

The result \eqref{E3:Space} can be proved in the same way. We leave the details for interested readers.
This completes the proof of Lemma \ref{L:HolderuHatx}.
\end{proof}

\bigskip
Before proving Proposition \ref{P:thetaBdd}, we introduce the following functionals to alleviate the notation:
for any function $g:\R_+\mapsto\R$, $n\in\bbN$ and $t\in(0,T]$, define
\begin{align}\label{E:F}
F_n\left[g\right](t):=
g(t) + \textcolor{\myred}{\left(1-1/\alpha\right)}\one_{\{t>T-2^{-n}\}}2^{n(1-1/\alpha)}
\int_{T-2^{-n}}^t(t-s)^{-1/\alpha}
g(s)\ud s,
\end{align}
and for $m\ge 1$,
\begin{align}\label{E:Fm}
F_{n}^m\left[g\right](t):=
m\textcolor{\myred}{\left(1-1/\alpha\right)}\one_{\{t>T-2^{-n}\}}2^{mn(1-1/\alpha)}
\int_{T-2^{-n}}^t(t-s)^{m(1-1/\alpha)-1}
g(s)\ud s.
\end{align}
\begin{lemma}\label{L:F}
For all $n,m,m'\ge 1$ and $T>0$, the following properties hold for all $t\in (0,T]$:
\begin{gather}
\label{E:F1}
F_n[g](t)=g(t)+F_n^1[g](t),\\
\label{E:F2}
\lim_{n\rightarrow\infty}F_n^m[g](t)\one_{\{t<T\}} = 0,\\
\label{E:F5}
F_n^m[|g|](t)\le \left(F_n^m[|g|^{m'}](t)\right)^{1/m'},\\
F_n^m\left[F_n^{m'}[g]\right](t) = C_{m,m',\alpha}\: F_n^{m+m'}[g](t)\,.
\label{E:F3}
\end{gather}
If $g$ is left-continuous at $T$, then
\begin{align}\label{E:F6}
\lim_{n\rightarrow\infty}F_n^m[g](T) = g(T).
\end{align}
\end{lemma}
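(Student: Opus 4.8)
\textbf{Proof proposal for Lemma \ref{L:F}.}

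The plan is to treat the six identities essentially as direct calculations with the integral functionals $F_n^m$, relying throughout on the Beta-integral identity
\[
\int_a^t (t-s)^{p-1}(s-a)^{q-1}\,\ud s = (t-a)^{p+q-1}\,B(p,q),\qquad p,q>0,
\]
and on the substitution $s=T-2^{-n}+u$ which rescales the interval $[T-2^{-n},t]$. I would handle \eqref{E:F1} first, since it is just unwinding definitions \eqref{E:F} and \eqref{E:Fm} at $m=1$: the coefficient $\left(1-1/\alpha\right)2^{n(1-1/\alpha)}(t-s)^{-1/\alpha}$ in $F_n$ is exactly the $m=1$ integrand in $F_n^1$, so $F_n[g]=g+F_n^1[g]$ with no work. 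For \eqref{E:F2} and \eqref{E:F6}, fix $t<T$; then for $n$ large enough $t\le T-2^{-n}$ fails only eventually, but once $t>T-2^{-n}$ we have $t-(T-2^{-n})\le 2^{-n}$, so substituting $s=T-2^{-n}+2^{-n}v$, $v\in[0,(t-(T-2^{-n}))2^n]\subseteq[0,1]$, turns $F_n^m[g](t)$ into $m(1-1/\alpha)\int_0^{c_n}(1-v)^{\dots}$-type integral where the length $c_n=(t-T+2^{-n})2^n\to 0$ when $t<T$ and $c_n\to 1$ when $t=T$. Boundedness of $g$ near $T$ (local boundedness suffices, or we just assume $g$ bounded on compacts, as is the case in all applications) makes the $t<T$ integral vanish; for $t=T$ the normalization $m(1-1/\alpha)\int_0^1 (1-v)^{m(1-1/\alpha)-1}\ud v=1$ together with left-continuity of $g$ at $T$ and dominated convergence gives $F_n^m[g](T)\to g(T)$, which is \eqref{E:F6} (and \eqref{E:F2} is the complementary statement).

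Next, \eqref{E:F5} is just Jensen's (or Hölder's) inequality applied to the probability measure obtained by normalizing the kernel $m(1-1/\alpha)2^{mn(1-1/\alpha)}(t-s)^{m(1-1/\alpha)-1}\ud s$ on $[T-2^{-n},t]$: since the total mass of this kernel is $\le 1$ (it equals $\left((t-T+2^{-n})2^n\right)^{m(1-1/\alpha)}\le 1$), we get $F_n^m[|g|]\le \left(F_n^m[|g|^{m'}]\right)^{1/m'}$ after raising to the power $1/m'$, using that $x\mapsto x^{m'}$ is convex. One must be mildly careful that the mass is $\le 1$ rather than $=1$, but that only helps the inequality. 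Finally \eqref{E:F3} is the semigroup/Beta-function computation: writing out $F_n^m[F_n^{m'}[g]](t)$ as a double integral over $T-2^{-n}\le r\le s\le t$, Fubini gives
\[
\int_{T-2^{-n}}^t \ud r\, g(r)\Big[\,mm'(1-1/\alpha)^2 2^{(m+m')n(1-1/\alpha)}\int_r^t (t-s)^{m(1-1/\alpha)-1}(s-r)^{m'(1-1/\alpha)-1}\ud s\Big],
\]
and the inner integral is $(t-r)^{(m+m')(1-1/\alpha)-1}B\big(m(1-1/\alpha),m'(1-1/\alpha)\big)$; collecting constants yields $C_{m,m',\alpha}F_n^{m+m'}[g](t)$ with $C_{m,m',\alpha}=\frac{mm'(1-1/\alpha)}{m+m'}B\big(m(1-1/\alpha),m'(1-1/\alpha)\big)$, since $F_n^{m+m'}$ carries its own normalizing factor $(m+m')(1-1/\alpha)$.

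The only genuinely delicate point — and the one I would write carefully — is the interchange-of-limits step in \eqref{E:F2} and \eqref{E:F6}: I must justify dominated convergence uniformly in the endpoint, i.e.\ control $\sup_n \sup_{T-2^{-n}\le s\le t}|g(s)|$ near $T$. In all uses of this lemma in the paper $g$ is of the form $\sup_{x,\mathbf z}\|\widehat u^{n,\cdot}_{\mathbf z}(s,x)\|_p^2$ or similar, which is bounded on $[0,T]$ by the moment estimates of Section \ref{SS:Tech} (e.g.\ \eqref{E:supxKu}, \eqref{E:supxKui}), so local boundedness is available; I would state the lemma under the standing hypothesis that $g$ is bounded on $[0,T]$ (or at least near $T$) and left-continuous at $T$ where \eqref{E:F6} is invoked, and note that $F_n^m$ is monotone in $|g|$ so the bound propagates. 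Everything else is bookkeeping with Beta integrals and the substitution $s\mapsto T-2^{-n}+2^{-n}v$.
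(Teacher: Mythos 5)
Your proof is essentially correct and largely follows the paper's route (Beta-integral computation for \eqref{E:F3}, H\"older/Jensen with the sub-probability kernel for \eqref{E:F5}, normalization plus left-continuity for \eqref{E:F6}), but two points are worth noting. First, for \eqref{E:F2} you work harder than necessary and even flirt with an unnecessary boundedness hypothesis: for $t<T$ and $n$ large enough that $2^{-n}<T-t$, the indicator $\one_{\{t>T-2^{-n}\}}$ vanishes identically, so $F_n^m[g](t)=0$ exactly — no integral estimate or control of $g$ near $T$ is needed, and the paper simply records this as ``clear from the definition.'' Second, and more substantively, for \eqref{E:F6} you invoke local boundedness of $g$ and dominated convergence after the rescaling $s=T-2^{-n}+2^{-n}v$. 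The paper avoids any boundedness assumption by a cleaner $\epsilon$--$\delta$ argument that exploits the fact that $F_n^m$ is a positive operator with $F_n^m[1](T)=1$: given $\epsilon>0$, pick $N$ with $|g(T)-g(s)|\le\epsilon$ for $T-s\le 2^{-N}$; then for $n\ge N$,
\[
\left|F_n^m[g](T)-g(T)\right|=\left|F_n^m[g](T)-F_n^m[g(T)](T)\right|\le F_n^m\big[|g(T)-g|\big](T)\le F_n^m[\epsilon](T)=\epsilon,
\]
using that the integration range $[T-2^{-n},T]$ lies inside the continuity window. This is both shorter and requires only left-continuity, not boundedness; you may want to adopt it, since invoking DCT makes the lemma's hypotheses look heavier than the paper intends. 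The rest — \eqref{E:F1} by unwinding definitions, \eqref{E:F3} by Fubini and the Beta integral with the constant $C_{m,m',\alpha}$ you compute, \eqref{E:F5} by H\"older noting the kernel has total mass $\le 1$ — matches the paper.
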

\begin{proof}
Properties \eqref{E:F1} and \eqref{E:F2} are clear from the definition.
Note that
\[
m\left(1-1/\alpha\right)2^{mn(1-1/\alpha)}
\int_{T-2^{-n}}^T(T-s)^{m(1-1/\alpha)-1}
\ud s=1.
\]
Hence, the H\"older inequality implies \eqref{E:F5}.
As for \eqref{E:F3}, we need only to consider the case when $t>T-2^{-n}$.
In this case, by the Beta integral,
\begin{align*}
 F_n^m\left[F_n^{m'}[g]\right](t) =&
 mm'(1-1/\alpha)^22^{(m+m')n(1-1/\alpha)}
 \int_{T-2^{-n}}^t\ud s\int_{T-2^{-n}}^s \ud r\\
 &\times
 (t-s)^{m(1-1/\alpha)-1}
 (s-r)^{m'(1-1/\alpha)-1} g(r)\\
 =&
 mm'(1-1/\alpha)^2
 2^{(m+m')n(1-1/\alpha)}
 \int_{T-2^{-n}}^t\ud r\: g(r) \int_{r}^t \ud s\\
 &\times
 (t-s)^{m(1-1/\alpha)-1}
 (s-r)^{m'(1-1/\alpha)-1} \\
 =& C_{m,m',\alpha}'(1-1/\alpha)
 2^{(m+m')n(1-1/\alpha)}\int_{T-2^{-n}}^t (t-r)^{(m+m')n(1-1/\alpha)-1} g(r)\ud r\\
 =&C_{m,m',\alpha}\: F_n^{m+m'}[g](t).
\end{align*}
Finally, if $g$ is left-continuous at $T$, then for any $\epsilon>0$, there exists $N\ge 1$ such that
when $T-s\le 2^{-N}$, then $|g(T)-g(s)|\le \epsilon$. Hence, for all $n\ge N$,
\[
\left|F_n^m[g](T)-g(T)\right|
=\left|F_n^m[g](T)-F_n^m[g(T)](T)\right|
\le F_n^m[|g(T)-g|](T) \le F_n^m[\epsilon ](T)=\epsilon.
\]
Since $\epsilon$ is arbitrary, we have proved \eqref{E:F6}.
\end{proof}

\bigskip
Now we are ready to prove our last proposition.
\begin{proof}[Proof of Proposition \ref{P:thetaBdd}]
In this  proof  we assume   $t\in (0,T]$ and we  use $\{M_n\}_{n\ge 1}$ to denote a generic sequence of nonnegative random variable that
goes to zero as $n\rightarrow\infty$.  Its value may vary at each occurrence.
We divide the proof to  four steps.

{\bigskip\noindent\bf Step 1.~}
We first prove \eqref{E:thetabdd} for $\widehat{u}^n_{\mathbf{z}}(T,x_i)$.
Notice that
\begin{align*}
\widehat{u}^n_{\mathbf{z}}(t,x_i)-u(t,x_i)
=&\int_0^t\int_\R G(t-s,x_i-y)\left[\rho\left(\widehat{u}^n_{\mathbf{z}}(s,y)\right)-
\rho\left(u(s,y)\right)\right]W(\ud s,\ud y)\\
&
+\sum_{j\ne i} z_j
\int_0^t\int_\R G(t-s,x_i-y)\rho\left(\widehat{u}^n_{\mathbf{z}}(s,y)\right)h_n^j(s,y)\ud s\ud y\\
&+ z_i
\int_0^t\int_\R G(t-s,x_i-y)
\left[
\rho\left(\widehat{u}^n_{\mathbf{z}}(s,y)\right)
-
\rho\left(\widehat{u}^n_{\mathbf{z}}(s,x_i)\right)
\right]
h_n^i(s,y)\ud s\ud y\\
&+ z_i
\int_0^t\int_\R G(t-s,x_i-y)
\left[
\rho\left(u(s,x_i)\right)
-
\rho\left(u(t,x_i)\right)
\right]
h_n^i(s,y)\ud s\ud y\\
&+z_i\Psi^i_n(t,x_i)\rho\left(u(t,x_i)\right) \\
&+ z_i
\int_0^t\int_\R G(t-s,x_i-y)
\left[
\rho\left(\widehat{u}^n_{\mathbf{z}}(s,x_i)\right)
-
\rho\left(u(s,x_i)\right)
\right]
h_n^i(s,y)\ud s\ud y\\
=:& \sum_{\ell=1}^6 I_\ell(t).
\end{align*}
Then we have that
\begin{align*}
\sup_{|\mathbf{z}|\le\kappa}
\left|
\widehat{u}^n_{\mathbf{z}}(t,x_i)-u(t,x_i)
\right|
\le &\sum_{\ell=1}^5 \sup_{|\mathbf{z}|\le\kappa} |I_\ell(t)|
+
C \one_{\{t>T-2^{-n}\}}2^{n(1-1/\alpha)}\\
&\times
\int_{T-2^{-n}}^t(t-s)^{-1/\alpha}
\sup_{|\mathbf{z}|\le\kappa}\left|
\widehat{u}^n_{\mathbf{z}}(s,x_i)
-
u(s,x_i)
\right|\ud s.
\end{align*}
By Lemma \ref{L:Contraction} (see \eqref{E3:Contraction}) and by \eqref{E:F},
\begin{align}\label{e1:uHat}
 \sup_{|\mathbf{z}|\le\kappa}\left|
\widehat{u}^n_{\mathbf{z}}(t,x_i)-u(t,x_i)
\right|
\le&
C \sum_{\ell=1}^5 F_n\left[
\sup_{|\mathbf{z}|\le\kappa}|I_\ell(\cdot)|
\right](t).
\end{align}
We estimate each term in the above sum separately. By \eqref{E:UnU-Close} and \eqref{E:hG3}, we see that
\[
\Norm{\sup_{|\mathbf{z}|\le\kappa}\left| I_1(t)\right|}_p^2\le  C 2^{-n(1-1/\alpha)}.
\]
By \eqref{E:SupU} and \eqref{E:hG2}, and since $i\ne j$, we see that
\[
\Norm{\sup_{|\mathbf{z}|\le\kappa}
\left| I_2(t)\right| }_p^2\le  C 2^{-n(1+1/\alpha)}.
\]
As for $I_3(t)$, we have that
\begin{align*}
\Norm{\sup_{|\mathbf{z}|\le\kappa}
\left|I_3(t)\right|}_p^2\le
& C2^{n(2-1/\alpha)}\one_{\{t>T-2^{-n}\}}\\
&\times \int_{T-2^{-n}}^t\int_{x_i-2^{-n}}^{x_i+2^{-n}} G(t-s,x_i-y)
\Norm{
\sup_{|\mathbf{z}|\le\kappa}
\left|
\widehat{u}^n_{\mathbf{z}}(s,y)-\widehat{u}^n_{\mathbf{z}}(s,x_i)
\right|}_p^2\ud s\ud y.
\end{align*}
Then since $|y-x_i|\le 2^{-n}$, by Lemma \ref{L:HolderuHatx},
\[
\Norm{\sup_{|\mathbf{z}|\le\kappa}
\left|\widehat{u}^n_{\mathbf{z}}(s,y)-\widehat{u}^n_{\mathbf{z}}(s,x_i)
\right|}_p^2\le
C 2^{-n(1-1/\alpha)},
\]
which implies that
\[
\Norm{\sup_{|\mathbf{z}|\le\kappa}\left|I_3(t)\right|}_p^2\le C 2^{-n(1-1/\alpha)}.
\]
Note that both $I_4(t)$ and $I_5(t)$ depend on $\mathbf{z}$ through a multiplicative factor $z_i$. By the H\"older continuity of $u(t,x)$, we see that
\begin{align*}
\Norm{
\sup_{|\mathbf{z}|\le\kappa}\left|
I_4(t)\right|}_p^2\le
& C2^{n(1-1/\alpha)}\int_0^t (t-s)^{-1/\alpha} \Norm{u(s,x_i)-u(t,x_i)}_p^2 \ud s\\
\le&
C2^{n(1-1/\alpha)}\int_0^t (t-s)^{-1/\alpha} (t-s)^{1-1/\alpha} \ud s\\
=& C 2^{-n(1-1/\alpha)}.
\end{align*}
Therefore,
\[
\Norm{\sum_{\ell=1}^4
F_n\left[\sup_{|\mathbf{z}|\le\kappa}|
I_\ell(\cdot)|
\right](t)}_p^2 \le C 2^{-n(1-1/\alpha)}.
\]
By the Borel-Cantelli lemma,
we see that,
\begin{align}\label{e2:uHat}
\lim_{n\rightarrow\infty}
\sum_{\ell=1}^4
F_n\left[
\sup_{|\mathbf{z}|\le\kappa}|I_\ell(\cdot)|
\right](t) = 0,\quad\text{a.s.}
\end{align}
The term $I_5$ does not depend on $\mathbf{z}$.
By the property of $\Psi_n(t,x_i)$ (see \eqref{E:hG0} and \eqref{E:hG1}),
\begin{align}
\label{e3-0:uHat}
|\Psi_n^i(t,x_i)\rho(u(t,x_i))|\le
C|\rho(u(t,x_i))|,\quad\text{a.s.}
\end{align}
Hence, for all $n\in\bbN$,
\begin{align}
F_n\left[|I_5(\cdot)|\right](t)
&\le C F_n\big[\left|\rho(u(\cdot,x_i))\right|\big](t).
\label{e3:uHat}
\end{align}
Note that the right-hand side  of \eqref{e3:uHat} does not depend on $n$.
Therefore, by combining \eqref{e1:uHat}, \eqref{e2:uHat} and \eqref{e3:uHat}
we have that
\begin{align}\label{e4:uHat}
\sup_{|\mathbf{z}|\le\kappa}
 \left|
 \widehat{u}_{\mathbf{z}}^n(t,x_i)
 -
 u(t,x_i)
 \right|\le &\:
 M_n+C\left|\rho(u(t,x_i))\right|+C F_n^1\big[\left|\rho(u(\cdot,x_i))\right|\big](t)
\quad
 \text{a.s. }
\end{align}
for all $n\in\bbN$.
Recall that $M_n$ is some generic nonnegative random quantity
that goes to zero a.s. as $n\rightarrow\infty$.
Finally, by letting $t=T$ and
sending $n\rightarrow\infty$, and by
the H\"older continuity of $s\mapsto \rho(u(s,x_i))$, we have that
\begin{align*}
\limsup_{n\rightarrow\infty}\sup_{|\mathbf{z}|\le\kappa}
 \left|
 \widehat{u}_{\mathbf{z}}^n(T,x_i)
\right|
 &\le
 \limsup_{n\rightarrow\infty}\sup_{|\mathbf{z}|\le\kappa}
 \left|
 \widehat{u}_{\mathbf{z}}^n(T,x_i)
 -
 u(T,x_i)
 \right| + |u(T,x_i)|\\
 & \le |u(T,x_i)|+ C |\rho(u(T,x_i))|, \quad\text{a.s.,}
\end{align*}
which proves Proposition \ref{P:thetaBdd} for $\widehat{u}^n_{\mathbf{z}}(T,x)$.

{\bigskip\noindent\bf Step 2.~}
Now we prove Proposition \ref{P:thetaBdd} for $\widehat{u}^{n,i}_{\mathbf{z}}(T,x)$.
Notice that
\begin{align*}
\widehat{u}^{n,i}_{\mathbf{z}}&(t,x_i) - \rho(u(t,x_i))\\
=& \:\: \theta^{n,i}_{\mathbf{z}}(t,x_i)\\
&+\one_{\{t>T-2^{-n}\}}\int_{T-2^{-n}}^t\int_\R G(t-s,x_i-y)
\rho'\left(\widehat{u}^{n}_{\mathbf{z}}(s,y)\right)
\widehat{u}^{n,i}_{\mathbf{z}}(s,y)W(\ud s,\ud y)\\
&+\sum_{j\ne i}z_j\int_0^t\int_\R G(t-s,x_i-y)
\rho'\left(\widehat{u}^{n}_{\mathbf{z}}(s,y)\right)
\widehat{u}^{n,i}_{\mathbf{z}}(s,y)
h_n^j(s,y)\ud s\ud y\\
&+z_i\int_0^t\int_\R G(t-s,x_i-y)
\rho'\left(\widehat{u}^{n}_{\mathbf{z}}(s,y)\right)
\left[\widehat{u}^{n,i}_{\mathbf{z}}(s,y)-\widehat{u}^{n,i}_{\mathbf{z}}(s,x_i)\right]
h_n^i(s,y)\ud s\ud y\\
&+z_i\int_0^t\int_\R G(t-s,x_i-y)
\rho'\left(\widehat{u}^{n}_{\mathbf{z}}(s,y)\right)
\left[\rho\left(u(s,x_i)\right)-\rho\left(u(t,x_i)\right)\right]
h_n^i(s,y)\ud s\ud y\\
&+z_i\rho\left(u(t,x_i)\right)\Psi_n^i(t,x_i)\\
&+z_i\int_0^t\int_\R G(t-s,x_i-y)
\rho'\left(\widehat{u}^{n}_{\mathbf{z}}(s,y)\right)
\left[\widehat{u}^{n,i}_{\mathbf{z}}(s,x_i)-\rho\left(u(s,x_i)\right)\right]
h_n^i(s,y)\ud s\ud y\\
=:&\sum_{\ell=0}^6 I_\ell(t).
\end{align*}
Then we have
\begin{align}
\label{e0:uHatD}
\begin{aligned}
\sup_{|\mathbf{z}|\le\kappa}\left|
\widehat{u}^{n,i}_{\mathbf{z}}(t,x_i)-\rho(u(t,x_i))
\right|
\le&  \sum_{\ell=0}^5 \sup_{|\mathbf{z}|\le\kappa} |I_\ell(t)|
+
C \one_{\{t>T-2^{-n}\}}2^{n(1-1/\alpha)}\\
&\times \int_{T-2^{-n}}^t(t-s)^{-1/\alpha}
\sup_{|\mathbf{z}|\le\kappa}
\left|
\widehat{u}^{n,i}_{\mathbf{z}}(s,x_i)
-
\rho(u(s,x_i))
\right|\ud s.
\end{aligned}
\end{align}
By Lemma \ref{L:Contraction} (see \eqref{E3:Contraction}),
\begin{align}\label{e1:uHatD}
\sup_{|\mathbf{z}|\le\kappa}
\left|
\widehat{u}^{n,i}_{\mathbf{z}}(t,x_i)-\rho(u(t,x_i))
\right|
\le
C \sum_{\ell=0}^5 F_n\left[
\sup_{|\mathbf{z}|\le\kappa}
|I_\ell(\cdot)|\right](t),\quad
\text{a.s.}
\end{align}
By the same arguments as those in Step 1 and using the fact that $\rho'$ is bounded, we see that
\[
\Norm{\sum_{\ell=1}^4F_n\left[
\sup_{|\mathbf{z}|\le\kappa}|I_\ell(\cdot)|\right](t)
}_p^2 \le C 2^{-n(1-1/\alpha)}.
\]
Hence, the Borel-Cantelli lemma implies that
\begin{align}\label{e2:uHatD}
\lim_{n\rightarrow\infty}
\sum_{\ell=1}^4
F_n\left[
\sup_{|\mathbf{z}|\le\kappa}
|I_\ell(\cdot)|
\right](t) = 0,\quad\text{a.s.}
\end{align}
The term $I_5$ part is identical to the term $I_5$ in Step 1
whence we have the bound \eqref{e3:uHat}.
As for $I_0(t)$, we decompose $z_i\theta^{n,i}_{\mathbf{z}}(t,x_i)$ into three parts
\begin{align*}
z_i\theta^{n,i}_{\mathbf{z}}(t,x_i)= &
\widehat{u}^{n}_{\mathbf{z}}(t,x_i)-u(t,x_i)\\
&+\int_0^t\int_\R G(t-s,x_i-y) \left[\rho\left(u(s,y)\right)-\rho\left(\widehat{u}^n_{\mathbf{z}}(s,y)\right)\right]W(\ud s,\ud y)\\
&-\sum_{j\ne i} z_j \theta_{\mathbf{z}}^{n,j}(t,x_i)\\
=:& I_{0,1}(t)+I_{0,2}(t) - I_{0,3}(t).
\end{align*}
Notice that $I_{0,2}(t)$ is equal to $I_1(t)$ in Step 1.
Hence,
\begin{align*}
 \lim_{n\rightarrow\infty}
F_n\left[\sup_{|\mathbf{z}|\le\kappa}
|I_{0,2}(\cdot)|\right](t)=0,\quad\text{a.s.}
\end{align*}
From \eqref{E4:SupU}, we have that
\begin{align*}
 \lim_{n\rightarrow\infty}
F_n\left[\sup_{|\mathbf{z}|\le\kappa}
|I_{0,3}(\cdot)|\right](t)=0,\quad\text{a.s.}
\end{align*}
As for $I_{0,1}(t)$,
by \eqref{e4:uHat} and \eqref{E:F3}, we see that
\begin{align*}
F_n\left[\sup_{|\mathbf{z}|\le\kappa}\left|I_{0,1}(\cdot)\right|\right](t)
\le&
M_n+C|\rho(u(t,x_i))|+C\sum_{\ell=1}^2 F_n^\ell \big[|\rho(u(\cdot,x_i))|\big](t)\quad\text{a.s.}
\end{align*}
Therefore,
\begin{align}\label{e4:uHatD}
F_n\left[\sup_{|\mathbf{z}|\le\kappa}\left|I_{0}(\cdot)\right|\right](t)
\le&
M_n+C|\rho(u(t,x_i))|+C\sum_{\ell=1}^2 F_n^\ell \big[|\rho(u(\cdot,x_i))|\big](t)\quad\text{a.s.}
\end{align}
Combining \eqref{e1:uHatD}, \eqref{e2:uHatD},
\eqref{e3:uHat} and \eqref{e4:uHatD} shows that
for all $t\in(0,T]$ and $n\in\bbN$,
\begin{align}
\label{e5:uHatD}
\sup_{|\mathbf{z}|\le\kappa}&\left|
 \widehat{u}_{\mathbf{z}}^{n,i}(t,x_i)
 - \rho(u(t,x_i))
\right|
\le
M_n+C\left|\rho(u(t,x_i))\right|+C \sum_{\ell=1}^2 F_n^\ell \left[\left|\rho(u(\cdot,x_i))\right|\right](t)\quad\text{a.s.}
\end{align}
Finally, by letting $t=T$ and sending $n\rightarrow\infty$,
and by the H\"older continuity of $s\mapsto\rho(u(s,x_i))$,
we have that
\[
\limsup_{n\rightarrow\infty}
\sup_{|\mathbf{z}|\le\kappa}\left|
 \widehat{u}_{\mathbf{z}}^{n,i}(T,x_i)
\right|\le C\left|\rho(u(T,x_i))\right|,
\]
which proves
Proposition \ref{P:thetaBdd} for
$\widehat{u}^{n,i}_{\mathbf{z}}(T,x_i)$.

{\bigskip\noindent\bf Step 3.~}
In this step, we will prove
\begin{align}\label{e1:thetaDD}
F_n\left[\sup_{|\mathbf{z}|\le\kappa}\left|\theta^{n,i,k}_{\mathbf{z}}(\cdot,x_i)\right|\right](t)
\le
M_n+C\left|\rho(u(t,x_i))\right|+C \sum_{\ell=1}^3 F_n^\ell \left[\left|\rho(u(\cdot,x_i))\right|\right](t)\quad\text{a.s.}
\end{align}
By \eqref{E:thetaIK-bd}, we need only to consider the case when $k=i$.
Notice from \eqref{E:hatUni} that
\begin{align*}
z_i \theta^{n,i,i}_{\mathbf{z}}(t,x_i)
=& \widehat{u}^{n,i}_{\mathbf{z}}(t,x_i) -
\theta^{n,i}_{\mathbf{z}}(t,x_i) -\sum_{j\ne i}z_j \theta^{n,i,k}_{\mathbf{z}}(t,x_i)-I(t,x_i),
\end{align*}
where
\[
I(t,x_i):=\one_{\{t>T-2^{-n}\}}\int_{T-2^{-n}}^t\int_\R G(t-s,x_i-y)
\rho'\left(\widehat{u}^{n}_{\mathbf{z}}(s,y)\right)
\widehat{u}^{n,i}_{\mathbf{z}}(s,y)W(\ud s,\ud y).
\]
By \eqref{e5:uHatD}, we see that
\begin{align*}
F_n\left[
\sup_{|\mathbf{z}|\le\kappa}\left|
 \widehat{u}_{\mathbf{z}}^{n,i}(\cdot,x_i)
\right|\right](t)
&\le
F_n\left[
\sup_{|\mathbf{z}|\le\kappa}\left|
 \widehat{u}_{\mathbf{z}}^{n,i}(\cdot,x_i)
 - \rho(u(\cdot,x_i))
\right|\right](t) + F_n\Big[|\rho(u(\cdot,x_i))|\Big](t)\\
&\le
M_n+C\left|\rho(u(t,x_i))\right|+C \sum_{\ell=1}^3 F_n^\ell \left[\left|\rho(u(\cdot,x_i))\right|\right](t)\quad\text{a.s.}
\end{align*}
Notice that $\theta_{\mathbf{z}}^{n,i}(t,x_i)$ is the $I_0$ term in Step 2, hence by \eqref{e4:uHatD}
\[
F_n\left[\sup_{|\mathbf{z}|\le\kappa}\left|\theta_{\mathbf{z}}^{n,i}(\cdot)\right|\right](t)
\le
M_n+C|\rho(u(t,x_i))|+C\sum_{\ell=1}^2 F_n^\ell \big[|\rho(u(\cdot,x_i))|\big](t)\quad\text{a.s.}
\]
Because $i\ne j$, from \eqref{E4:SupU} we see that
\[
\Norm{\sup_{|\mathbf{z}|\le\kappa}\sum_{j\ne i}\left|z_j \theta^{n,i,k}_{\mathbf{z}}(T,x_i) \right|}_p^2\le C 2^{-n(1-1/\alpha)}.
\]
Hence,
\[
\lim_{n\rightarrow\infty}
F_n\left[\sum_{j\ne i}\sup_{|\mathbf{z}|\le\kappa}\left|z_j \theta^{n,i,k}_{\mathbf{z}}(\cdot,x_i) \right|\right](t) = 0,\quad\text{a.s.}
\]
By the boundedness of $\rho'$, \eqref{E2:SupU} and \eqref{E:hG3}, we see that
\[
\Norm{\sup_{|\mathbf{z}|\le\kappa}\left|I(t,x_i)\right|}_p^2\le C 2^{-n(1-1/\alpha)},
\]
which implies that
\[
\lim_{n\rightarrow\infty} F_n\left[\sup_{|\mathbf{z}|\le\kappa}\left|I(\cdot,x_i)\right| \right](t)= 0,\quad\text{a.s.}
\]
Combining the above four terms proves \eqref{e1:thetaDD}.

{\bigskip\noindent\bf Step 4.~}
In this last step, we will prove Proposition \ref{P:thetaBdd} for $\widehat{u}^{n,i,k}_{\mathbf{z}}(T,x_i)$.
Write the six parts of $\widehat{u}^{n,i,k}_{\mathbf{z}}(t,x_i)$
in \eqref{E:hatUnik} as in \eqref{E:Unik6}, i.e.,
\begin{align}\label{e:Unik6}
\widehat{u}^{n,i,k}_{\mathbf{z}}(t,x_i) =
\theta^{n,i,k}_{\mathbf{z}}(t,x_i) +\theta^{n,k,i}_{\mathbf{z}}(t,x_i)+ \sum_{\ell=1}^4 U_\ell^n(t,x_i).
\end{align}
We first consider $U^n_4(t,x_i)$ because it contributes to the recursion. Write $U^n_4(t,x_i)$ in three parts
\begin{align*}
 U^n_4(t,x_i) = &\sum_{j\ne i}
 z_j\int_0^t\int_\R G(t-s,x_i-y)\rho'\left(\widehat{u}^n_{\mathbf{z}}(s,y)\right)
 \widehat{u}^{n,i,k}_{\mathbf{z}}(s,y)h_n^j(s,y)\ud s\ud y\\
 &+
 z_i\int_0^t\int_\R G(t-s,x_i-y)\rho'\left(\widehat{u}^n_{\mathbf{z}}(s,y)\right)
 \left[
 \widehat{u}^{n,i,k}_{\mathbf{z}}(s,y)-\widehat{u}^{n,i,k}_{\mathbf{z}}(s,x_i)
 \right]h_n^i(s,y)\ud s\ud y\\
 &+
 z_i\int_0^t\int_\R G(t-s,x_i-y)\rho'\left(\widehat{u}^n_{\mathbf{z}}(s,y)\right)
 \widehat{u}^{n,i,k}_{\mathbf{z}}(s,x_i)
 h_n^i(s,y)\ud s\ud y\\
 =:&U^n_{4,1}(t,x_i)+U^n_{4,2}(t,x_i)+U^n_{4,3}(t,x_i).
\end{align*}
Notice that
\[
\sup_{|\mathbf{z}|\le\kappa}
\left|U_{4,3}^n(t,x_i)\right|
\le C\one_{\{t>T-2^{-n}\}}2^{n(1-1/\alpha)}\int_{T-2^{-n}}^{t} (t-s)^{-1/\alpha}
\sup_{|\mathbf{z}|\le\kappa}
|\widehat{u}^{n,i,k}_{\mathbf{z}}(s,x_i)|  \ud s,\quad\text{a.s.}
\]
Therefore, by Lemma \ref{L:Contraction},
\begin{align}\label{E:2ndD}
\sup_{|\mathbf{z}|\le\kappa}
|\widehat{u}^{n,i,k}_{\mathbf{z}}(t,x_i)|
\le C \sum F_n \left[\sup_{|\mathbf{z}|\le\kappa} I_{\mathbf{z}}^n(\cdot,x_i)\right](t)
\quad\text{a.s.,}
\end{align}
where the summation is over all terms on the right-hand side  of \eqref{e:Unik6} except $U_{4,3}^n(t,x_i)$
and $I_{\mathbf{z}}^n$ stands such a generic term.

By the similar arguments as in the
 previous steps, using \eqref{E3:Space}, one can show that
\begin{align}\label{E_:U4-13}
\lim_{n\rightarrow\infty}
\sup_{|\mathbf{z}|\le\kappa}
F_n\left[\left|U^n_{4,\ell}(\cdot,x_i)\right|\right](t) = 0,\quad\text{a.s. for $\ell=1,2$.}
\end{align}

By \eqref{E2:SupU}, \eqref{E3:SupU}, the boundedness of both $\rho'$ and $\rho''$, and \eqref{E:hG3},
we can obtain moment bounds for both $\sup_{|\mathbf{z}|\le\kappa}\left|U_1^n(t,x_i)\right|$
and
$\sup_{|\mathbf{z}|\le\kappa}\left|U_3^n(t,x_i)\right|$
and then argue using the Borel-Cantelli lemma as above to conclude that
\begin{align}\label{E_:U13}
\lim_{n\rightarrow\infty}
F_n\left[\sup_{|\mathbf{z}|\le\kappa}
\left|U_\ell^n(\cdot,x_i)\right|\right](t) =0, \quad \text{a.s. for $\ell=1,3$.}
\end{align}
We claim that
\begin{align}\label{E_:U2}
F_n\left[\sup_{|\mathbf{z}|\le\kappa}
\left|U_{2}^{n}(\cdot,x_i)\right|\right](t)
\le &
M_n+C\rho^2(u(t,x_i))+C \sum_{\ell=1}^4 F_n^\ell \left[\rho^2(u(\cdot,x_i))\right](t)\quad\text{a.s.}
\end{align}
Write $U_2^n(t,x_i)$ as
\begin{align*}
 U_2^n(t,x_i)
 = \sum_{j=1}^d
 z_j U_2^{n,i,j,k}(t,x_i),
\end{align*}
where
\[
U_2^{n,i,j,k}(t,x_i)=\int_0^t
\int_\R G(t-s,x_i-y)\rho''\left(\widehat{u}_{\mathbf{z}}^n(s,y)\right)
\widehat{u}_{\mathbf{z}}^{n,i}(s,y)
\widehat{u}_{\mathbf{z}}^{n,k}(s,y)
h_n^j(s,y)\ud s\ud y.
\]
We need only take care of the case when $i=j=k$
because otherwise it is not hard to show that
\[
\lim_{n\rightarrow\infty} \sup_{|\mathbf{z}|\le\kappa}
F_n\left[|U_2^{n,i,j,k}(\cdot,x_i)|\right](t) =0 \quad\text{a.s.}
\]
As for $U_2^{n,i,i,i}(t,x_i)$, it is nonnegative and
\begin{align*}
U_2^{n,i,i,i}(t,x_i)
\le& C
\int_0^t
\int_\R G(t-s,x_i-y)
\widehat{u}_{\mathbf{z}}^{n,i}(s,y)^2
h_n^i(s,y)\ud s\ud y\\
\le&
\quad C
\int_0^t
\int_\R G(t-s,x_i-y)
\left[\widehat{u}_{\mathbf{z}}^{n,i}(s,y)-\widehat{u}_{\mathbf{z}}^{n,i}(s,x_i)\right]^2
h_n^i(s,y)\ud s\ud y\\
&+C
\int_0^t
\int_\R G(t-s,x_i-y)
\left[\widehat{u}_{\mathbf{z}}^{n,i}(s,x_i)-\rho(u(s,x_i))\right]^2
h_n^i(s,y)\ud s\ud y\\
&+C
\int_0^t
\int_\R G(t-s,x_i-y)
\left[\rho(u(s,x_i))-\rho(u(t,x_i))\right]^2
h_n^i(s,y)\ud s\ud y\\
=:&\quad
U_{2,1}^{n,i,i,i}(t,x_i)
+U_{2,2}^{n,i,i,i}(t,x_i)
+U_{2,3}^{n,i,i,i}(t,x_i).
\end{align*}
By \eqref{E2:Space} and the H\"older continuity of $s\mapsto \rho(u(s,x_i))$
one can prove in the same way as before that
\[
\lim_{n\rightarrow\infty}
F_n\left[
\sup_{|\mathbf{z}|\le\kappa}
U_{2,\ell}^{n,i,i,i}(\cdot,x_i)
\right](t)=0,\quad\text{a.s. for $\ell=1,3$.}
\]
Notice that
\[
\sup_{|\mathbf{z}|\le\kappa}U_{2,2}^{n,i,i,i}(t,x_i)
\le
C F_n^1 \left[\sup_{|\mathbf{z}|\le\kappa}
\left[\widehat{u}_{\mathbf{z}}^{n,i}(\cdot,x_i)-\rho(u(\cdot,x_i))\right]^2
\right](t).
\]
By applying \eqref{E:F5} on \eqref{e5:uHatD} with $m'=2$, we see that
\begin{align*}
\sup_{|\mathbf{z}|\le\kappa}
\left[\widehat{u}_{\mathbf{z}}^{n,i}(s,x_i)-\rho(u(s,x_i))\right]^2
&\le
M_n+C\rho^2(u(t,x_i))+C \sum_{\ell=1}^2 F_n^\ell \left[\rho^2(u(\cdot,x_i))\right](t)\quad\text{a.s.}
\end{align*}
Hence, by \eqref{E:F3}, for all $n\in\bbN$,
\begin{align*}
\sup_{|\mathbf{z}|\le\kappa}U_{2,2}^{n,i,i,i}(t,x_i)
\le &
M_n+C\rho^2(u(t,x_i))+C \sum_{\ell=1}^3 F_n^\ell \left[\rho^2(u(\cdot,x_i))\right](t)\quad\text{a.s.}
\end{align*}
Then another application of \eqref{E:F3} shows that
\begin{align*}
F_n\left[\sup_{|\mathbf{z}|\le\kappa}U_{2,2}^{n,i,i,i}(\cdot,x_i)\right](t)
\le &
M_n+C\rho^2(u(t,x_i))+C \sum_{\ell=1}^4 F_n^\ell \left[\rho^2(u(\cdot,x_i))\right](t)\quad\text{a.s.}
\end{align*}
Combining these terms, we have thus proved \eqref{E_:U2}.
\bigskip

Finally, combining \eqref{e1:thetaDD}, \eqref{E_:U13}, \eqref{E_:U2}, and \eqref{E_:U4-13}, we see that
\begin{align*}
\sup_{|\mathbf{z}|\le\kappa}
|\widehat{u}^{n,i,k}_{\mathbf{z}}(t,x_i)|
\le & M_n+C\left|\rho(u(t,x_i))\right| + C \rho^2(u(t,x_i))\\
&+C \sum_{\ell=1}^3 F_n^\ell \left[\left|\rho(u(\cdot,x_i))\right|\right](t)
+C \sum_{\ell=1}^4 F_n^\ell \left[\rho^2(u(\cdot,x_i))\right](t)\quad\text{a.s.}
\end{align*}
Then by letting $t=T$ and sending $n\rightarrow\infty$ in the above inequality,
and by the H\"older continuity of $s\mapsto\rho(u(s,x_i))$, we have that
\[
\limsup_{n\rightarrow\infty}
\sup_{|\mathbf{z}|\le\kappa}\left|
 \widehat{u}_{\mathbf{z}}^{n,i,k}(T,x_i)
\right|\le C \left(\left|\rho(u(T,x_i))\right|+\rho^2(u(T,x_i))\right),
\]
which proves
Proposition \ref{P:thetaBdd} for
$\widehat{u}^{n,i,k}_{\mathbf{z}}(T,x)$.
Thus  we complete the   proof of Proposition \ref{P:thetaBdd}.
\end{proof}

\appendix
\section{Appendix}
\subsection{Some miscellaneous results}

The following lemma is a well-known result.
\begin{lemma}\label{L:NegMom}
Let $X$ be a real-valued nonnegative random variable.
The following two statements are equivalent:
\begin{enumerate}
 \item[(1)] For all $p>0$, there exists some finite constant $C_p>0$ such that
\[
\bbP\left(X<\epsilon\right)< C_p \: \epsilon^p\quad\text{for all $\epsilon>0$.}
\]
\item[(2)] $\E[X^{-q}]<\infty$ for all $q>0$.
\end{enumerate}
\end{lemma}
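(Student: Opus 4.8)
The plan is to pass between the tail bound and the negative moment by the standard layer-cake (distribution function) representation, together with Markov's inequality in the reverse direction. Since $X\ge 0$, for any $q>0$ the random variable $X^{-q}$ is a nonnegative (possibly infinite) random variable, and I would start from
\[
\E\left[X^{-q}\right]=\int_0^\infty \bbP\left(X^{-q}>\lambda\right)\ud\lambda
=\int_0^\infty \bbP\left(X<\lambda^{-1/q}\right)\ud\lambda,
\]
which is the one identity doing all the work.

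For the implication (1) $\Rightarrow$ (2), fix $q>0$ and split the integral above at $\lambda=1$. On $(0,1]$ the integrand is at most $1$, so that piece contributes at most $1$. On $(1,\infty)$ I would apply hypothesis (1) with an exponent $p$ chosen strictly larger than $q$, giving $\bbP(X<\lambda^{-1/q})\le C_p\,\lambda^{-p/q}$, and since $p/q>1$ the integral $\int_1^\infty \lambda^{-p/q}\ud\lambda$ is finite. Adding the two pieces yields $\E[X^{-q}]<\infty$; as $q>0$ was arbitrary, (2) follows. For the converse (2) $\Rightarrow$ (1), given $p>0$ I would simply take $q=p$ in (2) and apply Markov's inequality: for every $\epsilon>0$,
\[
\bbP\left(X<\epsilon\right)=\bbP\left(X^{-p}>\epsilon^{-p}\right)\le \epsilon^{p}\,\E\left[X^{-p}\right],
\]
so (1) holds with $C_p:=\E[X^{-p}]<\infty$.

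There is no real obstacle here; the only point requiring a line of care is the choice $p>q$ (rather than $p=q$) in the first implication, which is what guarantees integrability of $\lambda^{-p/q}$ near infinity — taking $p=q$ would fail. Everything else is routine, so I would keep the write-up to a few lines.
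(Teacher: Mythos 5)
Your proof is correct and complete. Note that the paper does not actually supply a proof of this lemma: it is stated in the appendix with only the remark that it is ``a well-known result,'' so there is no internal argument to compare against. Your layer-cake representation $\E[X^{-q}]=\int_0^\infty \bbP(X<\lambda^{-1/q})\,\ud\lambda$ combined with Markov's inequality is precisely the standard textbook argument, and you correctly identify the one nonobvious point, namely that one must take $p>q$ (not $p=q$) in the forward implication to make the tail integral converge. The only cosmetic gap is that Markov yields $\bbP(X<\epsilon)\le \epsilon^p\,\E[X^{-p}]$ with a non-strict inequality, whereas statement (1) as written asks for a strict one; this is fixed trivially by taking $C_p:=\E[X^{-p}]+1$.
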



The following lemma can be viewed as a special case of
Propositions 3.2 and 4.3 in \cite{ChenDalang15FracHeat}.
This lemma is used in the proof of Theorem \ref{T:Pos}.
We will need the two-parameter {\it Mittag-Leffler
function} (see \cite[Section 1.2]{Podlubny99FDE} or
\cite[Section 1.9]{kilbas-fractional})  is defined as follows:
\begin{align}\label{E:Mittag-Leffler}
E_{\alpha,\beta}(z) := \sum_{k=0}^{\infty} \frac{z^k}{\Gamma(\alpha k+\beta)},
\qquad \alpha>0,\;\beta> 0.
\end{align}
\begin{lemma}\label{L:Mittag}
Suppose that $\alpha>1$, $\lambda>0$ and that $\beta:\R_+\mapsto\R$ is a locally integrable function.
\begin{enumerate}[parsep=0ex,topsep=1ex]
\item[(1)]  If $f$ satisfies
\begin{align}\label{E:IntEqf}
f(t)= \beta(t) + \lambda \int_0^t (t-s)^{-1/\alpha} f(s)\ud s \qquad \text{for $t\ge 0$,}
\end{align}
then
\begin{align}\label{E2:IntEqf}
f(t)= \beta(t)+\int_0^t \beta(s) K_\lambda(t-s)\ud s,
\end{align}
where
\begin{align}\label{E:Klambda}
K_\lambda(t) = t^{-1/\alpha}
\lambda \Gamma(1-1/\alpha) E_{1-1/\alpha,1-1/\alpha}\left(t^{1-1/\alpha}\lambda \Gamma(1-1/\alpha)\right).
\end{align}
Moreover, when $\beta(x)\ge 0$, if the equality in \eqref{E:IntEqf} is replaced by ``$\le$'' (resp. ``$\ge$''),
then the equality in the conclusion \eqref{E2:IntEqf} should be replaced by
``$\le$'' (resp. ``$\ge$'') accordingly.
\item[(2)]  If $\beta$ is a nonnegative constant, then there exist some nonnegative constants $C_\alpha$ and $\gamma_\alpha$ that depend on $\alpha$ only, such that for all $t\ge 0$,
\begin{align}\label{E3:IntEqf}
f(t) &=\beta \Gamma(1-1/\alpha) \lambda t^{1-1/\alpha} E_{1-1/\alpha,2-1/\alpha}(\Gamma(1-1/\alpha) \lambda t^{1-1/\alpha})\\
&\le \beta C_\alpha \exp\left(\gamma_\alpha \lambda^{\frac{\alpha}{\alpha-1}} t\right).
\label{E4:IntEqf}
\end{align}
\end{enumerate}
\end{lemma}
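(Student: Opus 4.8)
The plan is to treat \eqref{E:IntEqf} as a linear Volterra convolution equation with the weakly singular kernel $k(t):=\lambda t^{-1/\alpha}=\lambda t^{\nu-1}$, where $\nu:=1-1/\alpha\in(0,1)$ since $\alpha>1$, and to exhibit its resolvent kernel explicitly as $K_\lambda$. First I would set up the Picard iteration $f_0:=\beta$, $f_{m+1}:=\beta+k*f_m$ (with $*$ the time convolution on $\R_+$), so that by induction $f_m=\beta+\sum_{j=1}^m k^{*j}*\beta$, where $k^{*j}$ denotes the $j$-fold convolution of $k$ with itself. Using the Beta-integral identity $t^{a-1}*t^{b-1}=B(a,b)\,t^{a+b-1}$ one shows, again by induction on $j$, that $k^{*j}(t)=\lambda^j\,\Gamma(\nu)^j\,t^{j\nu-1}/\Gamma(j\nu)$, whence $\sum_{j\ge1}k^{*j}(t)=\lambda\Gamma(\nu)\,t^{\nu-1}\sum_{m\ge0}(\lambda\Gamma(\nu)t^{\nu})^m/\Gamma((m+1)\nu)=\lambda\Gamma(\nu)\,t^{\nu-1}E_{\nu,\nu}(\lambda\Gamma(\nu)t^{\nu})$, which is exactly $K_\lambda(t)$ in \eqref{E:Klambda} after substituting $\nu=1-1/\alpha$. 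Since $\Gamma(j\nu)$ grows super-exponentially in $j$ by Stirling, this series converges locally uniformly, and together with local integrability of $\beta$ the Neumann series $\beta+K_\lambda*\beta$ converges and is, by the standard Volterra theory, the unique locally integrable solution of \eqref{E:IntEqf}; this proves \eqref{E2:IntEqf}. A shorter alternative route is to verify directly that $f:=\beta+K_\lambda*\beta$ solves \eqref{E:IntEqf}, using the convolution identity $k*K_\lambda=K_\lambda-k$ (the one-parameter analogue of \eqref{E:K_Recursion}, immediate from $K_\lambda=\sum_{j\ge1}k^{*j}$): then $\beta+k*f=\beta+k*\beta+(K_\lambda-k)*\beta=\beta+K_\lambda*\beta=f$.

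For the inequality version of part (1), suppose $\beta\ge0$ and $f\le\beta+k*f$. Since $k\ge0$, iterating the inequality $m$ times yields $f\le\beta+\sum_{j=1}^m k^{*j}*\beta+k^{*(m+1)}*f$, and the remainder $k^{*(m+1)}*f$ tends to $0$ as $m\to\infty$ because its coefficient $\lambda^{m+1}\Gamma(\nu)^{m+1}/\Gamma((m+1)\nu)$ does; letting $m\to\infty$ gives $f\le\beta+K_\lambda*\beta$. The case ``$\ge$'' is entirely symmetric. Part (2) is then obtained by specializing $\beta$ to a nonnegative constant in \eqref{E2:IntEqf} and evaluating $\int_0^t K_\lambda(s)\,ds$ by integrating the series for $K_\lambda$ term by term, using $\int_0^t s^{j\nu-1}\,ds=t^{j\nu}/(j\nu)$, the relation $j\nu\,\Gamma(j\nu)=\Gamma(j\nu+1)$, and reindexing; this produces $\lambda\Gamma(\nu)t^{\nu}E_{\nu,\nu+1}(\lambda\Gamma(\nu)t^{\nu})$, which together with the elementary identity $E_{\nu,1}(z)=1+z\,E_{\nu,\nu+1}(z)$ gives \eqref{E3:IntEqf}.

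It remains to deduce the exponential bound \eqref{E4:IntEqf}, which follows from the elementary estimate that for $0<\nu<1$, $\mu>0$ and $z\ge0$ one has $E_{\nu,\mu}(z)\le C_{\nu,\mu}\exp(c_\nu z^{1/\nu})$: split the defining series at the index $j\sim z^{1/\nu}$, bound the reciprocal Gamma values $1/\Gamma(j\nu+\mu)$ via Stirling's formula, and compare with the series of $\exp(c_\nu z^{1/\nu})$ (alternatively, invoke the classical asymptotics $E_{\nu,\mu}(z)\sim\nu^{-1}z^{(1-\mu)/\nu}e^{z^{1/\nu}}$ as $z\to\infty$ plus continuity on compacts). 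Since $1/\nu=\alpha/(\alpha-1)$ and the argument here is $z=\lambda\Gamma(\nu)t^{\nu}$, one gets $z^{1/\nu}=(\lambda\Gamma(\nu))^{\alpha/(\alpha-1)}t$, which yields \eqref{E4:IntEqf} with $\gamma_\alpha$ proportional to $\Gamma(1-1/\alpha)^{\alpha/(\alpha-1)}$. The whole argument is classical Volterra/Mittag-Leffler calculus; the only points requiring care are the bookkeeping of the Gamma-function arguments through the convolution powers (done most cleanly by induction via the Beta integral) and the justification of the termwise manipulations and of the vanishing of the Picard remainder, both of which are controlled by the super-exponential growth of $\Gamma(j\nu)$. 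I do not anticipate a genuine conceptual obstacle here; indeed the bulk of the statement may simply be quoted from Propositions 3.2 and 4.3 of \cite{ChenDalang15FracHeat}, so that only the routine verification just outlined is needed.
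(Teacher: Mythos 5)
Your argument is correct and follows essentially the same route as the paper's proof: Picard iteration, computation of the convolution powers $k^{*j}$ via the Beta integral, summation into the Mittag--Leffler resolvent $K_\lambda$, and then the Podlubny integral identity together with the classical asymptotics of $E_{\nu,\mu}$ for the exponential bound; your direct verification via $k*K_\lambda = K_\lambda - k$ is a slight variant of the paper's ``successive replacements.'' One useful by-product of your derivation worth recording: integrating $K_\lambda$ term by term and invoking $E_{\nu,1}(z)=1+zE_{\nu,\nu+1}(z)$ yields $f(t)=\beta\,E_{1-1/\alpha,\,1}\!\left(\Gamma(1-1/\alpha)\lambda t^{1-1/\alpha}\right)=\beta\bigl[1+\Gamma(1-1/\alpha)\lambda t^{1-1/\alpha}E_{1-1/\alpha,\,2-1/\alpha}(\Gamma(1-1/\alpha)\lambda t^{1-1/\alpha})\bigr]$, consistent with $f(0)=\beta$, whereas the right-hand side of \eqref{E3:IntEqf} as printed vanishes at $t=0$; the printed formula thus appears to be missing the additive constant, although this has no bearing on the bound \eqref{E4:IntEqf} or on how the lemma is applied.
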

\begin{proof}
(1) Denote $g(t)=t^{-1/\alpha}$.
Let ``$*$'' be the convolution in time, i.e., $(g*f)(t)=\int_0^t g(s)f(t-s)\ud s$.
Define $k_0(t)= g(t)$ and for $n\ge 1$,
\[
k_n(t):=\Big(\underbrace{g*\cdots*g}_{\text{$(n+1)$'s $g$}}\Big)(t).
\]
Denote $b=1-1/\alpha$.
We claim that
\begin{align}\label{E_:Induction}
k_n(t)= \Gamma(b) t^{-1/\alpha} \frac{[t^{b}\Gamma(b)]^n}{\Gamma((n+1)b)},
\quad\text{for all $n\ge 1$.}
\end{align}
When $n=1$, by the definition of the Beta functions,
\begin{align*}
 (g*g)(t)= t^{1-2/\alpha}\int_0^1(s(1-s))^{-1/\alpha}\ud s = t^{1-2/\alpha}\frac{\Gamma(b)^2}{\Gamma(2b)} = k_1(t).
\end{align*}
If \eqref{E_:Induction} is true for $n$, then by the same reason,
\begin{align*}
 (k_n*g)(t) = & \frac{\Gamma(b)^{n+1}}{\Gamma((n+1)b)}\int_0^t s^{(n+1)b-1} (t-s)^{b-1} \ud s\\
 =&
t^{-1/\alpha} t^{(n+1)b} \frac{\Gamma(b)^{n+2}}{\Gamma((n+2)b)}=k_{n+1}(t).
\end{align*}
Hence, \eqref{E_:Induction} is true.
Therefore,
\begin{align*}
K_\lambda(t)&:=k_0(t)+\sum_{n=1}^\infty \lambda^{n+1} k_n(t) \\
& =
t^{-1/\alpha} + \lambda \Gamma(b) t^{-1/\alpha} \left[ E_{b,b}\left(t^b\lambda \Gamma(b)\right)- \frac{1}{\lambda\Gamma(b)}\right]\\
&=
\lambda \Gamma(b) t^{-1/\alpha} E_{b,b}\left(t^b\lambda \Gamma(b)\right).
\end{align*}
Finally, by successive replacements, we see that
\[
f(t)= \beta(t) + \sum_{n=0}^\infty (\beta * k_n)(t)
=\beta(t)+ (\beta*K_\lambda)(t).
\]
The remaining  part of (1) is clear.
\eqref{E3:IntEqf} is a direct consequence of (1) and the following integral (see (1.99) in \cite{Podlubny99FDE})
\[
\int_0^z E_{\alpha,\beta}(\lambda t^\alpha) t^{\beta-1}\ud t= z^\beta E_{\alpha,\beta+1}(\lambda z^\alpha),\quad \beta>0.
\]
Finally, for any $\gamma_\alpha>\Gamma(b)^{1/b}$, by the asymptotic property of
the Mittag-Leffler function (see Theorem 1.3 in \cite{Podlubny99FDE}),
\[
C_\alpha := \sup_{x\ge 0} x E_{b,1+b}(\Gamma(b) x^b )\exp\left(-\gamma_\alpha x \right)<\infty.
\]
This completes the proof of Lemma \ref{L:Mittag}.
\end{proof}

\begin{lemma}\label{L:Contraction}
Suppose that $\alpha\in (1,2]$, $\lambda>0$, $T>\epsilon>0$, and $\beta_\epsilon:\R_+\mapsto\R$ is a locally integrable function.
\begin{enumerate}[parsep=0ex,topsep=1ex]
\item[(1)] If $f$ satisfies
\begin{align}\label{E:Contraction}
f(t)= \beta_\epsilon(t) + \lambda \epsilon^{-(1-1/\alpha)}\one_{\{t>T-\epsilon\}}\int_{T-\epsilon}^t (t-s)^{-1/\alpha} f(s)\ud s
\qquad \text{for all $t\in (0,T]$,}
\end{align}
then
\begin{align}\label{E2:Contraction}
f(t)= \beta_\epsilon(t)+\one_{\{t>T-\epsilon\}}\int_{T-\epsilon}^t K_{\lambda \epsilon^{-(1-1/\alpha)}}(t-s)\beta_\epsilon(s)\ud s,
\qquad \text{for all $t\in (0,T]$,}
\end{align}
where $K_{\lambda \epsilon^{-(1-1/\alpha)}}(t)$ is defined in \eqref{E:Klambda}.
Moreover, when $\beta_\epsilon(t)\ge 0$, if the equality in \eqref{E:Contraction} is replaced by ``$\le$'' (resp. ``$\ge$''),
then the equality in the conclusion \eqref{E2:Contraction} should be replaced by
``$\le$'' (resp. ``$\ge$'') accordingly.
\item[(2)] When $\beta_\epsilon(t)\ge 0$, for some nonnegative constant $C$ that depends on $\alpha$, $\lambda$ and $T$ (not on $\epsilon$),
we have
\begin{align}\label{E3:Contraction}
f(t)\le \beta_\epsilon(t)+C \one_{\{t>T-\epsilon\}}\epsilon^{-(1-1/\alpha)}
\int_{T-\epsilon}^t (t-s)^{-1/\alpha}\beta_\epsilon(s)\ud s,
\end{align}
for all $t\in [0,T]$.
Moreover, if $\beta_\epsilon$ is a nonnegative constant, then for the same constant $C$, it holds that
\begin{align}
 f(t)&\le C \beta_\epsilon,\quad\text{for all $t\in[0,T]$}.
\label{E4:Contraction}
\end{align}
\end{enumerate}
\end{lemma}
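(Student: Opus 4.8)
The plan is to reduce part~(1) to Lemma~\ref{L:Mittag}(1) by a translation of the time variable, and then to obtain part~(2) directly from the explicit Mittag--Leffler kernel, paying attention to the fact that the resulting constant must be independent of $\epsilon$. First, for $t\in(0,T-\epsilon]$ the indicator $\one_{\{t>T-\epsilon\}}$ vanishes, so \eqref{E:Contraction} forces $f(t)=\beta_\epsilon(t)$ and \eqref{E2:Contraction} holds trivially. For $t\in(T-\epsilon,T]$ I would set $\tilde t:=t-(T-\epsilon)$, $\tilde f(\tilde t):=f(\tilde t+T-\epsilon)$ and $\tilde\beta(\tilde t):=\beta_\epsilon(\tilde t+T-\epsilon)$; then \eqref{E:Contraction} turns into the renewal equation \eqref{E:IntEqf} with $\lambda$ replaced by $\mu:=\lambda\,\epsilon^{-(1-1/\alpha)}$ and with $\beta$ replaced by the locally integrable function $\tilde\beta$. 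Lemma~\ref{L:Mittag}(1), together with its one-sided ($\le$ / $\ge$) version when $\beta_\epsilon\ge0$, then gives $\tilde f=\tilde\beta+\tilde\beta*K_\mu$, which after translating back is exactly \eqref{E2:Contraction}.

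For part~(2) the key point is that in \eqref{E2:Contraction} only the values $K_\mu(t-s)$ with $0<t-s\le\epsilon$ appear, because $s\ge T-\epsilon$ and $t\le T$. Writing $b:=1-1/\alpha\in(0,\tfrac12]$ and $\mu=\lambda\epsilon^{-b}$, formula \eqref{E:Klambda} gives
\[
K_\mu(u)=\mu\,\Gamma(b)\,u^{-1/\alpha}\,E_{b,b}\!\bigl(\Gamma(b)\,\mu\,u^{b}\bigr)
=\lambda\,\Gamma(b)\,\epsilon^{-b}\,u^{-1/\alpha}\,E_{b,b}\!\bigl(\lambda\,\Gamma(b)\,(u/\epsilon)^{b}\bigr),
\]
and for $0<u\le\epsilon$ the argument of $E_{b,b}$ stays in $[0,\lambda\Gamma(b)]$, so $E_{b,b}\!\bigl(\lambda\Gamma(b)(u/\epsilon)^{b}\bigr)\le C_{\alpha,\lambda}:=\sup_{0\le x\le\lambda\Gamma(b)}E_{b,b}(x)<\infty$. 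Hence $K_\mu(u)\le C\,\epsilon^{-b}\,u^{-1/\alpha}$ for a constant $C$ depending only on $\alpha$ and $\lambda$ (and thus not on $\epsilon$), and inserting this into \eqref{E2:Contraction} yields \eqref{E3:Contraction}.

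To deduce \eqref{E4:Contraction}, suppose $\beta_\epsilon\equiv c\ge0$ is constant. For $t\le T-\epsilon$ we have $f(t)=c$, while for $t\in(T-\epsilon,T]$ inequality \eqref{E3:Contraction} gives
\[
f(t)\le c+C\,c\,\epsilon^{-b}\int_{T-\epsilon}^{t}(t-s)^{-1/\alpha}\,\ud s
=c+\frac{C}{b}\,c\,\epsilon^{-b}\,\bigl(t-(T-\epsilon)\bigr)^{b}\le\Bigl(1+\tfrac{C}{b}\Bigr)c,
\]
since $t-(T-\epsilon)\le\epsilon$. Enlarging the constant $C$ of part~(2) to $1+C/b$ if necessary (which keeps it independent of $\epsilon$) gives \eqref{E4:Contraction} with the same $C$.

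The one genuinely delicate point — and the reason the lemma is phrased with the factor $\epsilon^{-(1-1/\alpha)}$ in front of the integral — is the $\epsilon$-uniformity of the constant in part~(2), which is precisely what is used in the proof of Theorem~\ref{T:Pos} with $\epsilon=2^{-n}$. That factor is exactly tuned so that, after the time rescaling, the effective argument $(u/\epsilon)^{b}$ of the Mittag--Leffler function remains in a fixed bounded set, so its exponential growth is never activated. Everything else is the bookkeeping of the translation argument and the Beta-integral identity already carried out in Lemma~\ref{L:Mittag}; I therefore anticipate no real obstacle beyond carefully tracking the indicator functions and the domain $t\in(T-\epsilon,T]$.
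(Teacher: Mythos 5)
Your proof is correct and follows essentially the same route as the paper: you reduce part (1) to Lemma~\ref{L:Mittag}(1) and then exploit the fact that on the support of the convolution integral one has $0<t-s\le\epsilon$, so the argument of the Mittag--Leffler kernel is bounded uniformly in $\epsilon$. The only (cosmetic) differences are that the paper realizes the reduction by the cutoff $f^*(t):=f(t)\one_{\{t>T-\epsilon\}}$, $\beta_\epsilon^*:=\beta_\epsilon\one_{\{t>T-\epsilon\}}$ (so the equation holds on all of $[0,\infty)$ and Lemma~\ref{L:Mittag} applies directly) rather than by your explicit time translation $\tilde t=t-(T-\epsilon)$, and that for part (2) the paper invokes the exponential upper bound $E_{b,b}(x)\le C\exp(Cx^{1/b})$ and then observes the exponent stays bounded for $t-s\le\epsilon$, whereas you simply note the argument $\lambda\Gamma(b)(u/\epsilon)^b$ of $E_{b,b}$ lies in $[0,\lambda\Gamma(b)]$ and use continuity; your version is marginally cleaner but amounts to the same thing.
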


\begin{proof}
(1) Write $\beta_\epsilon(t)$ in two parts
\[
\beta_\epsilon(t) =
\beta_\epsilon(t)\one_{\{t\le T-\epsilon\}}
+\beta_\epsilon(t)\one_{\{t> T-\epsilon\}}
=:
\beta_\epsilon^\dag(t)+ \beta_\epsilon^*(t).
\]
Denote accordingly $f^*(t)= f(t)\one_{\{t> T-\epsilon\}}$. Then it is clear that
$f(t) = \beta_\epsilon^\dag(t) + f^*(t)$ and $f^*$ satisfies
\[
f^*(t)= \beta_\epsilon^*(t) + \lambda \epsilon^{-(1-1/\alpha)}
\int_{0}^t (t-s)^{-1/\alpha} f^*(s)\ud s
\qquad \text{for all $t\ge 0$.}
\]
Hence, by Lemma \ref{L:Mittag},
\begin{align*}
f^*(t) &=
\beta_\epsilon^*(t)  + \int_0^t K_{\lambda\epsilon^{-(1-1/\alpha)}}(t-s)\beta_\epsilon^*(s)\ud s\\
&=
\beta_\epsilon^*(t)  + \one_{\{t>T-\epsilon\}}\int_{T-\epsilon}^t K_{\lambda\epsilon^{-(1-1/\alpha)}}(t-s)\beta_\epsilon(s)\ud s.
\end{align*}
Then adding $\beta_\epsilon^\dag(t)$ on
both sides proves \eqref{E2:Contraction}.

(2) By the property of the Mittag-Leffler function, we see that
for some constant $C$ that does not depend on $\epsilon$,
\[
K_{\lambda\epsilon^{-(1-1/\alpha)}}(t-s)\le
C \lambda\epsilon^{-(1-1/\alpha)}
(t-s)^{-1/\alpha} \exp\left(C \lambda\epsilon^{-(1-1/\alpha)}(t-s) \right)\,.
\]
Since the integral in \eqref{E2:Contraction}
is nonvanishing only when $T-t\le \epsilon$ and since $t-s\le T-t$, we see that
\[
K_{\lambda\epsilon^{-(1-1/\alpha)}}(t-s)\le
C \lambda\epsilon^{-(1-1/\alpha)}
(t-s)^{-1/\alpha} \exp\left(C \lambda\epsilon^{1/\alpha} \right)
\le C'\epsilon^{-(1-1/\alpha)} (t-s)^{-1/\alpha}.
\]
Putting this upper bound back into \eqref{E2:Contraction} proves \eqref{E3:Contraction}.
\eqref{E4:Contraction} is clearly  from \eqref{E3:Contraction}.
This completes the proof of Lemma \ref{L:Contraction}.
\end{proof}

\begin{lemma}\label{L:IntG-G}
Suppose that $\alpha\in (1,2]$ and $|\delta|\le 2-\alpha$. For all $\epsilon, h\in (0,1)$ and $y\in\R$,
it holds that
\begin{align}
\int_0^\epsilon \ud s \int_{-\epsilon}^\epsilon \ud z
\: \left|G(s,y-z)-G(s,z)\right|&\le C \epsilon^{2(1-1/\alpha)} |y|.
\end{align}
\end{lemma}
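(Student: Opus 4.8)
\textbf{Proof plan for Lemma \ref{L:IntG-G}.}

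The plan is to split the $z$-integral into the region where both arguments $y-z$ and $z$ are small (say $|z|\le |y|/2$, or rather into the two halves $|z|\le \epsilon$) and to exploit the scaling property \eqref{E:ScaleG}, $G(s,x)=s^{-1/\alpha}G(1,s^{-1/\alpha}x)$, together with the upper bound on $G^{(n)}(1,\cdot)$ in \eqref{E:BddG}. First I would change variables $z = s^{1/\alpha}w$ in the inner integral so that $\int_{-\epsilon}^\epsilon |G(s,y-z)-G(s,z)|\,\ud z = \int_{|w|\le \epsilon s^{-1/\alpha}} |G(1,s^{-1/\alpha}y-w)-G(1,w)|\,\ud w$. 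There are two competing estimates for the integrand: the crude bound $|G(1,a)-G(1,b)|\le G(1,a)+G(1,b)$, which integrates to something $O(1)$ in $w$ uniformly, and the mean-value bound $|G(1,a)-G(1,b)|\le |a-b|\,\sup_{\xi}|G'(1,\xi)|$ with $|a-b| = s^{-1/\alpha}|y|$. The first is useful when $s$ is comparable to $\epsilon^\alpha$ (so the $w$-domain is $O(1)$), the second when we want to extract the factor $|y|$.

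Concretely, I expect the following two-case decomposition in $s$ to do the job. For $s$ with $s^{-1/\alpha}|y|\ge 1$ (equivalently $s\le |y|^\alpha$), use the crude bound to get $\int_{-\epsilon}^\epsilon|G(s,y-z)-G(s,z)|\,\ud z \le 2\int_\R G(s,z)\,\ud z \cdot$ (no — rather bound by $\int_{|w|\le \epsilon s^{-1/\alpha}}(\cdots)\,\ud w$), and then note this is at most $C$; integrating $C$ over such $s$ gives $C(\epsilon \wedge |y|^\alpha)$, and since $|y|$ appears we can write $\epsilon \wedge |y|^\alpha \le \epsilon^{1-1/\alpha}\cdot(\epsilon^{1/\alpha}\wedge |y|)\le \epsilon^{1-1/\alpha}\cdot \epsilon^{1/\alpha - (\alpha-1)/\alpha}|y|^{?}$ — this bookkeeping needs care, and this is where the main obstacle lies (see below). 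For $s > |y|^\alpha$ (so $s^{-1/\alpha}|y|<1$), use $|G(1,a)-G(1,b)|\le |a-b|\,K_1' = s^{-1/\alpha}|y|\,K_1'$ for a suitable constant $K_1' = \sup_\xi|G'(1,\xi)|$ (finite by \eqref{E:BddG} with $n=1$), so the inner integral is $\le C\, s^{-1/\alpha}|y|\cdot (\epsilon s^{-1/\alpha}) = C|y|\,\epsilon\, s^{-2/\alpha}$, and then $\int_{|y|^\alpha \vee 0}^\epsilon s^{-2/\alpha}\,\ud s \le \int_0^\epsilon s^{-2/\alpha}\,\ud s$, which converges since $2/\alpha < 2$ but may diverge since $2/\alpha$ can exceed $1$ (indeed $2/\alpha \in [1,2)$); so instead one should cut at $s = \epsilon^\alpha$ rather than $s=|y|^\alpha$. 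Replacing the threshold by $\epsilon^\alpha$: for $s\le \epsilon^\alpha$ the $w$-domain $|w|\le \epsilon s^{-1/\alpha}$ has measure $\ge 1$ and one uses the crude $O(1)$ bound plus the $|y|$-factor extracted from $\min(1, s^{-1/\alpha}|y|)$; for $\epsilon^\alpha < s \le \epsilon$ one uses the derivative bound. A clean way to always keep the factor $|y|$ is to use $|G(1,a)-G(1,b)|\le C\min(1,|a-b|) \le C\min(1,s^{-1/\alpha}|y|)$ throughout, bound $\int_{|w|\le \epsilon s^{-1/\alpha}}\ud w \le C(1+\epsilon s^{-1/\alpha})$, and then compute $\int_0^\epsilon \min(1,s^{-1/\alpha}|y|)(1+\epsilon s^{-1/\alpha})\,\ud s$.

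The arithmetic check that this last integral is $\le C\epsilon^{2(1-1/\alpha)}|y|$ is the only real content. Since the statement only needs $h,\epsilon\in(0,1)$ and $y\in\R$, and for $|y|\ge 2\epsilon$ or so the two arguments $y-z,z$ with $|z|\le \epsilon$ are bounded apart and a direct decay estimate is even easier, I would treat $|y|\le 2\epsilon$ and $|y|> 2\epsilon$ separately if convenient; in the former the derivative bound gives $s^{-1/\alpha}|y|$ with $|y|\le 2\epsilon$ and the integral $\int_0^\epsilon s^{-1/\alpha}|y|\cdot \epsilon s^{-1/\alpha}\,\ud s$ is problematic only near $0$, so one splits at $s=|y|^\alpha$ there. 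The main obstacle is purely the case analysis and the exponent bookkeeping to land exactly on $\epsilon^{2(1-1/\alpha)}|y|$ rather than a slightly different power; I do not anticipate any analytic difficulty beyond \eqref{E:ScaleG}, \eqref{E:BddG} and \eqref{E:TildeG}. Once the inner $z$-integral is bounded by $C\min(1,s^{-1/\alpha}|y|)(1+\epsilon s^{-1/\alpha})$, integrating in $s\in(0,\epsilon)$ and distinguishing $s\lessgtr |y|^\alpha$ yields the claimed bound after elementary estimates, completing the proof.
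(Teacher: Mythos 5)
Your plan is a direct estimate (scaling, pointwise bounds on $G(1,\cdot)$, and a split of the $s$-range), whereas the paper's proof is a one-shot H\"older inequality with exponents $\beta\in(1,\alpha)$ and $\beta^*=\beta/(\beta-1)$ applied to the factorization $s^{-1/\alpha}\cdot\big|G(1,\cdot)-G(1,\cdot)\big|$, followed by the MVT bound on the second factor. These are genuinely different in mechanics, but I do not think either one, as written, actually closes.

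The concrete gap in your plan is in the very last step. After the change of variable $z=s^{1/\alpha}w$ and the bound $|G(1,a)-G(1,b)|\le C\min(1,s^{-1/\alpha}|y|)$, the inner integral is at most $C\min(1,s^{-1/\alpha}|y|)\cdot 2\epsilon s^{-1/\alpha}$ (not $1+\epsilon s^{-1/\alpha}$: the measure of $\{|w|\le\epsilon s^{-1/\alpha}\}$ is exactly $2\epsilon s^{-1/\alpha}$, so the extra ``$1+$'' is spurious). The resulting $s$-integral does \emph{not} come out to $C\epsilon^{2(1-1/\alpha)}|y|$. Indeed, splitting at $s=|y|^\alpha$ (assume $|y|^\alpha\le\epsilon$, the hard regime),
\[
\int_0^\epsilon \min(1,s^{-1/\alpha}|y|)\,\epsilon s^{-1/\alpha}\,\ud s
= \epsilon\int_0^{|y|^\alpha}s^{-1/\alpha}\,\ud s + \epsilon|y|\int_{|y|^\alpha}^\epsilon s^{-2/\alpha}\,\ud s
\asymp \epsilon |y|^{\alpha-1},
\]
for $\alpha\in(1,2)$, and $\asymp\epsilon|y|\log(\epsilon/|y|^2)$ for $\alpha=2$. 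Since $\alpha-1<2(1-1/\alpha)$ for $\alpha\in(1,2)$ and $\epsilon<1$, the bound $\epsilon|y|^{\alpha-1}$ is strictly \emph{weaker} than the target $\epsilon^{2(1-1/\alpha)}|y|$; for $\alpha=2$ there is a genuine $\log(1/\epsilon)$ excess. Replacing the pointwise bound by the sharper $L^1$-modulus estimate $\int_\R|G(s,y+z)-G(s,z)|\,\ud z\le C|y|s^{-1/\alpha}$ and then integrating in $s$ gives $C|y|\epsilon^{1-1/\alpha}$, which again has the wrong power of $\epsilon$. So the elementary bounds you are invoking (boundedness of $G(1,\cdot)$ and $G'(1,\cdot)$) do not on their own deliver the exponent $2(1-1/\alpha)$; extracting that extra factor of $\epsilon^{1-1/\alpha}$ requires exploiting more structure, e.g.\ that $|G'(1,\xi)|$ is itself small for small $|\xi|$ (which holds in the symmetric case since $G'(1,0)=0$), or a cancellation argument rather than a triangle inequality.

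For calibration, the paper's own proof has a parallel difficulty: after H\"older, it writes the second factor as $\big(\int_0^\epsilon\int_{-\epsilon}^\epsilon(|y|/s^{1/\alpha})^{\beta^*}\,\ud z\,\ud s\big)^{1/\beta^*}$ and evaluates it as $C|y|\epsilon^{(2-\beta^*/\alpha)/\beta^*}$, but for every admissible $\beta\in(1,\alpha)$ one has $\beta^*=\beta/(\beta-1)>\alpha/(\alpha-1)\ge\alpha$ when $\alpha\in(1,2]$, so $\beta^*/\alpha>1$ and $\int_0^\epsilon s^{-\beta^*/\alpha}\,\ud s$ diverges. (There is also a sign/displacement mismatch in the statement: the argument difference in $|G(s,y-z)-G(s,z)|$ is $y-2z$, which is not constant in $z$; both the paper's use of the MVT bound $K_1'|y|/s^{1/\alpha}$ and the downstream application in Lemma~\ref{L:HolderuHatx} assume a fixed displacement, so the intended statement appears to be with $G(s,-z)$ in place of $G(s,z)$.) The upshot is that this lemma is subtler than it looks: the claimed exponent $\epsilon^{2(1-1/\alpha)}$ cannot be reached by crude sup/MVT bounds alone, so your ``no analytic difficulty beyond \eqref{E:ScaleG}--\eqref{E:TildeG}'' assessment is too optimistic and the bookkeeping worry you flagged at the end is, in fact, where the proof fails.
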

\begin{proof}
Denote the integral by $I$.
Choose any $\beta\in (1,\alpha)$ and let $\beta^*=\beta/(\beta-1)$.
It is clear that $\beta^*\ge 2$.
By the scaling property of $G$ and by H\"older's inequality,
\begin{align*}
 I &=\int_0^\epsilon \ud s \int_{-\epsilon}^{\epsilon} \ud z\:
 s^{-1/\alpha}\left|G\left(1,\frac{y-z}{s^{1/\alpha}}\right)-G\left(1,\frac{z}{s^{1/\alpha}}\right)\right|\\
&=
\left(
\int_0^\epsilon \ud s \int_{-\epsilon}^{\epsilon} \ud z\:
 s^{-\beta/\alpha}
\right)^{1/\beta}
\left(
 \int_0^\epsilon \ud s \int_{-\epsilon}^{\epsilon} \ud z\:\left|G\left(1,\frac{y-z}{s^{1/\alpha}}\right)-G\left(1,\frac{z}{s^{1/\alpha}}\right)\right|^{\beta^*}
 \right)^{1/\beta^*}\\
&\le
C \epsilon^{\left(2-\frac{\beta}{\alpha}\right)\frac{1}{\beta}}
\left(
 \int_0^\epsilon \ud s \int_{-\epsilon}^{\epsilon} \ud z\:\left(\frac{|y|}{s^{1/\alpha}}\right)^{\beta^*}
 \right)^{1/\beta^*}\\
&\le C \epsilon^{\left(2-\frac{\beta}{\alpha}\right)\frac{1}{\beta}} \epsilon^{\left(2-\frac{\beta^*}{\alpha}\right)\frac{1}{\beta^*}} |y|
= C \epsilon^{2(1-1/\alpha)} |y|,
\end{align*}
where we have used the fact that $G^{(1)}(1,x)$ is bounded (see \eqref{E:BddG}).
\end{proof}

\begin{proposition}[Proposition  4.4 of \cite{ChenDalang15FracHeat}]
\label{P:G}
For $\alpha\in(1,2]$ and $|\delta|\le 2-\alpha$,
there is a constant $C$ depends only on $\alpha$ and $\delta$ such that
for all $t\ge s>0$ and $x,y\in\R$,
\begin{align}\label{E:G-x}
 &\int_0^t\ud r\int_\R \ud z
\left[G(t-r,x-z)-G(t-r,y-z)\right]^2
\le C |x-y|^{\alpha-1},\\
\label{E:G-t1}
&\int_0^s\ud r\int_\R \ud z
\left[G(t-r,x-z)-G(s-r,x-z)\right]^2
\le C (t-s)^{1-1/\alpha},
\end{align}
and
\begin{align}\label{E:G-t2}
\int_s^t\ud r\int_\R \ud z \left[G(t-r,x-z)\right]^2
& \le C (t-s)^{1-1/\alpha}\;.
\end{align}
\end{proposition}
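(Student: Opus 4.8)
The plan is to pass to the Fourier side, where by \eqref{e.1.F-Dxa} the kernel has the explicit transform $\mathcal{F}G(t,\cdot)(\xi)=\exp\left(-t|\xi|^\alpha e^{-i\pi\delta\sgn(\xi)/2}\right)$. Since \eqref{E:alphaDelta} forces $|\delta|\le 2-\alpha<1$, the quantity $c_\delta:=\cos(\pi\delta/2)$ is strictly positive, so $\big|\mathcal{F}G(t,\cdot)(\xi)\big|^2=e^{-2tc_\delta|\xi|^\alpha}$ and $\mathrm{Re}\big(|\xi|^\alpha e^{-i\pi\delta\sgn(\xi)/2}\big)=c_\delta|\xi|^\alpha\ge 0$. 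All three bounds then reduce, via Plancherel's identity in the spatial variable, the substitution $r\mapsto t-r$ (resp. $r\mapsto s-r$), and the elementary estimate $\int_0^\tau e^{-au}\,\ud u\le 1/a$ for $a>0$, to one-dimensional integrals in $\xi$ whose finiteness uses only $\alpha\in(1,2]$. An alternative, Fourier-free argument via the scaling relation \eqref{E:ScaleG} and the pointwise bounds \eqref{E:BddG} on $G^{(n)}(1,\cdot)$ is also possible but requires more bookkeeping, especially for the time increment; I would use Fourier.

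First I would treat \eqref{E:G-t2}. After $r\mapsto t-r$ and Plancherel, $\int_\R G(\tau,w)^2\,\ud w=\tfrac{1}{2\pi}\int_\R e^{-2\tau c_\delta|\xi|^\alpha}\,\ud\xi=C\,\tau^{-1/\alpha}$ with $C=C(\alpha,\delta)<\infty$ (substitute $\eta=(2\tau c_\delta)^{1/\alpha}\xi$ and use $\int_\R e^{-|\eta|^\alpha}\,\ud\eta<\infty$). Integrating, $\int_0^{t-s}\tau^{-1/\alpha}\,\ud\tau=\tfrac{\alpha}{\alpha-1}(t-s)^{1-1/\alpha}$, which is finite since $\alpha>1$; this gives \eqref{E:G-t2} with a constant depending only on $\alpha$ and $\delta$. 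Next \eqref{E:G-x}, the case $x=y$ being trivial. By Plancherel the inner integral equals $\tfrac{1}{2\pi}\int_\R e^{-2(t-r)c_\delta|\xi|^\alpha}\,2\big(1-\cos(\xi(x-y))\big)\,\ud\xi$; carrying out $\int_0^t e^{-2(t-r)c_\delta|\xi|^\alpha}\,\ud r\le(2c_\delta|\xi|^\alpha)^{-1}$ and substituting $\eta=\xi(x-y)$ shows that the left side of \eqref{E:G-x} is at most $\tfrac{|x-y|^{\alpha-1}}{2\pi c_\delta}\int_\R\frac{1-\cos\eta}{|\eta|^\alpha}\,\ud\eta$. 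The remaining integral is finite exactly because $\alpha\in(1,2]$: near $0$ one has $1-\cos\eta=O(\eta^2)$, so the integrand is $O(|\eta|^{2-\alpha})$ with $2-\alpha>-1$; near infinity the integrand is $O(|\eta|^{-\alpha})$ with $\alpha>1$.

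The time increment \eqref{E:G-t1} is the one that needs a little extra care, and is the main (mild) obstacle. Writing $\lambda(\xi):=|\xi|^\alpha e^{-i\pi\delta\sgn(\xi)/2}$ so that $\mathrm{Re}\,\lambda(\xi)=c_\delta|\xi|^\alpha\ge 0$, one has $\mathcal{F}G(t-r,\cdot)(\xi)-\mathcal{F}G(s-r,\cdot)(\xi)=e^{-(s-r)\lambda(\xi)}\big(e^{-(t-s)\lambda(\xi)}-1\big)$, so by Plancherel the left side of \eqref{E:G-t1} equals $\tfrac{1}{2\pi}\int_0^s\ud r\int_\R e^{-2(s-r)c_\delta|\xi|^\alpha}\big|e^{-(t-s)\lambda(\xi)}-1\big|^2\,\ud\xi$. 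The elementary bound $|e^{-w}-1|\le\min(2,|w|)$ for $\mathrm{Re}\,w\ge 0$ gives $\big|e^{-(t-s)\lambda(\xi)}-1\big|^2\le\min\big(4,(t-s)^2|\xi|^{2\alpha}\big)$, and integrating $\int_0^s e^{-2(s-r)c_\delta|\xi|^\alpha}\,\ud r\le(2c_\delta|\xi|^\alpha)^{-1}$ bounds the expression by $\tfrac{1}{4\pi c_\delta}\int_\R|\xi|^{-\alpha}\min\big(4,(t-s)^2|\xi|^{2\alpha}\big)\,\ud\xi$. Splitting this integral at $|\xi|=(t-s)^{-1/\alpha}$ — using the bound $(t-s)^2|\xi|^\alpha$ on $\{|\xi|\le(t-s)^{-1/\alpha}\}$ and $4|\xi|^{-\alpha}$ on $\{|\xi|>(t-s)^{-1/\alpha}\}$ — each piece evaluates to a constant times $(t-s)^{(\alpha-1)/\alpha}=(t-s)^{1-1/\alpha}$, since $2-\tfrac{\alpha+1}{\alpha}=\tfrac{\alpha-1}{\alpha}$ and both $|\xi|^\alpha$ and $|\xi|^{-\alpha}$ are integrable over the respective regions (the latter needing $\alpha>1$). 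Summing the two pieces proves \eqref{E:G-t1}, again with a constant depending only on $\alpha$ and $\delta$. The only points requiring attention throughout are the strict positivity of $c_\delta$ (guaranteeing absolute convergence of the Gaussian-type $\xi$-integrals) and the frequency split in the last step; everything else is routine.
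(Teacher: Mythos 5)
Your Fourier argument is correct: the positivity of $\cos(\pi\delta/2)$ under \eqref{E:alphaDelta}, the Plancherel reductions, the bound $|e^{-w}-1|\le\min(2,|w|)$ for $\mathrm{Re}\,w\ge 0$, and the frequency split at $|\xi|=(t-s)^{-1/\alpha}$ all check out and yield exactly the exponents in \eqref{E:G-x}--\eqref{E:G-t2} with constants depending only on $\alpha$ and $\delta$. Note that the paper itself does not prove this proposition but imports it as Proposition 4.4 of \cite{ChenDalang15FracHeat}; your Plancherel computation is the standard route used there, so there is nothing further to reconcile.
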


\bigskip
In the following theorem, we state the Kolmogorov continuity theorem with an emphasize on
the constants. The proof can be found, e.g., in \cite[Theorem 4.3]{Khoshnevisan09Mini}.

\begin{theorem}[The Kolmogorov continuity theorem]
\label{T:KolCont}
Suppose $\{X(\mathbf{t})\}_{\mathbf{t}\in T}$
is a stochastic process indexed by a compact cube $T:=[a_1,b_1]\times\dots \times [a_d,b_d]\subset \R^d$.
Suppose also that there exists constants $\Theta>0$, $p>0$, and $\gamma>d$ such that uniformly
for all $\mathbf{s}$, $\mathbf{t}\in T$,
\[
\Norm{X(\mathbf{t})-X(\mathbf{s})}_p \le \Theta |\mathbf{t}-\mathbf{s}|^{\gamma/p}.
\]
Then $X$ has a continuous modification $\bar{X}$. Moreover, if $0\le \theta<(\gamma-d)/p$, then
\begin{align}\label{E:KolCont}
\Norm{\sup_{\mathbf{t}\ne \mathbf{s}}\frac{\left|\bar{X}(\mathbf{t})-\bar{X}(\mathbf{s})\right|}{|\mathbf{t}-\mathbf{s}|^\theta}}_p
\le C_{\gamma,\theta,p,d}\: \Theta.
\end{align}
\end{theorem}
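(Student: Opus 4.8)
The plan is to prove the classical Kolmogorov--Chentsov theorem by a dyadic chaining argument, keeping the constants explicit enough to obtain the quantitative H\"older bound \eqref{E:KolCont}. First I would reduce to the case $T=[0,1]^d$: translating and rescaling each coordinate changes $\Theta$ only by a factor depending on $d$ and the side lengths $b_i-a_i$, and one checks that the hypothesis $\gamma>d$ and the admissible range $\theta<(\gamma-d)/p$ are unaffected, so the resulting constant is absorbed into $C_{\gamma,\theta,p,d}$.

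Next, introduce the dyadic grids. For $n\ge 0$ set $D_n:=\{\mathbf{k}2^{-n}:\mathbf{k}\in\{0,1,\dots,2^n\}^d\}$ and $D:=\bigcup_{n\ge 0}D_n$, a countable dense subset of $T$. Call $\mathbf{s},\mathbf{t}\in D_n$ \emph{neighbors} if they differ by $0$ or $\pm 2^{-n}$ in each coordinate, so that $|\mathbf{s}-\mathbf{t}|\le\sqrt{d}\,2^{-n}$, and note the number of neighboring pairs in $D_n$ is at most $c_d\,2^{nd}$. Define
\[
M_n:=\max\bigl\{|X(\mathbf{s})-X(\mathbf{t})|:\ \mathbf{s},\mathbf{t}\in D_n\ \text{neighbors}\bigr\}.
\]
Using the elementary inequality $(\max_j|a_j|)^p\le\sum_j|a_j|^p$ and the moment hypothesis,
\[
\E\bigl[M_n^p\bigr]\le\sum_{\text{neighbor pairs}}\E\bigl|X(\mathbf{s})-X(\mathbf{t})\bigr|^p\le c_d\,2^{nd}\,\Theta^p\,\bigl(\sqrt{d}\,2^{-n}\bigr)^{\gamma},
\]
whence $\Norm{M_n}_p\le C_{d,\gamma}\,\Theta\,2^{-n(\gamma-d)/p}$.

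The heart of the proof is the deterministic chaining estimate. Given $\mathbf{s},\mathbf{t}\in D$, choose $N\ge 0$ with $2^{-(N+1)}\le|\mathbf{s}-\mathbf{t}|<2^{-N}$; projecting $\mathbf{s}$ and $\mathbf{t}$ onto $D_m$ for $m\ge N$ and telescoping the successive increments (each of which is a sum of at most $d$ neighbor-increments within $D_m$, hence bounded by $d\,M_m$) yields $|X(\mathbf{s})-X(\mathbf{t})|\le c_d\sum_{m\ge N}M_m$. Multiplying by $|\mathbf{s}-\mathbf{t}|^{-\theta}\le 2^{(N+1)\theta}$ and absorbing the cutoff into the full series gives, for a constant $c_d'$ independent of the pair,
\[
\sup_{\substack{\mathbf{s},\mathbf{t}\in D\\ \mathbf{s}\ne\mathbf{t}}}\frac{|X(\mathbf{s})-X(\mathbf{t})|}{|\mathbf{s}-\mathbf{t}|^\theta}\le c_d'\sum_{m\ge 0}2^{m\theta}M_m.
\]
Taking $L^p(\Omega)$-norms, applying Minkowski's inequality, and inserting the bound on $\Norm{M_m}_p$,
\[
\Norm{\sup_{\mathbf{s}\ne\mathbf{t}\in D}\frac{|X(\mathbf{s})-X(\mathbf{t})|}{|\mathbf{s}-\mathbf{t}|^\theta}}_p\le c_d'\sum_{m\ge 0}2^{m\theta}\Norm{M_m}_p\le C\,\Theta\sum_{m\ge 0}2^{m(\theta-(\gamma-d)/p)},
\]
and the geometric series converges precisely because $\theta<(\gamma-d)/p$. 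In particular the supremum is a.s. finite, so $X|_D$ is a.s. uniformly $\theta$-H\"older on $D$; set $\bar X(\mathbf{t}):=\lim_{D\ni\mathbf{s}\to\mathbf{t}}X(\mathbf{s})$, which exists for every $\mathbf{t}\in T$ on the full-probability event above, is continuous on $T$, and inherits the same bound, proving \eqref{E:KolCont}. Finally, $\bar X$ is a modification of $X$: for fixed $\mathbf{t}$, take $D\ni\mathbf{s}_k\to\mathbf{t}$; then $\Norm{X(\mathbf{s}_k)-X(\mathbf{t})}_p\le\Theta|\mathbf{s}_k-\mathbf{t}|^{\gamma/p}\to 0$, so $X(\mathbf{s}_k)\to X(\mathbf{t})$ in $L^p$, while $X(\mathbf{s}_k)\to\bar X(\mathbf{t})$ a.s., giving $\bar X(\mathbf{t})=X(\mathbf{t})$ a.s.

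The main obstacle is the bookkeeping in the chaining step: reorganizing the pair-dependent tail $\sum_{m\ge N}M_m$ into a single series with a uniform weight $2^{m\theta}$ and a constant that does not depend on $(\mathbf{s},\mathbf{t})$, and verifying that the summability condition is exactly $\theta<(\gamma-d)/p$. The probabilistic input — counting dyadic neighbor pairs and applying the moment hypothesis — is routine, as is the rescaling reduction.
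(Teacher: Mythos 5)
Your chaining proof is correct and is the standard Kolmogorov--Chentsov argument; the paper itself gives no proof of Theorem \ref{T:KolCont}, quoting it from Khoshnevisan's minicourse (Theorem 4.3 there), and your dyadic-grid argument is essentially the proof found in that reference, with the constant tracked so that \eqref{E:KolCont} holds with $C_{\gamma,\theta,p,d}\,\Theta$. One small caveat: the statement allows any $p>0$, and for $0<p<1$ your appeal to Minkowski's inequality is not valid; there you should instead use the subadditivity $\E\bigl[\bigl(\sum_m 2^{m\theta}M_m\bigr)^p\bigr]\le\sum_m 2^{m\theta p}\,\E\bigl[M_m^p\bigr]$, which yields the same geometric series and the same condition $\theta<(\gamma-d)/p$ (in the paper's applications $p\ge 2$, so this does not matter there).
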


\bigskip
The following lemma can be used to give an   explicit form for $c_n$ defined  by
\eqref{E2:c2n} when $\alpha=2$ (see Remark \ref{R:c2n}).

\begin{lemma}\label{L:c2n}
For all $\nu>0$ and $t>0$,
\[
\int_0^t\ud s\int_{-t}^t \ud y \:  \frac{1}{\sqrt{2\pi \nu s}}\exp\left(-\frac{y^2}{2\nu s}\right)
=t \left[2 \left(\frac{t}{\nu}+1\right) \Phi\left(\sqrt{t/\nu}\right)-\frac{2
   t}{\nu}+\sqrt{2/\pi} \: e^{-\frac{t}{2\nu}}
   \sqrt{t/\nu}-1\right].
\]
\end{lemma}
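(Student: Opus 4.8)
<br>

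The statement to prove is Lemma \ref{L:c2n}, an explicit evaluation of the double integral
\[
\int_0^t \ud s \int_{-t}^t \ud y \: \frac{1}{\sqrt{2\pi\nu s}}\exp\left(-\frac{y^2}{2\nu s}\right).
\]
This is a purely computational lemma, so the plan is essentially to carry out the integration carefully and in the right order.

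\textbf{Plan of proof.} First I would do the inner integral over $y$. Writing $\Phi$ for the standard normal CDF, we have $\int_{-t}^t (2\pi\nu s)^{-1/2}\exp(-y^2/(2\nu s))\,\ud y = 2\Phi(t/\sqrt{\nu s}) - 1$ by the substitution $y = \sqrt{\nu s}\,w$. So the problem reduces to evaluating $\int_0^t \left(2\Phi(t/\sqrt{\nu s}) - 1\right)\ud s$. Next I would substitute to clean up the argument of $\Phi$: for instance setting $u = t/\sqrt{\nu s}$, equivalently $s = t^2/(\nu u^2)$, $\ud s = -2t^2/(\nu u^3)\,\ud u$, which turns the $s$-integral into an integral in $u$ over $[\sqrt{t/\nu}, \infty)$ of $\left(2\Phi(u)-1\right)$ against $t^2/(\nu u^3)\,\ud u$. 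Then integration by parts (differentiating $2\Phi(u)-1$, whose derivative is $2\varphi(u) = \sqrt{2/\pi}\,e^{-u^2/2}$, and integrating $u^{-3}$ to $-\tfrac12 u^{-2}$) reduces everything to a Gaussian integral of the form $\int u^{-1}e^{-u^2/2}\,\ud u$ — actually, after the $u^{-2}$ pairs with $e^{-u^2/2}$, another substitution back to an error-function/$\Phi$ integral closes it. Alternatively, and perhaps more transparently, I would keep the $s$ variable and integrate $\int_0^t \Phi(t/\sqrt{\nu s})\,\ud s$ directly by parts, treating $\Phi(t/\sqrt{\nu s})$ as the function to differentiate (its $s$-derivative is $\varphi(t/\sqrt{\nu s}) \cdot (-\tfrac{t}{2\sqrt{\nu}} s^{-3/2})$) and $s$ as the part to integrate; the boundary term at $s=t$ gives $t\,\Phi(\sqrt{t/\nu})$, the boundary term at $s=0$ vanishes because $\Phi(\infty)=1$ is multiplied by $s\to 0$, and the remaining integral is again an elementary Gaussian integral in $s$ after a substitution. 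Collecting the pieces should produce exactly
\[
t\left[2\left(\tfrac{t}{\nu}+1\right)\Phi\left(\sqrt{t/\nu}\right) - \tfrac{2t}{\nu} + \sqrt{2/\pi}\,e^{-t/(2\nu)}\sqrt{t/\nu} - 1\right].
\]
Finally I would verify the identity by a sanity check — e.g. checking the $t\downarrow 0$ asymptotics against the expansion $\Phi(x) = \tfrac12 + \tfrac{x}{\sqrt{2\pi}} + o(x)$, which is exactly the check used in Remark \ref{R:c2n} to extract \eqref{E2:c2n} — and, if desired, by differentiating the claimed right-hand side in $t$ and matching it against $\int_{-t}^t (2\pi\nu t)^{-1/2}e^{-y^2/(2\nu t)}\,\ud y + (\text{derivative of the limits})$, i.e. verifying the ODE the integral satisfies together with the initial value at $t=0$.

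\textbf{Main obstacle.} There is no conceptual difficulty here; the only real risk is bookkeeping. The integration by parts introduces several terms (a boundary term proportional to $t\Phi(\sqrt{t/\nu})$, a term proportional to $(t^2/\nu)\Phi(\sqrt{t/\nu})$ coming from the leftover integral, a $t$-linear term, and a Gaussian term $\sqrt{2/\pi}\,e^{-t/(2\nu)}\sqrt{t/\nu}$), and keeping the constants $\nu$, the square roots $\sqrt{t/\nu}$, and the factor $t$ out front all straight requires care. I would organize the computation by factoring $t$ out at the very end and grouping the coefficients of $\Phi(\sqrt{t/\nu})$, so that the match with the stated closed form is manifest. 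Specializing $\nu=2$, $t=2^{-n}$ then recovers the formula \eqref{E:c2n} used in Remark \ref{R:c2n}, which is the only place this lemma is invoked.
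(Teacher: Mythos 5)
Your primary route — do the $y$-integral to get $2\Phi(t/\sqrt{\nu s})-1$, substitute $u=t/\sqrt{\nu s}$, and integrate $\Phi(u)u^{-3}$ by parts twice (the second reduction using $\varphi'(u)=-u\varphi(u)$ to produce the $1-\Phi$ term) — is exactly the computation the paper carries out. The proposal is correct and matches the paper's approach.
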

\begin{proof}
Recall that $\Phi(x)=\int_{-\infty}^x \frac{1}{\sqrt{2\pi}}e^{-\frac{y^2}{2}}\ud y$.
Because $\int_{-t}^t \frac{1}{\sqrt{2\pi\nu s}}\exp\left(-\frac{y^2}{2\nu s}\right)\ud y=2\Phi\left(t/\sqrt{\nu s}\right)-1$,
by change of variable $t/\sqrt{\nu s}=u$, the double integral is equal to
\[
2t^2\nu^{-1}\int_{\sqrt{t/\nu}}^\infty \Phi(u)u^{-3}\ud u-t.
\]
Now we evaluate the $\ud u$ integral. Apply the integration-by-parts twice to obtain that
\begin{align*}
2\int_{\sqrt{t/\nu}}^\infty \Phi(u)u^{-3}\ud u
&=
-u^{-2}\Phi(u)\Big|_{\sqrt{t/\nu}}^{\infty} + \int_{\sqrt{t/\nu}}^\infty
u^{-2} \frac{1}{\sqrt{2\pi}}e^{-u^2/2}\ud u \\
&=-u^{-2}\Phi(u)\Big|_{\sqrt{t/\nu}}^{\infty} -
u^{-1}\frac{1}{\sqrt{2\pi}}e^{-u^2/2}\Big|_{\sqrt{t/\nu}}^{\infty}
+\int_{\sqrt{t/\nu}}^\infty
u^{-1}\frac{1}{\sqrt{2\pi}}e^{-u^2/2} (-u)\ud u\\
&=\frac{\nu}{t}\Phi(\sqrt{t/\nu}\:) +\sqrt{\frac{\nu}{2\pi t}} e^{-\frac{t}{2\nu}} -\left(1-\Phi(\sqrt{t/\nu}\:)\right).
\end{align*}
Lemma \ref{L:c2n} is proved after some simplification.
\end{proof}

\subsection{A general framework from Hu {\it et al} \texorpdfstring{\cite{HHNS14}}{AA}}
\label{S:HHNS}
In Hu, Huang, Nualart and Sun \cite{HHNS14}, 
under certain assumptions on the fundamental solutions and the noise, 
the existence and smoothness of density for the following SPDE have been studied
\begin{align}\label{E:Lubs}
L u(t,x) = b(u(t,x)) + \sigma(u(t,x))\W(t,x), \quad t> 0, \; x\in\R^d.
\end{align}
In this equation, $L$ denotes a second order partial differential operator and $\W$ is a centered Gaussian noise that is white in time and homogeneous in space. 
Formally, 
\begin{align}\label{E:f}
\E\left[\W(t,x)\W(s,y)\right]=\delta_0(t-s)f(x-y),
\end{align}
where $f$ is some nonnegative and nonnegative definite function. 
Let $\mu$ be the spectral measure, i.e., the Fourier transform of $f$.
Let $\calH$ be the Hilbert space obtained by the completion of  $C_0^\infty(\R^d)$ according to the inner product 
\[
\InPrd{\psi,\phi}_\calH =\int_{\R^d}\ud x\int_{\R^d}
\ud y \: \phi(x)f(x-y)\phi(y) 
=\int_{\R^d} \calF \phi(\xi) \overline{\calF\phi(\xi)} \mu(\ud\xi),\quad
\psi,\phi\in C_0^\infty(\R^d).
\]
The space $\calH$ may contain distributions.
Set $\calH_0=L^2([0,\infty);\calH)$.
\begin{itemize}
 \item[(H1)] The fundamental solution to $Lu=0$, denoted by $G$, satisfies that for all $t>0$, $G(t,\ud x)$ is a nonnegative measure with rapid decrease, such that for all $T>0$,
 \begin{align}
 \int_0^T\int_{\R^d} \left|\calF G(t)(\xi)\right|^2 \mu(\ud \xi)\ud t<\infty,
 \end{align}
 and 
 \begin{align}
 \sup_{t\in [0,T]} G(T,\R^d)\le C_T<\infty.
 \end{align}
 \item[(H2)] There exist positive constants $\kappa_1$ and $\kappa_2$
 such that for all $s,t\in[0,T]$, $x,y\in\R^d$, $T>0$ and $p\ge 1$,
 \begin{align}
  \E\left[\left|u(s,x)-u(t,x)\right|^p\right] &\le C_{p,T} |t-s|^{\kappa_1 p},\\
  \E\left[\left|u(t,x)-u(t,y)\right|^p\right] &\le C_{p,T} |x-y|^{\kappa_2 p},
 \end{align}
 for some constant $C_{p,T}$ which only depends on $p$ and $T$.
 \item[(H3)] There exist some constants $\eta>0$, $\epsilon_0>0$ and $C>0$ such that for all $0<\epsilon\le \epsilon_0$, 
 \begin{align}
  C \: \epsilon^\eta \le \int_0^\epsilon \Norm{G(r)}_\calH^2 \ud r.
 \end{align}
 \item[(H4)] Let $\eta$ be the constant defined in (H3) and $\kappa_1$ and $\kappa_2$ be the constants defined in (H2).
 \begin{enumerate}
  \item[(i)] There exist some constants $\eta_1>\eta$ and $\epsilon_1>0$ such that for all $0<\epsilon\le \epsilon_1$,
  \begin{align}
    \int_0^\epsilon r^{k_1}\Norm{G(r)}_\calH^2 \ud r\le C \epsilon^{\eta_1}.
  \end{align}
  \item[(ii)] There exists some constant $\eta_2>\eta$ such that for each fixed non-zero $w\in\R^d$, there exists two positive constants $C_w$ and $\epsilon_2$ satisfying 
  \begin{align}
   \int_0^\epsilon \InPrd{G(r,*),G(r,w+*)}_\calH\ud r \le C_w \epsilon^{\eta_2},\qquad \text{for all $0\le\epsilon\le \epsilon_2$.}
  \end{align}
  \item[(iii)] The measure $\Psi(t)$ defined by $|x|^{\kappa_2}G(t,\ud x)$ satisfies $\int_0^T\int_{\R^d}|\calF\Psi(t)(\xi)|^2\mu(\ud\xi)\ud t<\infty$ and there exists a constant $\eta_3>\eta$ such that for each fixed $w\in\R^d$, there exist two positive constants $C_w$ and $\epsilon_3$ satisfying 
  \begin{align}
   \int_0^\epsilon \InPrd{|*|^{\kappa_2}G(r,*),G(r,w+*)}_\calH\ud r \le C_w \epsilon^{\eta_3},\qquad \text{for all $0\le\epsilon\le \epsilon_3$.}
  \end{align}
 \end{enumerate}
\end{itemize}
\begin{theorem}[Theorem 3.1 of \cite{HHNS14}]\label{T:HHNS}
Assume that conditions (H1)-- (H4) hold, and the coefficients $\sigma$ and $b$ are smooth functions with bounded derivatives of all orders. 
Let $u(t,x)$ be the solution to \eqref{E:Lubs} with vanishing initial data.
Fix $t>0$ and let $x_1,\dots, x_n$ be $n$ distinct points in $\R^d$.
Assume that $u(t,x_i)$, $i=1,\dots, n$, satisfy the condition that 
for some positive constant $C_1$, 
\begin{align}\label{E:HHNS}
 |\sigma(u(t,x_i))|\ge C_1 \quad \text{$\bbP$-a.s. for any $i=1,\dots,n$.}
\end{align}
Then the law of the random vector $(u(t,x_1),\dots,u(t,x_n))$ has a smooth density with respect to the Lebesgue measure on $\R^n$.
\end{theorem}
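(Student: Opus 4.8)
\textbf{Proof proposal for Theorem \ref{T:HHNS}.}
The plan is to verify that the solution $u(t,x)$ to \eqref{E:Lubs} satisfies the hypotheses of the Malliavin calculus criterion for smoothness of densities at multiple points, and then deduce the conclusion. First I would recall the standard facts about the Malliavin differentiability of $u(t,x)$: under the assumption that $\sigma$ and $b$ are $C^\infty$ with bounded derivatives of all orders and that the initial data vanish, an iteration argument (analogous to Proposition \ref{P:D1}, but here with the noise colored in space) together with condition (H1) shows that $u(t,x)\in\D^\infty$ for all $(t,x)$, and that the Malliavin derivative $D_{r,\cdot}u(t,x)$ satisfies a linear SPDE whose inhomogeneous term is $\sigma(u(r,\cdot))G(t-r,x-\cdot)$. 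With $u(t,x_i)\in\D^\infty$ established, it remains, by part (2) of Theorem \ref{T:Density}, to prove that the Malliavin matrix
\[
\sigma_{ij}=\InPrd{Du(t,x_i),Du(t,x_j)}_{\calH_0}
\]
has an inverse determinant with finite moments of all orders, i.e. $\E[(\det\sigma)^{-p}]<\infty$ for all $p\ge 1$.

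The core of the argument is a quantitative non-degeneracy estimate for $\det\sigma$ near $\epsilon=0$, obtained by localizing the time integration to a small interval $[t-\epsilon,t]$. Following the strategy used in the proof of Theorem \ref{T:Mult} (Steps 1--4 there), I would write, for a unit vector $\xi\in\R^n$,
\[
\InPrd{\sigma\xi,\xi}\ge \int_{t-\epsilon}^t\Norm{\sum_i\xi_i D_{r,\cdot}u(t,x_i)}_\calH^2\,\ud r,
\]
and split $D_{r,\cdot}u(t,x_i)=\sigma(u(r,\cdot))G(t-r,x_i-\cdot)+(\text{remainder})$. The leading term is bounded below using $|\sigma(u(t,x_i))|\ge C_1$ (hypothesis \eqref{E:HHNS}), the spatial H\"older continuity of $u$ from (H2) to replace $u(r,\cdot)$ near $x_i$ by $u(t,x_i)$, and the lower bound in (H3) $\int_0^\epsilon\Norm{G(r)}_\calH^2\ud r\ge C\epsilon^\eta$; this gives a diagonal contribution of order $\epsilon^\eta$ (with the constant controlled a.s. thanks to \eqref{E:HHNS}). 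The off-diagonal cross terms and the error term coming from the spatial modulus of continuity are controlled, in $L^p$, by the mixed quantities in (H4)(ii)--(iii), which produce upper bounds of order $\epsilon^{\eta_2}$ or $\epsilon^{\eta_3}$ with $\eta_2,\eta_3>\eta$; the remainder term $Q_{r,\cdot}$ in the Malliavin SPDE is estimated using (H1) and (H4)(i), producing a bound of order $\epsilon^{\eta_1}$ with $\eta_1>\eta$. Since all error exponents strictly exceed $\eta$, one obtains for small $\epsilon$ and every $p\ge1$
\[
\bbP\left(\inf_{|\xi|=1}\InPrd{\sigma\xi,\xi}<\epsilon^\eta\right)\le C_p\,\epsilon^p,
\]
hence $\bbP((\det\sigma)^{1/n}<\epsilon)$ decays faster than any power of $\epsilon$, and Lemma \ref{L:NegMom} yields $\E[(\det\sigma)^{-p}]<\infty$ for all $p>0$. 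Applying part (2) of Theorem \ref{T:Density} then gives the smooth joint density.

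The step I expect to be the main obstacle is the bookkeeping of the off-diagonal and error contributions in the colored-noise setting: unlike the white-noise case, the inner products are taken in the Hilbert space $\calH$ determined by the spectral measure $\mu$, so the clean pointwise bounds $\int_\R G(t-r,x-z)G(t-r,y-z)\ud z$ used in Theorem \ref{T:Mult} must be replaced by the abstract estimates \eqref{E:f}-type integrals packaged into (H4). Verifying that the spatial-regularity exponent $\kappa_2$ from (H2) interacts correctly with the decay exponent $\eta_3$ from (H4)(iii) — in particular that replacing $u(r,z)$ by $u(t,x_i)$ over $|z-x_i|\le\epsilon^{1/?}$ and $|r-t|\le\epsilon$ costs only $\epsilon^{\kappa_2\cdot(\text{scale})}$ times the base quantity — is the delicate point, and this is precisely what hypotheses (H4)(i)--(iii) are designed to absorb. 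The rest is a routine, if lengthy, adaptation of the moment machinery (Lemma \ref{L:Mom}, the Burkholder--Davis--Gundy inequality \eqref{E:BGD}, and the Kolmogorov continuity theorem, Theorem \ref{T:KolCont}) already developed in the excerpt.
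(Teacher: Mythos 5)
This statement is quoted from \cite{HHNS14} (Theorem 3.1 there) and appears in the appendix of the present paper as a background reference; the paper does not reprove it, so there is no ``paper's own proof'' to compare against. That said, your reconstruction is the right strategy and runs parallel to the proof the present paper gives for its white-noise analogue, Theorem \ref{T:Mult}: establish $u(t,x)\in\D^\infty$ and the linear SPDE for $D_{r,\cdot}u(t,x)$ (easier here than in Proposition \ref{P:D1}, since vanishing initial data give uniform moment bounds without the localized $\D^\infty_S$ machinery); localize the Malliavin matrix to the time window $[t-\epsilon,t]$; isolate the diagonal contribution, which is bounded below by a deterministic multiple of $C_1^2\epsilon^\eta$ via \eqref{E:HHNS} and (H3); control all remaining contributions in $L^p$ at orders $\epsilon^{\eta_i}$ with $\eta_i>\eta$; and conclude via Chebyshev, Lemma \ref{L:NegMom}, and Theorem \ref{T:Density}(2).

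One small caution in the bookkeeping (which you flagged yourself as the delicate part): the sub-conditions of (H4) do not map onto the error terms quite as you assigned them. Condition (H4)(i), with its weight $r^{\kappa_1}$, is designed to absorb the \emph{temporal} modulus-of-continuity error from replacing $u(r,z)$ by $u(t,z)$ (exponent $\kappa_1$ from (H2)); (H4)(iii), with $|*|^{\kappa_2}$, absorbs the \emph{spatial} modulus-of-continuity error; and (H4)(ii) handles the off-diagonal cross terms at distinct $x_i\ne x_j$. The remainder $Q_{r,\cdot}(t,x)$ from the Malliavin SPDE is estimated separately by iterating the linear equation for $Du$ and using (H1) together with the basic bound $\int_0^\epsilon\Norm{G(r)}_\calH^2\ud r$, which gains an extra factor of $\epsilon^\eta$ over the leading term (compare Step 4 of the proof of Theorem \ref{T:Mult} in the white-noise case). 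With that correction the argument closes as you describe, and since the lower bound on the diagonal is deterministic here (unlike in Theorem \ref{T:Mult}, where one needs Theorem \ref{T:NegMom}), the final probability estimate is in fact simpler than in the present paper's setting.
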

\begin{remark}\label{R:HHNS}
In the remark 3.2 of \cite{HHNS14}, the authors commented that using a localization procedure developed in the proof of Theorem 3.1 in \cite{BP98}, one can obtain a version of Theorem \ref{T:HHNS} without assuming \eqref{E:HHNS}.
In this case, one concludes that the law of 
$(u(t,x_1),\dots,u(t,x_n))$ has a smooth density on $\{y\in\R: \sigma(y)\ne 0\}^n$. 
\end{remark}

\section*{Acknowledgements}
We appreciate some useful comments and stimulating discussions with Robert C. Dalang, Jin Feng, and Jingyu Huang.

\addcontentsline{toc}{section}{Bibliography}
\def\polhk#1{\setbox0=\hbox{#1}{\ooalign{\hidewidth
  \lower1.5ex\hbox{`}\hidewidth\crcr\unhbox0}}} \def\cprime{$'$}

\vspace{3em}
\hfill\begin{minipage}{0.55\textwidth}
{\bf Le CHEN}, {\bf Yaozhong HU}, {\bf David NUALART}\\[0.2em]
Department of Mathematics\\
University of Kansas\\
405 Snow Hall, 1460 Jayhawk Blvd,\\
Lawrence, Kansas, 66045-7594, USA.\\
E-mails: \: \url{chenle, yhu, nualart@ku.edu}
\end{minipage}

\end{document}